\def\a{{\mathsf{a}}}
\def\b{{\mathsf{b}}}
\def\c{{\mathsf{c}}}
\def\SS{{\operatorname{S}}}
\def\D{{\operatorname{D}}}
\def\Y{{\operatorname{Y}}}
\def\P{{\operatorname{P}}}
\def\Q{{\operatorname{Q}}}
\def\L{{\operatorname{L}}}
\def\V{{\operatorname{V}}}
\def\qGL{q\text{-}GL}
\newtheorem{theorem}{Theorem}[section]
\newtheorem{lemma}[theorem]{Lemma}
\newtheorem{corollary}[theorem]{Corollary} 
\theoremstyle{definition}  
\newtheorem{example}[theorem]{Example}
\newtheorem{remark}[theorem]{Remark}
\def\bigsqcup{\coprod}
\def\trans{\mathsf{t}}
\def\s{\chi}
\def\i{\imath}
\def\j{\jmath}
\def\Fock{\mathcal{F}}
\def\ch{\operatorname{ch}}
\def\cont{\operatorname{cont}}
\def\type{\operatorname{type}}
\def\res{\operatorname{res}}
\def\ind{\operatorname{ind}}
\def\infl{\operatorname{infl}}
\def\op{\operatorname{op}}
\def\Tr{\operatorname{Tr}}
\def\squarelet{\heartsuit}
\def\up{{{\color{darkblue}\uparrow}}}
\def\down{{{\color{darkblue}\downarrow}}}
\def\SYM{\operatorname{Sym}}
\def\Sym{\mathfrak{S}}
\def\RR{\mathbb{R}}
\def\Bubble{\mathord{
\begin{tikzpicture}[baseline = .8mm]
  \draw[<-,thick,darkblue] (0.2,0.2) to[out=90,in=0] (0,.4);
  \draw[-,thick,darkblue] (0,0.4) to[out=180,in=90] (-.2,0.2);
\draw[-,thick,darkblue] (-.2,0.2) to[out=-90,in=180] (0,0);
  \draw[-,thick,darkblue] (0,0) to[out=0,in=-90] (0.2,0.2);
\end{tikzpicture}
}}
\def\JM{L}
\def\jm{l}
\def\bi{{\bm{i}}}
\def\bj{{\bm{j}}}
\def\ob{\operatorname{ob}}
\def\FOT{\mathcal{FOT}}
\def\OB{\mathcal{OB}}
\def\OS{\mathcal{OS}}
\def\AOB{\mathcal{AOB}}
\def\H{\mathcal{H}}
\def\AOS{\mathcal{AOS}}
\def\words{\langle \up,\down\rangle}
\def\Words{\langle \up,\down\rangle_I}
\def\Par{\operatorname{Bip}}
\def\RPar{e\!\operatorname{-Bip}}
\def\VV{{\mathcal V}}
\def\La{\Lambda}
\def\LA{{\bm{\lambda}}}
\def\MU{{\bm{\mu}}}
\def\KAPPA{{\bm{\kappa}}}
\def\NU{{\bm{\nu}}}
\def\NOTHING{{\bm\varnothing}}
\def\Mat{\operatorname{Mat}}
\def\Mod{\operatorname{Mod-}\!}
\def\lfdMod{\operatorname{lfdMod-}\!}
\def\fdMod{\operatorname{fdMod-}\!}
\def\pMod{\operatorname{pMod-}\!}
\def\deltaMod{\Delta\!\operatorname{Mod-}\!}
\newcommand{\End}{\operatorname{End}}
\def\ev{\operatorname{ev}}
\def\coev{\operatorname{coev}}
\newcommand{\wt}{\operatorname{wt}}
\newcommand{\WT}{{\text{\bf wt}}}
\newcommand{\Hom}{\operatorname{Hom}} 
\newcommand{\Rep}{\operatorname{Rep}}
\newcommand{\Tilt}{\operatorname{Tilt}}
\newcommand{\REP}{{\underline{\operatorname{Re}}\!\operatorname{p}\,}}
\newcommand{\id}{\operatorname{id}}
\newcommand{\g}{\mathfrak{g}}
\newcommand{\h}{\mathfrak{h}}
\newcommand{\ZZ}{\mathbb{Z}}
\newcommand{\NN}{\mathbb{N}}
\newcommand{\CC}{\mathbb{C}}
\newcommand{\QQ}{\mathbb{Q}}
\newcommand{\K}{\mathbb{K}}
\newcommand{\eps}{\varepsilon}
\newcommand{\unit}{\mathds{1}}
\newcommand{\rad}{{\operatorname{rad}}}
\def\la{\lambda}
\renewcommand{\k}{\Bbbk}
\definecolor{white}{HTML}{FFFFFF}
\definecolor{darkblue}{HTML}{111199}
\definecolor{darkgreen}{HTML}{336633}
\definecolor{darkred}{HTML}{993333}
\definecolor{darkpurple}{HTML}{995599}
\newdimen\hoogte    \hoogte=20pt    
\newdimen\breedte   \breedte=22pt   
\newdimen\dikte     \dikte=.9pt    
\newenvironment{Young}{\begingroup
       \def\vr{\vrule height0.7\hoogte width\dikte depth 0.45\hoogte}
       \def\fbox##1{\vbox{\offinterlineskip
                    \hrule height\dikte
                    \hbox to \breedte{\vr\hfill##1\hfill\vr}
                    \hrule height\dikte}}
       \vbox\bgroup \offinterlineskip \tabskip=-\dikte \lineskip=-\dikte
            \halign\bgroup &\fbox{##\unskip}\unskip  \crcr }
       {\egroup\egroup\endgroup}
\def\diagram#1{\relax\ifmmode\vcenter{\,\begin{Young}#1\end{Young}\,}\else%
              $\vcenter{\,\begin{Young}#1\end{Young}\,}$\fi}
\begin{document}

\title[Oriented skein category]{Representations of the oriented skein category}

\author[J. Brundan]{Jonathan Brundan}
\address{Department of Mathematics,
University of Oregon, Eugene, OR 97403, USA}
\email{brundan@uoregon.edu}

\thanks{2010 {\it Mathematics Subject Classification}: 17B10, 18D10.}
\thanks{Research supported in part by NSF grant DMS-1700905.}

\begin{abstract}
The {\em oriented skein category} $\OS(z,t)$
is a ribbon category which underpins the definition of the HOMFLY-PT invariant of an oriented link,
in the same way that the Temperley-Lieb category 
underpins the Jones polynomial. 
In this article, we develop its representation theory using a highest
weight theory approach. 
This allows us to determine the Grothendieck ring of its additive
Karoubi envelope for all possible choices of parameters, including
the (already well-known) semisimple case,
and all non-semisimple situations.
Then we construct a graded lift of $\OS(z,t)$ by realizing it as a
2-representation of a Kac-Moody 2-category.
We also discuss the degenerate analog of $\OS(z,t)$, which is the 
{\em oriented Brauer category} $\OB(\delta)$. 
\end{abstract}

\maketitle  
\section{Introduction}

{\noindent {\bf 1.1.}}
We begin by recalling briefly the definition of 
the category $\FOT$ of {\em framed oriented tangles}; this is the framed analog of the
oriented tangle category $\mathcal{OT}$ introduced by Turaev in
\cite{Turaev3} and
also appears in \cite[Remark 8.10.3]{EGNO} where it is denoted $\mathcal{FT}$.
By definition, it is the 
strict monoidal category with objects given by the set $\words$ of all words in the letters $\up$
and $\down$. Tensor product of objects is given by concatenation, e.g.,
$\up\otimes\up\otimes \down=\up\up\down$, and
the unit object $\unit$ is the empty word $\varnothing$.
For two words $\a = \a_m\cdots \a_1, \b = \b_n \cdots \b_1\in\words$,
morphisms $f:\a \rightarrow \b$ are isotopy classes of 
framed oriented tangles in 
$[0,1]\times[0,1]\times\RR$ with boundary 
$$
\Big\{
\big({\textstyle \frac{m+1-i}{m+1}},0,0\big)\:\Big|\:i=1,\dots,m\Big\}\cup
\Big\{\big({\textstyle \frac{n+1-j}{n+1}},1,0\big)\:\Big|\:j=1,\dots,n
\Big\},
$$
such that the orientation in the $y$-direction near the boundary points $\big(\frac{m+1-i}{m+1},0,0\big)$
and $\big(\frac{n+1-j}{n+1},1,0\big)$ are $\a_{i}$ and $\b_j$,
respectively.
We will draw such tangles by 
projecting onto the $xy$-plane in such a way that the implicit framing is
``blackboard,'' and
there are no triple
intersections or tangencies; we also keep track of
``over'' or ``under'' data at each crossing.
We call the resulting diagrams {\em
  $(\a,\b)$-ribbons} for short. For 
example, 
here is a
$(\down \up\up\down,\down \down\up\up)$-ribbon:
\begin{equation}
\mathord{\begin{tikzpicture}[baseline = 0]
\draw[-,thick,darkblue] (0,1) to [out=-90,in=180] (.45,-0.6);
\draw[-,thick,darkblue] (.45,-.6) to [out=0,in=180] (1.4,0.6);
\draw[->,thick,darkblue] (2,.3) to [out=0,in=-90] (2.4,1);
\draw[-,thick,darkblue] (2,0.2) to [out=0,in=0] (2,-.2);
\draw[-,line width=4pt,white] (0,-1) to [out=80,in=190] (.7,.7);
\draw[<-,thick,darkblue] (0,-1) to [out=80,in=190] (.7,.7);
\draw[-,line width=4pt,white] (.7,.7) to [out=10,in=30] (0.8,-1);
\draw[-,thick,darkblue] (.7,.7) to [out=10,in=30] (0.8,-1);
\draw[->,thick,darkblue] (1.8,-0.6) to [out=-30,in=90] (2.4,-1);
\draw[-,line width=4pt,white] (0.8,1) to [out=-120,in=150] (1.8,-0.6);
\draw[-,thick,darkblue] (0.8,1) to [out=-120,in=150] (1.8,-0.6);
\draw[-,line width=4pt,white] (1.6,-1) to [out=90,in=-180] (2,0.2);
\draw[-,thick,darkblue] (1.6,-1) to [out=90,in=-180] (2,0.2);
\draw[-,line width=4pt,white] (2,-.2) to [out=180,in=-60] (1.6,1);
 \draw[->,thick,darkblue] (2,-.2) to [out=180,in=-60] (1.6,1);
\draw[-,line width=4pt,white] (-.5,-.2) to [out=90,in=180] (-.1,.25); 
\draw[-,line width=4pt,white] (-.1,.25) to [out=0,in=90] (.5,-.2); 
\draw[<-,thick,darkblue] (-.5,-.2) to [out=90,in=180] (-.1,.25); 
\draw[-,thick,darkblue] (-.1,.25) to [out=0,in=90] (.5,-.2); 
\draw[-,line width=4pt,white] (.1,-.7) to [out=0,in=-90] (.5,-.2); 
\draw[-,thick,darkblue] (.1,-.7) to [out=0,in=-90] (.5,-.2); 
\draw[-,line width=4pt,white] (-.5,-.2) to [out=-90,in=180] (.1,-.7); 
\draw[-,thick,darkblue] (-.5,-.2) to [out=-90,in=180] (.1,-.7); 
\draw[-,line width=4pt,white] (1.4,.6) to [out=0,in=180] (2,0.3);
\draw[-,thick,darkblue] (1.4,.6) to [out=0,in=180] (2,0.3);
\end{tikzpicture}}\,.
\label{years}\end{equation}
Isotopy translates into the
equivalence relation on diagrams
generated by planar isotopy fixing the
boundary, together with the oriented Reidemeister moves ($\mathrm{FR}$I)
({\em not} the full $(\text{R}\text{I})$ due to framing!),
 ($\mathrm{R}$II) and
($\mathrm{R}$III) displayed in Figure 1.
Composition of morphisms in $\FOT$ is given by vertically
stacking diagrams, i.e.,
$f \circ g:= \substack{\displaystyle f \\ \displaystyle g}$, while tensor product is given by
horizontal concatenation, i.e., $f \otimes g := fg$.

\begin{figure}
\begin{align*}
\mathord{
\begin{tikzpicture}[baseline = -.5mm]
  \draw[-,thick,darkblue] (0.3,0) to (0.3,.4);
	\draw[-,thick,darkblue] (0.3,0) to[out=-90, in=0] (0.1,-0.4);
	\draw[-,thick,darkblue] (0.1,-0.4) to[out = 180, in = -90] (-0.1,0);
	\draw[-,thick,darkblue] (-0.1,0) to[out=90, in=0] (-0.3,0.4);
	\draw[-,thick,darkblue] (-0.3,0.4) to[out = 180, in =90] (-0.5,0);
  \draw[-,thick,darkblue] (-0.5,0) to (-0.5,-.4);
\end{tikzpicture}
}
&=
\mathord{\begin{tikzpicture}[baseline=-.5mm]
  \draw[-,thick,darkblue] (0,-0.4) to (0,.4);
\end{tikzpicture}
}=\mathord{
\begin{tikzpicture}[baseline = -.5mm]
  \draw[-,thick,darkblue] (0.3,0) to (0.3,-.4);
	\draw[-,thick,darkblue] (0.3,0) to[out=90, in=0] (0.1,0.4);
	\draw[-,thick,darkblue] (0.1,0.4) to[out = 180, in = 90] (-0.1,0);
	\draw[-,thick,darkblue] (-0.1,0) to[out=-90, in=0] (-0.3,-0.4);
	\draw[-,thick,darkblue] (-0.3,-0.4) to[out = 180, in =-90] (-0.5,0);
  \draw[-,thick,darkblue] (-0.5,0) to (-0.5,.4);
\end{tikzpicture}
}\,,
&
\mathord{
\begin{tikzpicture}[baseline = -1mm]
\draw[-,thick,darkblue](-.5,.4) to (0,-.3);
	\draw[-,thick,darkblue] (0.3,-0.3) to[out=90, in=0] (0,0.2);
	\draw[-,line width=4pt,white] (0,0.2) to[out = -180, in = 40] (-0.5,-0.3);
	\draw[-,thick,darkblue] (0,0.2) to[out = -180, in = 40] (-0.5,-0.3);
\end{tikzpicture}
}&=
\mathord{
\begin{tikzpicture}[baseline = -1mm]
\draw[-,thick,darkblue](.6,.4) to (.1,-.3);
	\draw[-,line width=4pt,white] (0.6,-0.3) to[out=140, in=0] (0.1,0.2);
	\draw[-,thick,darkblue] (0.6,-0.3) to[out=140, in=0] (0.1,0.2);
	\draw[-,thick,darkblue] (0.1,0.2) to[out = -180, in = 90] (-0.2,-0.3);
\end{tikzpicture}
}\,,
&
\mathord{
\begin{tikzpicture}[baseline = -1mm]
	\draw[-,thick,darkblue] (0,0.2) to[out = -180, in = 40] (-0.5,-0.3);
	\draw[-,thick,darkblue] (0.3,-0.3) to[out=90, in=0] (0,0.2);
\draw[-,line width=4pt,white](-.5,.4) to (0,-.3);
\draw[-,thick,darkblue](-.5,.4) to (0,-.3);
\end{tikzpicture}
}&=
\mathord{
\begin{tikzpicture}[baseline = -1mm]
	\draw[-,thick,darkblue] (0.6,-0.3) to[out=140, in=0] (0.1,0.2);
	\draw[-,thick,darkblue] (0.1,0.2) to[out = -180, in = 90] (-0.2,-0.3);
\draw[-,line width=4pt,white](.6,.4) to (.1,-.3);
\draw[-,thick,darkblue](.6,.4) to (.1,-.3);
\end{tikzpicture}
}\:\:\text{for all orientations.}\!
\qquad(\text{R}0)
\end{align*}
$$
\qquad\qquad\qquad\quad\qquad\qquad
\qquad\qquad
\mathord{
\begin{tikzpicture}[baseline = -3.5mm]
	\draw[-,thick,darkblue] (0,-0.9) to (0,-0.6);
	\draw[<-,thick,darkblue] (0,.3) to (0,-0.1);
	\draw[-,thick,darkblue] (-0.5,-.3) to [out=90,in=180](-.3,-.1);
	\draw[-,thick,darkblue] (-0.3,-0.5) to [out=180,in=-90](-.5,-0.3);
	\draw[-,thick,darkblue] (-0.3,-.1) to [out=0,in=90](0,-0.6);
	\draw[-,line width=4pt,white] (0,0) to [out=-90,in=0] (-.3,-0.5);
	\draw[-,thick,darkblue] (0,0) to [out=-90,in=0] (-.3,-0.5);
\end{tikzpicture}
}
=
\mathord{
\begin{tikzpicture}[baseline = -0.5mm]
	\draw[<-,thick,darkblue] (0,0.6) to (0,-0.6);
\end{tikzpicture}}
=
\mathord{
\begin{tikzpicture}[baseline = -0.5mm]
	\draw[<-,thick,darkblue] (0,0.6) to (0,0.3);
	\draw[-,thick,darkblue] (0.5,0) to [out=90,in=0](.3,0.2);
	\draw[-,thick,darkblue] (0,-0.3) to (0,-0.6);
	\draw[-,thick,darkblue] (0.3,-0.2) to [out=0,in=-90](.5,0);
	\draw[-,thick,darkblue] (0,0.3) to [out=-90,in=180] (.3,-0.2);
	\draw[-,line width=4pt,white] (0.3,.2) to [out=180,in=90](0,-0.3);
	\draw[-,thick,darkblue] (0.3,.2) to [out=180,in=90](0,-0.3);
\end{tikzpicture}
}
\,.
\qquad
\qquad\qquad\qquad\qquad\qquad
\text{($\text{R}$I)}
$$
\begin{align*}
\qquad\quad\mathord{
\begin{tikzpicture}[baseline = -1mm]
	\draw[-,thick,darkblue] (0.28,-.6) to[out=90,in=-90] (-0.28,0);
	\draw[->,thick,darkblue] (-0.28,0) to[out=90,in=-90] (0.28,.6);
	\draw[-,line width=4pt,white] (-0.28,-.6) to[out=90,in=-90] (0.28,0);
	\draw[-,thick,darkblue] (-0.28,-.6) to[out=90,in=-90] (0.28,0);
	\draw[-,line width=4pt,white] (0.28,0) to[out=90,in=-90] (-0.28,.6);
	\draw[->,thick,darkblue] (0.28,0) to[out=90,in=-90] (-0.28,.6);
\end{tikzpicture}
}&=
\mathord{
\begin{tikzpicture}[baseline = -1mm]
	\draw[->,thick,darkblue] (0.18,-.6) to (0.18,.6);
	\draw[->,thick,darkblue] (-0.18,-.6) to (-0.18,.6);
\end{tikzpicture}
}\:,\quad&
\mathord{
\begin{tikzpicture}[baseline = -1mm]
	\draw[->,thick,darkblue] (0.28,0) to[out=90,in=-90] (-0.28,.6);
	\draw[-,line width=4pt,white] (-0.28,0) to[out=90,in=-90] (0.28,.6);
	\draw[->,thick,darkblue] (-0.28,0) to[out=90,in=-90] (0.28,.6);
	\draw[-,thick,darkblue] (-0.28,-.6) to[out=90,in=-90] (0.28,0);
	\draw[-,line width=4pt,white] (0.28,-.6) to[out=90,in=-90] (-0.28,0);
	\draw[-,thick,darkblue] (0.28,-.6) to[out=90,in=-90] (-0.28,0);
\end{tikzpicture}
}&=
\mathord{
\begin{tikzpicture}[baseline = -1mm]
	\draw[->,thick,darkblue] (0.18,-.6) to (0.18,.6);
	\draw[->,thick,darkblue] (-0.18,-.6) to (-0.18,.6);
\end{tikzpicture}
}\:,\quad&
\mathord{
\begin{tikzpicture}[baseline = -1mm]
	\draw[-,thick,darkblue] (0.28,-.6) to[out=90,in=-90] (-0.28,0);
	\draw[->,thick,darkblue] (-0.28,0) to[out=90,in=-90] (0.28,.6);
	\draw[-,line width=4pt,white] (-0.28,-.6) to[out=90,in=-90] (0.28,0);
	\draw[<-,thick,darkblue] (-0.28,-.6) to[out=90,in=-90] (0.28,0);
	\draw[-,line width=4pt,white] (0.28,0) to[out=90,in=-90] (-0.28,.6);
	\draw[-,thick,darkblue] (0.28,0) to[out=90,in=-90] (-0.28,.6);
\end{tikzpicture}
}&=
\mathord{
\begin{tikzpicture}[baseline = -1mm]
	\draw[->,thick,darkblue] (0.18,-.6) to (0.18,.6);
	\draw[<-,thick,darkblue] (-0.18,-.6) to (-0.18,.6);
\end{tikzpicture}
}\:,\quad
&\mathord{
\begin{tikzpicture}[baseline = -1mm]
	\draw[->,thick,darkblue] (0.28,0) to[out=90,in=-90] (-0.28,.6);
	\draw[-,line width=4pt,white] (-0.28,0) to[out=90,in=-90] (0.28,.6);
	\draw[-,thick,darkblue] (-0.28,0) to[out=90,in=-90] (0.28,.6);
	\draw[-,thick,darkblue] (-0.28,-.6) to[out=90,in=-90] (0.28,0);
	\draw[-,line width=4pt,white] (0.28,-.6) to[out=90,in=-90] (-0.28,0);
	\draw[<-,thick,darkblue] (0.28,-.6) to[out=90,in=-90] (-0.28,0);
\end{tikzpicture}
}&=
\mathord{
\begin{tikzpicture}[baseline = -1mm]
	\draw[<-,thick,darkblue] (0.18,-.6) to (0.18,.6);
	\draw[->,thick,darkblue] (-0.18,-.6) to (-0.18,.6);
\end{tikzpicture}
}\:.
\quad\qquad\quad
\:\,(\mathrm{R}\mathrm{II})
\end{align*}
$$\qquad\qquad\:
\qquad\qquad\qquad\qquad\qquad
\mathord{
\begin{tikzpicture}[baseline = -1mm]
	\draw[->,thick,darkblue] (0.45,-.6) to (-0.45,.6);
        \draw[-,thick,darkblue] (0,-.6) to[out=90,in=-90] (-.45,0);
        \draw[-,line width=4pt,white] (-0.45,0) to[out=90,in=-90] (0,0.6);
        \draw[->,thick,darkblue] (-0.45,0) to[out=90,in=-90] (0,0.6);
	\draw[-,line width=4pt,white] (0.45,.6) to (-0.45,-.6);
	\draw[<-,thick,darkblue] (0.45,.6) to (-0.45,-.6);
\end{tikzpicture}
}
=
\mathord{
\begin{tikzpicture}[baseline = -1mm]
	\draw[->,thick,darkblue] (0.45,-.6) to (-0.45,.6);
        \draw[-,line width=4pt,white] (0,-.6) to[out=90,in=-90] (.45,0);
        \draw[-,thick,darkblue] (0,-.6) to[out=90,in=-90] (.45,0);
        \draw[->,thick,darkblue] (0.45,0) to[out=90,in=-90] (0,0.6);
	\draw[-,line width=4pt,white] (0.45,.6) to (-0.45,-.6);
	\draw[<-,thick,darkblue] (0.45,.6) to (-0.45,-.6);
\end{tikzpicture}
}\:.\!\qquad\qquad\quad\qquad\qquad\qquad(\mathrm{R}\mathrm{III})
$$
$$
\qquad\qquad\qquad\qquad\qquad
\qquad\qquad\qquad
\mathord{
\begin{tikzpicture}[baseline = -3.5mm]
	\draw[-,thick,darkblue] (0,-0.9) to (0,-0.6);
	\draw[<-,thick,darkblue] (0,.3) to (0,-0.1);
	\draw[-,thick,darkblue] (-0.5,-.3) to [out=90,in=180](-.3,-.1);
	\draw[-,thick,darkblue] (-0.3,-0.5) to [out=180,in=-90](-.5,-0.3);
	\draw[-,thick,darkblue] (-0.3,-.1) to [out=0,in=90](0,-0.6);
	\draw[-,line width=4pt,white] (0,0) to [out=-90,in=0] (-.3,-0.5);
	\draw[-,thick,darkblue] (0,0) to [out=-90,in=0] (-.3,-0.5);
\end{tikzpicture}
}
=
\mathord{
\begin{tikzpicture}[baseline = -0.5mm]
	\draw[<-,thick,darkblue] (0,0.6) to (0,0.3);
	\draw[-,thick,darkblue] (0.5,0) to [out=90,in=0](.3,0.2);
	\draw[-,thick,darkblue] (0,-0.3) to (0,-0.6);
	\draw[-,thick,darkblue] (0.3,-0.2) to [out=0,in=-90](.5,0);
	\draw[-,thick,darkblue] (0,0.3) to [out=-90,in=180] (.3,-0.2);
	\draw[-,line width=4pt,white] (0.3,.2) to [out=180,in=90](0,-0.3);
	\draw[-,thick,darkblue] (0.3,.2) to [out=180,in=90](0,-0.3);
\end{tikzpicture}
}
\:.\!
\qquad\qquad\qquad
\qquad\qquad\qquad\text{($\text{FR}$I)}
$$
\begin{align*}
\qquad\qquad
\qquad\qquad\qquad
\quad\mathord{
\begin{tikzpicture}[baseline = -.5mm]
	\draw[->,thick,darkblue] (0.28,-.3) to (-0.28,.4);
	\draw[line width=4pt,white,-] (-0.28,-.3) to (0.28,.4);
	\draw[thick,darkblue,->] (-0.28,-.3) to (0.28,.4);
\end{tikzpicture}
}-\mathord{
\begin{tikzpicture}[baseline = -.5mm]
	\draw[thick,darkblue,->] (-0.28,-.3) to (0.28,.4);
	\draw[line width=4pt,white,-] (0.28,-.3) to (-0.28,.4);
	\draw[->,thick,darkblue] (0.28,-.3) to (-0.28,.4);
\end{tikzpicture}
}&=
z\:\mathord{
\begin{tikzpicture}[baseline = -.5mm]
	\draw[->,thick,darkblue] (0.18,-.3) to (0.18,.4);
	\draw[->,thick,darkblue] (-0.18,-.3) to (-0.18,.4);
\end{tikzpicture}
}\,.
\qquad\qquad\qquad\qquad
\qquad\qquad\qquad\text{($\mathrm{S}$)}
\end{align*}
\begin{align*}
\:\qquad\qquad\quad\qquad\qquad
\qquad\qquad\qquad
\mathord{
\begin{tikzpicture}[baseline = -0.5mm]
	\draw[<-,thick,darkblue] (0,0.6) to (0,0.3);
	\draw[-,thick,darkblue] (0.5,0) to [out=90,in=0](.3,0.2);
	\draw[-,thick,darkblue] (0,-0.3) to (0,-0.6);
	\draw[-,thick,darkblue] (0.3,-0.2) to [out=0,in=-90](.5,0);
	\draw[-,thick,darkblue] (0,0.3) to [out=-90,in=180] (.3,-0.2);
	\draw[-,line width=4pt,white] (0.3,.2) to [out=180,in=90](0,-0.3);
	\draw[-,thick,darkblue] (0.3,.2) to [out=180,in=90](0,-0.3);
\end{tikzpicture}
}
=
t\:\mathord{
\begin{tikzpicture}[baseline = -0.5mm]
	\draw[<-,thick,darkblue] (0,0.6) to (0,-0.6);
\end{tikzpicture}
}\;.
\qquad\,
\qquad\qquad\qquad
\qquad\qquad\qquad\text{($\mathrm{T}$)}
\end{align*}
$$\qquad\qquad\qquad\qquad
\qquad\qquad\qquad\quad \Bubble
=
\frac{t-t^{-1}}{z}\; 1_\varnothing.
\qquad\qquad
\quad\qquad\qquad\qquad\text{($\mathrm{D}$)}
$$
\begin{align*}
\hspace{50mm}
\mathord{
\begin{tikzpicture}[baseline = -.5mm]
	\draw[thick,darkblue,->] (-0.28,-.3) to (0.28,.4);
      \node at (-0.16,-0.15) {$\color{darkblue}\scriptstyle\bullet$};
	\draw[-,line width=4pt,white] (0.28,-.3) to (-0.28,.4);
	\draw[->,thick,darkblue] (0.28,-.3) to (-0.28,.4);
\end{tikzpicture}
}&= 
\mathord{
\begin{tikzpicture}[baseline = -.5mm]
	\draw[->,thick,darkblue] (0.28,-.3) to (-0.28,.4);
	\draw[line width=4pt,white,-] (-0.28,-.3) to (0.28,.4);
	\draw[thick,darkblue,->] (-0.28,-.3) to (0.28,.4);
      \node at (0.145,0.23) {$\color{darkblue}\scriptstyle\bullet$};
\end{tikzpicture}
}\:.
\hspace{47.5mm}
(\text{A})
\end{align*}
\caption{Reidemeister-type relations}
\end{figure}
 
Now let $\k$ be some fixed commutative ground ring
and fix parameters 
$z,t \in \k^\times$.
The {\em extended oriented skein
  category} $\widehat{\OS}(z,t)$ is the quotient of the $\k$-linearization
of $\FOT$ by the $\k$-linear tensor ideal generated by the {Conway skein relation} ($\text{S}$) and the {twist
  relation}
($\text{T}$), both of which are displayed in Figure 1.
These relations imply that
\begin{equation}\label{beautiful}
(t-t^{-1})\:
\mathord{
\begin{tikzpicture}[baseline = -.5mm]
	\draw[->,thick,darkblue] (0.08,-.6) to (0.08,.6);
\end{tikzpicture}
}=
\mathord{
\begin{tikzpicture}[baseline = -0.5mm]
	\draw[<-,thick,darkblue] (0,0.6) to (0,0.3);
	\draw[-,thick,darkblue] (0.5,0) to [out=90,in=0](.3,0.2);
	\draw[-,thick,darkblue] (0,-0.3) to (0,-0.6);
	\draw[-,thick,darkblue] (0.3,-0.2) to [out=0,in=-90](.5,0);
	\draw[-,thick,darkblue] (0,0.3) to [out=-90,in=180] (.3,-0.2);
	\draw[-,line width=4pt,white] (0.3,.2) to [out=180,in=90](0,-0.3);
	\draw[-,thick,darkblue] (0.3,.2) to [out=180,in=90](0,-0.3);
\end{tikzpicture}
}
-
\mathord{
\begin{tikzpicture}[baseline = -0.5mm]
	\draw[<-,thick,darkblue] (0,0.6) to (0,0.3);
	\draw[-,thick,darkblue] (0.5,0) to [out=90,in=0](.3,0.2);
	\draw[-,thick,darkblue] (0,-0.3) to (0,-0.6);
	\draw[-,thick,darkblue] (0.3,-0.2) to [out=0,in=-90](.5,0);
	\draw[-,thick,darkblue] (0.3,.2) to [out=180,in=90](0,-0.3);
	\draw[-,line width=4pt,white] (0,0.3) to [out=-90,in=180] (.3,-0.2);
	\draw[-,thick,darkblue] (0,0.3) to [out=-90,in=180] (.3,-0.2);
\end{tikzpicture}
}
=
z \:
\mathord{
\begin{tikzpicture}[baseline = -.5mm]
	\draw[->,thick,darkblue] (0.08,-.6) to (0.08,.6);
\end{tikzpicture}
}\:\:\:
\mathord{
\begin{tikzpicture}[baseline = 1.25mm]
  \draw[<-,thick,darkblue] (0.2,0.2) to[out=90,in=0] (0,.4);
  \draw[-,thick,darkblue] (0,0.4) to[out=180,in=90] (-.2,0.2);
\draw[-,thick,darkblue] (-.2,0.2) to[out=-90,in=180] (0,0);
  \draw[-,thick,darkblue] (0,0) to[out=0,in=-90] (0.2,0.2);
\end{tikzpicture}
}\:.
\end{equation}
Usually, we will impose the additional {dimension
  relation}
($\text{D}$) from Figure 1.
We call the resulting category the (reduced) {\em oriented skein
  category}, and denote it simply by $\OS(z,t)$. This is the main
object of study in this article.

With a different normalization of crossings, the category $\OS(z,t)$ was introduced in
\cite[$\S$5.2]{Turaev3}, where it is called the {\em Hecke category}.
In \cite[Definition 2.1]{QS} it is called the {\em quantized oriented Brauer category}.
Others call $\OS(z,t)$ the {\em framed HOMFLY-PT skein category}.

\vspace{2mm}

\noindent{\bf 1.2.}
Let us state some foundational results about $\OS(z,t)$.
The first one gives an efficient monoidal presentation.
It is a corollary of a more general result of Turaev \cite[Lemma
I.3.3]{Turaev1} which
gives a presentation for the category $\FOT$.

\begin{theorem}\label{first}
The oriented skein category $\OS(z,t)$ is isomorphic to the
strict $\k$-linear monoidal
category generated by objects $E$ and $F$ and morphisms
$$
S:E\otimes E \rightarrow E\otimes E,\quad
T:F \otimes E \rightarrow E \otimes F,\quad
C:\unit \rightarrow F \otimes E,\quad
D:E \otimes F \rightarrow \unit,
$$
subject to the following relations:
\begin{itemize}
\item[(1)] 
$S^2 =z S + 1_E \otimes 1_E$;
\item[(2)] $(S \otimes 1_E) \circ (1_E \otimes S) \circ (S \otimes
  1_E) = (1_E \otimes S) \circ (S \otimes 1_E) \circ (1_E \otimes S)$;
\item[(3)] $(D \otimes 1_E) \circ (1_E \otimes C) = 1_E$,
 $(1_F \otimes D) \circ (C \otimes 1_F) = 1_F$;
\item[(4)]
$T^{-1} = (1_F \otimes 1_E  \otimes D) \circ (1_F \otimes
  S  \otimes 1_F) \circ (C \otimes 1_E \otimes
  1_F)$ (two-sided inverse);
\item[(5)] $t D \circ T \circ C = \frac{t-t^{-1}}{z} 1_\unit$.
\end{itemize}
\end{theorem}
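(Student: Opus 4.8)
The plan is to derive this presentation from Turaev's monoidal presentation of the framed oriented tangle category $\FOT$ \cite[Lemma I.3.3]{Turaev1}. Passing to the $\k$-linear quotient of $\FOT$ by the tensor ideal generated by (S), (T) and (D) gives a presentation of $\OS(z,t)$ with many generators and relations, and the theorem amounts to simplifying that presentation --- eliminating redundant generators and redundant relations --- down to the data $E,F,S,T,C,D$ and (1)--(5). Concretely I would package the argument as a pair of mutually inverse strict $\k$-linear monoidal functors $\Phi\colon\mathcal{C}\to\OS(z,t)$ and $\Psi\colon\OS(z,t)\to\mathcal{C}$, where $\mathcal{C}$ is the category presented by (1)--(5): the functor $\Phi$ sends $E,F\mapsto\up,\down$ and sends $S,T,C,D$ to the positive $\up\up$-crossing, the mixed crossing $\down\up\to\up\down$, the cup $\unit\to\down\up$ and the cap $\up\down\to\unit$; the functor $\Psi$ sends each generator of Turaev's presentation of $\FOT$ (and the imposed relations (S), (T), (D)) to an explicit composite of $S,T,C,D$.

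Checking that $\Phi$ is well defined, i.e.\ that (1)--(5) hold in $\OS(z,t)$, is the routine direction. Relation (1) is the Conway skein relation (S): it reads $S-S^{-1}=z\,1_E\otimes 1_E$ with $S^{-1}$ the negative crossing, so multiplying by $S$ gives $S^2=zS+1_E\otimes 1_E$; conversely (1) shows $S$ is invertible with $S^{-1}=S-z\,1_E\otimes 1_E$, so it already subsumes the $\up\up$-instance of (RII). Relation (2) is (RIII) for three $\up$-strands, and the two halves of (3) are the snake (straightening) relations for the cup $C$ and the cap $D$. Relation (4) exhibits $T^{-1}$ as the rotation of $S$ obtained by bending the lower-right and upper-left endpoints around using $C$ and $D$; in $\OS(z,t)$ this rotated diagram is visibly the two-sided inverse of the mixed crossing $T$, so (4) records the mixed instances of (RII). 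For (5), the morphism $D\circ T\circ C\colon\unit\to\unit$ is a circle carrying a single kink, which the twist relation (T) evaluates as $t^{-1}$ times the plain circle $\Bubble$; combined with $\Bubble=\frac{t-t^{-1}}{z}1_\unit$ from (D) this yields $t\,D\circ T\circ C=\frac{t-t^{-1}}{z}1_\unit$.

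The substance of the proof is the construction of $\Psi$. Inside $\mathcal{C}$ one must express every generator of Turaev's presentation of $\FOT$ in terms of $S,T,C,D$: the negative $\up\up$-crossing is $S-z\,1_E\otimes 1_E$; all crossings of mixed orientation are rotations of $S$, $S^{-1}$, $T$ and $T^{-1}$, bent around using $C$, $D$ and their $T$-conjugates $\widehat{C}:=T\circ C\colon\unit\to E\otimes F$ and $\widehat{D}:=D\circ T\colon F\otimes E\to\unit$; and the second pair of turnback morphisms is $\widehat{C},\widehat{D}$. One then has to verify that every relation in Turaev's presentation --- all orientation variants of (RII) and (RIII), the snake relations for $\widehat{C},\widehat{D}$, and the framing relation (FRI) in each of its forms --- is a formal consequence of (1)--(5). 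This diagrammatic bookkeeping is the main obstacle: although finite, it is lengthy, and the least mechanical part is showing that $(\widehat{C},\widehat{D})$ is again a dual pair, for which one needs relation (4) together with the normalization of the twist pinned down by (5). Once $\Phi$ and $\Psi$ are both shown to be well defined, they agree on generators and are therefore mutually inverse, which proves the claimed isomorphism.
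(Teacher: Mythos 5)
Your overall plan---a pair of mutually inverse strict $\k$-linear monoidal functors $\Phi:\mathcal C\to\OS(z,t)$ and $\Psi:\OS(z,t)\to\mathcal C$, with $\Phi$ routine and $\Psi$ carrying the weight via Turaev's presentation of $\FOT$---is exactly the paper's. The genuine gap is the normalization of the leftward duality morphisms. You set $\widehat{C}:=T\circ C$ and $\widehat{D}:=D\circ T$ and propose to use these as the images under $\Psi$ of the leftward cup and cap, but in $\OS(z,t)$ the leftward cup and cap are $C':=t\,T\circ C$ and $D':=t\,D\circ T$; this is what (\ref{ready}) encodes once one remembers that the auxiliary morphism $\diamondsuit$ appearing there evaluates to $t$ by the twist relation (T). Diagrammatically, $T\circ C$ is a \emph{kinked} leftward cup, and straightening the kink costs a factor of $t^{-1}$, so $T\circ C$ and the true leftward cup differ by $t^{-1}$. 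With your unnormalized $\widehat{C},\widehat{D}$ the leftward snake relation---the $\Psi$-image of the corresponding (R0) isotopy---would have to give $(\widehat{D}\otimes 1_F)\circ(1_F\otimes\widehat{C})=1_F$, but the left side actually equals $t^{-2}\,1_F$, so your $\Psi$ would not be well defined. The same defect derails the verification of (T): the positive right curl has image $(D\otimes 1_E)\circ(1_E\otimes T^{-1})\circ(1_E\otimes C')$, which equals $t\,1_E$ as required when $C'=t\,T\circ C$, but only $1_E$ with your $\widehat{C}$. You point to relation (5) as the source of the twist normalization, but (5) only fixes the value of the circle $D\circ C'=D'\circ C$ and is insensitive to how the scalar is distributed between $C'$ and $D'$; the factor of $t$ has to be built in from the start, as the paper does.

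A secondary remark: the paper does not run the argument against the long presentation of $\FOT$ from Lemma~\ref{title} (twelve generators, all orientation variants of (R0), (FRI), (RII), (RIII)), but against Turaev's streamlined one from Theorem~\ref{key} (= \cite[Lemma I.3.3]{Turaev1}), with eight generators---rightward cup and cap, four crossings, and the twist morphisms $\diamondsuit,\clubsuit$---subject only to relations (i)--(viii). You cite Lemma I.3.3 but then describe checking the full raw relation set, which is much more work. Using the streamlined presentation has the further advantage that the leftward cup and cap are not independent generators whose images you must choose, but derived morphisms packaged via $\diamondsuit$; sending $\diamondsuit\mapsto t\,1_E$ then produces the factor of $t$ automatically.
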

 
An explicit functor giving an isomorphism from the monoidal category with this presentation to
$\OS(z,t)$ sends
$E \mapsto \up$, $F \mapsto \down$, and the
generating morphisms $S, T, C$ and $D$ to 
$\mathord{
\begin{tikzpicture}[baseline = -.5mm]
	\draw[->,thick,darkblue] (0.2,-.2) to (-0.2,.3);
	\draw[line width=4pt,white,-] (-0.2,-.2) to (0.2,.3);
	\draw[thick,darkblue,->] (-0.2,-.2) to (0.2,.3);
\end{tikzpicture}
}$, 
$\mathord{
\begin{tikzpicture}[baseline = -.5mm]
	\draw[->,thick,darkblue] (0.2,-.2) to (-0.2,.3);
	\draw[line width=4pt,white,-] (-0.2,-.2) to (0.2,.3);
	\draw[thick,darkblue,<-] (-0.2,-.2) to (0.2,.3);
\end{tikzpicture}
}$,
$\mathord{
\begin{tikzpicture}[baseline = 1mm]
	\draw[<-,thick,darkblue] (0.3,0.35) to[out=-90, in=0] (0.05,0);
	\draw[-,thick,darkblue] (0.05,0) to[out = 180, in = -90] (-0.2,0.35);
\end{tikzpicture}
}
$ and $\mathord{
\begin{tikzpicture}[baseline = 1mm]
	\draw[<-,thick,darkblue] (0.3,0) to[out=90, in=0] (0.05,0.35);
	\draw[-,thick,darkblue] (0.05,0.35) to[out = 180, in = 90] (-0.2,0);
\end{tikzpicture}
}\:$,  respectively.
We strongly encourage the reader to verify that the relations (1)--(5)
from  the theorem
all hold in $\OS(z,t)$ by drawing the appropriate pictures!

\vspace{2mm}
\noindent{\bf 1.3.}
The next theorem gives bases for morphism spaces. Again, this is due to 
Turaev \cite[Theorems 5.1 and 5.2.3]{Turaev3}; Turaev notes that it
was also proved independently by Morton and Traczyk.
To formulate it, 
given $\a = \a_m\cdots \a_1, \b = \b_n\cdots \b_1\in\words$,
an {\em $(\a,\b)$-matching} means a bijection
\begin{multline*}
\left\{\big({\textstyle\frac{m+1-i}{m+1}},0,0\big)\:\Big|\:\a_i=\up\right\}\cup\left\{
\big({\textstyle\frac{n+1-j}{n+1}},1,0\big)\:\Big|\: \b_{j} = \down\right\}
\\\stackrel{\sim}{\longrightarrow}
\left\{\big({\textstyle\frac{m+1-i}{m+1}},0,0\big)\:\Big|\:\a_i=\down\right\}\cup\left\{\big({\textstyle\frac{n+1-j}{n+1}},1,0\big)\:\Big|\: \b_{j} = \up\right\}.
\end{multline*}
There are no such bijections unless the domain and codomain have the
same size $d$, in which case there are $d!$ possibilities.
An $(\a,\b)$-ribbon is a {\em lift} of a given
$(\a,\b)$-matching if 
the boundary of each strand in the ribbon 
consists of a pair of points which correspond under the matching;
in particular, it contains no ``floating bubbles.''
An $(\a,\b)$-ribbon is {\em reduced} if
no strand crosses itself and no two
strands cross more than once.

\begin{theorem}\label{bt}
The morphism space $\Hom_{\OS(z,t)}(\a,\b)$ is free as a
$\k$-module with
basis given by any set consisting of a reduced lift for each of the
$(\a,\b)$-matchings.
The same is true in $\widehat{\OS}(z,t)$ with one exception:
if $\a = \b = \varnothing$ then the morphism space
$\Hom_{\widehat\OS(q,t)}(\varnothing, \varnothing)$ is free of rank
two with basis $\left\{\,1_\varnothing,\Bubble\:\right\}$.
\end{theorem}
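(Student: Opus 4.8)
The plan is to treat the two assertions together, first observing that the $\widehat{\OS}(z,t)$-statement reduces to the $\OS(z,t)$-statement away from the corner $(\varnothing,\varnothing)$, then proving the $\OS(z,t)$-statement by a spanning argument and a linear independence argument. For the reduction: the quotient $\widehat{\OS}(z,t)\twoheadrightarrow\OS(z,t)$ is by the tensor ideal generated by $\Bubble-\tfrac{t-t^{-1}}{z}1_\varnothing$, and by the consequence \eqref{beautiful} of the skein and twist relations this generator acts as zero on every morphism space $\Hom(\a,\b)$ with $(\a,\b)\ne(\varnothing,\varnothing)$ (a floating $\Bubble$ next to a strand equals $\tfrac{t-t^{-1}}{z}$ times that strand, and any $f\colon\a\to\unit$ or $g\colon\unit\to\b$ with $\a\ne\unit$ or $\b\ne\unit$ contains such a strand). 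Hence $\widehat{\OS}(z,t)$ and $\OS(z,t)$ have the same morphism spaces except at $(\varnothing,\varnothing)$, which I treat at the end. For a general ground ring I would set everything up over the universal ring $\ZZ[z^{\pm 1},t^{\pm 1}]$ (to which $\tfrac{t-t^{-1}}{z}$ belongs) and pass to arbitrary $\k$ by base change, using that a $\k$-linear monoidal category presented by generators and relations has morphism spaces compatible with $-\otimes_{\ZZ[z^{\pm 1},t^{\pm 1}]}\k$.

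For spanning I would induct on the number of crossings of a generic $(\a,\b)$-ribbon $D$. A self-crossing (kink) is removed directly by the framed Reidemeister relation and the twist relation, up to a factor $t^{\pm 1}$. If two strands cross more than once, choose such a crossing and apply the skein relation: one resulting diagram has that crossing reversed, after which Reidemeister moves create and cancel a bigon via a Reidemeister II move, lowering the crossing number while preserving the matching; the other resulting diagram is the oriented smoothing, which has fewer crossings but may represent a different matching or produce a closed component. Closed components are then unknotted and unlinked exactly as in the computation of the HOMFLY--PT polynomial, leaving disjoint contractible loops, each evaluated by the dimension relation (D) (again absorbing kinks by the twist relation). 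Thus $D$ becomes a $\k$-linear combination of reduced lifts of matchings. Finally, applying the skein relation once more at the crossings where it disagrees, one sees that replacing a reduced lift of a matching $M$ by any other reduced lift of $M$ changes it only by reduced lifts of matchings of strictly smaller crossing number; so the passage to a fixed chosen set of reduced lifts is unitriangular, and the chosen set spans.

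The real content, and the main obstacle, is linear independence. Here I would exhibit a monoidal functor out of $\OS(z,t)$ under which the images of the chosen reduced lifts are visibly independent. The classical options are: (a) the $R$-matrix functor into $\Rep U_q(\mathfrak{gl}_N)$ for $N\gg 0$ (with $z=q-q^{-1}$, $t=q^{N}$), whose fullness and the relevant dimension count come from classical invariant theory; or (b) following Morton and Traczyk, using the rigid structure ($F=E^{*}$ with cap/cup morphisms $C,D$) to identify $\Hom_{\OS(z,t)}(\a,\b)$ with an explicit idempotent-truncation of $\End_{\OS(z,t)}(E^{\otimes n})$, which by relations (1)--(2) of Theorem~\ref{first} is the type $A$ Hecke algebra $\mathcal H_{n}(z)$, free of rank $n!$ over any $\k$; one checks that the rank of the truncation is precisely the number of $(\a,\b)$-matchings, the parameter $t$ entering (through relation (5)) only the identification and not the rank. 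Combining this with a base change from $\ZZ[z^{\pm 1},t^{\pm 1}]$, where for a field it is classical (and for generic parameters reduces to semisimplicity and the known character formulas), gives independence over arbitrary $\k$. Since the chosen reduced lifts both span and are independent they form a $\k$-basis, and any other choice of reduced lifts is a basis by the unitriangularity established above.

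It remains to handle $\Hom_{\widehat{\OS}(z,t)}(\varnothing,\varnothing)$. Running the spanning argument with no strand present (so that (D) is unavailable, but the kink computation using the skein, framed Reidemeister, and twist relations still applies to a contractible loop) produces the relation $\Bubble^{2}=\tfrac{t-t^{-1}}{z}\,\Bubble$, whence this space is spanned by $1_\varnothing$ and $\Bubble$. That these are $\k$-linearly independent—equivalently, that $\Bubble$ is not a scalar multiple of $1_\varnothing$ in $\widehat{\OS}(z,t)$—follows by evaluating on a faithful representation in which the contractible loop is not forced to a scalar, for instance the framed HOMFLY--PT skein module of the solid torus; establishing this non-degeneracy uniformly in the parameters is the point requiring the most care.
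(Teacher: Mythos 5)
Your proof for $\OS(z,t)$ runs the same race as the paper's: set up over the universal ring $\ZZ[z^{\pm1},t^{\pm1}]$, show spanning by induction on crossings, and get independence by specializing over $\QQ(q)$ at $z=q-q^{-1}$, $t=q^n$ for infinitely many $n$ and invoking the dimension count from quantized Schur--Weyl duality in $\Rep U_q(\mathfrak{gl}_n)$ (see Lemma~\ref{heartbeat} and (\ref{swr})). Your option~(b) is not an independent route as stated: the surjection $H_r\twoheadrightarrow\End_{\OS(z,t)}(\up^r)$ coming from relations (1)--(2) is easy, but its injectivity is precisely what needs proving, and there is no way to determine the rank of the relevant algebra without eventually recycling Schur--Weyl or an equivalent input. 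A smaller point: the claim that a presented $\k$-linear monoidal category has ``morphism spaces compatible with $-\otimes\k$'' is stronger than what is true in general; what is actually needed is only the one-sided fact that a spanning set whose image in the base change is independent must have been independent already.

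The genuine gap is linear independence of $\{1_\varnothing,\Bubble\}$ in $\Hom_{\widehat\OS(z,t)}(\varnothing,\varnothing)$. Pointing to a ``faithful representation on the skein module of the solid torus'' is not a proof: that module is $\Tr(\OS(z,t))$ (Theorem~\ref{TR}), a separate result, whereas $\End_{\widehat\OS}(\varnothing)$ is the skein module of $\RR^3$; and in either case, the freeness being invoked is of exactly the same difficulty as the claim at hand, so the appeal is circular. You flag this as the delicate point but do not close it. The paper does so by an explicit construction: over $\QQ(q)$ with $t=q^n$, it builds a $\k$-linear monoidal category $\mathcal C$ agreeing with $\OS(q-q^{-1},q^n)$ everywhere except that $\Hom_{\mathcal C}(\varnothing,\varnothing)=\Hom_{\OS}(\varnothing,\varnothing)\oplus\k$, the extra $\k$-summand being absorbing under horizontal and vertical composition with all other morphisms. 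The universal property of $\widehat\OS$ then yields a strict monoidal functor $\widehat\OS(q-q^{-1},q^n)\to\mathcal C$ sending $1_\varnothing\mapsto(1_\varnothing,1_\k)$ and $\Bubble\mapsto([n]_q\,1_\varnothing,0)$, which are manifestly linearly independent; one then base-changes as before. Some concrete target of this kind is required; a gesture towards skein modules does not supply one.
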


The algebra $\Hom_{\widehat\OS(z,t)}(\varnothing, \varnothing)$
appearing in Theorem~\ref{bt}
is
known in the literature as the {\em Conway skein module}
\cite{Turaev0}
or the {\em framed HOMFLY-PT skein module} \cite[Definition 2.1]{MS}
of the manifold $\mathbb{R}^3$. 
The basis $\left\{\:1_\varnothing,\Bubble\:\right\}$ for 
it described in the theorem is implicit already in \cite{HOMFLY,
  PT}, indeed, the existence
of the HOMFLY-PT polynomial for oriented links constructed in those papers follows 
easily from this result. To explain this briefly, let $L$ be an oriented link diagram, 
and define $\operatorname{writhe}(L)$ as usual to be the number of
positive crossings minus the number of negative crossings. 
Viewing $L$ as a $(\varnothing,\varnothing)$-ribbon,
there is a 
unique scalar
$H(L) \in \k$ such that
$$
t^{-\operatorname{writhe}(L)} L = H(L) \:\Bubble
$$
in $\End_{\widehat\OS(q,t)}(\varnothing)$.
The scalar $H(L)$ is invariant under the Reidemeister
moves 
($\mathrm{R}$I), ($\mathrm{R}$II) and
($\mathrm{R}$III); for all but
($\mathrm{R}$I), this is automatic from the defining relations in
$\FOT$, while
($\mathrm{R}$I) follows from ($\mathrm{T}$) and
($\mathrm{FR}$I).
The relation ($\mathrm{S}$) implies
that
$$
t H(L_+) - t^{-1} H(L_-) = z H(L_0)
$$
for oriented link diagrams $L_+, L_-$ and $L_0$
which agree except in one place, which is a positive crossing in
$L_+$, a negative crossing in $L_-$, and the crossing is resolved
in $L_0$.
This is exactly the skein relation defining the HOMFLY-PT polynomial.
Finally, observe that $H(L) = 1$ in case $L$ is the
unknot. Hence, taking $\k := \ZZ[z,z^{-1},t,t^{-1}]$,
the scalar $H(L)$ is exactly the HOMFLY-PT polynomial of $L$.

\vspace{2mm}
\noindent{\bf 1.4.}
Let $H_r$ be the {\em Iwahori-Hecke algebra} of the
symmetric group
$\Sym_r$ with quadratic relation $S^2 = zS + 1$; if $z = q-q^{-1}$
this can be written equivalently as $(S-q)(S+q^{-1}) = 0$.
There is a homomorphism
\begin{equation}\label{randwick}
\i_r:H_r \rightarrow \End_{\OS(z, t)}(\up^r)
\end{equation}
sending the generator for $H_r$ that corresponds to the $i$th basic
transposition 
to the positive crossing
$\begin{tikzpicture}[baseline = -.5mm]
	\draw[->,thick,darkblue] (0.2,-.2) to (-0.2,.3);
	\draw[line width=4pt,white,-] (-0.2,-.2) to (0.2,.3);
	\draw[thick,darkblue,->] (-0.2,-.2) to (0.2,.3);
\end{tikzpicture}$
of the $i$th and $(i+1)$th strand, numbering strands by $1,\dots,r$ from {right}
to {left}. The main step in Turaev's proof of Theorem~\ref{bt} is to show that $\i_r$
is an isomorphism. This is deduced ultimately from
Jimbo's {\em quantized Schur-Weyl
  reciprocity} from \cite{J}, which connects $H_r$ to the quantized enveloping algebra
$U_q(\mathfrak{gl}_n)$.

In fact, quantized Schur-Weyl reciprocity can be upgraded to the
following well-known result.
For a $\k$-linear category $\mathcal C$, we write $\dot{\mathcal C}$
for its {\em additive Karoubi envelope}, that is, the idempotent
completion of its additive envelope;
in case $\mathcal C$ is monoidal, $\dot{\mathcal C}$ is monoidal too.

\begin{theorem}\label{webs}
Assume that
 $\k$ is a field of characteristic zero, $q \in \k^\times$ is not a root of
unity, $z = q-q^{-1}$, and $t = q^{\eps n}$ for $n \in \NN$ and $\eps
\in \{\pm\}$.
There is a full $\k$-linear monoidal functor
$\Psi:\OS(z,t) \rightarrow 
\Rep U_q(\mathfrak{gl}_{n})$
sending $\up$ to the natural $U_q(\mathfrak{gl}_{n})$-module
and $\down$ to its dual.
It induces a monoidal equivalence 
\begin{equation}\label{tiltingequiv}
\bar\Psi:\dot{\OS}(z,t) / \mathcal N
\stackrel{\approx}{\longrightarrow}
\Rep U_q(\mathfrak{gl}_{n}),
\end{equation}
where
$\mathcal N$ is the tensor ideal
of $\dot \OS(z,t)$ consisting of negligible morphisms (see
\cite[$\S$6.1]{D}).
As an additive
$\k$-linear tensor ideal,
$\mathcal N$
is generated 
by $\i_{n+1}(e)$
where $e
 \in H_{n+1}$
is the Young symmetrizer 
associated to 
the sign representation if $\eps = +$ or the trivial representation if
$\eps = -$.
\end{theorem}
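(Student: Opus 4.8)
The plan is to construct the functor $\Psi$ explicitly on the generators of the monoidal presentation from Theorem~\ref{first}, check well-definedness by verifying relations (1)--(5), and then deduce the statements about fullness, the induced equivalence, and the generators of $\mathcal N$ from quantized Schur--Weyl reciprocity and Turaev's isomorphism $\i_r$ (\ref{randwick}).

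\textbf{Construction of $\Psi$.} Let $V$ be the natural $n$-dimensional module for $U_q(\mathfrak{gl}_n)$ and $V^*$ its dual. Send $\up \mapsto V$, $\down\mapsto V^*$, and define $\Psi$ on morphisms by sending $S$ to the normalized $R$-matrix $\check R_{V,V}$ (rescaled so that it satisfies $S^2 = zS + 1$, equivalently has eigenvalues $q,-q^{-1}$), $C:\unit\to V^*\otimes V$ to the coevaluation, $D:V\otimes V^*\to\unit$ to the evaluation, and $T:V^*\otimes V\to V\otimes V$ to the braiding $\check R_{V^*,V}$ (again suitably normalized). One then has to check relations (1)--(5): (1) is the quadratic relation on $\check R_{V,V}$; (2) is the Yang--Baxter equation; (3) are the usual zig-zag (snake) identities for $(V,V^*)$ in the rigid category $\Rep U_q(\mathfrak{gl}_n)$; (4) expresses $T$ via a rotation of $S$, which holds because the braiding in a ribbon category is determined by the braiding on $V\otimes V$ together with duality; and (5) is the computation of the ``twist eigenvalue'' or equivalently the value of the curl on $V$, which is where the hypothesis $t = q^{\eps n}$ enters --- the quantum trace of $\id_V$ is $[n]_q = \frac{q^n - q^{-n}}{q-q^{-1}}$, and tracking normalizations shows $t D\circ T\circ C$ acts on $\unit$ by the scalar $\frac{t - t^{-1}}{z}$ precisely when $t = q^{\pm n}$. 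This is the one genuinely computational point, and it is best handled by carefully fixing the normalization of $\check R$ at the outset so that the ribbon element of $U_q(\mathfrak{gl}_n)$ produces the required power of $q$.

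\textbf{Fullness and the induced equivalence.} The subcategory of $\Rep U_q(\mathfrak{gl}_n)$ monoidally generated by $V$ and $V^*$ has, as morphism spaces $V^{\otimes a}\otimes (V^*)^{\otimes b}\to V^{\otimes c}\otimes (V^*)^{\otimes d}$, images of mixed Hecke-type algebras; by (an oriented version of) quantized Schur--Weyl reciprocity these images are spanned by compositions of $R$-matrices, cups and caps, i.e.\ by the images of $(\a,\b)$-ribbons. Hence $\Psi$ is full onto this subcategory; since $V$ generates all of $\Rep U_q(\mathfrak{gl}_n)$ under $\otimes$, summands and duals (as $U_q(\mathfrak{gl}_n)$ is a quantum group and every f.d.\ module is a summand of a tensor power of $V\oplus V^*$ when $q$ is not a root of unity), passing to the additive Karoubi envelope $\dot\OS(z,t)$ makes $\Psi$ essentially surjective. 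A full monoidal functor between additive Karoubi-complete categories whose target is semisimple (which $\Rep U_q(\mathfrak{gl}_n)$ is, for $q$ not a root of unity) factors through the quotient by the tensor ideal of morphisms killed by $\Psi$; standard semisimplicity/rigidity arguments (as in \cite[\S6.1]{D}) identify this kernel ideal with the ideal $\mathcal N$ of negligible morphisms, giving the equivalence (\ref{tiltingequiv}).

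\textbf{Identifying the generator of $\mathcal N$.} Restricting $\Psi$ to $\End_{\OS(z,t)}(\up^{r})$ and precomposing with the isomorphism $\i_r:H_r\xrightarrow{\sim}\End_{\OS(z,t)}(\up^r)$ recovers the Hecke-algebra action on $V^{\otimes r}$; by quantized Schur--Weyl reciprocity this action has kernel exactly the two-sided ideal of $H_r$ generated by the Young symmetrizer for the partition $(1^{n+1})$ (if $\eps=+$) or $(n+1)$ (if $\eps = -$), these being the Young diagrams that do not fit in a single row resp.\ column of the relevant shape --- equivalently the antisymmetrizer on $n+1$ strands vanishes on $V^{\otimes(n+1)}$ since $\dim V = n$. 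Thus $\i_{n+1}(e)\in\mathcal N$, and one checks it generates $\mathcal N$ as an additive $\k$-linear tensor ideal: any negligible morphism, being killed by $\Psi$, is a composition of ribbons through which the corresponding Hecke ideal element factors, so it lies in the tensor ideal generated by $\i_{n+1}(e)$; conversely that ideal is negligible because $\Psi(\i_{n+1}(e)) = 0$. The main obstacle throughout is bookkeeping: pinning down the normalization of the $R$-matrix consistently so that relations (1), (4), (5) hold on the nose, and then invoking the (well-known but somewhat folklore) mixed/oriented form of quantized Schur--Weyl reciprocity with enough precision to read off the generator of $\mathcal N$.
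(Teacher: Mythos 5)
Your proposal follows essentially the same route as the paper: construct $\Psi$ on the generators of the presentation from Theorem~\ref{first}, verify relations (1)--(5), invoke quantized Schur--Weyl reciprocity for fullness, and use Deligne's semisimplicity argument to factor through the negligible ideal. The one place where the paper is noticeably sharper, and where your argument is softest, is the reduction to Schur--Weyl duality. You appeal to an ``oriented/mixed'' Schur--Weyl reciprocity for $\Hom_{U_q(\mathfrak{gl}_n)}(V^{\otimes a}\otimes(V^*)^{\otimes b}, V^{\otimes c}\otimes(V^*)^{\otimes d})$ directly, which you yourself flag as somewhat folklore. The paper instead constructs an explicit ``rotation'' isomorphism $\theta:\Hom_{\OS(z,t)}(\a,\b)\stackrel{\sim}{\to}\End_{\OS(z,t)}(\up^r)$ by pre- and post-composing with nested cups and caps, fitting it into a commuting square with $\i_r$ and the analogous $\phi$ on the $U_q(\mathfrak{gl}_n)$ side; this reduces every mixed Hom space to $\End(\up^r)$, where the statement becomes precisely Jimbo's classical result (surjectivity of $\j_r:H_r\to\End_{U_q(\mathfrak{gl}_n)}(V^{\otimes r})$, with kernel generated by $e_{(1^{n+1})}$ for $r>n$). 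The same square, taken modulo the ideal $\mathcal I$ generated by $\i_{n+1}(e)$, is what the paper uses to \emph{prove} that $\ker\Psi = \mathcal I$, which is the step you gesture at with ``any negligible morphism ... is a composition of ribbons through which the corresponding Hecke ideal element factors.'' That sentence is the content of the $\tilde\theta$-diagram and should be made precise; otherwise you have only shown one containment $\mathcal I\subseteq\ker\Psi$. Finally, the paper dispatches $\eps=-$ by twisting with the sign automorphism $\#$ rather than running both cases in parallel as you do, which is a minor economy.
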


The evident ribbon structure on $\OS(z,t)$ induces a ribbon structure on
$\Rep U_q(\mathfrak{gl}_{n})$ 
so that $\bar\Psi$ is an equivalence of ribbon categories.
This induced ribbon structure 
depends on the sign $\eps$; we denote the resulting ribbon category by $\Rep
U_q(\mathfrak{gl}_{\eps n})$. 
When $\k = \CC$, $q$ is not a root of unity, and $\delta$ is any
complex number,
the category 
\begin{equation}
\REP U_q(\mathfrak{gl}_\delta) := \dot\OS(q-q^{-1}, q^\delta)
\end{equation}
is the $q$-analog of the Deligne category $\REP GL_\delta$
introduced in \cite{DM} (see also \cite[$\S$10]{D}) and
studied recently in \cite{CW, EHS}.
In $\REP U_q(\mathfrak{gl}_\delta)$, 
relation (D) implies that the objects $\up$ and $\down$ have categorical dimension
$$
[\delta]_q := \frac{q^\delta - q^{-\delta}}{q-q^{-1}}.
$$
For $n \in \ZZ$, $[n]_q$ is the usual quantum integer, and
Theorem~\ref{webs} shows that the ribbon category $\Rep U_q(\mathfrak{gl}_{n})$
is a quotient of $\REP U_q(\mathfrak{gl}_n)$.
Thus, the categories $\REP U_q(\mathfrak{gl}_\delta)$
for $\delta \in \CC$
interpolate between the categories $\Rep U_q(\mathfrak{gl}_{n})$.
It is also known that the category $\REP U_q(\mathfrak{gl}_\delta)$ is semisimple 
when $\delta \notin \ZZ$; we will say more about this shortly.

Most of the recent literature on diagrammatic approaches
to $\Rep U_q(\mathfrak{gl}_n)$
focuses
instead on variants of the ``$SL_n$-spider''
of Cautis, Kamnitzer and Morrison from
\cite{CKM}. In \cite[Definition 6.4]{QS}, this $\CC$-linear monoidal category is upgraded to
a 
ribbon category $\mathsf{Sp}(\delta)$ depending on $q \in
\CC^\times$ (not a root of unity) and a parameter $\delta \in \CC$; 
we prefer to denote $\mathsf{Sp}(\delta)$ by
$\mathcal{W}eb(\delta)$.
According to \cite[Proposition 6.7]{QS}, $\mathcal{W}eb(\delta)$ is a thickening (i.e.,
a partial idempotent completion) of $\OS(q-q^{-1},q^\delta)$,
so that
the Deligne category $\REP
U_q(\mathfrak{gl}_\delta)$
may also be realized as the
additive Karoubi envelope of $\mathcal{W}eb(\delta)$. 
Subsequent developments in the literature
have revolved around
2-categorifications 
related to Khovanov-Rozansky homology; e.g., see \cite{MW}.

\vspace{2mm}
\noindent{\bf 1.5.}
We are interested here instead in the decategorification of $\OS(z,t)$. There are two
basic ways to understand this: either by taking the trace, or by
passing to the Grothendieck ring. Let us briefly recall these definitions.

By the {\em trace} of a $\k$-linear category $\mathcal C$, we mean the $\k$-module
$$
\Tr(\mathcal C) := 
\bigoplus_{X \in \ob \mathcal C} \Hom_{\mathcal C}(X,X) \Big/ \Big\langle f \circ g -
g \circ f\:\Big|\:
\begin{array}{l}
\text{for all }X,Y \in \ob
\mathcal C\text{ and}\\
f:X \rightarrow Y, g:Y \rightarrow X
\end{array}
\Big\rangle.
$$
One can represent
the image $[f] \in \Tr(\mathcal C)$ of $f\in \Hom_{\mathcal C}(X,X)$
diagrammatically by drawing $f$ in an annulus:
\begin{equation}
\label{bomber}
\mathord{
\begin{tikzpicture}[baseline = -.5mm]
\draw[-,thick,double,darkblue] (0,-.5) to [out=0,in=-90] (.5,0);
\draw[-,thick,double,darkblue] (.5,0) to [out=90,in=0] (0,.5);
\draw[-,thick,double,darkblue] (0,.5) to [out=180,in=85] (-.48,0.14);
\draw[-,thick,double,darkblue] (-.48,-.14) to [out=-85,in=180] (0,-.5);
\draw[darkblue,thick,yshift=-4pt,xshift=-4pt] (-.48,0) rectangle ++(8pt,8pt);
\node at (-.48,0) {$\scriptstyle f$};
\filldraw (0,0) circle (4pt);
\end{tikzpicture}
}\:.
\end{equation} 
If $\mathcal C$ is a monoidal category, then $\Tr(\mathcal C)$ is a
$\k$-algebra with $[f] [g] := [f \otimes g]$.
Note also that $\Tr(\mathcal C)$ and
$\Tr(\dot{\mathcal C})$ may be identified; see 
\cite[Proposition 3.2]{BGHL}. 

The {\em Grothendieck group}
$K_0(\dot{\mathcal C})$ 
is the $\ZZ$-module generated by isomorphism
classes $[X]$ of objects $X$ in $\dot{\mathcal C}$ modulo the
relations $[X\oplus Y] = [X] + [Y]$.
Also, a {\em $\mathcal C$-module} means a $\k$-linear functor from
$\mathcal C^{\op}$ to the category of $\k$-modules;
we write $\Mod\mathcal C$ for the category of all such modules. 
The Yoneda embedding induces an equivalence between $\dot{\mathcal C}$ 
and the full subcategory $\pMod\mathcal C$ of $\Mod\mathcal C$ consisting 
of finitely generated
projective $\mathcal C$-modules.
So any finitely generated projective $\mathcal C$-module $M$
also defines a class $[M] \in K_0(\dot{\mathcal C})$.

The notions of trace and Grothendieck group are related by
the  {\em character map}
\begin{equation}
h:K_0(\dot{\mathcal C}) \otimes_\ZZ \k \rightarrow
\Tr(\mathcal C), \qquad
[X] \mapsto [1_X].
\end{equation}
Typically, this map is injective, e.g., it is so if 
$\k$ is an algebraically closed field
and all of the morphism
spaces of $\mathcal C$ are finite-dimensional; see \cite[Proposition
2.4]{BHLW}.
If in addition $\dot{\mathcal C}$ is semisimple then $h$ is an isomorphism; 
see \cite[Proposition 2.5]{BHLW} for a more general
statement here.
In case $\mathcal C$ is monoidal, $K_0(\dot{\mathcal C})$ is a ring
with $[X][Y] := [X \otimes Y]$,
and $h$ is a ring homomorphism.

The trace of $\OS(z,t)$ was computed originally by Turaev
\cite[Theorem 2]{Turaev0}, albeit from a rather different point
of view: 
it is exactly the Conway skein module of the solid torus, as follows
by contemplating the picture (\ref{bomber}).
Turaev's result can be formulated as follows.

\begin{theorem}\label{TR}
The algebra $\Tr(\OS(z,t))$ is the free polynomial algebra 
$\k[u_n, v_n\:|\:n \geq 1]$
generated by the trace classes $u_n$ and $v_n$ of the following ``cycles'' for all $n \geq 1$:
\begin{align*}
u_n&\leftrightarrow\underbrace{\mathord{
\begin{tikzpicture}[baseline =-1.5mm]
	\draw[->,thick,darkblue] (.5,-.5) to (.2,.5);
	\draw[->,thick,darkblue] (.2,-.5) to (-.1,.5);
	\draw[->,thick,darkblue] (-.4,-.5) to (-.7,.5);
	\draw[-,line width=4pt,white] (-.7,-.5) to (.5,.5);
	\draw[->,thick,darkblue] (-.7,-.5) to (.5,.5);
      \node at (-.1,-0.45) {$\scriptstyle\cdots$};
      \node at (-.35,0.45) {$\scriptstyle\cdots$};
\end{tikzpicture}
}}_{\text{$n$ strands}},&
v_n&\leftrightarrow\underbrace{\mathord{
\begin{tikzpicture}[baseline =-1.5mm]
	\draw[<-,thick,darkblue] (.5,-.5) to (.2,.5);
	\draw[<-,thick,darkblue] (.2,-.5) to (-.1,.5);
	\draw[<-,thick,darkblue] (-.4,-.5) to (-.7,.5);
	\draw[-,line width=4pt,white] (-.7,-.5) to (.5,.5);
	\draw[<-,thick,darkblue] (-.7,-.5) to (.5,.5);
      \node at (-.1,-0.45) {$\scriptstyle\cdots$};
      \node at (-.35,0.45) {$\scriptstyle\cdots$};
\end{tikzpicture}
}}_{\text{$n$ strands}}.
\end{align*}
\end{theorem}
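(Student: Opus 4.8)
The plan is to identify $\Tr(\OS(z,t))$ with the Conway skein module of the solid torus $S^1 \times D^2$ (equivalently, the annulus times an interval), using the diagrammatic picture \eqref{bomber}: a morphism $f \in \End_{\OS(z,t)}(\up^{a}\down^{b})$ drawn in an annulus, modulo the trace relation $[fg]=[gf]$, is precisely an oriented framed tangle diagram in the annulus modulo the skein relation (S), twist (T) and dimension (D), which is the skein module. So it suffices to compute this module and show it is the free polynomial algebra on the classes $u_n, v_n$ of the $n$-strand cyclic tangles. The multiplicativity $[f][g] = [f\otimes g]$ makes $\Tr(\OS(z,t))$ a commutative $\k$-algebra (commutativity because in the annulus one may slide one diagram around the other), and $u_n$ (resp.\ $v_n$) is the class of the single closed curve wrapping $n$ times around the core in the $\up$ (resp.\ $\down$) direction.

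First I would establish a spanning set. Using Theorem~\ref{bt}, every morphism space $\Hom_{\OS(z,t)}(\a,\b)$ has a basis of reduced lifts of $(\a,\b)$-matchings; taking $\a = \b$ and forming the trace, each basis diagram closes up in the annulus to a disjoint union of closed curves, each of which is isotopic in the annulus to a curve wrapping some number of times around the core with some orientation. Resolving the crossings between distinct curves via (S) and removing contractible components via (D), and using (T) plus (FR$\mathrm I$) to absorb self-crossings and framing twists of a single curve, I would reduce an arbitrary trace class to a $\k$-linear combination of monomials in the $u_n$'s and $v_n$'s; here the key observation is that a curve wrapping $n$ times with, say, $\up$-orientation, possibly knotted and self-crossing inside the annulus, equals (up to a power of $t$ and lower monomials) a scalar multiple of $u_n$, by an inductive skein argument on the number of self-crossings exactly paralleling the writhe-normalization argument for the HOMFLY-PT polynomial in $\S1.3$. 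This shows $\{u_n, v_n\}$ generate the algebra.

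Next I would prove algebraic independence, i.e.\ that the natural surjection $\k[u_n,v_n\mid n\ge 1]\twoheadrightarrow \Tr(\OS(z,t))$ is injective. The clean way is to exhibit enough linear functionals: for each $n$, specialize $\k$ to a field, take $q$ generic with $z=q-q^{-1}$, $t=q^\delta$, and use the semisimple Deligne category $\REP U_q(\mathfrak{gl}_\delta)$ of $\S1.4$, where the trace (character map $h$) is an isomorphism onto the center-like object $\Tr$, and the closed $n$-curves $u_n, v_n$ map to the power-sum-type symmetric functions in the (infinitely many) eigenvalue variables attached to $\up$ and $\down$; these power sums are algebraically independent, so the $u_n,v_n$ must be too. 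Alternatively, and more in the spirit of Turaev, one can set up a filtration on $\Tr(\OS(z,t))$ by total winding number and show the associated graded is the polynomial algebra, by a direct basis count matching conjugacy-class data. Either way, combined with the spanning statement, this yields the theorem.

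The main obstacle is the algebraic-independence half: the spanning argument is a fairly routine skein induction, but proving no relations hold among the $u_n, v_n$ requires either importing the (already nontrivial) semisimplicity and Schur--Weyl description of $\REP U_q(\mathfrak{gl}_\delta)$ to get separating functionals, or else a careful leading-term analysis with respect to a winding-number filtration, tracking that the "cycle closing up" map from $\bigsqcup_r \End(\up^r)/{\sim}$ onto winding-number-$r$ classes is bijective on an appropriate basis. I expect the cleanest writeup to route through the generic-$q$ specialization and power sums, reducing the independence over a general ground ring $\k$ to the field case by a standard base-change / flatness argument, since the claimed basis of monomials is the same regardless of $\k$.
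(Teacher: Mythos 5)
The paper does not actually prove this theorem: it states in \S 1.10 that ``For Theorem~\ref{TR}, which is not needed in the remainder of the article, we refer the reader to Turaev's original article [T1],'' and the theorem is cited as \cite[Theorem 2]{Turaev0}. So there is no in-paper argument for you to be measured against, and your plan has to stand on its own.

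Your outline captures the right two-step structure (skein-theoretic spanning by cable monomials, then linear/algebraic independence), and the independence route through the semisimple case is viable given the machinery already in the paper (Theorem~\ref{bt} for the basis, Theorem~\ref{sscase} and the character isomorphism $h$ from \cite{BHLW}). That said, two steps need tightening. First, the spanning step: after you smooth a self-crossing of a single cable with the Conway relation, you do not obtain ``a scalar multiple of $u_n$'' — you obtain a genuine $\k$-linear combination of diagrams whose components wrap various amounts, which after full induction becomes $t^k u_n$ plus a linear combination of \emph{products} of lower $u_m,v_m$. So the statement has to be organised as a filtration argument (by total absolute winding number, say), with $t^k u_n$ as the leading term; the phrase ``scalar multiple of $u_n$'' as written is false. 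Second, the independence step: you need to actually verify that the image of $u_n$ under the character map into $\SYM\otimes\SYM$ is a nonzero element concentrated in the $\up$-factor that is not a polynomial in $u_1,\dots,u_{n-1},v_1,v_2,\dots$. Concretely, $h(u_n)$ is $\sum_{\lambda\vdash n}\chi^q_\lambda(c)\,\chi_\lambda\otimes 1$ where $c$ is the Coxeter $n$-cycle, and by Starkey's rule this is supported on hook partitions; one must check this $q$-analog of $p_n$ is nonzero and that these elements are algebraically independent (e.g.\ by specialising $q\to 1$ where it literally becomes the power sum). Finally, the reduction from a field to an arbitrary commutative $\k$ should be routed through the universal ring $\ZZ[z^{\pm1},t^{\pm1}]$ and the freeness of morphism spaces (Theorem~\ref{bt}), which gives $\Tr(\OS(z,t))_\k\cong\Tr(\OS(z,t))_{\ZZ[z^{\pm1},t^{\pm1}]}\otimes\k$; injectivity of $\ZZ[z^{\pm1},t^{\pm1}][U_n,V_n]\to\Tr$ then follows from injectivity over the fraction field. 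None of these are fatal, but as written they are gaps rather than imprecisions, especially the Starkey-rule computation, which is the real content of the independence half.
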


 Theorem~\ref{bt} implies that the algebra
$B_{r,s} := \End_{\OS(z,t)}(\down^s\:\up^r)$
is free as a $\k$-module of rank $(r+s)!$.
This is 
the {\em quantized walled Brauer algebra} introduced originally by
Kosuda and Murakami in \cite{KM1, KM2}. 
%
As pointed out by Morton \cite{M2}, any 
$[f]\in\operatorname{Tr}(\OS(z,t))$ 
defines a central element
$$
\mathord{
\begin{tikzpicture}[baseline = -.5mm]
\draw[-,thick,double,darkblue] (1,0) to [out=90,in=0] (0,.5);
\draw[-,thick,double,darkblue] (0,.5) to [out=180,in=85] (-.98,0.14);
\draw[-,thick,darkblue] (.6,-1) to (.6,0);
\draw[-,thick,darkblue] (.15,-1) to (.15,0);
\draw[<-,thick,darkblue] (-.15,-1) to (-.15,0);
\draw[<-,thick,darkblue] (-.6,-1) to (-.6,0);
      \node at (-.36,-0.95) {$\scriptstyle\cdots$};
      \node at (.38,-0.95) {$\scriptstyle\cdots$};
      \node at (-.36,0.95) {$\scriptstyle\cdots$};
      \node at (.38,.95) {$\scriptstyle\cdots$};
\draw[-,line width=5pt,white] (0,-.5) to [out=0,in=-90] (1,0);
\draw[-,line width=5pt,white] (-.98,-.14) to [out=-85,in=180] (0,-.5);
\draw[-,thick,double,darkblue] (0,-.5) to [out=0,in=-90] (1,0);
\draw[-,thick,double,darkblue] (-.98,-.14) to [out=-85,in=180] (0,-.5);
\draw[darkblue,thick,yshift=-4pt,xshift=-4pt] (-.98,0) rectangle ++(8pt,8pt);
\node at (-.98,0) {$\scriptstyle f$};
\draw[-,line width=4pt,white] (.6,0) to (.6,1);
\draw[-,line width=4pt,white] (.15,0) to (.15,1);
\draw[-,line width=4pt,white] (-.15,0) to (-.15,1);
\draw[-,line width=4pt,white] (-.6,0) to (-.6,1);
\draw[->,thick,darkblue] (.6,0) to (.6,1);
\draw[->,thick,darkblue] (.15,0) to (.15,1);
\draw[-,thick,darkblue] (-.15,0) to (-.15,1);
\draw[-,thick,darkblue] (-.6,0) to (-.6,1);
\end{tikzpicture}
}
$$
in $B_{r,s}$.
Morton conjectured that these elements generate the entire center
$Z(B_{r,s})$.
Morton's conjecture has recently been proved in \cite{JK} assuming
$\k$ is a field of characteristic zero and $z,t$ are generic. 
In fact, Jung
and Kim show that $Z(B_{r,s})$ is generated already by the
supersymmetric power sums $p_n(X_1,\dots,X_r|Y_{1},\dots,Y_{s}) = X_1^n+\cdots+X_r^n-Y_{1}^n - \cdots -
Y_{s}^n$
in the {\em Jucys-Murphy elements}
\begin{equation}\label{salsa}
X_i := 
\mathord{
\begin{tikzpicture}[baseline =-1.5mm]
	\draw[<-,thick,darkblue] (.2,.5) to[out=-90,in=90] (1.2,0);
	\draw[<-,thick,darkblue] (-1.1,-.5) to (-1.1,.5);
	\draw[<-,thick,darkblue] (-.6,-.5) to (-.6,.5);
	\draw[->,thick,darkblue] (-.3,-.5) to (-.3,.5);
	\draw[-,line width=4pt,white] (.5,-.5) to (.5,.5);
	\draw[-,line width=4pt,white] (1,-.5) to (1,.5);
	\draw[->,thick,darkblue] (.5,-.5) to (.5,.5);
	\draw[->,thick,darkblue] (1,-.5) to (1,.5);
      \node at (-.83,-0.45) {$\scriptstyle\cdots$};
      \node at (.77,-0.45) {$\scriptstyle\cdots$};
      \node at (-.03,-0.45) {$\scriptstyle\cdots$};
      \node at (-.83,0.45) {$\scriptstyle\cdots$};
      \node at (.77,0.45) {$\scriptstyle\cdots$};
      \node at (-.03,0.45) {$\scriptstyle\cdots$};
	\draw[-,line width=4pt,white] (.2,-.5) to[out=90,in=-90] (1.2,0);
	\draw[-,thick,darkblue] (.2,-.5) to[out=90,in=-90] (1.2,0);
\end{tikzpicture}
}
\:,
\qquad
Y_j :=
t^{-2}\:
\mathord{
\begin{tikzpicture}[baseline =-1.5mm]
	\draw[<-, thick,darkblue] (-.6,-.5) to [out=90,in=-90] (1.2,0);
	\draw[<-,thick,darkblue] (-1.1,-.5) to (-1.1,.5);
      \node at (-.83,-0.45) {$\scriptstyle\cdots$};
      \node at (.78,-0.45) {$\scriptstyle\cdots$};
      \node at (-.03,-0.45) {$\scriptstyle\cdots$};
      \node at (-.83,0.45) {$\scriptstyle\cdots$};
      \node at (.78,0.45) {$\scriptstyle\cdots$};
      \node at (-.03,0.45) {$\scriptstyle\cdots$};
	\draw[-, line width=4pt,white] (-.3,-.5) to (-.3,.5);
	\draw[-, line width=4pt,white] (.2,-.5) to (.2,.5);
	\draw[-, line width=4pt,white] (.5,-.5) to (.5,.5);
	\draw[-, line width=4pt,white] (1,-.5) to (1,.5);
	\draw[<-, thick,darkblue] (-.3,-.5) to (-.3,.5);
	\draw[<-, thick,darkblue] (.2,-.5) to (.2,.5);
	\draw[->, thick,darkblue] (.5,-.5) to (.5,.5);
	\draw[->, thick,darkblue] (1,-.5) to (1,.5);
	\draw[-, line width=4pt,white] (-.6,.5) to [out=-90,in=90] (1.2,0);
	\draw[-, thick,darkblue] (-.6,.5) to [out=-90,in=90] (1.2,0);
\end{tikzpicture}
}\:,
\end{equation}
where the interesting strand is the $i$th or
 $(r+j)$th from the right, respectively.
These elements were also introduced by Morton (extending an observation 
from
\cite{Ram} in the case of the Iwahori-Hecke algebra): up to an
obvious symmetry and rescaling they are the elements $T$ and $U$
from the proof of \cite[Theorem 1]{M2}; see \cite[Remark
6.7]{JK}.
(Jung and Kim  also prove a version of
\cite[Conjecture 7.4]{SS} in the degenerate case.)
Later in the article, we will give a more conceptual interpretation of
Jucys-Murphy elements 
based on another monoidal category, the {\em affine oriented skein
  category} $\AOS(z,t)$, which is of independent interest.

\vspace{2mm}

\noindent{\bf 1.7.}
{\em For the remainder of the introduction}, we assume that $\k$ is a field
and $z = q-q^{-1}$ for $q \in \k^\times\setminus\{\pm 1\}$.
The next theorem describes $K_0(\dot\OS(z,t))$ in all semisimple cases.
It is also possible to
compute the irreducible characters 
$h(\chi_\LA) \in \k[u_n,v_n\:|\:n \geq 1]$ by an algorithm involving
Starkey's rule \cite{Geck}.

To state the theorem,
let $\Par = \bigsqcup_{r,s \geq 0} \Par_{r,s}$ where $\Par_{r,s}$
consists of {\em bipartitions} $\LA = (\la^\up, \la^\down)$
for $\la^\up \vdash r$ and $\la^\down \vdash s$.
Let $\SYM$ be the ring of symmetric functions and
denote the Schur function associated to a partition $\lambda$ by
$\s_\lambda$.
The structure constants for this basis of $\SYM$ are the Littlewood-Richardson
coefficients:
$\s_\mu \s_\nu = \sum_\lambda LR^\lambda_{\mu,\nu} \s_\lambda$.
Let $\lambda^\trans$ denote the conjugate partition to $\lambda$.

\begin{theorem}\label{sscase}
The category $\dot\OS(z,t)$ is semisimple if and only if $q$ is not a
root of unity and $t \notin \{\pm q^n\:|\:n \in \ZZ\}$.
Assuming this is the case, the isomorphism classes of indecomposable objects in
$\dot\OS(z,t)$
are parametrized in a canonical way by $\Par$.
Moreover, the rings
$K_0(\dot\OS(z,t))$ 
and
$\SYM \otimes_{\ZZ} \SYM$
may be identified 
so that the isomorphism class of the indecomposable indexed by $\LA
\in \Par_{r,s}$ identifies with 
\begin{equation}
\s_\LA := 
\sum_{\substack{0 \leq d \leq \min(r,s)\\\MU \in \Par_{r-d,s-d}}}
\!\!\!N^\LA_\MU \,\s_{\mu^\up} \otimes \s_{\mu^\down}
\quad\text{where}\quad
N^\LA_\MU
:=
(-1)^d \sum_{\nu \vdash d}
LR_{\mu^\up, \nu}^{\la^\up} LR_{\mu^\down, \nu^\trans}^{\la^\down}.
\label{nlamu}\end{equation}
\end{theorem}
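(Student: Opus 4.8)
The plan is to reduce both halves of the statement to the representation theory of the quantized walled Brauer algebras $B_{r,s} := \End_{\OS(z,t)}(\down^s\up^r)$, and then to match the resulting combinatorics with Koike's universal (``composite'') characters. Using the braiding on the ribbon category $\OS(z,t)$ one sees that every word in $\up,\down$ is isomorphic to some $\down^s\up^r$, so every object of $\dot\OS(z,t)$ is a summand of some $\down^s\up^r$; hence $\dot\OS(z,t)$ is semisimple if and only if each algebra $B_{r,s}$ is semisimple. For the forward direction, $B_{r,0}$ is identified with the Iwahori--Hecke algebra $H_r$ via the isomorphism $\i_r$ of \eqref{randwick}, and this fails to be semisimple as soon as $q$ is a root of unity of order at most $r$; and if $t=\pm q^n$ for some $n\in\ZZ$ then, using \eqref{beautiful} and the bases of Theorem~\ref{bt}, the Gram matrix of a suitable cell module of some $B_{r,s}$ has determinant equal, up to a unit, to a product of factors of the shape $[c]_q$ and $t^2-q^{2c}$ over a finite set of integers $c$ that grows with $r+s$, one of which then vanishes, so $B_{r,s}$ acquires nonzero Jacobson radical. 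Conversely, when $q$ is not a root of unity and $t\notin\{\pm q^n\mid n\in\ZZ\}$ every such factor is invertible, so each cell module of $B_{r,s}$ carries a nondegenerate contravariant form and $B_{r,s}$ is semisimple; this establishes the semisimplicity criterion.

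Assume from now on that $\dot\OS(z,t)$ is semisimple, so that indecomposable objects coincide with simple ones. For a bipartition $\LA=(\lambda^\up,\lambda^\down)\in\Par_{r,s}$ let $E_\LA$ denote the indecomposable summand of $\down^s\up^r$ cut out by a primitive idempotent of $B_{r,s}$ lying in the ``top'' cell labelled by $\LA$, i.e.\ one that does not factor through any $\down^{s'}\up^{r'}$ with $r'+s'<r+s$. The key step is the branching rule
\[
E_\LA\otimes\up \;\cong\; \bigoplus_{\LA^{+}} E_{\LA^{+}}, \qquad
E_\LA\otimes\down \;\cong\; \bigoplus_{\LA^{-}} E_{\LA^{-}},
\]
where $\LA^{+}$ runs over the bipartitions obtained from $\LA$ by adding a box to $\lambda^\up$ or deleting a box from $\lambda^\down$, and $\LA^{-}$ dually; here the hypotheses on $q$ and $t$ are precisely what guarantees that all multiplicities equal $1$ and that the bipartitions $\LA^{\pm}$ occurring are pairwise distinct. (This is most naturally proved via the categorical $\mathfrak{gl}_\infty$-action on $\dot\OS(z,t)$ given by tensoring with $\up$ and $\down$.) Starting from $E_{\NOTHING}=\unit$ and iterating, each $E_\LA$ appears as a summand of some $\down^s\up^r$, the $E_\LA$ are pairwise nonisomorphic, and they exhaust the indecomposable summands of all the objects $\down^s\up^r$; hence they form a complete irredundant list of indecomposables, giving the asserted parametrization by $\Par$.

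Next, $K_0(\dot\OS(z,t))$ is a commutative ring (since $\OS(z,t)$ is braided) which is free as a $\ZZ$-module on $\{[E_\LA]\mid\LA\in\Par\}$, and we define a $\ZZ$-linear map $K_0(\dot\OS(z,t))\to\SYM\otimes_\ZZ\SYM$ by $[E_\LA]\mapsto\s_\LA$, with $\s_\LA$ as in \eqref{nlamu}. This $\s_\LA$ is exactly Koike's universal character attached to $\LA$; since its $d=0$ summand is $\s_{\lambda^\up}\otimes\s_{\lambda^\down}$ while all remaining summands have strictly smaller total degree, the $\s_\LA$ form a $\ZZ$-basis of $\SYM\otimes_\ZZ\SYM$, so the map is a $\ZZ$-module isomorphism. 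It remains to see that it respects products, i.e.\ that the tensor-product multiplicities $[E_\LA\otimes E_\MU : E_\NU]$ coincide with Koike's structure constants. For this I would note that, by a standard specialization argument with the cellular bases of Theorem~\ref{bt}, these multiplicities are unchanged under specialization of $(z,t)$ throughout the semisimple locus; it therefore suffices to compute them after passing to $\k=\CC$ with $q$ and $t$ algebraically independent, where $\dot\OS(z,t)=\REP U_q(\mathfrak{gl}_\delta)$ is the generic Deligne category and $E_\LA$ specializes, for $N\gg 0$, to the irreducible $U_q(\mathfrak{gl}_N)$-module of highest weight $\LA$ by Theorem~\ref{webs}; in the stable range the tensor-product multiplicities for $U_q(\mathfrak{gl}_N)$ are exactly Koike's coefficients. (Equivalently, one can transport the ring structure through the character isomorphism $h\colon K_0(\dot\OS(z,t))\otimes_\ZZ\k\xrightarrow{\ \sim\ }\Tr(\OS(z,t))=\k[u_n,v_n\mid n\geq1]$ of Theorem~\ref{TR} and \cite{BHLW}, computing $h([E_\LA])$ via Starkey's rule \cite{Geck}.)

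The main obstacle is the branching rule of the second paragraph, and with it the sharpness of the semisimplicity criterion: one must show that in exactly the claimed range of parameters the quantum numbers never force a coincidence among the relevant $\mathfrak{gl}_\infty$-weights, so that tensoring with $\up$ and $\down$ is multiplicity-free and categorifies the Pieri operators on $\SYM\otimes_\ZZ\SYM$. Granting that, everything else reduces to bookkeeping with Littlewood--Richardson coefficients and the elementary properties of Koike's universal characters.
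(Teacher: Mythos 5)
You take a genuinely different route: the paper deliberately avoids the ``standard technique'' it mentions at the start of $\S$1.7 (Deligne-style specialization plus Schur--Weyl duality) in favour of a self-contained highest weight argument based on the triangular decomposition of $OS$. The actual proof given in the paper reads off the semisimplicity criterion from Theorem~\ref{selection} together with semisimplicity of Hecke algebras, exhibits a non-split extension in Example~\ref{swex} when $t=\pm q^n$, parametrizes indecomposables by Theorem~\ref{class}, and obtains the ring isomorphism via Theorem~\ref{delection}, Theorem~\ref{koike1}, Lemma~\ref{commute}, and Koike's inverse transition matrix. Your proposal instead works with the quantized walled Brauer algebras directly, uses a cellular/Gram-determinant argument for semisimplicity, and recovers the ring structure by specializing to the generic Deligne category. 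What the paper's route buys is a uniform treatment that simultaneously yields the non-semisimple structure theory (standard/proper standard modules, BGG reciprocity, the identifications \eqref{k0b} and \eqref{k0c}); your route, if completed, would be shorter for the semisimple statement alone but would not extend to those cases.

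There are, however, genuine gaps in your argument as written. The most serious is the Gram-determinant claim in the first paragraph: you assert, without derivation or citation, that the determinant of a Gram form on a cell module of $B_{r,s}$ is ``up to a unit, a product of factors of the shape $[c]_q$ and $t^2-q^{2c}$.'' This is plausible, and explicit formulas of roughly this kind do exist in the literature (Cox--De~Visscher--Doty--Martin in the degenerate case, Rui and collaborators in the quantum case), but the proof must either derive the formula from \eqref{beautiful} and Theorem~\ref{bt} or cite it precisely; moreover you need to check sharpness in both directions, namely that some factor vanishes for \emph{every} $t\in\{\pm q^n\}$ (not just eventually for large $r+s$) and that no other specializations degenerate. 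As it stands the ``only if'' direction is an unverified computational claim. The paper sidesteps this entirely by producing an explicit non-split extension in Example~\ref{swex}, which is considerably more economical.

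The second gap concerns the branching rule $E_\LA\otimes\up\cong\bigoplus E_{\LA^{+}}$. You remark that it is ``most naturally proved via the categorical $\mathfrak{gl}_\infty$-action,'' but in the paper that categorical action is \emph{derived} from the machinery of $\S\S5$--$7$ (in particular from Theorem~\ref{koike1} and the $\Delta$-flag analysis), so invoking it here would be circular unless you set it up independently. A non-circular path in the semisimple case would compute the Hom-space dimensions directly from Theorem~\ref{bt}, or deduce the branching rule from the restriction/induction rules for $B_{r,s}\subset B_{r+1,s}$; either would need to be spelled out. Finally, the specialization argument for the ring structure is only outlined: you need to justify that $\dim\Hom(E_\NU,E_\LA\otimes E_\MU)$ is constant on the semisimple locus, which is not an automatic consequence of Theorem~\ref{bt} since the idempotents cutting out the $E_\LA$ vary with the parameters. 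This can be made rigorous (e.g.\ by working over $\ZZ[q,q^{-1},t,t^{-1}]$ localized at the bad primes, and using flatness of the idempotent truncations), but it requires more care than ``a standard specialization argument.''
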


The standard technique to prove Theorem~\ref{sscase} 
is to deduce it from Theorem~\ref{webs}
by similar arguments to \cite[Proposition 10.6]{D}; see also \cite[Theorems 4.8.1 and
7.1.1]{CW}.
In other words, one uses Schur-Weyl duality and well-known properties
of  $\Rep U_q(\mathfrak{gl}_n)$ for sufficiently large $n$.
We will take a completely different approach to the proof of
Theorem~\ref{sscase} and the representation theory of $\OS(z,t)$ in
general 
(even in positive characteristic or at roots of unity)
based on the simple observation that it has a
{\em triangular decomposition}. This allows us to adapt the usual arguments
of highest weight theory in a way that is reminiscent of the general framework of \cite{HN, BT}.

In this triangular decomposition, the ``Cartan subalgebra'' $\OS^\circ(z,t)$ 
is the monoidal subcategory consisting of all objects, and morphisms 
spanned by diagrams containing neither caps
nor cups in which all upward propagating strands pass underneath downward
propagating strands. The ``positive Borel subalgebra''
$\OS^\sharp(z,t)$ is defined similarly, allowing also cups but no caps.
Inflation from $\OS^\circ(z,t)$
to $\OS^\sharp(z,t)$ followed by induction from there to $\OS(z,t)$
defines an exact
{\em standardization functor} 
\begin{equation}
\Delta: 
\Mod \OS^\circ(z,t) \rightarrow \Mod \OS(z,t).
\end{equation}
Moreover, there is an obvious equivalence of categories
\begin{equation}\label{morita}
\Mod \OS^\circ(z,t) \approx \prod_{r,s
  \geq 0} \Mod H_r \otimes H_s.
\end{equation}
Since the Hecke algebra is semisimple when $q$ is not a root of unity,
the semisimplicity part of Theorem~\ref{sscase} is a consequence of the following
more general result.

\begin{theorem}\label{selection}
If $t \notin \{\pm q^n\:|\:n \in \ZZ\}$ then
$\Delta$ is an equivalence of
categories.
\end{theorem}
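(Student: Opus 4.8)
The plan is to treat $\OS(z,t)$ as a highest weight category, with the triangular decomposition recalled before the theorem playing the role of $U(\mathfrak n^-)\otimes U(\mathfrak h)\otimes U(\mathfrak n^+)$ as in the general framework of \cite{HN,BT}, and to reduce the equivalence of $\Delta$ to the nonvanishing of one explicit family of scalars; the hypothesis $t\notin\{\pm q^{n}\mid n\in\ZZ\}$ will enter only at that last point.

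\emph{Step 1: the ``PBW'' decomposition and the highest weight formalism.} First I would make the triangular decomposition precise using Theorem~\ref{bt}. One checks that every reduced $(\mathbf a,\mathbf b)$-ribbon can be isotoped into a unique normal form obtained by stacking a cap diagram, then a ``Cartan'' diagram (only through-strands, all upward strands passing under all downward ones), then a cup diagram, and that these normal forms --- running over the $(\mathbf a,\mathbf b)$-matchings --- form a $\k$-basis of $\Hom_{\OS(z,t)}(\mathbf a,\mathbf b)$ compatible with the subcategories $\OS^\circ(z,t)\subseteq\OS^\sharp(z,t)\subseteq\OS(z,t)$ and with the ``caps'' analogue $\OS^\flat(z,t)$. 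In particular $\OS(z,t)$ is free as a right $\OS^\sharp(z,t)$-module, re-proving the exactness of $\Delta$ quoted before the theorem, and the whole package exhibits $\Mod\OS(z,t)$ as a highest weight category in the sense of \cite{HN,BT}: its weight poset is $\bigsqcup_{r,s\ge0}\operatorname{Irr}(H_r\otimes H_s)$ (equal to $\Par$ when $q$ is not a root of unity, though the argument does not use that), ordered so that simultaneously lowering $r$ and $s$ lowers the weight, $\Delta$ is its standardization functor, and there is a costandardization $\nabla$ with $\Hom_{\OS(z,t)}\bigl(\Delta(M),\nabla(N)\bigr)\cong\Hom_{\OS^\circ(z,t)}(M,N)$. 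The zig-zag relations of Theorem~\ref{first}(3), valid for all $z,t$, provide a canonical ``top-weight'' natural map and reduce the assertion that $\Delta$ is an equivalence to the statement that, for every object $\mathbf x$, the contravariant pairing between $\Delta(\mathbf x)$ and $\nabla(\mathbf x)$ is nondegenerate; equivalently that the representable module $\Hom_{\OS(z,t)}(-,\mathbf x)$ coincides with $\Delta$ applied to its top-weight part; equivalently that $\End_{\OS(z,t)}(\down^{s}\up^{r})=B_{r,s}$ is, for all $r,s$, a direct sum of matrix algebras over the $H_{r-d}\otimes H_{s-d}$.

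\emph{Step 2: the scalars, and the main obstacle.} It then remains to compute the Gram determinants of these contravariant forms and show that they are units. With respect to the basis of Step 1 each such form is block-triangular for the ``defect'' filtration, and on each graded piece it is computed by contracting matched pairs of cups and caps; relations (S), (T), (FR I) and above all the dimension relation (D) turn every such contraction into multiplication by a scalar, which I would evaluate by diagonalizing the Jucys--Murphy elements \eqref{salsa} (acting via \eqref{randwick}) on the relevant standard modules. The outcome should be that every scalar that occurs is a product of factors of the form
\begin{equation*}
\frac{tq^{c}-t^{-1}q^{-c}}{z}\qquad(c\in\ZZ),
\end{equation*}
the ``$c$-shifted categorical dimension'' of $\up$, with the shifts $c$ running through contents of boxes of the ambient partitions. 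Such a factor vanishes exactly when $t^{2}=q^{-2c}$, i.e.\ when $t\in\{\pm q^{-c}\}$; letting $c$ range over $\ZZ$, the product is a unit for every $r,s$ precisely when $t\notin\{\pm q^{n}\mid n\in\ZZ\}$, and then all contravariant forms are nondegenerate and $\Delta$ is an equivalence. I expect the genuinely delicate part to be this last bookkeeping: identifying, rigorously, which scalars arise when one resolves a standardly based morphism through a smaller object --- equivalently, evaluating the Gram matrix that decides whether a ``cap--cup'' idempotent splits off --- which is cleanest to handle by induction on the number of cups, using that along the quotient $\OS^\sharp(z,t)\twoheadrightarrow\OS^\circ(z,t)$ everything is governed by $\OS^\circ(z,t)$, which by \eqref{morita} is the product of Hecke algebras. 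Everything else --- the normal form, the exactness of $\Delta$, and the highest weight machinery --- should be routine once Theorem~\ref{bt} is in hand.
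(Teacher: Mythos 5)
Your route is genuinely different from the paper's. The paper never computes a Gram determinant: it proves the theorem by combining the branching rules for the refined induction/restriction functors (Lemma~\ref{newfirst}, Lemma~\ref{phew}) with the formal-character formula for $\tilde\Delta(\LA)$ (Theorem~\ref{characters}), then extracts the key combinatorial fact that a composition factor $\L(\MU)$ of $\bar\Delta(\LA)$ forces the existence of a path $\NOTHING \rightsquigarrow \LA$ and a minimal path $\NOTHING \rightsquigarrow \MU$ of the same type (Corollary~\ref{Canpath}). The hypothesis $t\notin\{\pm q^n\}$ then enters once and \emph{softly}: it is exactly the condition that the two sets of Jucys--Murphy eigenvalues $I_1=\{q^{2n}\}$ and $I_{t^{-2}}=\{t^{-2}q^{2n}\}$ are disjoint, so the colours on a path determine its endpoint, giving $\LA\in\RPar_{r,s}$ and hence $\P(\LA)=\Delta(\LA)$ by BGG reciprocity (Corollaries~\ref{morocco}, \ref{BGG}); the equivalence of categories is then formal from exactness of $\Delta$. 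Your plan instead routes through nondegeneracy of the contravariant forms for the cell/standard filtration of $B_{r,s}$ and an explicit product formula for the Gram determinants. The same condition $t\notin\{\pm q^n\}$ appears, now as nonvanishing of the shifted dimension $(tq^c-t^{-1}q^{-c})/z$, but you have to earn the product formula; the paper does not. What your route would buy, if carried out, is explicit Gram data for the quantized walled Brauer algebra, which is of independent interest; what it costs is that your Step~2 is, as you yourself flag, the entire mathematical content.

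Two more concrete concerns about the plan as written. First, the product formula for the Gram determinant is asserted as an expectation, not proved; establishing that nothing other than bubble contractions contributes, that the Jucys--Murphy spectrum really diagonalizes the contraction map into a triangular form with those diagonal entries, and that this persists at roots of unity (where $H_r$ is not semisimple and the relevant ``top-weight part'' is $\Y(\LA)$ rather than an irreducible Specht module) is a serious Jones-basic-construction-type computation, comparable in weight to the whole of Section~\ref{schars} of the paper. Second, the chain of reformulations at the end of your Step~1 is loose. Nondegeneracy of the pairing $\Delta(\LA)\times\nabla(\LA)\to\k$ is the criterion for $\Delta(\LA)\cong\L(\LA)$, which is semisimplicity and is false here whenever $e>0$ (even for generic $t$); and ``$B_{r,s}$ is a direct sum of matrix algebras over $H_{r-d}\otimes H_{s-d}$'' should be a Morita statement (a direct sum of algebras $\End_{H_{r-d,s-d}}(P)$ for various projectives $P$, not literally $\Mat_m(H_{r-d,s-d})$). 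What you actually want is that the $\Delta$-flag on $1_{\down^s\up^r}OS$ from Theorem~\ref{koike1} splits, equivalently $\Q(\LA)\cong\bigoplus\Delta(\MU)^{\oplus M^\LA_\MU}$, equivalently $\P(\LA)=\Delta(\LA)$; that is the statement whose Gram-determinant reformulation needs to be set up carefully before Step~2 can begin. (One last nit: $OS$ is free as a \emph{left}, not right, $OS^\sharp$-module; that is what gives exactness of $\Delta=(-)\otimes_{OS^\sharp}OS$.)
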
 

The other basic observation used to compute $K_0(\dot\OS(z,t))$ as a
ring is:

\begin{theorem}\label{delection}
The inclusion
$\OS^\circ(z,t) \rightarrow \OS(z,t)$ induces a
{ring isomorphism}
$$K_0(\dot\OS{^\circ}(z,t)) \stackrel{\sim}{\rightarrow}
K_0(\dot\OS(z,t)).
$$
\end{theorem}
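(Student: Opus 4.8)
The plan is to show that the inclusion functor $\iota:\OS^\circ(z,t)\hookrightarrow\OS(z,t)$ induces mutually inverse ring homomorphisms on Grothendieck groups by exhibiting a two-sided ``splitting'' at the level of indecomposable objects. The key input is the triangular decomposition already set up: the standardization functor $\Delta:\Mod\OS^\circ(z,t)\to\Mod\OS(z,t)$ together with the Morita equivalence \eqref{morita}, under which $\dot\OS^\circ(z,t)$ is equivalent to $\prod_{r,s\geq 0}\pMod\,(H_r\otimes H_s)$. Since the Hecke algebras $H_r$ are symmetric algebras, every object of $\dot\OS^\circ(z,t)$ is a summand of a sum of objects of the form $\down^s\up^r$, and the indecomposables of $\dot\OS^\circ(z,t)$ are indexed by bipartitions $\LA=(\la^\up,\la^\down)$ via the primitive idempotents $e_\LA\in H_r\otimes H_s$. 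Write $P(\LA)$ for the corresponding indecomposable object of $\dot\OS^\circ(z,t)$.

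First I would analyze what $\iota$ does to the classes $[P(\LA)]$. Via the Yoneda embedding, $\iota$ corresponds on modules to restriction along $\OS^\circ\to\OS$, and its left adjoint is induction; the crucial point is that induction of the projective $\OS^\circ$-module represented by $\down^s\up^r$ is the projective $\OS$-module represented by the same object $\down^s\up^r$, and idempotents are preserved. Hence $[P(\LA)]\in K_0(\dot\OS^\circ)$ maps to the class of the indecomposable summand of $\down^s\up^r$ in $\dot\OS(z,t)$ cut out by $e_\LA$. So surjectivity of the induced map is immediate once one knows that the indecomposable objects of $\dot\OS(z,t)$ are all summands of objects $\down^s\up^r$ — this follows because $\OS(z,t)$ is generated as a monoidal category by $\up$ and $\down$ (Theorem~\ref{first}), so $\dot\OS(z,t)$ is additively generated by the $\down^s\up^r$. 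For injectivity, and to see that it is in fact an isomorphism of rings, I would use a triangularity/filtration argument: order bipartitions by total size $|\LA|=r+s$ (with ties broken arbitrarily), and show that the indecomposable of $\dot\OS(z,t)$ attached to $\LA$ decomposes, in $K_0$, as $[P(\LA)]$ plus a $\ZZ$-combination of $[P(\MU)]$ with $|\MU|<|\LA|$ — this is exactly the shape of the formula \eqref{nlamu} and it comes from resolving the caps/cups when one restricts an $\OS$-module back to $\OS^\circ$. Thus the change-of-basis matrix between $\{[P(\LA)]\}$ (a basis of $K_0(\dot\OS^\circ)$) and the images of the indecomposables of $\dot\OS$ is unitriangular over $\ZZ$, hence invertible, giving that $\iota$ induces a $\ZZ$-module isomorphism. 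Compatibility with the ring structure is automatic since $\iota$ is monoidal.

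An alternative, perhaps cleaner, route is to invoke Theorem~\ref{delection}'s companion Theorem~\ref{selection}: whenever $t\notin\{\pm q^n\}$ the functor $\Delta$ is an equivalence, so $K_0(\dot\OS)\cong K_0(\dot\OS^\circ)$ formally in that case, and one would then argue that the ring $K_0(\dot\OS(z,t))$ is \emph{independent of $t$} — e.g.\ by a base-change/specialization argument treating $t$ as a formal parameter, since morphism-space bases (Theorem~\ref{bt}) and hence the decomposition matrices vary in a controlled, generically constant way — to deduce the isomorphism for all admissible $t$. I would include this remark but base the actual proof on the direct induction/restriction argument above, since it works uniformly (also in positive characteristic and at roots of unity) without needing $\Delta$ to be an equivalence.

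\textbf{Main obstacle.} The delicate step is establishing the unitriangularity: namely that restricting the indecomposable $\OS$-object attached to $\LA$ to $\OS^\circ$ yields $P(\LA)$ plus only \emph{lower} terms $P(\MU)$ with $|\MU|<|\LA|$, with \emph{no} terms of the same size other than $P(\LA)$ itself. This requires understanding how the idempotent $e_\LA$ behaves when one passes through a cap–cup pair, i.e.\ a genuine computation in the quantized walled Brauer algebra $B_{r,s}$ using the basis of Theorem~\ref{bt}; the "degree reduction" each time a strand gets capped off is what forces the drop in $|\MU|$, and one must check the leading coefficient is exactly $1$. This is where the hypothesis $z,t\in\k^\times$ (so $\Bubble=\frac{t-t^{-1}}{z}$ is the relevant scalar and the bubble relations are usable) enters, and it is the part I expect to absorb most of the work.
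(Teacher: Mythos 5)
Your approach is essentially the paper's. The map on $K_0$ sends $[\Y(\LA)]$ (the class of the indecomposable projective $\OS^\circ$-module) to $[\Q(\LA)]$, where $\Q(\LA) := \Y(\LA)\otimes_{OS^\circ}OS$, and the heart of the argument is precisely the unitriangularity you flag as the ``main obstacle'': this is Theorem~\ref{koike1}, which constructs a $\Delta$-flag of $\Q(\LA)$ with top section $\Delta(\LA)$ and remaining sections $\Delta(\MU)$ for bipartitions $\MU$ of strictly smaller total size. Together with Corollary~\ref{exactsubcat} this makes $\{[\Q(\LA)]\}$ a basis of $K_0(\dot\OS(z,t))$ (Corollary~\ref{kio}), which is all one needs. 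One slip to correct in your surjectivity paragraph: the idempotent $e_\LA$, while primitive in $H_r\otimes H_s$, is usually \emph{not} primitive in $B_{r,s}=\End_{\OS(z,t)}(\down^s\up^r)$, so it does not cut out an indecomposable summand in $\dot\OS(z,t)$; it cuts out all of $\Q(\LA)$, which by Corollary~\ref{kio} equals $\P(\LA)$ plus indecomposable projectives indexed by strictly smaller bipartitions. The surjectivity observation therefore cannot stand on its own; the unitriangular change of basis is what gives injectivity and surjectivity simultaneously. Finally, to treat the non-semisimple cases you should replace the Young idempotents by the projective covers $\Y(\LA)$ for $e$-restricted $\LA$, exactly as the paper does; Theorem~\ref{koike1} then gives multiplicities $M^\LA_\MU(e,p)$ in place of the Littlewood--Richardson formula, but the leading coefficient is still $1$, so the triangularity persists. (One small notational point: the functor $\dot\OS^\circ(z,t)\to\dot\OS(z,t)$ corresponds under Yoneda to \emph{induction} $?\otimes_{OS^\circ}OS$ on $\pMod$ categories, not restriction; your argument implicitly uses this but the opening sentence of that paragraph says the opposite.)
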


Now we describe
$K_0(\dot\OS(z,t))$ for all choices of $q$ and $t$.
There are four cases.
\begin{itemize}
\item
Suppose first that $q$ is not a root of unity. 
Up to isomorphism, the irreducible representations of the
(semisimple) Hecke algebra $H_r$ are the {\em Specht modules} parametrized by partitions of $r$.
Using the Morita equivalence (\ref{morita}), we deduce that the irreducible $\OS^\circ(z,t)$-modules
are parametrized by bipartitions; we denote them 
$\{\SS(\LA)\:|\:\LA \in \Par\}.$
Their standardizations give us a family of
$\OS(z,t)$-modules
$\{\Delta(\LA)\:|\:\LA \in \Par\}.$
\begin{itemize}
\item
When $t \notin \{\pm q^n\:|\:n \in \ZZ\}$ (so that $\dot\OS(z,t)$ is semisimple), the modules $\Delta(\LA)$
give
a full set of
pairwise inequivalent indecomposable $\OS(z,t)$-modules.
This is the labelling from Theorem~\ref{sscase}:
in the identification of $K_0(\dot\OS(z,t))$ with
$\SYM\otimes_\ZZ\SYM$ we have that
\begin{equation}\label{k0}
\hspace{15mm}[\Delta(\LA)] \leftrightarrow \chi_\LA.
\end{equation}
\item
When $t = \pm q^n$ for $n \in \ZZ$,
we will show that $\Mod \OS(z,t)$ has the structure of an {\em
  upper-finite highest weight category} with standard modules
$\{\Delta(\LA) \:|\:\LA \in \Par\}$. This is a slight generalization of the usual notion of highest weight
category; e.g., see \cite[$\S$6.1.2]{EL}.
Each standard module $\Delta(\LA)$ has a unique irreducible quotient
$\L(\LA)$,
and these give a full set of pairwise inequivalent irreducible $\OS(z,t)$-modules. Moreover, 
the projective cover $\P(\LA)$ of $\Delta(\LA)$ has a finite
$\Delta$-flag with multiplicities satisfying BGG reciprocity; 
however, unlike for usual highest weight categories, 
standard modules have infinite length.
In this situation, the Grothendieck ring
$K_0(\dot\OS(z,t))$ is identified with the {\em same} ring $\SYM \otimes_{\ZZ} \SYM$
as for generic $t$ so that
\begin{equation}\label{k0a}
\hspace{15mm}
[\P(\MU)] \leftrightarrow \sum_{\substack{0 \leq d \leq \min(r,s)\\\LA \in \Par_{r-d,s-d}}} [\Delta(\LA):\L(\MU)] \chi_\LA
\end{equation}
for $\MU \in \Par_{r,s}$.
It remains to compute the numbers $[\Delta(\LA):\L(\MU)]$.
This turns out to be quite straightforward: they are all either $0$ or
$1$ and can be computed using the cup diagrams of
\cite{BS}. The combinatorics is discussed in detail elsewhere; e.g.,
see \cite{CW, EHS} (with Theorem~\ref{finalt} in mind).
\end{itemize}
\item
Now suppose that $q^2$ is a primitive $e$th root of unity for $e > 1$.
Then the situation is more complicated as the Hecke algebras are
no longer semisimple.
Let $\RPar = \bigsqcup_{r,s \geq 0} \RPar_{r,s}$ be the set of {\em
  $e$-restricted bipartitions}. By \cite{DJ} and (\ref{morita}),
the ``Specht module'' 
$\SS(\LA)$ has irreducible head $\D(\LA)$ if $\LA$ is $e$-restricted, and
the
modules $\{\D(\LA)\:|\:\LA \in \RPar\}$ give a full set of pairwise
inequivalent irreducible
$\OS^\circ(z,t)$-modules.
Also let $\Y(\LA)$ be a projective cover of $\D(\LA)$.
Applying the standardization functor to 
$\D(\LA)$ and $\Y(\LA)$ gives us 
$\OS(z,t)$-modules denoted 
$\bar\Delta(\LA)$ and $\Delta(\LA)$, respectively.
\begin{itemize}
\item
When $t \notin\{\pm q^n\:|\:n \in \ZZ\}$, 
the modules $\left\{\Delta(\LA)\:|\:\LA \in \RPar\right\}$ give a full set of pairwise inequivalent
indecomposable projective $\OS(z,t)$-modules, and $K_0(\dot\OS(z,t))$
is identified with a
proper subring of $\SYM \otimes_\ZZ \SYM$ so that
\begin{equation}\label{k0b}
\hspace{15mm}
[\Delta(\LA)] \leftrightarrow \sum_{\KAPPA \in \Par_{r,s}} [\SS(\KAPPA):\D(\LA)] \chi_\KAPPA
\end{equation}
for $\LA \in \RPar_{r,s}$ and $r,s \geq 0$.
The decomposition numbers
$[\SS(\KAPPA):\D(\LA)]$ 
are known providing $\k$ is of characteristic zero, 
since they are products of the decomposition numbers of Hecke algebras 
determined by Ariki \cite{Ariki}.
\item
When $t = \pm q^n$, the 
 category of $\OS(z,t)$-modules is an {\em upper-finite standardly
   stratified category} with standard modules
$\{\Delta(\LA) \:|\:\LA \in \RPar\}$
and proper standard modules
$\{\bar\Delta(\LA) \:|\:\LA \in \RPar\}$; see \cite[$\S$2]{LW}
and \cite[$\S$6.2.1]{EL}.
The proper standard module $\bar\Delta(\LA)$ has irreducible head
$\L(\LA)$, and these modules give a full set of pairwise inequivalent
irreducible $\OS(z,t)$-modules. 
The projective cover $\P(\MU)$ of $\L(\MU)$ has a finite $\Delta$-flag
with multiplicities satisfying $(\P(\MU):\Delta(\LA)) = [\bar\Delta(\LA):\L(\MU)]$.
Then 
$K_0(\dot\OS(z,t))$
is identified with the {\em same} subring of $\SYM\otimes_\ZZ \SYM$ as
for generic $t$ so that
\begin{equation}\label{k0c}
\hspace{15mm}[\P(\MU)] \leftrightarrow \sum_{\substack{0 \leq d \leq \min(r,s)\\\LA \in
    \RPar_{r-d,s-d}\\\KAPPA \in \Par_{r-d,s-d}}} [\bar\Delta(\LA):\L(\MU)] 
[\SS(\KAPPA):\D(\LA)] \chi_\KAPPA
\end{equation}
for $\MU \in \RPar_{r,s}$ and $r,s\geq 0$.
It means that as well as the decomposition numbers for Hecke
algebras, one also wants to determine the composition multiplicities
$[\bar\Delta(\LA):\L(\MU)]$. This is still an open problem
even when $\k$ is of characteristic zero; we will make some
further comments at the end of the next subsection.
\end{itemize}
\end{itemize}

\vspace{2mm}
\noindent{\bf 1.8.}
Suppose either that $q$ is not a root of unity and $e = 0$, or $q^2$
is a primitive $e$th root of unity
for $e > 1$.
Let $I := \{q^{2n}, t^{-2} q^{-2n}\:|\:n \in \ZZ\} \subset \k$ and
$\mathfrak{g}$ be the (complex) Kac-Moody algebra 
with Cartan matrix $(c_{i,j})_{i,j \in I}$
defined from
\begin{equation}\label{cartan}
c_{i,j} := 
\left\{
\begin{array}{rl}
2&\text{if $i=j$,}\\
-1&\text{if $i = q^2 j$ or $i=q^{-2} j$ but not both,}\\
-2&\text{if $i=q^2j=q^{-2} j$ (which is possible only if $e=2$),}\\
0&\text{otherwise.}
\end{array}\right.
\end{equation}
There are four cases paralleling the discussion of $K_0$ in the previous subsection: 
when $e = 0$
then 
$\g \cong \mathfrak{sl}_\infty\oplus \mathfrak{sl}_\infty$ 
if $t \notin \{\pm q^n\:|\:n \in \ZZ\}$
and $\g\cong\mathfrak{sl}_\infty$ otherwise;
when $e > 0$
then $\g\cong\widehat{\mathfrak{sl}}_e\oplus \widehat{\mathfrak{sl}}_e$ 
if $t \notin \{\pm q^n\:|\:n \in \ZZ\}$
and $\g \cong \widehat{\mathfrak{sl}}_e$ otherwise.
We denote the weight lattice of $\g$ by $P$ and its fundamental dominant weights by $\Lambda_i\:(i
\in I)$.
Let $V(-\Lambda_1|\Lambda_{t^{-2}})$ be the tensor product
of the integrable lowest weight module of lowest weight $-\Lambda_1$
and the integrable highest weight module of highest weight
$\Lambda_{t^{-2}}$.
This is an irreducible $\g$-module if and only if
$t \notin \{\pm q^n\:|\:n \in \ZZ\}$.

\begin{theorem}\label{tpctheorem}
The category of  $\OS(z, t)$-modules admits the structure of a tensor product
  categorification of the $\g$-module
$V(-\Lambda_1|\Lambda_{t^{-2}})$ 
in the general sense of Losev and Webster \cite{LW}.
\end{theorem}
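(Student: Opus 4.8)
The plan is to verify directly the axioms of a tensor product categorification in the sense of Losev--Webster \cite{LW}, assembling the structures developed above. First I would pin down the categorical $\g$-action on $\Mod\OS(z,t)$: the endofunctors $-\otimes\up$ and $-\otimes\down$ are biadjoint (via the cup and cap morphisms) and carry a natural action of the affine oriented skein category $\AOS(z,t)$, and decomposing their $n$-fold iterates into generalized eigenspaces for the Jucys--Murphy endomorphisms $X_i$ and $Y_j$ of \eqref{salsa} produces endofunctors $E_i,F_i$ ($i\in I$) with, for a fixed convention, $-\otimes\up\cong\bigoplus_{i}F_i$ and $-\otimes\down\cong\bigoplus_{i}E_i$. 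I would check the Kac--Moody relations among the $E_i$ and $F_i$ by exhibiting $\Mod\OS(z,t)$ as a $2$-representation of the relevant Kac--Moody $2$-category, the needed $2$-morphisms and their relations being controlled by $\AOS(z,t)$. The weight decomposition $\Mod\OS(z,t)=\bigoplus_{\nu}\Mod_{\nu}\OS(z,t)$ is the block decomposition coming, via the Morita equivalence \eqref{morita}, from the central-character (equivalently, Jucys--Murphy eigenvalue) decomposition of $\bigoplus_{r,s}\Mod H_r\otimes H_s$; the block attached to $\LA\in\Par_{r,s}$ lies in the weight space obtained from $-\Lambda_1+\Lambda_{t^{-2}}$ by subtracting the residue content of $\LA$.

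Next I would feed in the stratified structure. By Theorem~\ref{selection} and the analysis summarized in \S1.7, $\Mod\OS(z,t)$ is an upper-finite highest weight category when $q$ is not a root of unity, with standard objects $\Delta(\LA)$ ($\LA\in\Par$), and an upper-finite standardly stratified category when $q^2$ is a primitive $e$th root of unity, with standard objects $\Delta(\LA)$ and proper standard objects $\bar\Delta(\LA)$ ($\LA\in\RPar$); in both cases the stratification poset is dominance order on (restricted) bipartitions. In each of the four cases of \S1.7 this labelling set is exactly the vertex set of the tensor product crystal attached to $V(-\Lambda_1|\Lambda_{t^{-2}})$, reflecting that $\bigoplus_r\Mod H_r$ categorifies a dual (lowest-weight) Fock space while $\bigoplus_s\Mod H_s$ categorifies a Fock space, with the $t^{-2}$ normalization in \eqref{salsa} implementing the shift of the second tensor factor.

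The substantive step is the compatibility of the $\g$-action with this stratification: one must show that $E_i$ and $F_i$ send $\Delta$-filtered (resp.\ $\bar\Delta$-filtered) modules to such modules, with filtration multiplicities matching the crystal combinatorics --- equivalently, that on $K_0$ the operators $[E_i],[F_i]$ act as the Chevalley generators in the basis $\{[\Delta(\LA)]\}$. I would verify this first on the ``Cartan part'' $\OS^\circ(z,t)$, where $E_i$ and $F_i$ restrict to $i$-restriction and $i$-induction on $\bigoplus_{r,s}\Mod H_r\otimes H_s$ and the statement is the classical categorification of the tensor-product Fock space (Lascoux--Leclerc--Thibon, Ariki), and then transport along the exact standardization functor of \eqref{morita}\,ff., the point being that it intertwines the two categorical actions up to the bookkeeping twist visible in \eqref{salsa}. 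Combined with the identifications of $K_0(\dot\OS(z,t))$ from \S1.7 --- under which $K_0(\dot\OS(z,t))\otimes_{\ZZ}\QQ$ becomes $\SYM\otimes_{\QQ}\SYM$ (or the appropriate subring) and $[E_i],[F_i]$ become the standard operators on $V(-\Lambda_1|\Lambda_{t^{-2}})$, a check with the coefficients $N^\LA_\MU$ of \eqref{nlamu} --- this confirms all of the Losev--Webster axioms; one may then also invoke their uniqueness theorem to identify $\Mod\OS(z,t)$ with the abstractly constructed tensor product categorification.

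The hardest part will be this intertwining of the categorical $\g$-action with the triangular decomposition of $\OS(z,t)$: it requires a precise understanding of how the Jucys--Murphy elements of the quantized walled Brauer algebras interact with cups, caps, and the ``Borel'' subcategories $\OS^\sharp(z,t)$, $\OS^\circ(z,t)$, which is exactly the role played by $\AOS(z,t)$. A secondary, more formal obstacle absent from the original finite Losev--Webster setting is that the standard modules here have infinite length and the categories are only upper-finite, so the whole argument has to be run inside the extension of their formalism to upper-finite (standardly) stratified categories.
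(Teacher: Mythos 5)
Your overall picture is broadly right --- the biadjoint endofunctors from $\up\otimes-$ and $\down\otimes-$, their eigenspace decomposition via Jucys--Murphy endomorphisms coming from $\AOS(z,t)$, upgrading this to a 2-representation, and then checking the Losev--Webster axioms for the standardly stratified structure. But two steps are misformulated in a way that would sink the argument as written.

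First, the stratification poset for a Losev--Webster tensor product categorification is \emph{not} the dominance order on (restricted) bipartitions, as you state. It is $P\times P$ with the \emph{inverse} dominance order, together with the function $\WT:\RPar\rightarrow P\times P$, $\LA\mapsto(\wt^\up(\LA),\wt^\down(\LA))$. This is not cosmetic: the strata $\mathcal{C}_{(\rho,\sigma)}$ are indexed by pairs of weights rather than by individual bipartitions, a given stratum can contain many irreducibles (so the result is only standardly stratified, not highest weight, once $e>0$), and the associated-graded category of a stratum has to match a block of the \emph{minimal} categorification of $V(-\Lambda_1|\Lambda_{t^{-2}})$ --- here realized by $\Mod OS^\circ_{\rho,\sigma}$, via the Morita equivalence with $\Mod H_r\otimes H_s$. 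Establishing that $\WT$ actually gives a standardly stratified structure at all requires a linkage principle of the form $[\bar\Delta(\LA):\L(\MU)]\neq 0\Rightarrow\WT(\MU)\leq\WT(\LA)$; in the paper this is Theorem~\ref{linkage1}, proved from the character formula Theorem~\ref{characters}, which in turn comes from the branching rules. Your proposal never explains where this linkage comes from, and without it the axiomatics do not get off the ground.

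Second, you describe the compatibility of the categorical action with the triangular decomposition as the standardization functor ``intertwining the two categorical actions up to the bookkeeping twist visible in \eqref{salsa}.'' This is not what happens and it cannot: $E_i\circ\Delta$ is \emph{not} isomorphic to $\Delta\circ E_i^\up$ (nor to $\Delta$ post-composed with any single twisted induction). Rather there are short exact sequences of functors, as in Lemma~\ref{phew}:
$$0\longrightarrow \Delta\circ E_i^\down\longrightarrow E_i\circ\Delta\longrightarrow \Delta\circ E_i^\up\longrightarrow 0,$$
and similarly for $F_i$, coming from explicit bimodule short exact sequences (Lemma~\ref{newz}). It is precisely this SES axiom that encodes the ``tensor product'' structure in Losev--Webster; if the standardization functor intertwined the actions on the nose, the Grothendieck group would decompose as a direct sum rather than a tensor product, which is wrong. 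These sequences are also what make the characters of $\tilde\Delta(\LA)$ computable as path sums in the bipartition graph, feeding back into the linkage principle.

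Finally, a smaller technical point: for the 2-representation axiom itself, the paper does not directly check the quiver Hecke relations; it constructs an affine Hecke algebra action on powers of $E$ from $\AOS(z,t)$ and then invokes the Brundan--Kleshchev/Rouquier isomorphism together with the ``control by $K_0$'' theorem of \cite{BD} (which requires $K_0$ to be the correct integrable $\g$-module, supplied by Lemma~\ref{bunnarong}/Lemma~\ref{int}). Your proposal gestures at this but should be explicit that control-by-$K_0$ is what reduces the 2-representation axiom to a $K_0$-level check plus nilpotency of dots.
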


This means in particular that $\dot\OS(z,t)$ is a $2$-representation of
the {\em Kac-Moody 2-category} $\mathfrak{U}(\g)$ of Khovanov, Lauda and
Rouquier \cite{KL3, Rou}: there is a strict $\k$-linear
2-functor from $\mathfrak{U}(\g)$
to the $2$-category of $\k$-linear categories taking
objects to blocks of $\dot\OS(z,t)$, 
1-morphisms to functors between these blocks, and 2-morphisms to
natural transformations between these functors.
The functors $E_i\:(i \in I)$ arise from the summands
of the endofunctor $\up
\otimes ?$ defined by the generalized $i$-eigenspaces of
Jucys-Murphy elements.
For more background on 2-representations, we refer to \cite{BD}, whose notation and
conventions we follow closely. Diagrams representing 2-morphisms in
$\mathfrak{U}(\g)$ will be drawn in red to distinguish them from
diagrams in $\OS(z,t)$.

For any weight $\Lambda \in P$,
there is a universal 2-representation 
$\mathcal R(\Lambda)$ of $\mathfrak{U}(\g)$ with weight subcategories
$\mathcal R(\Lambda)_\omega :=\mathcal{H}om_{\mathfrak{U}(\g)}(\Lambda,
\omega)$
for each $\omega \in P$; 
see \cite[$\S$5.1.2]{Rou} and also \cite[$\S$4.2]{BD}.
Now we set
$\Lambda := \Lambda_{t^{-2}}-\Lambda_1$
and
let $\mathcal I$ be the invariant ideal (``full sub-2-representation'') of 
$\mathcal{R}(\La)$ generated by the 2-morphisms
\begin{equation}\label{ideal}
\mathord{
\begin{tikzpicture}[baseline = 0]
	\draw[->,darkred,thick] (0.08,-.3) to (0.08,.4);
      \node at (0.08,0) {$\color{darkred}\bullet$};
   \node at (0.1,-.42) {$\scriptstyle{i}$};
   \node at (-0.3,0) {$\color{darkred}\scriptstyle{\delta_{i,1}}$};
\end{tikzpicture}
}
{\scriptstyle\La}\:,
\qquad
\mathord{
\begin{tikzpicture}[baseline = 2]
	\draw[<-,darkred,thick] (0.08,-.3) to (0.08,.4);
      \node at (0.08,0.1) {$\color{darkred}\bullet$};
   \node at (0.1,.54) {$\scriptstyle{i}$};
   \node at (-0.38,.1) {$\color{darkred}\scriptstyle{\delta_{i,t^{-2}}}$};
\end{tikzpicture}
}
{\scriptstyle\La}\:,
\qquad
\mathord{
\begin{tikzpicture}[baseline = 1mm]
  \draw[<-,thick,darkred] (0,0.4) to[out=180,in=90] (-.2,0.2);
  \draw[-,thick,darkred] (0.2,0.2) to[out=90,in=0] (0,.4);
 \draw[-,thick,darkred] (-.2,0.2) to[out=-90,in=180] (0,0);
  \draw[-,thick,darkred] (0,0) to[out=0,in=-90] (0.2,0.2);
 \node at (0,-.12) {$\scriptstyle{1}$};
   \node at (0.45,0.2) {$\scriptstyle{\La}$};
\end{tikzpicture}
}
\end{equation}
for all $i \in  I$ (the last generator is needed only in case $t=\pm 1$).
The quotient $2$-representation
\begin{equation}\label{similarly}
\VV(-\Lambda_1|\Lambda_{t^{-2}})
:=
\mathcal{R}(\La) / \mathcal I
\end{equation}
is a special one of Webster's {\em generalized cyclotomic
  quotients} of $\mathfrak{U}(\g)$ introduced in \cite[Proposition
5.6]{Web}; see also \cite[Construction 4.13]{BD} where 
it is denoted
${\mathcal L}_{\min}(-\Lambda_1|\Lambda_{t^{-2}})$.
It is a $\k$-linear category which is not monoidal in any obvious way.

\begin{theorem}\label{evalthm}
Evaluation on the unit  object
defines a full
strongly equivariant functor (``morphism of $2$-representations'')
$\Theta:\mathcal{R}(\Lambda_{t^{-2}}-\Lambda_1)
\rightarrow \dot\OS(z,t)$.
This factors through $\VV(-\Lambda_1|\Lambda_{t^{-2}})$ to induce a strongly equivariant
equivalence
$$\bar\Theta:
\dot{\VV}(-\Lambda_1|\Lambda_{t^{-2}})
\stackrel{\approx}{\longrightarrow} \dot\OS(z,t).
$$
\end{theorem}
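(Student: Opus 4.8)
The plan is to build $\Theta$ from the universal property of $\mathcal{R}(\Lambda)$, descend it along the quotient defining $\VV(-\Lambda_1|\Lambda_{t^{-2}})$, and then check that the descended functor is an equivalence. By Theorem~\ref{tpctheorem}, $\dot\OS(z,t)$ is a $2$-representation of $\mathfrak{U}(\g)$, and under the associated weight decomposition the unit object $\unit = \varnothing$ lies in the weight space indexed by $\Lambda := \Lambda_{t^{-2}}-\Lambda_1$, being the ``vacuum'' to which none of the $E_i$ or $F_i$ have yet been applied. The universal $2$-representation $\mathcal{R}(\Lambda)$ carries a distinguished object $1_\Lambda \in \mathcal{R}(\Lambda)_\Lambda$ with the property that, for any $2$-representation $\mathcal{C}$ and any $X \in \mathcal{C}_\Lambda$, there is a unique strongly equivariant functor $\mathcal{R}(\Lambda)\to\mathcal{C}$ sending $1_\Lambda\mapsto X$ (see \cite[\S5.1.2]{Rou} and \cite[\S4.2]{BD}). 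Taking $\mathcal{C} = \dot\OS(z,t)$ and $X = \varnothing$ defines $\Theta$; it sends a composite of the $1$-morphisms $E_i, F_i$ applied to $1_\Lambda$ to the corresponding generalized-eigenspace summand of a tensor product of $\up$'s and $\down$'s applied to $\varnothing$ --- that is, $\Theta$ is evaluation on the unit object --- and it is strongly equivariant by construction.

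Next, I would prove that $\Theta$ is full. Every object of $\dot\OS(z,t)$ is a summand of some $\up^{\otimes a}\otimes\down^{\otimes b}$, which is $\Theta$ of an object of $\mathcal{R}(\Lambda)$, so it is enough to hit every morphism of $\OS(z,t)$ between such tensor products. By Theorem~\ref{first} any such morphism is a composite of tensor products of $1_\up, 1_\down, S, T, C, D$. Matching the operation of tensoring on the left by $\up$ (respectively $\down$) with horizontal composition by $E = \bigoplus_i E_i$ (respectively $F = \bigoplus_i F_i$) in $\mathfrak{U}(\g)$, each of $C, D, S, T$ decomposes over the eigenspaces into a finite combination of images of the generating $2$-morphisms of $\mathfrak{U}(\g)$: the cups and caps of the adjunctions $E_i \dashv F_i \dashv E_i$ give $C$ and $D$, while KLR crossings and dots --- controlled by the weight constraints and by the skein relation (S) matched against the KLR quadratic relation --- give $S$ and $T$. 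Hence $\Theta$ is full, and this fullness is inherited by the induced functor on idempotent completions.

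The plan next is to check that $\Theta$ annihilates the three families of generators of the invariant ideal $\mathcal{I}$ in \eqref{ideal}, so that it descends along $\mathcal{R}(\Lambda)\twoheadrightarrow\VV(-\Lambda_1|\Lambda_{t^{-2}})$. The two dotted generators are $\delta_{i,1}$ times the dot on $E_i 1_\Lambda$ and $\delta_{i,t^{-2}}$ times the dot on $F_i 1_\Lambda$; these are nonzero only for $i = 1$, respectively $i = t^{-2}$, in which case $\Theta$ sends them to the operators $X_1 - 1$ on $\up = \up\otimes\varnothing$ and $Y_1 - t^{-2}$ on $\down = \down\otimes\varnothing$ (the dot on an $E_i$- or $F_i$-strand being the relevant Jucys--Murphy element shifted by its eigenvalue $i$). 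On a single strand the pictures \eqref{salsa} reduce to straight strands, so $X_1 = 1_\up$ and $Y_1 = t^{-2}\cdot 1_\down$ act by the scalars $1$ and $t^{-2}$ --- precisely the vertices $1, t^{-2}\in I$ --- and both operators vanish. The remaining bubble generator, present only when $t = \pm 1$, maps to a closed loop, which by relation (D) equals $\tfrac{t-t^{-1}}{z}\,1_\varnothing = 0$ in that case. Thus $\Theta$ factors through $\VV(-\Lambda_1|\Lambda_{t^{-2}})$ and, since $\dot\OS(z,t)$ is idempotent complete, through its additive Karoubi envelope, yielding the full strongly equivariant functor $\bar\Theta : \dot{\VV}(-\Lambda_1|\Lambda_{t^{-2}}) \to \dot\OS(z,t)$.

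Finally, I would show $\bar\Theta$ is an equivalence. Density is clear: every indecomposable object of $\dot\OS(z,t)$ is a summand of some $\up^{\otimes a}\otimes\down^{\otimes b}$, hence of the image of a composite of $E_i$'s and $F_i$'s applied to $1_\Lambda$. The essential point --- which I expect to be the main obstacle --- is faithfulness. The plan is to compare Grothendieck groups: $\dot\OS(z,t)$ decategorifies to the $\g$-module $V(-\Lambda_1|\Lambda_{t^{-2}})$ by the computation of $K_0(\dot\OS(z,t))$ in \S1.7 together with Theorem~\ref{tpctheorem}, while $\dot{\VV}(-\Lambda_1|\Lambda_{t^{-2}})$ decategorifies to the same module because it is Webster's minimal generalized cyclotomic quotient (\cite[Proposition~5.6]{Web}, \cite[Construction~4.13]{BD}); being strongly equivariant and vacuum-to-vacuum, $\bar\Theta$ realizes this identification on $K_0$. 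Then I would upgrade ``full, dense, and an isomorphism on $K_0$'' to ``fully faithful'': both categories carry the upper-finite highest weight (respectively standardly stratified) structure described in \S1.7, with matching standard objects, and the dimension of each $\Hom$-space between projectives is determined by BGG reciprocity and the decomposition matrices, which coincide under the $K_0$-identification; hence for all objects $\bar\Theta$ is a surjection between $\Hom$-spaces of equal finite dimension, so it is bijective. (Alternatively, one can appeal to the uniqueness of tensor product categorifications of a fixed module in the sense of Losev--Webster \cite{LW}; but carrying out the $\Hom$-dimension comparison, and with it faithfulness, is where the real work lies.)
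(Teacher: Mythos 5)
Your overall scaffold is right, and it matches the paper's: construct $\Theta$ from the universal property of $\mathcal R(\Lambda_{t^{-2}}-\Lambda_1)$ using Theorem~\ref{tpctheorem}, check that $\Theta$ kills the generators of the ideal~\eqref{ideal}, and then promote the induced $\bar\Theta$ to an equivalence. Your treatment of the two dot generators is also essentially the paper's argument, just phrased concretely (the paper observes the dot is a nilpotent element of $\End_{OS}(\Delta(((1),\varnothing)))\cong\k$; you compute it as $1-1=0$ directly).

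Two places diverge from the paper, one minor and one serious.

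For the bubble generator you identify the color-$1$ KLR bubble with the ribbon bubble $\Bubble$ acting on $\Delta(\NOTHING)$ and invoke relation~(D). This requires two things you do not justify: that on $\Delta(\NOTHING)$ the only nonvanishing summand of the full $EF$-bubble is the color-$(1,1)$ piece, so that the skein bubble collapses to the KLR one; and that the normalization of the cups and caps in the $2$-representation structure coming from Theorem~\ref{tpctheorem} agrees on the nose with the cups and caps of $\OS(z,t)$, not merely up to a scalar. The paper instead argues entirely inside $\Mod OS$: by Lemmas~\ref{newfirst} and~\ref{phew} the module $E_1 F_1 \Delta(\NOTHING)$ has a two-step $\Delta$-flag with $\Delta(\NOTHING)$ at the bottom, and Example~\ref{swex} with $n=0$ plus BGG reciprocity shows this module is the indecomposable projective $\P(((1),(1)))$, forcing the unit--counit composite to vanish. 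That argument sidesteps the normalization issue entirely and is worth adopting.

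The serious gap is in the equivalence step. You propose to show $\bar\Theta$ is a surjection between $\Hom$-spaces of equal finite dimension, with equality of dimensions coming from ``both categories carry the standardly stratified structure of \S1.7 with matching standard objects, and the decomposition matrices coincide under the $K_0$-identification.'' This is circular: to know that the decomposition matrix of $\dot\VV(-\Lambda_1|\Lambda_{t^{-2}})$ coincides with that of $\dot\OS(z,t)$, you would first need to establish a structure-preserving equivalence, which is what you are trying to prove. Agreement on $K_0$ does not fix the decomposition numbers (indeed decomposition numbers are generally invisible on $K_0$ of the projectives alone). Your alternative --- citing Losev--Webster uniqueness of tensor product categorifications --- is a genuine route, but it has its own cost: you would first have to establish that $\dot\VV(-\Lambda_1|\Lambda_{t^{-2}})$ is itself a tensor product categorification in the LW sense, which is not free. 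The paper's actual proof of the key isomorphism~\eqref{magic} avoids all of this. It reduces, via explicit isomorphisms $\underline{E}\,\underline{F}1_\La\cong\underline{F}\,\underline{E}1_\La$ (constructed by Krull--Schmidt in $\dot\VV$) and adjunction, to showing that $\bar\Theta$ is an isomorphism on $\End(\underline{E}^r)$. That in turn follows from three inputs: \cite[Proposition~3.11]{KL3}, which shows $\End_{\dot\VV}(\underline{E}^r)$ is generated over $\End(1_\La)$ by the quiver Hecke algebra image; the observation that $\End_{\dot\VV}(1_\La)=\k$ because the ideal~\eqref{ideal} forces all dotted bubbles to be scalars; and the Brundan--Kleshchev isomorphism $QH_r/J_r\cong H_r$ from~\cite{BK}, which matches the induced map with the isomorphism $\i_r$ of~\eqref{randwick}. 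This is the real technical core of the theorem, and your proposal does not supply a substitute for it.

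Your separate fullness argument (matching $S,T,C,D$ to KLR generators) is in the right spirit, but is subsumed by, and better handled as part of, the $\Hom$-space isomorphism~\eqref{magic} rather than proved independently.
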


This is significant because the finite-dimensional category
$\VV(-\Lambda_1|\Lambda_{t^{-2}})$ possesses a natural
$\ZZ$-grading.
When the ground field is of characteristic zero, 
this 
grading is
known to be {\em mixed}
in the sense of \cite[Definition
1.11]{Web}
in three of the four cases discussed above: it is trivial in the semisimple case;
 it may be deduced in an elementary way from the
Koszulity of the Khovanov arc algebra $K^\infty_\infty$ studied in
\cite{BS} 
when $e=0$ and $t \in \{\pm q^n\:|\:n \in \ZZ\}$;
and it follows from \cite{VV} 
when $e > 0$ and $t \notin \{\pm q^n\:|\:n \in \ZZ\}$.
We conjecture that it is also mixed in the fourth case.
As discussed in \cite[$\S$8]{Web}, the truth of this conjecture
implies
that the classes $[\P(\LA)]$ coincide with Lusztig's canonical basis for
$V(-\Lambda_1|\Lambda_{t^{-2}})$.

\vspace{2mm}
\noindent{\bf 1.9.}
There is a parallel story in the {\em degenerate case} $z=0$.
In this case, relation ($\mathrm{S}$) says simply that the
positive and negative crossings are {\em equal}; it is natural
to denote them both by the same ``singular'' crossing
$\mathord{
\begin{tikzpicture}[baseline = -.5mm]
	\draw[->,thick,darkblue] (0.2,-.2) to (-0.2,.3);
	\draw[thick,darkblue,->] (-0.2,-.2) to (0.2,.3);
\end{tikzpicture}
}$. The relation (T) forces $t^2=1$;
we assume actually that $t = 1$ since the other possibility $t=-1$ produces an
isomorphic object.
In place of the relation ($\mathrm{D}$) (which no longer makes any sense)
we impose that
$$
\Bubble
=
\delta\, 1_\varnothing
$$
for some $\delta \in \k$.
The resulting category is the {\em oriented
  Brauer category} $\OB(\delta)$
from \cite{BCNR}, which is the free $\k$-linear symmetric monoidal
category generated by the dual pair of objects $\up$ and $\down$ of
dimension $\delta$.
Like in (\ref{randwick}), 
there is a homomorphism
\begin{equation}\label{randwick2}
\i_r:\k\Sym_r \rightarrow \End_{\OS(\delta)}(\up^r)
\end{equation}
sending the transposition $(i\:\:i\!+\!1)$
to the crossing of the $i$th and $(i+1)$th strands. 
The degenerate analogs of Theorems~\ref{first} and \ref{bt} are
discussed in \cite{BCNR}; the latter shows in particular that $\i_r$ is an isomorphism.

In this paragraph suppose that $\k = \CC$. The category
\begin{equation}
\REP GL_\delta := \dot\OB(\delta)
\end{equation}
is the Deligne category mentioned before.
As in Theorem~\ref{webs}, for $n \in \NN$ and 
any sign $\eps \in \{\pm\}$, the category $\Rep
GL_{n}$ of (finite-dimensional) 
rational representations of 
$GL_{n}$ over $\k$ is monoidally equivalent to the quotient of 
the Deligne category $\REP GL_{\eps n}$ by the tensor ideal of negligible morphisms. This is proved in
\cite[Th\'eor\`eme 10.4]{D}; the induced symmetric monoidal structure
on $\Rep GL_{n}$ is the usual one when $\eps = +$, and comes
from super vector spaces when $\eps = -$.
The following extends this result to include fields of positive
characteristic.

\begin{theorem}\label{websup}
For
$n \in \NN$ and $\eps \in \{\pm\}$, 
there is a full $\k$-linear monoidal functor
$\Psi:\OB(\eps n) \rightarrow 
\Rep GL_{n}$ 
sending $\up$ and $\down$ to the natural $GL_{n}$-module $V$
and its dual $V^*$, respectively,
the crossing $\mathord{
\begin{tikzpicture}[baseline = -.5mm]
	\draw[->,thick,darkblue] (0.2,-.2) to (-0.2,.3);
	\draw[thick,darkblue,->] (-0.2,-.2) to (0.2,.3);
\end{tikzpicture}
}$
to the homomorphism $V \otimes V
\rightarrow V \otimes V, v \otimes w \mapsto \eps w \otimes v$,
and the cap
$\mathord{
\begin{tikzpicture}[baseline = -3mm]
	\draw[<-,thick,darkblue] (0.3,-0.35) to[out=90, in=0] (0.05,0);
	\draw[-,thick,darkblue] (0.05,0) to[out = 180, in = 90] (-0.2,-0.35);
\end{tikzpicture}
}
$ to 
$V \otimes V^* \rightarrow \k, v \otimes f \mapsto \eps f(v)$.
It induces a monoidal equivalence 
\begin{equation}\label{tiltingequiv}
\bar\Psi:\dot{\OB}(\eps n) / \mathcal N
\stackrel{\approx}{\longrightarrow}
\Tilt' GL_{n},
\end{equation}
where
$\mathcal N$ is the 
additive $\k$-linear tensor ideal of $\dot{\OB}(\eps n)$
generated 
by
$$
x := \left\{
\begin{array}{ll}
\sum_{g \in \Sym_{n+1}} \operatorname{sgn}(g) \i_{n+1}(g)
&\text{if $\eps = +$},\\
\sum_{g \in \Sym_{n+1}} \i_{n+1}(g)&\text{if $\eps = -$,}
\end{array}\right.
$$
and
$\Tilt' GL_n$ is the full subcategory of $\Rep GL_n$ consisting of
modules that are isomorphic to direct sums of summands of tensor
products of $V$ and $V^*$.
\end{theorem}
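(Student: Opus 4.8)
The plan is to construct the functor $\Psi:\OB(\eps n)\to\Rep GL_n$ directly and then identify its image and kernel. First I would verify that the assignment on generators is well-defined: the natural module $V$ of $GL_n$ together with its dual $V^*$ form a dual pair of objects, with cap $\ev:V\otimes V^*\to\k$, $v\otimes f\mapsto \eps f(v)$, cup $\coev:\k\to V^*\otimes V$ the coevaluation, and symmetry $\tau:V\otimes V\to V\otimes V$, $v\otimes w\mapsto \eps w\otimes v$. One checks the defining relations of $\OB(\delta)$ from \cite{BCNR}: the symmetric-group relations (braid and $\tau^2=1$) are immediate since $\eps^2=1$; the snake/zig-zag relations hold because $\ev,\coev$ exhibit a genuine duality (the sign $\eps$ appears twice and cancels); and the bubble relation $\Bubble=\delta\,1_\varnothing$ becomes the assertion that the composite $\k\xrightarrow{\coev}V^*\otimes V\xrightarrow{\eps\,\ev}\k$ is multiplication by $\eps\dim V=\eps n=\delta$, which is exactly the hypothesis $\delta=\eps n$. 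By the universal property of $\OB(\delta)$ this produces a $\k$-linear symmetric monoidal functor $\Psi$; passing to the additive Karoubi envelope gives $\dot\Psi:\dot\OB(\eps n)\to\Rep GL_n$.

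Next I would identify the image. Since $\up\mapsto V$ and $\down\mapsto V^*$, the essential image of $\dot\Psi$ is by definition the category $\Tilt' GL_n$ of summands of tensor products of $V$ and $V^*$. Fullness is the key external input: it says $\Psi$ surjects onto all $GL_n$-homomorphisms between mixed tensor powers. Over a field of characteristic zero this is classical Schur--Weyl--Brauer duality for $GL_n$ (equivalently \cite[Th\'eor\`eme 10.4]{D}); in positive characteristic it is the statement that the walled Brauer algebra $B_{r,s}(n)=\End_{\OB(n)}(\down^s\up^r)$ surjects onto $\End_{GL_n}(V^{\otimes r}\otimes (V^*)^{\otimes s})$. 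I would deduce this from the (characteristic-free) double centralizer property for tilting modules: $V^{\otimes r}\otimes(V^*)^{\otimes s}$ is a tilting $GL_n$-module, and the endomorphism algebra of a full tilting module is computed by its diagrammatic model; this is the mixed-tensor analog of the result that the (dual of the) Schur algebra is the endomorphism algebra of $V^{\otimes d}$, and can be cited from the literature on walled Brauer algebras and rational Schur--Weyl duality (e.g. work of Dipper--Doty--Stoll and of Cox--De Visscher). Granting fullness, $\dot\Psi$ is a full functor onto $\Tilt' GL_n$.

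Finally I would compute $\mathcal N=\ker\dot\Psi$. Since $\dot\Psi$ is full onto $\Tilt' GL_n$, it induces a monoidal equivalence $\dot\OB(\eps n)/\mathcal N\xrightarrow{\approx}\Tilt' GL_n$ by general nonsense (a full, essentially surjective $\k$-linear functor factors through its kernel ideal as an equivalence; that the kernel is a tensor ideal is automatic). It remains to show $\mathcal N$ is generated as an additive $\k$-linear tensor ideal by the single element $x=\i_{n+1}(e)\in\End_{\OB(\eps n)}(\up^{n+1})$, where $e$ is the antisymmetrizer if $\eps=+$ and the full symmetrizer if $\eps=-$. One inclusion is the observation that $\Psi(x)=0$: for $\eps=+$ this is because $\Lambda^{n+1}V=0$ for $\dim V=n$, and for $\eps=-$ (super setting) $x$ projects onto $\mathrm{Sym}^{n+1}$ of the odd space $V$, which also vanishes. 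For the reverse inclusion I would argue as follows: let $\mathcal J$ be the tensor ideal generated by $x$; then $\dot\OB(\eps n)/\mathcal J$ surjects onto $\Tilt' GL_n$, and I must show this surjection is faithful. Here I would pass to the generic Deligne category $\dot\OB(\delta)$ with $\delta$ an indeterminate, use that $\dot\OB(\delta)/\mathcal J_\delta$ has the "right size" on each block — the indecomposable objects surviving are precisely those indexed by bipartitions with no column of length $>n$ — and match this against the classification of indecomposable tilting modules for $GL_n$ by highest weight, whose labels are exactly such bipartitions (dominant rational weights). A dimension/labelling count then forces faithfulness. This last step, identifying the ideal $\mathcal N$ precisely and in a characteristic-free way rather than merely up to a smaller ideal, is the main obstacle: in positive characteristic one cannot appeal to semisimplicity, so one needs either the explicit classification of indecomposable tilting $GL_n$-modules together with a compatible classification of indecomposables in $\dot\OB(\eps n)/\mathcal J$, or a direct diagrammatic argument that any morphism killed by $\Psi$ already lies in $\mathcal J$ (the analog, for the walled Brauer algebra, of the statement that the kernel of the walled Brauer algebra surjecting onto $\mathrm{End}_{GL_n}$ is generated by a single idempotent). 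I would most likely import the needed tilting-module input from the literature rather than reprove it.
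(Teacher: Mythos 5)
Your outline is in the right direction, but you have a real gap at the last step, which you yourself acknowledge; and there is a simplification you are missing that the paper uses to make the whole argument elementary and characteristic-free.

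The gap: you need to show that the kernel of $\dot\Psi$ is \emph{exactly} the tensor ideal $\mathcal J$ generated by the antisymmetrizer (or symmetrizer) $x\in\End(\up^{n+1})$, not just that it contains it. Your proposed route --- pass to a generic Deligne category $\dot\OB(\delta)$ with $\delta$ an indeterminate, count indecomposables per block, and match against the classification of indecomposable tilting $GL_n$-modules --- does not obviously work in positive characteristic, and you say as much. The generic ($\delta$ indeterminate) category is semisimple, so it gives you no information about how the kernel can jump when you specialize to $\delta=\eps n\cdot 1_\k$ over a field of characteristic $p$. Matching against tilting module classifications would require knowing the (characteristic-dependent) decomposition of $V^{\otimes r}\otimes (V^*)^{\otimes s}$ into indecomposables, which is precisely the hard side of the duality; importing that wholesale is circular with respect to what you are trying to prove.

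What the paper does instead (this is the same argument as for Theorem~\ref{webs}, transported to the degenerate case): for arbitrary objects $\a,\b$, build the linear isomorphism $\theta:\Hom_{\OB(\eps n)}(\a,\b)\stackrel{\sim}{\rightarrow}\End_{\OB(\eps n)}(\up^r)$ by bending all $\down$-legs up with rightward cups and caps (exactly as in (\ref{exactly})). This fits into a commuting square with the corresponding isomorphism on the $GL_n$-side, and thereby reduces both fullness and the identification of $\ker\Psi$ to the single assertion that $\j_r:\k\Sym_r\to\End_{GL_n}(V^{\otimes r})$ is surjective with kernel equal to the two-sided ideal generated by the antisymmetrizer $\sum_{g\in\Sym_{n+1}}\operatorname{sgn}(g)\,g$ (after the usual reduction to $\eps=+$ via the isomorphism $\#$). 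That statement is exactly the characteristic-free Schur--Weyl duality of de Concini and Procesi \cite[Theorems 4.1--4.2]{dCP}. This completely sidesteps tilting module theory and the walled Brauer algebra: you never need to know the kernel of $B_{r,s}(n)\to\End_{GL_n}(V^{\otimes r}\otimes(V^*)^{\otimes s})$ is generated by a single idempotent, because $\theta$ reduces you to pure $\up$-strings where only the group algebra of the symmetric group appears. If you adopt this reduction, your first two paragraphs stand as written and the third becomes a direct citation.
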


The other results discussed above can also be adapted quite easily to $\OB(\delta)$.
For example, 
the degenerate analog of Theorem~\ref{sscase} 
gives that $\dot\OB(\delta)$ is semisimple if and only if $\k$ is of
characteristic zero and $\delta \notin \ZZ$. This is proved in
\cite{D}; non-semisimplicity in positive characteristic is clear from
(\ref{randwick}).
The analog of Theorem~\ref{selection} needs $\delta \notin \ZZ\cdot 1_\k$.
Then there is a description of $K_0(\dot\OB(\delta))$ with four cases similar to the
above: replace the representation theory of Hecke algebras with that
of symmetric groups.

Let us discuss the degenerate analogs of Theorem~\ref{tpctheorem}--\ref{evalthm} in a little
more detail. Assume now that $\k$ is a field of characteristic $p \geq 0$.
Let $I := \{n, -n-\delta\:|\:n \in \ZZ\}
\subseteq \k$, 
and $\mathfrak{g}$ be the (complex) Kac-Moody algebra 
with Cartan matrix $(c_{i,j})_{i,j \in I}$
defined from
\begin{equation}\label{cartan2}
c_{i,j} := 
\left\{
\begin{array}{rl}
2&\text{if $i=j$,}\\
-1&\text{if $i = j+1$ or $i=j-1$ but not both,}\\
-2&\text{if $i=j+1=j-1$ (which is possible only if $p=2$),}\\
0&\text{otherwise.}
\end{array}\right.
\end{equation}
As before, there are four possibilities depending on
$p$ and $\delta$: $\g \cong \mathfrak{sl}_\infty\oplus
\mathfrak{sl}_\infty,
\mathfrak{sl}_\infty,
\widehat{\mathfrak{sl}}_p\oplus\widehat{\mathfrak{sl}}_p$ or $\widehat{\mathfrak{sl}}_p$.
The degenerate analog of Theorem~\ref{tpctheorem} shows that
$\dot\OB(\delta)$ admits the structure of a tensor product
categorification of the $\g$-module $V(-\Lambda_0|\Lambda_{-\delta})$;
see also \cite[Theorem 10.2.1]{A} for a closely related (actually,
Ringel dual) statement in the case $p=0$.
The following is the degenerate analog of Theorem~\ref{evalthm};
it was conjectured originally in discussions
with Stroppel and Webster.

\begin{theorem}\label{aisha2}
Evaluation on the unit induces
a strongly equivariant equivalence
\begin{equation}\label{thappy}
\bar\Theta:\dot\VV(-\Lambda_0|\Lambda_{-\delta})
\stackrel{\approx}{\longrightarrow}
\dot\OB(\delta)
\end{equation}
where
$\VV(-\Lambda_0|\Lambda_{-\delta})$
is the quotient of the universal 2-representation
$\mathcal R(\Lambda_{-\delta}-\Lambda_0)$
of $\mathfrak{U}(\g)$ by the invariant ideal generated by the
2-morphisms
\begin{equation}
\mathord{
\begin{tikzpicture}[baseline = 0]
	\draw[->,darkred,thick] (0.08,-.3) to (0.08,.4);
      \node at (0.08,0) {$\color{darkred}\bullet$};
   \node at (0.1,-.42) {$\scriptstyle{i}$};
   \node at (-0.3,0) {$\color{darkred}\scriptstyle{\delta_{i,0}}$};
\end{tikzpicture}
}
{\scriptstyle\Lambda_{-\delta}-\Lambda_0}\:,
\qquad
\mathord{
\begin{tikzpicture}[baseline = 2]
	\draw[<-,darkred,thick] (0.08,-.3) to (0.08,.4);
      \node at (0.08,0.1) {$\color{darkred}\bullet$};
   \node at (0.1,.54) {$\scriptstyle{i}$};
   \node at (-0.38,.1) {$\color{darkred}\scriptstyle{\delta_{i,-\delta}}$};
\end{tikzpicture}
}
{\scriptstyle\Lambda_{-\delta}-\Lambda_0}\:,
\qquad
\mathord{
\begin{tikzpicture}[baseline = 1mm]
  \draw[<-,thick,darkred] (0,0.4) to[out=180,in=90] (-.2,0.2);
  \draw[-,thick,darkred] (0.2,0.2) to[out=90,in=0] (0,.4);
 \draw[-,thick,darkred] (-.2,0.2) to[out=-90,in=180] (0,0);
  \draw[-,thick,darkred] (0,0) to[out=0,in=-90] (0.2,0.2);
 \node at (0,-.12) {$\scriptstyle{0}$};
   \node at (0.85,0.2) {$\scriptstyle{\Lambda_{-\delta}-\Lambda_0}$};
\end{tikzpicture}
}
\end{equation}
for all $i \in  I$ (the last generator is needed only in case
$\delta=0$).
\end{theorem}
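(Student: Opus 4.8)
The plan is to run the degenerate version of the argument behind Theorems~\ref{tpctheorem}--\ref{evalthm}, replacing each quantum ingredient by its $z=0$ analog (the Hecke algebra $H_r$ by $\k\Sym_r$, the affine oriented skein category $\AOS(z,t)$ by the affine oriented Brauer category of \cite{BCNR}, the relations (S), (T), (D) by their degenerate forms) and using throughout that $\OB(\delta)$ is symmetric rather than merely braided. First I would construct the functor $\Theta$. By the degenerate analog of Theorem~\ref{tpctheorem}, $\dot\OB(\delta)$ carries a $2$-representation of $\mathfrak U(\g)$: there is a strict $\k$-linear $2$-functor from $\mathfrak U(\g)$ taking the weight $\La:=\Lambda_{-\delta}-\Lambda_0$ to the block of $\dot\OB(\delta)$ containing $\unit$, with $E_i$ (resp. $F_i$) the summand of $\up\otimes ?$ (resp. $\down\otimes ?$) cut out by the generalized $i$-eigenspace of the Jucys-Murphy element $X_1$ (resp. $Y_1$), the polynomial action of dots being exactly the action of Jucys-Murphy elements and factoring through the affine oriented Brauer category. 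The functor $\Theta:\mathcal R(\La)\to\dot\OB(\delta)$ is then the strongly equivariant functor classifying the object $\unit$ of weight $\La$ via the universal property of $\mathcal R(\La)$ (see \cite[$\S$4.2]{BD}); concretely it sends a $1$-morphism $X:\La\to\omega$ to $X\unit$, so strong equivariance is automatic.

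Second, I would check that $\Theta$ kills the invariant ideal, so as to obtain $\bar\Theta:\dot\VV(-\Lambda_0|\Lambda_{-\delta})\to\dot\OB(\delta)$; this amounts to checking the three families of generators in Theorem~\ref{aisha2}. A short diagram computation with the crossing and cap/cup relations of $\OB(\delta)$ shows that $X_1$ acts on $\up=\up\otimes\unit$ by the scalar $0$ and $Y_1$ acts on $\down=\down\otimes\unit$ by $-\delta$. Hence $E_i\unit=0=F_i\unit$ unless $i\in\{0,-\delta\}$, and on $E_0\unit$ (resp. $F_{-\delta}\unit$) the ``dot minus eigenvalue'' $2$-morphism already vanishes --- which is exactly what the first two generators require. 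The cyclotomic bubble generator is present only when $\delta=0$; there a direct computation identifies its image under $\Theta$ with a scalar multiple of the closed loop $\Bubble$, equal to $\delta\,1_\varnothing=0$ by the defining relation of $\OB(\delta)$. So $\bar\Theta$ exists and is still strongly equivariant.

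Third, I would prove $\bar\Theta$ is an equivalence. It is essentially surjective: the objects in the image of $\Theta$ are summands of the $\up^{r}\otimes\down^{s}$ (because $\up\otimes ?=\bigoplus_{i}E_i$ and $\down\otimes ?=\bigoplus_{i}F_i$), and these generate $\dot\OB(\delta)$ under $\oplus$ and passage to summands. It is full: every generating morphism of $\OB(\delta)$ in the presentation of the degenerate analog of Theorem~\ref{first} --- the crossing $S$, the cup $C$ and the cap $D$ --- is the image under $\Theta$ of a $2$-morphism of $\mathfrak U(\g)$ (the crossing from the KLR-type dot and crossing $2$-morphisms, the cup/cap from the $E_i\dashv F_i$ adjunctions), so $\Theta$, and hence $\bar\Theta$, is surjective on all morphism spaces. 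It is faithful: by the degenerate analog of Theorem~\ref{bt}, $\dim_\k\End_{\OB(\delta)}(\up^{r}\otimes\down^{s})=(r+s)!$, while the realization of $\VV(-\Lambda_0|\Lambda_{-\delta})$ as one of Webster's generalized cyclotomic quotients \cite{Web} lets one compute the dimension of the corresponding endomorphism algebra in $\dot\VV$ to be the same; a surjective $\k$-linear map between spaces of equal finite dimension is bijective, and since every object of either category is a summand of some $\up^{r}\otimes\down^{s}$, this forces $\bar\Theta$ to be faithful. Therefore $\bar\Theta$ is an equivalence of $2$-representations.

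The \emph{main obstacle} is the dimension count behind faithfulness: one has to evaluate $\dim_\k\End_{\dot\VV}$ of the relevant object using the structure theory of generalized cyclotomic quotients --- which requires matching the crystal labelling of the simple (equivalently, indecomposable projective) objects of $\VV(-\Lambda_0|\Lambda_{-\delta})$ with the bipartition labelling coming from the triangular decomposition of $\OB(\delta)$ of $\S1.7$, in particular matching the lowest-weight factor $V(-\Lambda_0)$ with the $\down$-strands (via the $Y_j$, twisted by the $-\delta$ shift) and the highest-weight factor $V(\Lambda_{-\delta})$ with the $\up$-strands (via the $X_i$). This is precisely the point at which the $z\neq 0$ argument for Theorem~\ref{evalthm} has to be replayed degenerately. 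The case $\delta=0$ needs extra care: then $\La=0$, the cyclotomic bubble relation of Theorem~\ref{aisha2} becomes indispensable, and one must confirm that no relations beyond the three listed there are imposed on $\dot\OB(\delta)$.
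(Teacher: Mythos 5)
Your overall structure matches the paper's: construct $\Theta$ via the universal property of $\mathcal R(\Lambda_{-\delta}-\Lambda_0)$, check it kills the invariant ideal, and then establish the equivalence. But the implementation of the last step has genuine gaps, and this is precisely where the paper leans on the Brundan--Kleshchev isomorphism in an essential way.

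\textbf{Fullness.} You claim the crossing $S$, cup $C$ and cap $D$ of $\OB(\delta)$ are each ``the image under $\Theta$ of a $2$-morphism of $\mathfrak U(\g)$.'' The cup and cap are indeed reached from the adjunction $2$-morphisms, but the symmetric-group crossing $s_i$ is \emph{not} the image of any single KLR crossing: $\tau_i^2 = 0$ on same-color idempotents while $s_i^2 = 1$, so the relationship between KLR $2$-morphisms and the crossing $s_i$ passes through the completion isomorphism $\widehat{QH}_r \cong \widehat{dAH}_r$ of \cite{BK,Rou,W} followed by the evaluation $dAH_r \twoheadrightarrow \k\Sym_r$. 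In other words, asserting that $s_i$ is in the image is circular: it is equivalent to the fullness you are trying to establish. What the paper actually does (in the proof of Theorem~\ref{evalthm}, to which the proof of Theorem~\ref{aisha2} defers after the degenerate modifications) is reduce to $\End(\underline E^r)$ via the $\theta$-reduction, show that the canonical map $\j_r : QH_r \to \End(\underline E^r)$ is surjective (using \cite[Proposition 3.11]{KL3} together with the observation that $\End(1_\Lambda) = \k$), and then apply the isomorphism $QH_r/J_r \cong \k\Sym_r$ from \cite{BK}. You cannot bypass this.

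\textbf{Faithfulness.} You identify the dimension count as ``the main obstacle'' but do not resolve it, so this part of the argument is incomplete as stated. To compute $\dim_\k \End_{\dot\VV}(\underline X)$ you would need exactly the same machinery: the $\theta$-reduction to $\underline E^r$, surjectivity of $\j_r$, and the BK isomorphism. Once you have these, fullness and faithfulness drop out together from the commuting square, and the dimension count is redundant.

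\textbf{The bubble generator when $\delta=0$.} Your claim that the image of the cyclotomic bubble is ``a scalar multiple of the closed loop $\Bubble$'' and hence zero is too quick. The red counter-clockwise bubble labelled $0$ evaluates to the composite $\Delta(\NOTHING) \xrightarrow{f} E_0 F_0 \Delta(\NOTHING) \xrightarrow{g} \Delta(\NOTHING)$ using the \emph{eigenspace-restricted} adjunction data, not the full cup and cap of $\OB(\delta)$. Since $\End_{\OB(\delta)}(\unit) = \k \cdot 1_\unit$, the image is some scalar times $1_\unit$ and it is not a priori a multiple of the loop morphism. The paper's argument shows $g\circ f = 0$ by proving $E_0 F_0 \Delta(\NOTHING) \cong \P(((1),(1)))$ is indecomposable (via BGG reciprocity and the degenerate form of Example~\ref{swex}), so that $f$ includes $\Delta(\NOTHING)$ as the bottom of the $\Delta$-flag while $g$ projects onto the top --- an argument that does not reduce to a single relation in $\OB(\delta)$.

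Your checks of the first two ideal generators (the eigenvalue computations $x(\up)=0$, $x(\down)=-\delta$) and the construction of $\Theta$ are correct. But as written the proposal replaces the crux of the paper's argument --- the $\theta$-reduction and the BK isomorphism --- with claims that are either circular or explicitly flagged as unresolved. You should state the degenerate $\theta$-reduction, cite \cite[Proposition 3.11]{KL3} for surjectivity of $\j_r$, and cite the BK isomorphism for the degenerate case; with those in place the argument closes.
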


\begin{corollary}\label{finalt}
Assume that $\k = \CC$, $q$ is not a root of unity, and $\delta \in
\CC$ is arbitrary.
Then there is a $\CC$-linear equivalence of categories
$\REP U_q(\mathfrak{gl}_\delta)\stackrel{\approx}{\longrightarrow} \REP GL_\delta$.
\end{corollary}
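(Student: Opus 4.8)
The plan is to deduce the corollary from Theorems~\ref{evalthm} and \ref{aisha2} by observing that, over $\CC$ with $q$ not a root of unity, the two generalized cyclotomic quotients appearing on their right-hand sides coincide as $\CC$-linear categories. Throughout set $z := q-q^{-1}$ and $t := q^\delta$, so that $\REP U_q(\mathfrak{gl}_\delta)=\dot\OS(z,t)$ while $\REP GL_\delta=\dot\OB(\delta)$; since the corollary asks only for a $\CC$-linear (not monoidal) equivalence, all tensor structure may be discarded.

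The first step is to match up the Kac--Moody data in the two settings. In the degenerate case of Theorem~\ref{aisha2} one takes $p=0$, so that the index set is $\{n,\,-n-\delta\:|\:n\in\ZZ\}$ with Cartan matrix (\ref{cartan2}); in the situation of Theorem~\ref{evalthm} one has $e=0$ since $q$ is not a root of unity, so that the index set is $\{q^{2n},\,t^{-2}q^{-2n}\:|\:n\in\ZZ\}$ with Cartan matrix (\ref{cartan}). Let $\phi$ be the assignment ``$x\mapsto q^{2x}$'', that is, $n\mapsto q^{2n}$ on the first piece and $-n-\delta\mapsto t^{-2}q^{-2n}$ on the second. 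Then $\phi$ is equivariant for the operation ``add $1$'' on the degenerate index set and ``multiply by $q^2$'' on the quantum one, and it sends the distinguished vertices $0\mapsto 1$ and $-\delta\mapsto t^{-2}$; since moreover the matrix entry $-2$ never occurs here (as $p=e=0$), one sees that $\phi$ intertwines (\ref{cartan2}) with (\ref{cartan}). One must also check that $\phi$ is a well-defined bijection, which comes down to the statement that the two integer pieces of the degenerate index set overlap precisely when the pieces $\{q^{2n}\}$ and $\{t^{-2}q^{-2n}\}$ on the quantum side overlap, both being governed by the same condition $\delta\in\ZZ$ (equivalently $t^2\in q^{2\ZZ}$).

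A bijection of Cartan data of this kind induces a strict isomorphism of the associated Kac--Moody $2$-categories carrying fundamental weights $\Lambda_i\mapsto\Lambda_{\phi(i)}$, hence an isomorphism of universal $2$-representations $\mathcal R(\Lambda_{-\delta}-\Lambda_0)\cong\mathcal R(\Lambda_{t^{-2}}-\Lambda_1)$. Under it, the generators of the invariant ideal in Theorem~\ref{aisha2} (with coefficients $\delta_{i,0}$ and $\delta_{i,-\delta}$ as $i$ ranges over the index set, together with the bubble labelled $0$) are carried exactly to those of Theorem~\ref{evalthm} (with $\delta_{i,1}$ and $\delta_{i,t^{-2}}$, together with the bubble labelled $1$), since $\phi$ is a bijection with $\phi(0)=1$ and $\phi(-\delta)=t^{-2}$, forcing $\delta_{i,0}=\delta_{\phi(i),1}$ and $\delta_{i,-\delta}=\delta_{\phi(i),t^{-2}}$; in particular the bubble generator is present on one side exactly when it is on the other (the condition ``$\delta=0$'' on the degenerate side matching ``$t=\pm1$'' on the quantum side). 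Passing to the quotients and then to additive Karoubi envelopes gives $\dot{\VV}(-\Lambda_0|\Lambda_{-\delta})\approx\dot{\VV}(-\Lambda_1|\Lambda_{t^{-2}})$, and composing with the equivalences of Theorems~\ref{aisha2} and \ref{evalthm},
\[
\REP GL_\delta=\dot\OB(\delta)\;\approx\;\dot{\VV}(-\Lambda_0|\Lambda_{-\delta})\;\approx\;\dot{\VV}(-\Lambda_1|\Lambda_{t^{-2}})\;\approx\;\dot\OS(z,t)=\REP U_q(\mathfrak{gl}_\delta),
\]
which, read in the other direction, is the asserted $\CC$-linear equivalence.

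I expect essentially all of the work to lie in the bookkeeping of the last two steps: checking that $\phi$ matches the exact lists of $2$-morphism generators of the two invariant ideals, and --- the more delicate point --- verifying that the various special configurations on the two sides genuinely correspond, so that ``$t\in\{\pm q^n\}$'' on the quantum side lines up with ``$\delta\in\ZZ$'' on the degenerate side and ``$t=\pm1$'' with ``$\delta=0$''. Everything else is a formal consequence of the two equivalences already in hand.
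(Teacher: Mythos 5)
Your argument is correct and is the same as the paper's, which disposes of the corollary in one sentence by ``a relabelling of the Dynkin diagram''; you have simply made the relabelling $\phi\colon x\mapsto q^{2x}$ explicit and checked that it carries the Cartan data, distinguished vertices, and ideal generators of Theorem~\ref{aisha2} onto those of Theorem~\ref{evalthm}, which is precisely what the paper's phrase means.
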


\begin{proof}
When $e=p=0$, we have that $\mathfrak{g} \cong \mathfrak{sl}_\infty \oplus
\mathfrak{sl}_\infty$ if $\delta \notin \ZZ$ or $\mathfrak{sl}_\infty$
if $\delta \in \ZZ$.
Now observe that the category $\VV(-\Lambda_0|\Lambda_{-\delta})$
in Theorem~\ref{aisha2} is isomorphic to the category $\VV(-\Lambda_1|\Lambda_{t^{-2}})$
in Theorem~\ref{evalthm} by a relabelling of the Dynkin diagram.
\end{proof}

The equivalence constructed in Corollary~\ref{finalt} is {\em not} monoidal, but
it is
a strongly equivariant equivalence of 2-representations. Hence,
it is compatible with the endofunctors $\up \otimes -$ 
and $\down \otimes -$, and it induces a ring isomorphism between the
Grothendieck rings preserving the labellings of isomorphism 
classes of indecomposable objects.
Etingof has suggested that such an equivalence could also be
constructed using KZ equations in the spirit of the Drinfeld-Kohno
theorem.

For our final corollary, we assume $\k$ is a field of positive
characteristic $p$ and take $\delta$ to be the image in $\k$ of some
$n\in \NN$, 
so that  $\mathfrak{g} \cong \widehat{\mathfrak{sl}}_p$.
There is a well-known categorical action making the category $\Rep GL_n$ of rational
representations of $GL_n$ over $\k$ into a 2-representation of
$\mathfrak{U}(\g)$;
see \cite[$\S$7.5.1]{CR} and \cite[$\S$6]{RW}.
The full subcategory $\Tilt GL_n$ of $\Rep GL_n$ consisting of all 
tilting modules (e.g., see \cite{Donkin})
is a Karoubian sub-2-representation.
As explained in detail in \cite[Proposition 6.5]{RW},
there is a $\g$-module isomorphism
\begin{equation}\label{thippy}
\CC \otimes_{\ZZ} K_0(\Tilt GL_n)
\cong {\bigwedge}^n \operatorname{Nat}_p,
\end{equation}
where $\operatorname{Nat}_p$ is the level zero
representation of $\mathfrak{g}$ with basis $\{m_r\:|\:r \in \ZZ\}$
on which the Chevalley generators of $\g$ act via
\begin{align*}
e_i m_r &= \left\{\begin{array}{ll}
m_{r+1}&\text{if $i \equiv r \pmod{p}$,}\\
0&\text{otherwise;}
\end{array}\right.
&
f_i m_{r+1} &= \left\{\begin{array}{ll}
m_{r}&\text{if $i \equiv r \pmod{p}$,}\\
0&\text{otherwise.}
\end{array}\right.
\end{align*}
Using the defining relations for the cyclic module
$V(-\Lambda_0|\Lambda_{-n})$ (e.g., see
\cite[(3.6)]{BD}), it is easy to check that there is a 
$\g$-module homomorphism
$$
V(-\Lambda_0|\Lambda_{-n}) \rightarrow
{\bigwedge}^n \operatorname{Nat}_p
$$
sending the generator of $V(-\Lambda_0|\Lambda_{-n})$ ($=$ the class of the
unit object under (\ref{thappy}))
to $m_0\wedge m_{-1}\wedge\cdots\wedge m_{1-n}$
($=$ the class of the trivial module under
(\ref{thippy})).
This map is surjective, i.e.,
$m_0\wedge m_{-1}\wedge\cdots\wedge m_{1-n}$
generates $\bigwedge^n \operatorname{Nat}_p$ as a $\g$-module, 
if and only if $p > n$.
This is also exactly the requirement on $p$ needed to ensure that 
the subcategory
$\Tilt' GL_n$ from Theorem~\ref{websup} is all of $\Tilt GL_n$.
Indeed, when $p > n$
all of the exterior powers $V, \bigwedge^2 V, \dots, \det = \bigwedge^n V$ plus
$\det^{-1} = \bigwedge^n V^*$ are summands of corresponding
tensor powers of $V$ or $V^*$ so they 
lie in $\Tilt' GL_n$.
Every indecomposable 
tilting module arises as a summand of some tensor product of these fundamental
tilting modules by highest weight considerations.

\begin{corollary}\label{lastcor}
Suppose that $n \in \NN$ and
$\k$ is a field of characteristic $p > n$.
There is a strongly equivariant equivalence
$$
\Phi:\dot\VV(-\La_0|\La_{- n}) / \mathcal J
\stackrel{\approx}{\longrightarrow}
\Tilt GL_n
$$
where
$\mathcal J$ is the invariant ideal of 
$\dot\VV(-\La_0|\La_{- n})$
generated by
$$
y := 
\mathord{
\begin{tikzpicture}[baseline = 0]
	\draw[->,darkred,thick] (0.48,-.2) to (0.48,.3);
   \node at (0.5,-.32) {$\scriptstyle{0}$};
	\draw[->,darkred,thick] (-0.02,-.2) to (-0.02,.3);
   \node at (-0.05,-.32) {$\scriptstyle{-1}$};
   \node at (-0.5,.05) {$\scriptstyle{\cdots}$};
	\draw[->,darkred,thick] (-1.02,-.2) to (-1.02,.3);
	\draw[->,darkred,thick] (-1.52,-.2) to (-1.52,.3);
   \node at (-1,-.32) {$\scriptstyle{1-n}$};
   \node at (-1.6,-.32) {$\scriptstyle{-n}$};
   \node at (-1.52,.05) {$\color{darkred}\scriptstyle{\bullet}$};
   \node at (-1.98,.05) {$\color{darkred}\scriptstyle{\delta_{p,n+1}}$};
\end{tikzpicture}
}
{\scriptstyle\Lambda_{-n}-\Lambda_0}.
$$
\end{corollary}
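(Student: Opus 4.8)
The plan is to build $\Phi$ from the two equivalences already in hand and then to pin down its kernel. Set $\eps=+$ in Theorem~\ref{websup} and let $\delta\in\k$ be the image of $n$. Since $p>n$ forces $\Tilt'GL_n=\Tilt GL_n$, Theorem~\ref{websup} gives a full monoidal functor $\Psi\colon\OB(n)\to\Rep GL_n$ inducing an equivalence $\bar\Psi\colon\dot\OB(n)/\mathcal N\stackrel{\approx}{\longrightarrow}\Tilt GL_n$, where $\mathcal N$ is the tensor ideal generated by the antisymmetriser $x=\sum_{g\in\Sym_{n+1}}\operatorname{sgn}(g)\,\i_{n+1}(g)$. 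As $\Psi(x)$ is the antisymmetrising operator on $V^{\otimes(n+1)}$, whose image $\bigwedge^{n+1}V$ vanishes since $\dim V=n$, we get $\Psi(x)=0$; since $\ker\Psi$ is a tensor ideal this gives $\mathcal N=\ker\Psi$, and since a tensor ideal of $\dot\OB(n)$ is automatically an invariant ideal for the categorical $\mathfrak{U}(\g)$-action, $\mathcal N$ is the invariant ideal generated by $x$. On the other side, Theorem~\ref{aisha2} with $\delta=n$ gives the strongly equivariant equivalence $\bar\Theta\colon\dot\VV(-\La_0|\La_{-n})\stackrel{\approx}{\longrightarrow}\dot\OB(n)$. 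Both $\mathfrak{U}(\g)$-actions come from $\up\otimes-\cong V\otimes-$ with the node decomposition supplied by generalised eigenspaces of the Jucys--Murphy elements, and the explicit form of $\Psi$ carries these to the partial-Casimir operators on $V^{\otimes r}$, so $\bar\Psi$ is strongly equivariant too. Everything therefore reduces to the equality $\bar\Theta^{-1}(\mathcal N)=\mathcal J$ of invariant ideals; granting it, $\Phi:=\bar\Psi\circ\bar\Theta$ descends to the quotients and is the desired strongly equivariant equivalence.

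Since $\mathcal N$ is the invariant ideal generated by $x$, it suffices to show that $\bar\Theta^{-1}(x)$ and $y$ generate the same invariant ideal of $\dot\VV(-\La_0|\La_{-n})$. The antisymmetriser forces the Jucys--Murphy elements to take the values $0,-1,\dots,-n$, the contents of a column of length $n+1$, so $\bar\Theta^{-1}(x)$ is supported on the residue string $\bi_0=(0,-1,\dots,-n)$; and since $E_i\unit=0$ for $i\ne 0$ while $E_0\unit\cong\up$, the only summand of the object corresponding to $\up^{n+1}$ that can carry it is $E_{\bi_0}\unit$, which is non-zero. When $p>n+1$ the identity $x^2=(n+1)!\,x$ has $(n+1)!$ invertible, so $\tfrac{1}{(n+1)!}x$ is the idempotent cutting out $E_{\bi_0}\unit$; hence $\bar\Theta^{-1}(x)=(n+1)!\cdot 1_{E_{\bi_0}\unit}=(n+1)!\cdot y$ (note $\delta_{p,n+1}=0$ here), and the two invariant ideals coincide immediately.

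When $p=n+1$ the relation degenerates to $x^2=0$: now $x$ is no longer a multiple of an idempotent, $E_{\bi_0}\unit$ is no longer negligible, and $1_{E_{\bi_0}\unit}$ by itself does not lie in $\mathcal N$. In this case the residues $0,-1,\dots,-n$ run exactly once around $\ZZ/p$, and the plan is to compute $\bar\Theta^{-1}(x)$ directly in the (affine) nilHecke-type subalgebra acting on $E_{\bi_0}\unit$, using Webster's presentation of $\VV(-\La_0|\La_{-n})$ as a generalised cyclotomic quotient of $\mathfrak{U}(\g)$ \cite{Web} --- in the form of \cite[Construction~4.13]{BD}, exactly as in the proofs of Theorems~\ref{aisha2} and \ref{evalthm} --- so as to identify $\bar\Theta^{-1}(x)$ with a unit multiple of the dotted $2$-morphism $y$, the extra dot being precisely the correction forced by this wrap-around. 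I expect this $p=n+1$ computation, together with the bookkeeping turning the resulting identity of generators into an equality of invariant ideals, to be the only real obstacle; the remaining ingredients are the assembly of known equivalences and a short eigenvalue count.
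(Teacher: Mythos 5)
Your overall strategy matches the paper's: reduce the corollary to the claim that $\bar\Theta^{-1}$ sends the generator $x$ of $\mathcal N$ to a nonzero multiple of the generator $y$ of $\mathcal J$, then assemble $\Phi = \bar\Psi\circ\bar\Theta$. For $p>n+1$ your elementary argument — that $\tfrac{1}{(n+1)!}x$ is the idempotent projecting onto the sign component, which is exactly the $\bi_0$-summand when the residues $0,-1,\dots,-n$ are pairwise distinct modulo $p$ — is clean and correct, and gives $\bar\Theta^{-1}(x)=(n+1)!\,y$ with $(n+1)!$ a unit. That buys you a self-contained proof in this subcase, avoiding the Hu--Mathas formula.

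However, there is a genuine gap: you have not actually proved the $p=n+1$ case, only described a plan and stated that you \emph{expect} the computation to come out. This is exactly the case in which the extra dot $\delta_{p,n+1}$ appears in $y$, so it is the heart of the corollary; without that step the statement is not established. The residues in $\bi_0$ wrap once around $\ZZ/p$ and no longer separate the irreducible constituents of $\up^{\otimes(n+1)}$, so $x$ is no longer (a multiple of) the idempotent $1_{E_{\bi_0}\unit}$, and one needs a genuine identity in the cyclotomic quiver Hecke algebra to pin down $\bar\Theta^{-1}(x)$. The paper handles both subcases at once by invoking the explicit Brundan--Kleshchev isomorphism between $\k\Sym_{n+1}$ and the level-one cyclotomic quiver Hecke algebra together with \cite[Proposition 6.7]{HM} applied to the standard tableau given by a single row of length $n+1$; this directly yields that $\bar\Theta^{-1}(x)$ is a nonzero scalar times $y$ (with or without the dot, according as $p=n+1$ or $p>n+1$). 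You would need to carry out the analogue of that computation — or cite the same result — to close your proof.

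A smaller point worth tightening: the inference ``$\Psi(x)=0$ and $\ker\Psi$ is a tensor ideal, hence $\mathcal N=\ker\Psi$'' only gives an inclusion $\mathcal N\subseteq\ker\Psi$ by itself; the equality is part of the \emph{statement} of Theorem~\ref{websup} (it is what makes $\bar\Psi$ an equivalence), so just cite it rather than rederive it. Likewise, the passage from ``tensor ideal generated by $x$'' to ``invariant ideal generated by $x$'' uses that $\dot\OB(n)$ is symmetric monoidal, so that two-sided tensor ideals coincide with left tensor ideals, and that the left tensor ideal generated by $x$ is the invariant ideal generated by $x$ because $E,F$ are the summands of $\up\otimes-,\,\down\otimes-$; this is worth stating explicitly rather than compressing into a single clause.
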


\begin{proof}
Let $\Psi$ be the functor from Theorem~\ref{websup} taking $\eps = +$.
By the definitions of the categorical actions, it is strongly equivariant.
The tensor ideal $\mathcal N$ in Theorem~\ref{websup}
is generated by the quasi-idempotent $x= 
\sum_{g \in \Sym_{n+1}} \operatorname{sgn}(g) \i_{n+1}(g)$.
Since $\dot{\OB}(n)$ is symmetric monoidal, it is actually
generated by $x$ just as a {\em left} tensor ideal.

Let $\bar\Theta^{-1}$ be quasi-inverse to the strongly equivariant equivalence $\bar\Theta$ from Theorem~\ref{aisha2}.
We claim that it maps the generator $x$ of 
$\mathcal N$ (as a left tensor ideal) 
to a non-zero multiple of the generator $y$ of $\mathcal J$ (as an
invariant ideal).
To prove this, the definition of $\bar\Theta$ from the proof of
Theorem~\ref{aisha2} means that on endomorphisms of $\up^{n+1}$ 
the map induced by
$\bar\Theta^{-1}$ arises
from the isomorphism between the group algebra $\k \Sym_{n+1}$
and the corresponding cyclotomic quiver Hecke algebra constructed in \cite{BK}.
So the claim follows from \cite[Proposition 6.7]{HM} applied to the standard tableau $\mathfrak{s}$
that is a single row of length $(n+1)$. (This argument works when $p
\leq n$ too producing a slightly more complicated formula for $y$.)

From the claim and Theorem~\ref{websup},
it follows that $\bar\Theta$ induces a strongly equivariant equivalence
$
\dot{\VV}(-\Lambda_0|\Lambda_{-n}) / \mathcal J
\stackrel{\approx}{\longrightarrow}
\dot{\OB}(n) / \mathcal N.
$
To get $\Phi$, it just remains to
compose this with the strongly equivariant equivalence $\bar \Psi$
from Theorem~\ref{websup}, noting that $\Tilt' GL_n = \Tilt GL_n$ 
due to the assumption $p > n$.
\end{proof}

The category $\dot\VV(-\La_0|\La_{-n}) / \mathcal J$ appearing in
Corollary~\ref{lastcor}
has a natural $\ZZ$-grading.
So we have constructed a graded lift of 
$\Tilt GL_n$. In \cite{RW}, 
Riche and Williamson have constructed graded lifts of all regular
blocks of $\Tilt GL_n$ via the diagrammatic Hecke category of
\cite{EW}; see also \cite{EL}.
We expect that the graded lifts of such blocks arising 
from Corollary~\ref{lastcor} are equivalent to the
ones of {\em loc. cit.}. 

We leave it to the reader to formulate the $q$-analog of Corollary~\ref{lastcor};
see Remark~\ref{websrem}.

\vspace{2mm}
\noindent{\bf 1.10.}
The remainder of the article is organized as follows.
Sections 2 and 3 are expository in nature and contain proofs of
Theorems~\ref{first},\ref{bt} and \ref{webs}, thereby making
the connection to $U_q(\mathfrak{gl}_n)$.
For Theorem~\ref{TR}, which is not needed in the remainder of the
article,
we refer the reader to
Turaev's original article \cite{Turaev0}.
Then Section~\ref{aos} discusses the affine oriented skein category
$\AOS(z,t)$ and the resulting Jucys-Murphy elements.
The triangular decomposition of $\OS(z,t)$ is introduced in
section~\ref{sswt}, and the highest weight approach to representations is 
developed there. Another noteworthy result in this section is
Theorem~\ref{koike1},
which is used both to prove Theorem~\ref{delection} and to 
obtain the description of $K_0(\dot\OS(z,t))$ (Theorem~\ref{thegg}).
Then in section~\ref{schars} we study certain induction and restriction
functors $E_i$ and $F_i$ which give rise to the categorical action on
$\OS(z,t)$-modules.
A novel result here is Theorem~\ref{characters}, which uses these induction and
restriction functors to compute the formal characters of the
standardizations of Specht modules. 
This is used to prove
Theorem \ref{selection}, also completing the proof of Theorem~\ref{sscase}.
In section~\ref{tpc}, 
Theorem~\ref{characters} is used again to prove 
a
linkage principle for the decomposition numbers
$[\bar\Delta(\LA):\L(\MU)]$ (Theorem~\ref{linkage1}).
In Theorem~\ref{sstrat}, we introduce the highest weight/standardly stratified structure on
$\OS(z,t)$-modules. Then we prove Theorems~\ref{tpctheorem}--\ref{evalthm}.
Finally, in section~\ref{sdeg}, we discuss the degenerate case.
Theorem~\ref{websup} is proved by the same argument as Theorem~\ref{webs}.
After that, we just highlighting the main differences in the
degenerate case compared to the quantum case, which
arise because the Jucys-Murphy elements need different
treatment. 
Theorem~\ref{aisha2} then follows.

\vspace{2mm}
\noindent{\em Acknowledgements.}
This article is based in part on the PhD thesis of Andrew Reynolds \cite{R}, who developed the
representation theory of the oriented Brauer category using the highest
weight approach. In particular, Reynolds proved the degenerate analog of Theorem~\ref{tpctheorem}.
Many of the arguments were inspired by 
Ben Webster's work \cite{Wbook} and the ideas of \cite{LW}.
I also thank Stephen Donkin, Stephen Doty, Michael Ehrig,
Inna Entova-Aizenbud,
Pavel Etingof, Catharina Stroppel and Geordie Williamson for helpful
questions and answers.

\section{Generators and relations}\label{sgr}

The monoidal category $\FOT$ from the introduction is the category $\mathcal{R}ib_{\mathcal
  V}$ from \cite[$\S$I.2.3]{Turaev1}, taking $\mathcal V$ to be the trivial monoidal category with
just one object $*$ and one morphism $1_*:* \rightarrow *$.
Our generating objects $\up$ and $\down$ are Turaev's $(*,-)$ and
$(*,+)$. As we explained in the introduction, objects in $\FOT$ are
words $\a, \b, \dots$ in the letters $\up, \down$, i.e., elements of the
free monoid $\words$ generated by these symbols. Morphisms
$\a \rightarrow \b$ are isotopy classes of $(\a,\b)$-ribbons.

We say that an $(\a,\b)$-ribbon is {\em generic} if all of its critical
points ($=$ points of slope zero) are local maxima and minima, and all
crossings occur away from the critical points. 
Thus, a generic $(\a,\b)$-ribbon can only involve ``identity lines'' of non-zero
slope, two sorts of cup (left/right), 
two sorts of cap (left/right), and eight sorts of crossing
(up/right/down/left and positive/negative).
Any $(\a,\b)$-ribbon is isotopic to a generic one. 
Moreover, isotopy of generic $(\a,\b)$-ribbons is generated by
{\em rectilinear isotopy}, i.e., planar isotopy that fixes the
boundary and preserves genericity,
plus the oriented Reidemeister moves $(\text{R}0)$, 
$(\text{FR}\text{I})$, 
$(\text{R}\text{II})$ and $(\text{R}\text{III})$ from
Figure 1.
This is justified carefully in \cite[$\S$I.4.6]{Turaev1}.

The following theorem giving an explicit monoidal presentation for
$\FOT$ follows from this discussion; see also
\cite[Theorem 3.2]{Turaev3} for the analogous result without framing,
and
\cite[$\S$I.4.2]{Turaev1} for more background about generators and relations for strict monoidal
categories.

\begin{lemma}\label{title}
The category $\FOT$ is the free strict monoidal category generated
by the objects $\up$ and $\down$ and the morphisms
$\mathord{

}
$. 
\end{itemize}
\end{theorem}

\begin{proof}
This is \cite[Lemma I.3.3]{Turaev1} except that we have rotated Turaev's
generators and relations by $180^\circ$ around a horizontal axis.
\end{proof}

The category $\FOT$ is a ribbon category in the sense of
\cite[$\S$3.3.2]{Turaev2}: it is braided and pivotal,
and $(\text{FR}\text{I})$ ensures that the right and left twists are equal.
Like in
\cite[$\S$XII.2.2]{Turaev1}, the braiding $\tau_{\a,\b}:\a \otimes \b \rightarrow \b
\otimes \a$ is defined by the first of the following diagrams;
the right dual of $\a = \a_n \cdots \a_1\in\words$ is $\a^* := \a_1^* \cdots \a_n^*$
where $\up^* = \down$ and $\down^* = \up$ with structure maps
$\a\otimes \a^* \rightarrow \unit \rightarrow \a^* \otimes \a$ defined
from the second of the following diagrams;
the left dual ${^*}\a$ is the same object as the right dual
with structure maps 
${^*\!}\a \otimes \a  \rightarrow \unit \rightarrow \a \otimes {^*\!}\a$
defined by the third diagram.
$$
\mathord{
\begin{tikzpicture}[baseline = 0]
\node at (0,.7) {$\b\:\qquad\qquad \: \a$};
\draw[-,thick,double,darkblue] (-.85,.5) to (.85,-.5);
\draw[-,line width=5pt,white] (-.9,-.5) to (.9,.5);
\draw[-,thick,double,darkblue] (-.9,-.5) to (.9,.5);
\node at (0,-.7) {$\a\:\qquad  \qquad \:\b$};
\end{tikzpicture}}\,,\qquad
\mathord{
\begin{tikzpicture}[baseline = 0]
\node at (0,0) {$\,\:\a\:\:\qquad  \a^*\:\qquad \:\a$};
\draw[-,thick,double,darkblue] (-1.05,.2) to [out=90,in=180] (-.5,.7);
\draw[-,thick,double,darkblue] (-.5,.7) to [out=0,in=90] (.05,.2);
\draw[-,thick,double,darkblue] (.6,-.7) to [out=180,in=-90] (.05,-.2);
\draw[-,thick,double,darkblue] (1.15,-.2) to [out=-90,in=0] (0.6,-.7);
\end{tikzpicture}
}\,,
\qquad
\mathord{
\begin{tikzpicture}[baseline = 0]
\node at (0,0) {$\:\a\:\:\:\qquad \!\! {^*\!}\a\:\:\:\qquad \a$};
\draw[-,thick,double,darkblue] (-1.05,-.2) to [out=-90,in=180] (-.5,-.7);
\draw[-,thick,double,darkblue] (-.5,-.7) to [out=0,in=-90] (.05,-.2);
\draw[-,thick,double,darkblue] (1.15,.2) to [out=90,in=0] (.6,.7);
\draw[-,thick,double,darkblue] (.6,.7) to [out=180,in=90] (.05,.2);
\end{tikzpicture}
}\,.
$$
(In these diagrams, the double lines labelled $\a$ denote parallel thin lines oriented
in order from left to right according to the letters of the word $\a$.)
The right and left duality functors are both defined on diagrams by rotating
the $xy$-plane through $180^\circ$, hence, they are equal, and we have
equipped $\FOT$ with a strictly pivotal structure.

The ribbon structure on $\FOT$ induces a ribbon structure on the
oriented skein category $\OS(z,t)$ too.
In particular, it also possesses a strictly pivotal structure.

\begin{proof}[Proof of Theorem~\ref{first}]
Let $\mathcal C$ be the strict monoidal category defined by the
generators and relations (1)--(5) from Theorem~\ref{first}.
We first define a strict monoidal functor $\Phi:{\mathcal C}
\rightarrow \OS(z,t)$
sending $E \rightarrow \up, F \rightarrow \down$, and
$S, T, C$ and $D$ to 
$\mathord{
\begin{tikzpicture}[baseline = -.5mm]
	\draw[->,thick,darkblue] (0.28,-.3) to (-0.28,.4);
	\draw[line width=4pt,white,-] (-0.28,-.3) to (0.28,.4);
	\draw[thick,darkblue,->] (-0.28,-.3) to (0.28,.4);
\end{tikzpicture}
}$, 
$\mathord{
\begin{tikzpicture}[baseline = -.5mm]
	\draw[->,thick,darkblue] (0.28,-.3) to (-0.28,.4);
	\draw[line width=4pt,white,-] (-0.28,-.3) to (0.28,.4);
	\draw[thick,darkblue,<-] (-0.28,-.3) to (0.28,.4);
\end{tikzpicture}
}$,
$\mathord{
\begin{tikzpicture}[baseline = 1mm]
	\draw[<-,thick,darkblue] (0.4,0.4) to[out=-90, in=0] (0.1,0);
	\draw[-,thick,darkblue] (0.1,0) to[out = 180, in = -90] (-0.2,0.4);
\end{tikzpicture}
}$ and $\mathord{
\begin{tikzpicture}[baseline = 1mm]
	\draw[<-,thick,darkblue] (0.4,0) to[out=90, in=0] (0.1,0.4);
	\draw[-,thick,darkblue] (0.1,0.4) to[out = 180, in = 90] (-0.2,0);
\end{tikzpicture}
}\,$.
To check this is well defined, one needs to verify that the
relations from Theorem~\ref{first}(1)--(5) all hold in
$\OS(z,t)$.
We already set this as an exercise for the reader in the introduction,
and are not about to spoil the fun here!

Next we construct a strict monoidal functor
$\Psi:\OS(z,t) \rightarrow \mathcal C$ in the other direction. 
For this we use the presentation for the strict
$\k$-linear monoidal category
$\OS(z,t)$ arising from Theorem~\ref{key}, with
eight generating morphisms and relations
(i)--(viii) from the theorem, plus the relations
($\text{S}$), ($\text{T}$) and ($\text{D}$).
Then (\ref{ready}) becomes the {\em definition} of the leftward cap and
leftward cup. 
We set
$C' := t  T \circ C$ and $D' := t D \circ T$, then
define $\Psi$
by sending
$\up \mapsto E, \down \mapsto F$ and
the eight generating morphisms in the order listed to 
$D, C, S, S - z 1_E \otimes 1_E, T, T + z C' \circ D'$, $t 1_E$ and $t^{-1}
1_E$, respectively.
To verify that this is well defined, we must check the images of the eleven relations
hold in $\mathcal C$. The relations (i) and ($\text{S}$)
follow from (1). Relations (ii), (iii), (iv), (vi) and (vii) are
equally easy using (2), (3) and the definitions.
For relation (D), $\Psi$ maps $\Bubble$ to $D \circ C' = D' \circ C = t D \circ T
\circ C = \frac{t-t^{-1}}{z} 1_\unit$ by (5). 
Consider relation (v). 
From (4), we see that the image of the negative rightward
crossing
is
$(T^{-1} - z C \circ D)$,
and we must check that this has two-sided inverse $(T + z C'
\circ D')$.
This follows easily using the identities established so far
plus
$T^{-1} \circ C' = t C, D' \circ T^{-1} = t D$.
For (viii), 
we note that $(1_F \otimes S) \circ (C \otimes 1_E) = (T^{-1}
\otimes 1_E) \circ (1_E \otimes C)$.
So
$\Psi$ maps the $\&$ symbol to 
\begin{align*}
((D \circ T& + z D \circ C' \circ D') \otimes 1_E) \circ 
(T^{-1}
\otimes 1_E) \circ (1_E \otimes C)\\
&=
((t^{-1} D' + (t-t^{-1}) D') \otimes 1_E) \circ 
(T^{-1}
\otimes 1_E) \circ (1_E \otimes C)\\
&= t((D' \circ T^{-1}) \otimes 1_E) \circ (1_E \otimes C)
=t^2(D \otimes 1_E) \circ (1_E \otimes C) = t^2 1_E,
\end{align*}
as required.
Finally, for (T), the image of the positive right curl is $$
(D \otimes
1_E) \circ (1_E \otimes T^{-1}) \circ (1_E \otimes C')
= t (D \otimes 1_E) \circ (1_E \otimes C) = t 1_E.
$$

To complete the proof of Theorem~\ref{first}, it remains to observe that the functors $\Phi$ and $\Psi$ are two-sided
inverses. This depends on (\ref{diamond})--(\ref{ready}).
\end{proof}

To conclude the section, we briefly list some further symmetries of the monoidal categories
$\OS(z,t)$.
There is an isomorphism 
\begin{equation}\label{tau}
\tau:\OS(z,t) \stackrel{\sim}{\rightarrow}
\OS(z,t)^{\operatorname{op}}
\end{equation}
which fixes objects,
and rotates diagrams for morphisms though $180^\circ$ around a horizontal axis then
reverses all orientations. Thus, the vertical crossings are
fixed and leftward crossings are switched with rightward crossings (preserving whether
they are positive or negative), while rightward and leftward caps are switched with
leftward and rightward cups, respectively.
Composing $\tau$ with duality, we obtain an
  isomorphism 
\begin{equation}
\phi:\OS(z,t) \stackrel{\sim}{\rightarrow}
  \OS(z,t)^{\operatorname{rev}}.
\end{equation}
This fixes sideways crossings, cups and caps, but switches upward
crossings with downward crossings (preserving whether they are positive or
negative).
Finally, there are isomorphisms
\begin{align}
\rho&:\OS(z,t) \stackrel{\sim}{\rightarrow} \OS(z,t),\label{rho}\\
\sigma&:\OS(z,t) \stackrel{\sim}{\rightarrow} \OS(-z,-t),\label{sigma}\\
\omega&:\OS(z,t)\stackrel{\sim}{\rightarrow} \OS(-z,t^{-1}),\label{omega}\\
\pi&:\OS(z,t)
\stackrel{\sim}{\rightarrow} \OS(z,-t).\label{pi}
\end{align}
These reverse all orientations, scale by $(-1)^{\#\text{crossings}}$,
switch all positive crossings with negative crossings, and scale by
$(-1)^{\#\text{leftward cups}+\#\text{leftward caps}}$, respectively.
Let
\begin{equation}\label{signaut}
\#:\OS(z,t)\stackrel{\sim}{\rightarrow} \OS(z,t^{-1})
\end{equation}
denote $\sigma\circ\omega\circ\pi$.

\section{Connection to $\Rep U_q(\mathfrak{gl}_n)$}\label{sbt}

In this section, we assume until the final proof that $\k$ is a field
of characteristic 0, $q \in \k^\times$ is not a root of unity,
and
$z = q-q^{-1}$.
Fix $n \in \NN$ and let $U_q(\mathfrak{gl}_n)$ be the usual
quantized enveloping algebra
over $\k$; we include
the possibility that $n=0$ by interpreting $U_q(\mathfrak{gl}_0)$ as $\k$.
We denote the standard generators 
of $U_q(\mathfrak{gl}_n)$ by $\left\{e_i, f_i, d_j^{\pm}\:|\:1 \leq i <
  n, 1 \leq j \leq n\right\}$.
This is a well-known object, so the reader should have no trouble surmising the relations on being
told that the usual diagonal generator $k_i$ of $U_q(\mathfrak{sl}_n)$ is $d_i d_{i+1}^{-1}$.
We have the natural $U_q(\mathfrak{gl}_n)$-module $V^+$ on basis
$\left\{v_i^+\:|\:1 \leq i \leq n\right\}$ and the dual natural module 
$V^-$ on basis $\left\{v_i^-\:|\:1 \leq i\leq  n\right\}$. The actions of
the generators on these bases are given by the following formulae:
\begin{align*}
f_i v^{+}_j &= \delta_{i,j} v^{+}_{i+1},
&e_i v^{+}_j &= \delta_{i+1,j} v^{+}_i,
&
d_i  v^+_j &= q^{\delta_{i,j}}  v^+_j,
\\
f_i v^{-}_j &= \delta_{i+1,j} v^{-}_i,
&
e_i v^{-}_j &= \delta_{i,j} v^{-}_{i+1},
&d_i  v^-_j &= q^{- \delta_{i,j}} v^-_j.
\end{align*}
We use the comultiplication
$\Delta:U_q(\mathfrak{gl}_n) \rightarrow U_q(\mathfrak{gl}_n) \otimes U_q(\mathfrak{gl}_n)$ defined from
$$
\Delta(f_i) =
1 \otimes f_i  + f_i \otimes d_id_{i+1}^{-1},
\quad
\Delta(e_i) =
d_i^{-1} d_{i+1} \otimes e_i  + e_i \otimes 1,
\quad
\Delta(d_i) = d_i \otimes d_i.
$$
The corresponding antipode is given by $\mathrm{S}(e_i) = -d_i d_{i+1}^{-1} e_i, \mathrm{S}(f_i) = 
-f_i d_i^{-1} d_{i+1}$ and $\mathrm{S}(d_i) = d_i^{-1}$; for the user of \cite{Lubook}
we note that Lusztig's $v$ and $K_i$ are our
$q^{-1}$ and $k_i^{-1} = d_i^{-1} d_{i+1}$.

Let $\Rep U_q(\mathfrak{gl}_n)$ be the category
of finite-dimensional
$U_q(\mathfrak{gl}_n)$-modules that are isomorphic to finite direct sums of summands of
the modules obtained by taking tensor products of $V^+$ and
$V^-$; in the trivial case $n=0$, we mean the category of
finite-dimensional vector spaces. 
In general, $\Rep U_q(\mathfrak{gl}_n)$ is the usual category of finite-dimensional
representations of $U_q(\mathfrak{gl}_n)$ that are semisimple of type
{\bf 1} over its diagonal subalgebra.
It is well known that $\Rep U_q(\mathfrak{gl}_n)$ is a ribbon category, but we do not
want to fix a ribbon structure yet. Instead, we are going to use
Theorem~\ref{first} classify monoidal functors
$\Phi:\OS(z,t)\rightarrow\Rep U_q(\mathfrak{gl}_n)$ that take $\up$ to
$V^+$ and $\down$ to $V^-$.

There is a unique (up to scalars) non-degenerate bilinear pairing
$$\langle\cdot,\cdot\rangle:V^+ \times V^- \rightarrow \k$$ 
satisfying $\langle uv^+,v^- \rangle = \langle v^+,\mathrm{S}(u)
v^-\rangle$. Since there is freedom to rescale
the basis vectors $v_j^-$ by a global scalar, we may assume this is given
explicitly by the formula
$\langle v_i^+, v_j^- \rangle := (-1)^i q^{-i} \delta_{i,j}.
$
The associated evaluation and coevaluation maps
will be denoted 
\begin{align}\label{chip1}
\ev&:V^+ \otimes V^- \rightarrow \k, &&v_i^+ \otimes v_j^-
\mapsto (-1)^i q^{-i} \delta_{i,j},\\
\coev&:\k \rightarrow V^- \otimes V^+,
&&1 \mapsto  \sum_{j=1}^n (-1)^j q^j
v_j^- \otimes v_j^+.\label{chip2}
\end{align}
Then if we define $\Phi(C)
:= \coev$ and $\Phi(D) := \ev$, 
where $C = \mathord{
\begin{tikzpicture}[baseline = 1mm]
	\draw[<-,thick,darkblue] (0.3,0.3) to[out=-90, in=0] (0.1,0);
	\draw[-,thick,darkblue] (0.1,0) to[out = 180, in = -90] (-0.1,0.3);
\end{tikzpicture}
}$ and $D = \mathord{
\begin{tikzpicture}[baseline = 1mm]
	\draw[<-,thick,darkblue] (0.3,0) to[out=90, in=0] (0.1,0.3);
	\draw[-,thick,darkblue] (0.1,0.3) to[out = 180, in = 90] (-0.1,0);
\end{tikzpicture}
}\:$,
the relation (3) from Theorem~\ref{first} is satisfied.

Next we choose a candidate for the image of $S=
\begin{tikzpicture}[baseline = -.5mm]
	\draw[->,thick,darkblue] (0.2,-.2) to (-0.2,.3);
	\draw[line width=4pt,white,-] (-0.2,-.2) to (0.2,.3);
	\draw[thick,darkblue,->] (-0.2,-.2) to (0.2,.3);
\end{tikzpicture}
$. This should
be an isomorphism $V^+\otimes V^+\stackrel{\sim}{\rightarrow} V^+\otimes V^+$
satisfying the relations (1) and (2) from Theorem~\ref{first}.
One possible choice is to take $\Psi(S) := R$ where
\begin{align}\label{R}
R(v_i^+ \otimes v_j^+) &:=
\left\{
\begin{array}{ll}
v_j^+ \otimes v_i^+&\text{if $i < j$},\\
q v_j^+ \otimes v_i^+&\text{if $i = j$},\\
v_j^+ \otimes v_i^- + (q-q^{-1}) v_i^+ \otimes v_j^+&\text{if $i > j$}.
\end{array}
\right.
\end{align}
This formula is the $R$-matrix from 
\cite[$\S$32.1.4]{Lubook}.
The only other possibility for us would be to take $\Psi(S) := -R^{-1}$, but there
is no loss in generality in choosing the former, since one
can twist with the isomorphism $\#
:\OS(z,t)
\stackrel{\sim}{\rightarrow} \OS(z,t^{-1})$ from
(\ref{signaut})
which switches $S$ and $-S^{-1}$.
To see that $-R^{-1}$ is indeed the only other option, recall that
endomorphism algebra of $V^+\otimes V^+$ is two-dimensional, so any
isomorphism $V^+ \otimes V^+ \rightarrow V^+ \otimes V^+$ takes the form
$aR  + b$ for scalars $a$ and $b$. Then a
simple computation shows there are only two choices for these
scalars which satisfy the relation (1): $a = 1, b = 0$ or $a = -1, b = q-q^{-1}$.

Using the relation (4), we can determine the
image of
$T =
\begin{tikzpicture}[baseline = -.5mm]
	\draw[->,thick,darkblue] (0.2,-.2) to (-0.2,.3);
	\draw[line width=4pt,white,-] (-0.2,-.2) to (0.2,.3);
	\draw[thick,darkblue,<-] (-0.2,-.2) to (0.2,.3);
\end{tikzpicture}
$, as follows. We want to have $\Psi(T^{-1}) = (1_{V^-} \otimes 1_{V^+} \otimes \ev) \circ (1_{V^-} \otimes R
\otimes 1_{V^-}) \circ (\coev \otimes 1_{V^+} \otimes
1_{V^-})$. Computing the right hand side explicitly gives that
$$
\Psi(T^{-1})(v^+_i \otimes v^-_j) =
\left\{
\begin{array}{ll}
v^-_j \otimes v^+_i&\text{if $i \neq j$},\\
\displaystyle q v^-_i \otimes v^+_i + (q-q^{-1})
\sum_{k=j+1}^{n} (-q)^{k-i} v^-_{k}\otimes v^+_{k}&\text{if $i = j$};
\end{array}
\right.
$$
Inverting this map then gives us $\Psi(T)$:
$$
\Psi(T)(v^-_i \otimes v^+_j) =
\left\{
\begin{array}{ll}
v^+_j \otimes v^-_i&\text{if $i \neq j$},\\
\displaystyle q^{-1} v^+_i \otimes v^-_i - (q-q^{-1})
\sum_{k=i+1}^{n} (-q)^{i-k} v^+_{k}\otimes v^-_{k}\!&\text{if $i = j$}.
\end{array}
\right.
$$
Now the relation (4) holds.

The images under $\Psi$ of 
$C' = \mathord{
\begin{tikzpicture}[baseline = 1mm]
	\draw[-,thick,darkblue] (0.3,0.3) to[out=-90, in=0] (0.1,0);
	\draw[->,thick,darkblue] (0.1,0) to[out = 180, in = -90] (-0.1,0.3);
\end{tikzpicture}
}$ and $D'=\mathord{
\begin{tikzpicture}[baseline = 1mm]
	\draw[-,thick,darkblue] (0.3,0) to[out=90, in=0] (0.1,0.3);
	\draw[->,thick,darkblue] (0.1,0.3) to[out = 180, in = 90] (-0.1,0);
\end{tikzpicture}
}\:$
must come from another
non-degenerate pairing
$$\langle\cdot,\cdot\rangle':V^-\times V^+ \rightarrow \k$$ 
such that $\langle u v^-,v^+ \rangle' = \langle v^-,\mathrm{S}(u)
v^+\rangle'$.
There is a unique (up to scalars) such pairing, 
namely,
$
\langle v_i^-, v_j^+\rangle' := \eps (-1)^{i} q^{i-n-1} \delta_{i,j}
$
for $\eps \in \k^\times$.
We denote the corresponding evaluation and coevaluation maps by
\begin{align}\label{chip3}
\ev'&:V^-\otimes V^+ \rightarrow \k, &&v_i^-\otimes v_j^+ \mapsto \eps
(-1)^i q^{i-n-1}\delta_{i,j},\\
\coev'&:\k \rightarrow
V^+ \otimes V^-,&
&1\mapsto \eps^{-1}\sum_{j=1}^n (-1)^j q^{n+1-j} v_j^+\otimes v_j^-.\label{chip4}
\end{align}
Then, for some choice of $\eps$, we have that
$\Phi(C') = \coev'$ and $\Phi(D') = \ev'$.
To determine the possibilities for $\eps$,
from the first equation in (\ref{ready}), we know that $\ev' = t \ev\circ \Psi(T)$.
Applying this equation to the vector $v_n^- \otimes v_n^+$ quickly produces the
equation
$
\eps (-1)^n q^{-1} = t q^{-1} (-1)^n q^{-n},
$
hence, $t = \eps q^n$. 
From the second equation in (\ref{ready}), we know that $\coev' =
t \Psi(T) \circ \coev$. Looking at the $v_1^+ \otimes
v_1^-$-coefficient of the image of $1$ under the two sides of this
equation gives
$
-\eps^{-1} q^n = - t,
$
hence, $t = \eps^{-1} q^n$. We deduce that $\eps = \eps^{-1}$, i.e.,
$\eps = \pm 1$. 
Since we can twist with the isomorphism
$\pi:\OS(z,t)\stackrel{\sim}{\rightarrow}\OS(z,-t)$ 
from (\ref{pi}) which
takes $C'$ to $-C'$ and $D'$ to $-D'$, we are reduced without loss of generality to the case that $\eps  =
+1$ and $t = q^n$. 
It can then be checked that (\ref{ready}) holds fully.

Finally, we have that $\coev \circ
\ev' = [n]_q$, so that relation (5) from Theorem~\ref{first} holds too, and the theorem
implies that the functor $\Psi$ is well defined.
We have proved the following lemma, a version of which was used already in \cite{Turaev3}.

\begin{lemma}\label{heartbeat}
Assume $\k$ is of characteristic zero,  $z=q-q^{-1}$ for
generic $q \in
\k^\times$, and
$t = q^n$ for $n \in \NN$.
There is a $\k$-linear monoidal functor
$\Psi:\OS(z, t)
\rightarrow \Rep U_q(\mathfrak{gl}_n)$
sending $\up \mapsto V^+, \down \mapsto V^-$, 
the positive upward crossing
to the $R$-matrix from (\ref{R}),
the rightward cap and rightward cup to the maps $\ev$ and $\coev$
from (\ref{chip1})--(\ref{chip2}), and the leftward cap and leftward cup to the maps $\ev'$ and
$\coev'$ from (\ref{chip3})--(\ref{chip4}) taking $\eps = +1$.
\end{lemma}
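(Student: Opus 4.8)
\emph{Proof proposal.} The plan is to apply the monoidal presentation of $\OS(z,t)$ from Theorem~\ref{first}. Since that theorem exhibits $\OS(z,t)$ as the strict $\k$-linear monoidal category generated by objects $E,F$ and morphisms $S,T,C,D$ subject only to relations (1)--(5), it suffices to prescribe the images of these generators in $\Rep U_q(\mathfrak{gl}_n)$ and to verify that the relators (1)--(5) become identities there. Concretely I would set $\Psi(E):=V^+$, $\Psi(F):=V^-$, $\Psi(S):=R$ with $R$ the $R$-matrix of (\ref{R}), $\Psi(C):=\coev$, $\Psi(D):=\ev$, and I would \emph{define} $\Psi(T)$ to be the two-sided inverse of $(1_{V^-}\otimes 1_{V^+}\otimes\ev)\circ(1_{V^-}\otimes R\otimes 1_{V^-})\circ(\coev\otimes 1_{V^+}\otimes 1_{V^-})$; the explicit triangular formula computed above shows this composite is invertible, so relation (4) holds by fiat, and the image of $T$ is then exactly the map written out above.

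It then remains to check (1), (2), (3) and (5). Relation (3) is precisely the assertion that $(\coev,\ev)$ realize $V^-$ as a right dual of $V^+$, which is immediate from non-degeneracy of the pairing $\langle\cdot,\cdot\rangle$ together with its compatibility with the antipode. Relation (1), $R^2=zR+1_{V^+\otimes V^+}$, and relation (2), the braid relation $(R\otimes 1)(1\otimes R)(R\otimes 1)=(1\otimes R)(R\otimes 1)(1\otimes R)$, are the Hecke quadratic relation and the Yang--Baxter equation for the $R$-matrix of $U_q(\mathfrak{gl}_n)$ respectively; both are standard, and in any case each reduces to a finite computation on the weight basis of $V^+\otimes V^+$ and of $V^{+\otimes 3}$. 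This is the most tedious part, but it is purely mechanical.

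For relation (5), $t\,D\circ T\circ C=\tfrac{t-t^{-1}}{z}1_\unit$, I would bring in the second non-degenerate pairing $\langle\cdot,\cdot\rangle':V^-\times V^+\to\k$ with its evaluation and coevaluation $\ev',\coev'$ from (\ref{chip3})--(\ref{chip4}). The images under $\Psi$ of the leftward cap and leftward cup are forced by (\ref{ready}) to be $\ev'$ and $\coev'$ for some scalar $\eps\in\k^\times$; matching coefficients in the identities of (\ref{ready}) pins $\eps$ down to $\pm1$ and simultaneously forces $t=q^n$, and after twisting by the isomorphism $\pi$ of (\ref{pi}) one may assume $\eps=+1$, which is the normalization in the statement, whereupon (\ref{ready}) holds in full. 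With this in hand $\ev'=t\,(\ev\circ\Psi(T))$, so $\Psi(t\,D\circ T\circ C)=t\,(\ev\circ\Psi(T)\circ\coev)=\ev'\circ\coev=[n]_q=\tfrac{q^n-q^{-n}}{q-q^{-1}}=\tfrac{t-t^{-1}}{z}$, and relation (5) holds. Hence $\Psi$ is a well-defined $\k$-linear monoidal functor, and by construction it sends the positive upward crossing, the rightward cap and cup, and the leftward cap and cup to the maps listed.

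The only genuine obstacle is the bookkeeping of scalars: fixing the normalization of the basis $\{v_j^-\}$, the value of $\eps$, and the identification $t=q^n$ so that relations (1) and (5) hold on the nose rather than up to a scalar, together with inverting the triangular matrix that defines $\Psi(T)$. The conceptual ingredients---the Hecke and braid relations for the $R$-matrix and non-degeneracy of the two invariant pairings---are all standard.
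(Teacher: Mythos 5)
Your proposal is correct and follows essentially the same route as the paper: fix images of the generators of Theorem~\ref{first}, define $\Psi(T)$ as the inverse forced by relation (4), invoke the standard Hecke and Yang--Baxter identities for $R$ to handle (1)--(2), use the duality pairing for (3), and then reduce (5) to the computation $\ev'\circ\coev=[n]_q$ via the identity $\ev' = t\,\ev\circ\Psi(T)$, which is recognized as the unique non-degenerate antipode-compatible pairing and pinned down via the identities of (\ref{ready}) to the normalization $\eps=+1$. This is precisely the argument carried out in the text preceding the lemma.
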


\begin{remark}
Pre-composing the functor $\Psi$ with one or both of the isomorphisms
$\#$ and $\pi$ 
from (\ref{pi})--(\ref{signaut}) gives three more
such functors with $t=q^n$ replaced by $q^{-n}, -q^n$ or $-q^{-n}$.
The arguments above actually show that these four functors constitute 
essentially all possible $\k$-linear 
monoidal functors $\OS(z,t)\rightarrow\Rep U_q(\mathfrak{gl}_n)$ taking $\up\mapsto V^+$ and
$\down\mapsto V^-$. 
Each of the four choices for this functor
corresponds to a ribbon structure on the monoidal
category
$\operatorname{Rep} U_q(\mathfrak{gl}_n)$.
The standard ribbon structure on $\operatorname{Rep}
U_q(\mathfrak{gl}_n)$
is the one coming from $\Psi$ itself, i.e., $t = q^n$, and we will only
use this from now it.
Taking $\Psi\circ\#$, i.e., $t = q^{-n}$,
gives a non-standard ribbon structure on $\Rep U_q(\mathfrak{gl}_n)$
with positive upward crossing $-R^{-1}$,
rightward cap and cup being $\ev$ and $\coev$, and leftward cap
and cup being $\ev'$ and $\coev'$ with $\eps = -1$;
this is the ribbon category denoted
$\Rep U_q(\mathfrak{gl}_{-n})$ in the introduction.
\end{remark}

In order to prove Theorems~\ref{bt} and \ref{webs} from the
introduction, we also need 
the {\em Iwahori-Hecke algebra}
$H_r$ associated to the symmetric group $\Sym_r$. This is
the associative $\k$-algebra with generators $S_1,\dots,S_{r-1}$
subject to the relations
\begin{align}\label{hecke}
(S_i-q)(S_i+q^{-1}) &= 0,
&S_i S_j &= S_jS_i \text{ if $|i-j|>1$,}&
S_i S_{i+1} S_i &= S_{i+1} S_i S_{i+1}.
\end{align}
As is well known, $H_r$ has dimension $r!$, with basis
$\{S_w\:|\:w \in \Sym_r\}$ defined as usual by letting $S_w$ be the word in
the generators $S_i$ arising from any reduced
expression for $w$.
It is obvious from the defining relations that there is a
homomorphism 
\begin{align}\label{iotap1}
\i_r
:
H_r&\rightarrow \End_{\OS(z,t)}(\up^r)
\end{align}
sending $S_i$ to the positive crossing 
$\begin{tikzpicture}[baseline = -.5mm]
	\draw[->,thick,darkblue] (0.2,-.2) to (-0.2,.3);
	\draw[line width=4pt,white,-] (-0.2,-.2) to (0.2,.3);
	\draw[thick,darkblue,->] (-0.2,-.2) to (0.2,.3);
\end{tikzpicture}
$ 
of the $i$th and $(i+1)$th strands, numbering strands
in increasing order from right to left.
Also for $\lambda \vdash r$ we let
\begin{equation}\label{symmetrizers}
x_\lambda := \sum_{w \in \Sym_\lambda} q^{\ell(w)} S_w,\qquad
y_\lambda := \sum_{w \in \Sym_\lambda} (-q)^{-\ell(w)} S_w,
\end{equation}
where $\Sym_\lambda$ denotes the usual parabolic subgroup
$\Sym_{\lambda_1}\times\Sym_{\lambda_2}\times\cdots$ 
of $\Sym_r$.
Assuming $q$ is not a root of unity, there is a unique (up to sign) idempotent
\begin{equation}\label{youngsymmetrizer}
e_\lambda \in y_{\lambda^\trans} H_r x_\lambda.
\end{equation}
This is the {\em Young symmetrizer}.
For example,
\begin{align}
e_{(n)} &= 
\frac{q^{-\frac{1}{2}n(n-1)}}{[n]_q^!}
\sum_{w \in \Sym_{n}} q^{\ell(w)} h_w,&
e_{(1^{n})} &= 
\frac{q^{\frac{1}{2}n(n-1)}}{[n]_q^!}
\sum_{w \in \Sym_{n}} (-q)^{-\ell(w)} h_w,
\end{align}
which correspond to the trivial and the sign representations of $H_r$, respectively.
The algebra involution
\begin{equation}\label{sign}
\#:H_r \rightarrow H_r, \qquad
S_i \mapsto -S_i^{-1}
\end{equation}
interchanges $e_{(n)}$ and $e_{(1^n)}$. 
Given an $H_r$-module $M$, the $H_r$-module $M^\#$
obtained from $M$ by twisting the action by $\#$
gives the $q$-analog
of ``tensoring with sign.''

\begin{proof}[Proof of Theorem~\ref{webs}]
Since we can compose with $\#$, which switches
$t = q^n$ with $t = q^{-n}$
and $e_{(n)}$ with $e_{(1^n)}$, we are reduced just to proving the theorem
in the case that $\eps = +$.
Then the appropriate monoidal functor $\Psi$ is the one constructed in Lemma~\ref{heartbeat}.

In this paragraph, we show that $\Psi$ is full.
Take 
$\a,\b\in\words$ such that $x$ (resp. $x'$)
letters of $\a$ and $y$ (resp. $y'$) letters of $\b$ are equal to $\down$
(resp. $\up$).
The space $\Hom_{\OS(z, t)}(\a,\b)$ is zero
unless $r:=x'+y=x+y'$, so we may assume that is the case. 
Let $a:\up^{x'} \rightarrow
\a \up^{x}$ be the unique morphism that consists of $x$ nested
rightward cups on top of $x'$ vertical upward strands. Let $b:\up^{y} \b
\rightarrow \up^{y'}$ be the unique morphism that consists of
$y$ nested rightward caps on top of $y'$ vertical strands.
The linear map
\begin{align}\label{exactly}
\theta:\Hom_{\OS(z,t)}(\a,\b) &\rightarrow
\Hom_{\OS(z,t)}(\up^r, \up^r),\\\notag
f &\mapsto (b \otimes 1_{\up^{x}}) \circ
(1_{\up^{y}}\otimes f \otimes  1_{\up^{x}}) \circ
(1_{\up^{y}}\otimes a)
\end{align} 
has an obvious two-sided inverse, hence, it is a vector space isomorphism. For
example, taking $\a = \down\up\down\up$ and $\b = \up\down$, the map $\theta$ sends
$$
\mathord{
\begin{tikzpicture}[baseline = -.25mm]
  \draw[<-,thick,darkblue] (-.6,-.3) to[out=90,in=-90] (0,.3);
  \draw[-,line width=4pt,white] (0.3,-.3) to[out=90,in=-90] (-0.3,.3);
  \draw[->,thick,darkblue] (0.3,-.3) to[out=90,in=-90] (-0.3,.3);
  \draw[-,thick,darkblue] (-0.3,-.3) to[out=90,in=180] (-0.15,-.1);
  \draw[->,thick,darkblue] (-0.15,-.1) to[out=0,in=90] (0,-.3);
\end{tikzpicture}
}
\mapsto
\mathord{
\begin{tikzpicture}[baseline = -.25mm]
  \draw[-,thick,darkblue] (-.6,-.3) to[out=90,in=-90] (0,.3);
  \draw[<-,thick,darkblue] (-.3,.9) to (-.3,.3);
  \draw[-,line width=4pt,white] (0.3,-.3) to[out=90,in=-90] (-0.3,.3);
  \draw[-,thick,darkblue] (0.3,-.3) to[out=90,in=-90] (-0.3,.3);
  \draw[-,line width=4pt,white] (0,.3) to[out=90,in=0] (-0.45,.6);
  \draw[-,thick,darkblue] (0,.3) to[out=90,in=0] (-0.45,.6);
  \draw[-,thick,darkblue] (-.9,-.9) to (-0.9,.3);
  \draw[-,thick,darkblue] (-0.45,.6) to[out=180,in=90] (-0.9,.3);
  \draw[-,thick,darkblue] (0.3,-.9) to (0.3,-.3);
  \draw[-,thick,darkblue] (-0.3,-.3) to[out=90,in=180] (-0.15,-.1);
  \draw[-,thick,darkblue] (-0.3,-.3) to (-0.3,-.9);
  \draw[-,thick,darkblue] (-0.15,-.1) to[out=0,in=90] (0,-.3);
  \draw[-,line width=4pt,white] (-0.6,-.3) to[out=-90,in=180] (0.15,-.75);
  \draw[-,thick,darkblue] (-0.6,-.3) to[out=-90,in=180] (0.15,-.75);
  \draw[-,line width=4pt,white] (0.15,-.75) to[out=0,in=-90] (.9,-.3);
  \draw[-,thick,darkblue] (0.15,-.75) to[out=0,in=-90] (.9,-.3);
  \draw[-,line width=4pt,white] (0,-.3) to[out=-90,in=180] (.3,-.5);
  \draw[-,thick,darkblue] (0,-.3) to[out=-90,in=180] (.3,-.5);
  \draw[-,line width=4pt,white] (.3,-.5) to[out=0,in=-90] (.6,-.3);
  \draw[-,thick,darkblue] (.3,-.5) to[out=0,in=-90] (.6,-.3);
  \draw[->,thick,darkblue] (0.6,-.3) to[out=90,in=-90] (0.3,.9);
  \draw[->,thick,darkblue] (0.9,-.3) to (0.9,.9);
\end{tikzpicture}
}\:.
$$
Since $\Psi$ is a monoidal functor, there is an isomorphism
\begin{align*}
\phi:
\Hom_{U_q(\mathfrak{gl}_n)}(\Psi(\a),\Psi(\b)) &\stackrel{\sim}{\rightarrow}
\Hom_{U_q(\mathfrak{gl}_n)}((V^+)^{\otimes r}, (V^+)^{\otimes r}),\\
g &\mapsto (\Psi(b) \otimes 1_{V^+}^{\otimes x}) \circ
(1_{V^+}^{\otimes y}\otimes g \otimes  1_{V^+}^{\otimes x}) \circ
(1_{V^+}^{\otimes y}\otimes \Psi(a))
\end{align*}
making the following diagram commute:
\begin{equation}\label{queenie}
\begin{CD}
\Hom_{\OS(z,t)}(\a,\b) &@>\sim>\theta>&
\Hom_{\OS(z,t)}(\up^r, \up^r) &@<\i_r<<&H_r\\
@V\Psi VV&&@VV\Psi V\\
\Hom_{U_q(\mathfrak{gl}_n)}(\Psi(\a),\Psi(\b)) &@>\sim >\phi>&
\Hom_{U_q(\mathfrak{gl}_n)}((V^+)^{\otimes r}, (V^+)^{\otimes r}).
\end{CD}
\end{equation}
The composition
$\j_r:H_r \rightarrow \End_{U_q(\mathfrak{gl}_n)}((V^+)^{\otimes
  r})$ of $\i_r$ and the right hand $\Psi$
is a homomorphism
studied in \cite{J} in the context of ``quantized Schur-Weyl
reciprocity.'' It is shown there that $\j_r$ is surjective. Hence, the right hand $\Psi$ is surjective. 
The commutativity of the diagram then implies the
analogous statement for the left hand $\Psi$. 

As $\Rep U_q(\mathfrak{gl}_n)$ is additive Karoubian, the functor
$\Psi$ extends to a 
full functor $\dot\Psi:\dot\OS(z,t) \rightarrow \Rep
U_q(\mathfrak{gl}_n)$. 
Let $\mathcal N$ be the tensor ideal of $\dot\OS(z,t)$ of negligible morphisms.
Since $\Rep U_q(\mathfrak{gl}_n)$ is absolutely
semisimple, $\dot\Psi$ induces a fully faithful functor
$\bar\Psi:\dot\OS(z,t) / \mathcal N \rightarrow \Rep
U_q(\mathfrak{gl}_n)$
by the argument from the proof of \cite[Th\'eor\`eme 6.2]{D}.
This functor is also dense
since every object of $\Rep U_q(\mathfrak{gl}_n)$ is a summand of some
tensor product of the modules $V^+$ and $V^-$.
So it is a monoidal equivalence.

To complete the proof, we need to show that 
$\mathcal N$ is generated as
an additive $\k$-linear tensor ideal of $\dot\OS(z,t)$ by the
morphism $\i_{n+1}(e_{(1^{n+1})})$.
It suffices to show that the kernel\footnote{
We mean the 
tensor ideal of $\OS(z,t)$ defined by the kernels of the maps
$\Psi:\Hom_{\OS(z,t)}(\a,\b)\twoheadrightarrow 
\Hom_{U_q(\mathfrak{gl}_n)}(\Psi(\a),\Psi(\b))$ for all $\a,\b\in\words$.}
of the original functor $\Psi$
is the $\k$-linear tensor ideal $\mathcal I$ of $\OS(z,t)$ generated by
$\i_{n+1}(e_{(1^{n+1})})$.
Jimbo's results show that the homomorphism $\j_r$ introduced above
is injective when $r \leq n$, and that $\ker \j_r$ is the ideal of $H_r$ generated
by $e_{(1^{n+1})}$ (viewed as an element of $H_r$ via the natural
embedding
$H_{n+1} \hookrightarrow H_r$)
when $r > n$.
In particular, taking $r=n+1$, this shows that $\Psi(\i_{n+1}(e_{1^{n+1}})) = 0$,
so $\Psi$ induces a monoidal functor $\tilde\Psi:\OS(q, q^n) / \mathcal I
\rightarrow \Rep U_q(\mathfrak{gl}_n)$.
The commuting diagram (\ref{queenie}) becomes
$$
\begin{CD}
\Hom_{\OS(z,t) / \mathcal I}(\a,\b) &@>\sim>\tilde\theta>&
\Hom_{\OS(z,t) / \mathcal I}(\up^r, \up^r)
&@<\tilde\i_r<<&H_r / I_r\\
@V\tilde\Psi VV&&@VV\tilde\Psi V\\
\Hom_{U_q(\mathfrak{gl}_n)}(\Psi(\a),\Psi(\b)) &@>\sim >\phi>&
\Hom_{U_q(\mathfrak{gl}_n)}((V^+)^{\otimes r}, (V^+)^{\otimes r}),
\end{CD}
$$
where $I_r := \{0\}$ if $r \leq n$ and $I_r :=  \langle e_{(1^{n+1})}
\rangle$ if $r > n$.
The isomorphism $\tilde\theta$ in this diagram is defined in the same way as
$\theta$, indeed, it is induced by $\theta$ in an obvious way.
Also when $r > n$ the map $\i_r$ takes $e_{(1^{n+1})} \in H_r$ to $\up^{r-n-1}
\i_{n+1}(e_{(1^{n+1})})$. This morphism lies in $\mathcal I$,
showing that $\i_r$ induces the homomorphism $\tilde\i_r$ 
indicated in the diagram.
Now the composition of $\tilde\i_r$ and
the right hand $\tilde\Psi$ is
an isomorphism.
Also it is obvious from the definition of $\OS(z, t)$ 
that $\i_r$, hence, $\tilde\i_r$ is surjective.
We deduce that the right hand $\tilde\Psi$ is an
isomorphism, hence, the left hand one is too. 
This shows that $\mathcal I$ is indeed the kernel of $\Psi$.
\end{proof}

\begin{remark}
Let notation be as in Theorem~\ref{webs}, taking $\eps = +$.
If $\a,\b\in\words$ 
are objects such that 
$x$ (resp. $x'$)
letters of $\a$ and $y$ (resp. $y'$) letters of $\b$ are equal to $\down$
(resp. $\up$), and
$r := x'+y=x+y'$ satisfies $r \leq n$, then $\Psi$ is injective on
$\Hom_{\OS(z,t)}(\a,\b)$ and
\begin{equation}\label{swr}
\dim_{\k} 
\Hom_{\OS(z,t)}(\a,\b)= \dim H_r = r!.
\end{equation}
These assertions follow from the proof just explained: when $r \leq n$ the map
$\j_r$ is an isomorphism so all of the vertical maps in (\ref{queenie}) are
isomorphisms too.
(Theorem~\ref{bt} implies that the formula
 (\ref{swr}) holds without the restriction $r \leq n$, but 
we will use this special case in its proof.)
\end{remark}

\begin{proof}[Proof of Theorem~\ref{bt}]
In this proof, we are going to allow $\k$ to vary, so may add an
additional subscript, denoting $\OS(z,t)$ and $\widehat{\OS}(z,t)$ 
instead by $\OS(z,t)_\k$ and $\widehat{\OS}(z,t)_\k$, respectively.
We first establish the result for the morphism spaces of $\OS$.
Take $\a, \b\in\words$ such that
$x$ (resp. $x'$)
letters of $\a$ and $y$ (resp. $y'$) letters of $\b$ are equal to $\down$
(resp. $\up$), and
$r := x'+y=x+y'$.
Let $B(\a,\b)$ be some set of reduced lifts of the
$(\a,\b)$-matchings, so that $|B(\a,\b)| = r!$.
It is straightforward to see 
for any $\k, z$ and $t$ 
that $B(\a,\b)$ spans $\Hom_{\OS(z,t)_\k}(\a,\b)$. We
need to show that it is also linearly independent.

Consider first the
case that $\k = \ZZ[z,z^{-1}, t, t^{-1}]$. 
Take a linear relation $$
\sum_{b \in B(\a,\b)} c_b(z,t) b = 0
$$
for $c_b(z,t) \in \ZZ[z,z^{-1}, t, t^{-1}]$.
For any $n \geq r$, 
we can consider the obvious strict $\ZZ$-linear monoidal functor $\omega:\OS(z,t)_{\ZZ[z,z^{-1},t,t^{-1}]} \rightarrow
\OS(q-q^{-1}, q^n)_{\QQ(q)}$ sending $z \mapsto q-q^{-1}, t \mapsto
q^n$, and generating morphisms to the generating morphisms with the
same names. 
This functor maps $B(\a,\b)$ to a spanning set for
$\Hom_{\OS(q-q^{-1}, q^n)_{\QQ(q)}}(\a,\b)$. Since $|B(\a,\b)| = r!$, we
deduce from (\ref{swr}) that $\omega(B(\a,\b))$ is linearly
independent too. Hence, $c_b(q-q^{-1}, q^n) = 0$ for each $b \in
B(\a,\b)$. Since this is true for infinitely many values of $n$, it
follows that each $c_b(z, t) = 0$.

Now take an arbitrary commutative ground ring $\k$ and parameters $\bar z,\bar t \in
\k^\times$.
Viewing $\k$ as a $\ZZ[z,z^{-1}, t,t^{-1}]$-module so $z$ and $t$ act
via $\bar z$ and $\bar t$, there is an obvious strict $\k$-linear monoidal functor
$\OS(\bar z, \bar t)_\k
\rightarrow
\OS(z,t)_{\ZZ[z,z^{-1},t,t^{-1}]} \otimes_{\ZZ[z,z^{-1}, t,t^{-1}]} \k$
sending generating morphisms to the generating morphisms with the same
name tensored with $1_\k$.
This functor sends $B(\a,\b)$ to a set of morphisms which we already know is linearly
independent thanks to the previous paragraph. Hence $B(\a,\b)$ itself is
linearly independent.
This completes the proof for $\OS$.

It remains to treat the extended category $\widehat{\OS}$.
Again, it is clear that the morphisms from the statement of
Theorem~\ref{bt} span, so we just need to establish linear independence.
For all but the case $\a=\b=\varnothing$, this follows immediately since
we have already established linear independence in the quotient category
$\OS(z,t)_\k$.
Thus, we are left with showing that $1_\varnothing$ and $\Bubble$ are
linearly independent in $\Hom_{\widehat\OS(z,t)_\k}(\varnothing,\varnothing)$.
By the same arguments as in the previous two paragraphs, this follows
if we can check it in
$\Hom_{\widehat\OS(q-q^{-1},q^n)_{\QQ(q)}}(\varnothing,\varnothing)$
for infinitely many values of $n$. This is done in the final paragraph
of the proof.

So 
 assume that $\k = \QQ(q)$, $z = q-q^{-1}$ and
$t = q^n$ for $n \in \NN$. 
We define a new strict $\k$-linear monoidal
category $\mathcal C$.
Its objects are  as in $\OS(z, t)$ with the
same tensor product, and its morphisms are defined from
$$
\Hom_{\mathcal C}(\a,\b) := 
\left\{
\begin{array}{ll}
\Hom_{\OS(z, t)}(\a,\b)&\text{if $\a \neq \varnothing$ or $\b \neq
    \varnothing$,}\\
\Hom_{\OS(z,
  t)}(\varnothing,\varnothing) \oplus \k&\text{if $\a = \b = \varnothing$.}
\end{array}\right.
$$
So $\Hom_{\mathcal C}(\varnothing,\varnothing)$ is two-dimensional with basis
$(1_\varnothing,0)$ and $(0,1_{\k})$.
Horizontal and vertical composition of most of the morphisms in $\mathcal C$ is induced
by the compositions in $\OS(z, t)$ in the obvious way;
the horizontal and vertical composition of
$(0,1_{\k})$ with any morphism in $\Hom_{\mathcal C}(\a,\b)$ is zero
if  $\a \neq \varnothing$ or $\b \neq \varnothing$; the horizontal and vertical composition of
$(0,1_{\k})$ with $(a 1_\varnothing,b 1_{\k})$ is $(0,b
1_{\k})$.
Now the point is that there is a strict $\k$-linear monoidal functor
$\widehat\OS(z, t) \rightarrow \mathcal C$
sending objects and generating morphisms to their images under the
quotient functor to $\OS(z, t)$ embedded
(non-unitally) into
$\mathcal C$. Due to the relation ($\text{D}$) in 
$\OS(z, t)$, the morphism
$\Bubble\in \Hom_{\widehat\OS(z,t)}(\varnothing,\varnothing)$ maps to $([n]_q 1_\varnothing, 0)$,
while the identity element $1_\varnothing \in \Hom_{\widehat\OS(z,t)}(\varnothing,\varnothing)$ must map to the identity element
$(1_\varnothing, 1_{\k}) \in \Hom_{\mathcal C}(\varnothing,\varnothing)$.
Since $([n]_q 1_{\varnothing}, 0)$ and $(1_\varnothing, 1_{\k})$
  are linearly independent, it follows that $\Bubble$ and
  $1_\varnothing$ are linearly independent in 
$\Hom_{\widehat\OS(z,t)}(\varnothing,\varnothing)$.
\end{proof}

\begin{remark}\label{websrem}
A modified version of Theorem~\ref{webs} holds over any
field $\k$ for any $q \in \k^\times\setminus\{\pm 1\}$.
Let $\qGL_n$ be the quantum general linear group over $\k$ at
parameter $q$; its coordinate algebra $\k[\qGL_n]$ is
the localization of Manin's quantized coordinate algebra of $n
\times n$ matrices at the 
quantum determinant as in \cite{PW}.
Let $\Rep \qGL_n$ be the category of rational 
$\qGL_n$-modules ($=$finite-dimensional $\k[\qGL_n]$-comodules).
Then there is a full $\k$-linear monoidal functor
$\Psi:\OS(q-q^{-1},t^{n}) \rightarrow
\Rep \qGL_n$ sending $\up \mapsto V^+$ and $\down \mapsto V^-$
defined just like in Lemma~\ref{heartbeat}. 
It induces a monoidal equivalence
\begin{equation}
\bar\Psi:\dot\OS(q-q^{-1},q^n) / \mathcal N \stackrel{\approx}{\longrightarrow}
\Tilt' \qGL_n
\end{equation}
where $\mathcal N$ is the 
additive $\k$-linear tensor ideal 
generated by $\i_{n+1}(e_{(1^{n+1})})$,
and $\Tilt' \qGL_n$ is the full subcategory of 
$\Rep \qGL_n$ consisting of all modules isomorphic to direct
sums of summands of tensor powers of $V^+$ and $V^-$.
The proof of this is similar to the proof of Theorem~\ref{webs}, using the
generalization of Schur-Weyl duality 
from \cite[Theorem 6.2]{DPS} and \cite[Theorem 4]{H}.
\end{remark}

\section{The affine oriented skein category}\label{aos}

In this section, $\k$ is a commutative ground ring and $z,t \in
\k^\times$ are arbitrary.
The {\em affine oriented skein category} $\AOS(z,t)$ is the strict $\k$-linear monoidal category
obtained from $\OS(z,t)$ by adjoining an additional generating
morphism
$\mathord{
\begin{tikzpicture}[baseline = -1mm]
      \node at (0,0) {$\color{darkblue}\scriptstyle\bullet$};
	\draw[<-,thick,darkblue] (0,0.25) to (0,-0.25);
\end{tikzpicture}
}$ and a two-sided inverse of this morphism,
subject to the additional relation ($\text{A}$) from Figure 1.
For any $n \in \ZZ$, we write
$\mathord{
\begin{tikzpicture}[baseline = -1mm]
      \node at (0,0) {$\color{darkblue}\scriptstyle\bullet$};
      \node at (.2,0) {$\color{darkblue}\scriptstyle n$};
	\draw[<-,thick,darkblue] (0,0.25) to (0,-0.25);
\end{tikzpicture}
}$
for the $n$th power of this additional generator.
The relation ($\text{A}$) comes from
the
{\em affine Hecke algebra} $AH_r$, which is generated by the
Iwahori-Hecke algebra
$H_r$ from (\ref{hecke}) plus additional elements $X_1^{\pm 1},\dots,X_r^{\pm 1}$
subject to the relations
\begin{align}\label{AHR}
X_i X_j &= X_j X_i,
&S_i X_i S_i &= X_{i+1}
\end{align}
for all $i,j$. There is an algebra
homomorphism
\begin{equation}
\j_r:AH_r \rightarrow \End_{\AOS(z,t)}(\up^r)
\end{equation}
defined  on $S_1,\dots,S_{r-1}$ in the same way as for the homomorphism $\i_r$ from (\ref{iotap1}),
and sending $X_i$ to the dot on the $i$th strand from the right.




\begin{lemma}
In $\AOS(z,t)$, we have that 
$\mathord{
\begin{tikzpicture}[baseline = -1mm]
	\draw[->,thick,darkblue] (0.08,.4) to (0.08,-.4);
      \node at (0.08,0) {$\color{darkblue}\scriptstyle\bullet$};
\end{tikzpicture}
}:=\:\mathord{
\begin{tikzpicture}[baseline = -1mm]
  \draw[->,thick,darkblue] (0.3,0) to (0.3,-.4);
	\draw[-,thick,darkblue] (0.3,0) to[out=90, in=0] (0.1,0.4);
	\draw[-,thick,darkblue] (0.1,0.4) to[out = 180, in = 90] (-0.1,0);
	\draw[-,thick,darkblue] (-0.1,0) to[out=-90, in=0] (-0.3,-0.4);
	\draw[-,thick,darkblue] (-0.3,-0.4) to[out = 180, in =-90] (-0.5,0);
  \draw[-,thick,darkblue] (-0.5,0) to (-0.5,.4);
   \node at (-0.1,0) {$\color{darkblue}\scriptstyle\bullet$};
\end{tikzpicture}
}=\:
\mathord{
\begin{tikzpicture}[baseline = -1mm]
  \draw[-,thick,darkblue] (0.3,0) to (0.3,.4);
	\draw[-,thick,darkblue] (0.3,0) to[out=-90, in=0] (0.1,-0.4);
	\draw[-,thick,darkblue] (0.1,-0.4) to[out = 180, in = -90] (-0.1,0);
	\draw[-,thick,darkblue] (-0.1,0) to[out=90, in=0] (-0.3,0.4);
	\draw[-,thick,darkblue] (-0.3,0.4) to[out = 180, in =90] (-0.5,0);
  \draw[->,thick,darkblue] (-0.5,0) to (-0.5,-.4);
   \node at (-0.1,0) {$\color{darkblue}\scriptstyle\bullet$};
\end{tikzpicture}
}\,.$
Moreover, all of the following relations hold:
\begin{align}
\mathord{
\begin{tikzpicture}[baseline = 1mm]
	\draw[<-,thick,darkblue] (0.4,-0.1) to[out=90, in=0] (0.1,0.3);
	\draw[-,thick,darkblue] (0.1,0.3) to[out = 180, in = 90] (-0.2,-0.1);
      \node at (-0.14,0.15) {$\color{darkblue}\scriptstyle\bullet$};
\end{tikzpicture}
}
&=
\mathord{
\begin{tikzpicture}[baseline = 1mm]
      \node at (0.335,0.15) {$\color{darkblue}\scriptstyle\bullet$};
	\draw[<-,thick,darkblue] (0.4,-0.1) to[out=90, in=0] (0.1,0.3);
	\draw[-,thick,darkblue] (0.1,0.3) to[out = 180, in = 90] (-0.2,-0.1);
\end{tikzpicture}
}\:,
&
\mathord{
\begin{tikzpicture}[baseline = -1.5mm]
	\draw[<-,thick,darkblue] (0.4,0.1) to[out=-90, in=0] (0.1,-0.3);
	\draw[-,thick,darkblue] (0.1,-0.3) to[out = 180, in = -90] (-0.2,0.1);
      \node at (-0.14,-0.15) {$\color{darkblue}\scriptstyle\bullet$};
\end{tikzpicture}
}&
=
\mathord{
\begin{tikzpicture}[baseline = -1.5mm]
      \node at (0.335,-0.15) {$\color{darkblue}\scriptstyle\bullet$};
	\draw[<-,thick,darkblue] (0.4,0.1) to[out=-90, in=0] (0.1,-0.3);
	\draw[-,thick,darkblue] (0.1,-0.3) to[out = 180, in = -90] (-0.2,0.1);
\end{tikzpicture}
}\:,\label{d5}\\
\mathord{
\begin{tikzpicture}[baseline = 1mm]
      \node at (0.33,0.15) {$\color{darkblue}\scriptstyle\bullet$};
	\draw[-,thick,darkblue] (0.4,-0.1) to[out=90, in=0] (0.1,0.3);
	\draw[->,thick,darkblue] (0.1,0.3) to[out = 180, in = 90] (-0.2,-0.1);
\end{tikzpicture}
}
&=
\mathord{
\begin{tikzpicture}[baseline = 1mm]
	\draw[-,thick,darkblue] (0.4,-0.1) to[out=90, in=0] (0.1,0.3);
	\draw[->,thick,darkblue] (0.1,0.3) to[out = 180, in = 90] (-0.2,-0.1);
      \node at (-0.14,0.15) {$\color{darkblue}\scriptstyle\bullet$};
\end{tikzpicture}
}\:,
&
\mathord{
\begin{tikzpicture}[baseline = -1.5mm]
      \node at (0.335,-0.15) {$\color{darkblue}\scriptstyle\bullet$};
	\draw[-,thick,darkblue] (0.4,0.1) to[out=-90, in=0] (0.1,-0.3);
	\draw[->,thick,darkblue] (0.1,-0.3) to[out = 180, in = -90] (-0.2,0.1);
\end{tikzpicture}
}&
=
\mathord{
\begin{tikzpicture}[baseline = -1.5mm]
	\draw[-,thick,darkblue] (0.4,0.1) to[out=-90, in=0] (0.1,-0.3);
	\draw[->,thick,darkblue] (0.1,-.3) to[out = 180, in = -90] (-0.2,0.1);
      \node at (-0.14,-0.15) {$\color{darkblue}\scriptstyle\bullet$};
\end{tikzpicture}
}\:,\label{d4}\\
\label{d0}
\mathord{
\begin{tikzpicture}[baseline = -.5mm]
	\draw[->,thick,darkblue] (0.28,-.3) to (-0.28,.4);
      \node at (0.165,-0.15) {$\color{darkblue}\scriptstyle\bullet$};
	\draw[line width=4pt,white,-] (-0.28,-.3) to (0.28,.4);
	\draw[thick,darkblue,->] (-0.28,-.3) to (0.28,.4);
\end{tikzpicture}
}&=\mathord{
\begin{tikzpicture}[baseline = -.5mm]
	\draw[thick,darkblue,->] (-0.28,-.3) to (0.28,.4);
	\draw[-,line width=4pt,white] (0.28,-.3) to (-0.28,.4);
	\draw[->,thick,darkblue] (0.28,-.3) to (-0.28,.4);
      \node at (-0.14,0.23) {$\color{darkblue}\scriptstyle\bullet$};
\end{tikzpicture}
}\
\:,
&\qquad
\mathord{
\begin{tikzpicture}[baseline = -.5mm]
	\draw[thick,darkblue,->] (-0.28,-.3) to (0.28,.4);
      \node at (-0.16,-0.15) {$\color{darkblue}\scriptstyle\bullet$};
	\draw[-,line width=4pt,white] (0.28,-.3) to (-0.28,.4);
	\draw[->,thick,darkblue] (0.28,-.3) to (-0.28,.4);
\end{tikzpicture}
}&= 
\mathord{
\begin{tikzpicture}[baseline = -.5mm]
	\draw[->,thick,darkblue] (0.28,-.3) to (-0.28,.4);
	\draw[line width=4pt,white,-] (-0.28,-.3) to (0.28,.4);
	\draw[thick,darkblue,->] (-0.28,-.3) to (0.28,.4);
      \node at (0.145,0.23) {$\color{darkblue}\scriptstyle\bullet$};
\end{tikzpicture}
}\:,\\
\mathord{
\begin{tikzpicture}[baseline = -.5mm]
	\draw[<-,thick,darkblue] (0.28,-.3) to (-0.28,.4);
	\draw[line width=4pt,white,-] (-0.28,-.3) to (0.28,.4);
	\draw[thick,darkblue,->] (-0.28,-.3) to (0.28,.4);
      \node at (-0.15,0.24) {$\color{darkblue}\scriptstyle\bullet$};
\end{tikzpicture}
}&=\mathord{
\begin{tikzpicture}[baseline = -.5mm]
	\draw[thick,darkblue,->] (-0.28,-.3) to (0.28,.4);
	\draw[-,line width=4pt,white] (0.28,-.3) to (-0.28,.4);
	\draw[<-,thick,darkblue] (0.28,-.3) to (-0.28,.4);
      \node at (0.14,-0.13) {$\color{darkblue}\scriptstyle\bullet$};
\end{tikzpicture}
}\
\:,
&\qquad
\mathord{
\begin{tikzpicture}[baseline = -.5mm]
	\draw[thick,darkblue,->] (-0.28,-.3) to (0.28,.4);
      \node at (-0.16,-0.15) {$\color{darkblue}\scriptstyle\bullet$};
	\draw[-,line width=4pt,white] (0.28,-.3) to (-0.28,.4);
	\draw[<-,thick,darkblue] (0.28,-.3) to (-0.28,.4);
\end{tikzpicture}
}&= 
\mathord{
\begin{tikzpicture}[baseline = -.5mm]
	\draw[<-,thick,darkblue] (0.28,-.3) to (-0.28,.4);
	\draw[line width=4pt,white,-] (-0.28,-.3) to (0.28,.4);
	\draw[thick,darkblue,->] (-0.28,-.3) to (0.28,.4);
      \node at (0.145,0.23) {$\color{darkblue}\scriptstyle\bullet$};
\end{tikzpicture}
}\:,\label{d3}\\
\mathord{
\begin{tikzpicture}[baseline = -.5mm]
	\draw[<-,thick,darkblue] (0.28,-.3) to (-0.28,.4);
	\draw[line width=4pt,white,-] (-0.28,-.3) to (0.28,.4);
	\draw[thick,darkblue,<-] (-0.28,-.3) to (0.28,.4);
      \node at (-0.15,0.24) {$\color{darkblue}\scriptstyle\bullet$};
\end{tikzpicture}
}&=\mathord{
\begin{tikzpicture}[baseline = -.5mm]
	\draw[thick,darkblue,<-] (-0.28,-.3) to (0.28,.4);
	\draw[-,line width=4pt,white] (0.28,-.3) to (-0.28,.4);
	\draw[<-,thick,darkblue] (0.28,-.3) to (-0.28,.4);
      \node at (0.14,-0.13) {$\color{darkblue}\scriptstyle\bullet$};
\end{tikzpicture}
}\
\:,
&\qquad
\mathord{
\begin{tikzpicture}[baseline = -.5mm]
	\draw[thick,darkblue,<-] (-0.28,-.3) to (0.28,.4);
	\draw[-,line width=4pt,white] (0.28,-.3) to (-0.28,.4);
	\draw[<-,thick,darkblue] (0.28,-.3) to (-0.28,.4);
      \node at (0.155,0.24) {$\color{darkblue}\scriptstyle\bullet$};
\end{tikzpicture}
}&= 
\mathord{
\begin{tikzpicture}[baseline = -.5mm]
	\draw[<-,thick,darkblue] (0.28,-.3) to (-0.28,.4);
	\draw[line width=4pt,white,-] (-0.28,-.3) to (0.28,.4);
	\draw[thick,darkblue,<-] (-0.28,-.3) to (0.28,.4);
      \node at (-0.14,-0.13) {$\color{darkblue}\scriptstyle\bullet$};
\end{tikzpicture}
}\:,\label{d2}\\
\mathord{
\begin{tikzpicture}[baseline = -.5mm]
	\draw[->,thick,darkblue] (0.28,-.3) to (-0.28,.4);
	\draw[line width=4pt,white,-] (-0.28,-.3) to (0.28,.4);
	\draw[thick,darkblue,<-] (-0.28,-.3) to (0.28,.4);
      \node at (0.15,-0.14) {$\color{darkblue}\scriptstyle\bullet$};
\end{tikzpicture}
}&=\mathord{
\begin{tikzpicture}[baseline = -.5mm]
	\draw[thick,darkblue,<-] (-0.28,-.3) to (0.28,.4);
	\draw[-,line width=4pt,white] (0.28,-.3) to (-0.28,.4);
	\draw[->,thick,darkblue] (0.28,-.3) to (-0.28,.4);
      \node at (-0.14,0.23) {$\color{darkblue}\scriptstyle\bullet$};
\end{tikzpicture}
}
\:,
&\qquad
\mathord{
\begin{tikzpicture}[baseline = -.5mm]
	\draw[thick,darkblue,<-] (-0.28,-.3) to (0.28,.4);
	\draw[-,line width=4pt,white] (0.28,-.3) to (-0.28,.4);
	\draw[->,thick,darkblue] (0.28,-.3) to (-0.28,.4);
      \node at (0.155,0.24) {$\color{darkblue}\scriptstyle\bullet$};
\end{tikzpicture}
}&= 
\mathord{
\begin{tikzpicture}[baseline = -.5mm]
	\draw[->,thick,darkblue] (0.28,-.3) to (-0.28,.4);
	\draw[line width=4pt,white,-] (-0.28,-.3) to (0.28,.4);
	\draw[thick,darkblue,<-] (-0.28,-.3) to (0.28,.4);
      \node at (-0.13,-0.12) {$\color{darkblue}\scriptstyle\bullet$};
\end{tikzpicture}
}\:.\label{d1}
\end{align}
\end{lemma}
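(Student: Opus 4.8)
The plan is to derive the whole lemma from relation $(\mathrm{A})$, the monoidal presentation of $\OS(z,t)$ given by Theorem~\ref{first} (equivalently, by Theorem~\ref{key}), and the rigidity (zigzag) relations built into that presentation. Although the statement lists the identities in a particular order, I would prove them in the order: the two equalities $(\ref{d0})$ first; then the well-definedness of the downward dot together with the cap/cup slides $(\ref{d5})$ and $(\ref{d4})$; and finally the remaining crossing slides $(\ref{d3})$, $(\ref{d2})$ and $(\ref{d1})$.

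To obtain $(\ref{d0})$, I would start from $(\mathrm{A})$, use $(\mathrm{S})$ to convert the negative upward crossing appearing there into the positive one (a dot on an identity strand is central, so the resulting correction term is harmless), and then bend the two outer legs of the crossing into a cap and a cup, straightening by the zigzag relations; this yields both equalities of $(\ref{d0})$, one passing the dot along each of the two through-strands of the crossing. The only delicate point is to carry out each straightening with the correct strand lying on top, which is governed by the relations identifying the positive and negative crossings as two-sided inverses.

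Granting $(\ref{d0})$, define the downward dot by the middle diagram of the first display. The cap/cup slides $(\ref{d5})$ and $(\ref{d4})$ then follow by pre- or post-composing with a rightward cap or cup and straightening with the zigzag relations, pushing the dot past any crossing that appears by $(\ref{d0})$; once one observes that the downward dot, pulled straight, is literally a dot on an upward segment, these become nearly immediate. The equality of the two curl expressions in the first display is then obtained by transporting the dot from one branch of the curl to the other using $(\ref{d5})$ and $(\ref{d4})$. Finally, $(\ref{d3})$, $(\ref{d2})$ and $(\ref{d1})$ follow by expressing each of the remaining crossings in terms of the two generating crossings and the caps and cups --- using the expansions recorded in Section~\ref{sgr}, together with $(\mathrm{S})$ whenever a negative crossing is involved --- and then moving the dot through each generating crossing by $(\ref{d0})$ and around each cap and cup by $(\ref{d5})$ and $(\ref{d4})$.

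The main obstacle is that $(\mathrm{A})$ is given for just one orientation and one choice of over/under, whereas the lemma records the analogous behaviour for all eight crossing types and for dots wrapping around caps and cups; the real content therefore lies in the first step and in the well-definedness of the downward dot, everything else being a careful bookkeeping of orientations and of which strand lies over which at each crossing.
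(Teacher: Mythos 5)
Your overall plan is in the right direction, and the order you choose (establish how the dot slides through crossings and around caps/cups, then define the downward dot and derive the rest) is sensible, but there are two concrete problems that would have to be repaired before the proof is correct.

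First, the treatment of $(\ref{d0})$ via the skein relation $(\mathrm{S})$ is wrong. The second equality in $(\ref{d0})$ is literally relation $(\mathrm{A})$ from the defining presentation, so there is nothing to prove there. For the first equality, write $X_L$ (resp.\ $X_R$) for a dot on the left (resp.\ right) strand and $S$ for the positive upward crossing; relation $(\mathrm{A})$ reads $S^{-1}\, X_L = X_R\, S$, and multiplying on the left by $S$ and on the right by $S^{-1}$ immediately gives $X_L\, S^{-1} = S\, X_R$, which is $(\ref{d0})$(1). The only input beyond $(\mathrm{A})$ is that $S^{-1}$ is a two-sided inverse, i.e.\ $(\mathrm{R}\mathrm{II})$ --- exactly the route the paper takes. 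Your proposal to instead substitute $S^{-1} = S - z\,\mathrm{id}$ via $(\mathrm{S})$ introduces a term $z\,[\text{identity with a dot}]$ that does \emph{not} vanish and does not cancel against anything; the parenthetical assertion that ``a dot on an identity strand is central, so the resulting correction term is harmless'' is false: the dot is not central in $\End_{\AOS(z,t)}(\up\up)$ --- indeed, the affine Hecke algebra relation $S X_i S = X_{i+1}$ is precisely the statement that it does not commute with $S$. The bending-into-a-cap-and-cup step you sketch is also unnecessary for $(\ref{d0})$.

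Second, you group $(\ref{d4})$ with $(\ref{d5})$ as ``nearly immediate'' from zigzag relations. This misses the essential point: $(\ref{d5})$ concerns the \emph{rightward} cap and cup, which are generators, so the claim really does come straight out of the zigzag relations. But $(\ref{d4})$ concerns the \emph{leftward} cap and cup, which are not generators; they are defined via $(\ref{ready})$ in terms of a rightward cap/cup and the twist morphism $\diamond$, and pushing the dot around them therefore requires sliding it past a curl. The twist relation $(\mathrm{T})$ then enters: the paper's explicit computation for $(\ref{d4})$ produces and absorbs a factor of $t$ using $(\ref{ready})$. Moreover, in the paper $(\ref{d4})$ is proved \emph{after} $(\ref{d1})$ (``we use these identities plus $(\ref{ready})$ to check $(\ref{d4})$''), whereas your order tries to dispose of $(\ref{d4})$ before $(\ref{d1})$; you should check carefully that you are not using $(\ref{d4})$ to prove something that your proof of $(\ref{d4})$ itself requires. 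For comparison, the paper's order is: $(\ref{d0})$ from $(\mathrm{A})$ and $(\mathrm{R}\mathrm{II})$; then $(\ref{d5})$, $(\ref{d3})$, $(\ref{d2})$ from $(\mathrm{R}0)$; then an explicit diagrammatic argument for $(\ref{d1})$; and finally $(\ref{d4})$ and the equality of the two curl expressions, both of which need $(\ref{ready})$.

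Once you fix the $(\mathrm{S})$ vs.\ $(\mathrm{R}\mathrm{II})$ confusion and acknowledge the role of $(\ref{ready})$ and $(\mathrm{T})$ in $(\ref{d4})$, the remaining steps (expanding the other crossings via the formulas around $(\ref{hoohoo})$ and sliding the dot through by the relations already proved) are a reasonable way to finish, and not far from what the paper does.
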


\begin{proof}
Define 
$\mathord{
\begin{tikzpicture}[baseline = -1mm]
	\draw[<-,thick,darkblue] (0.08,-.25) to (0.08,.25);
      \node at (0.08,0.05) {$\color{darkblue}\scriptstyle\bullet$};
\end{tikzpicture}
}$ to be the left hand expression from the main identity that we are trying
to prove; we still need to show that it is equal to the right hand
expression.
The relations (\ref{d0}) follow from ($\text{A}$) and ($\text{R}$II).
Using the relations from ($\text{R}0$) involving rightward cups and
rightward caps, the relations (\ref{d5}), (\ref{d3}) and (\ref{d2})
are then easy to check too.
Here is the proof of the first equality from (\ref{d1}); the second
equality can be proved similarly:
\begin{align*}
\mathord{
\begin{tikzpicture}[baseline = -.5mm]
	\draw[thick,darkblue,<-] (-0.28,-.3) to (0.28,.4);
	\draw[-,line width=4pt,white] (0.28,-.3) to (-0.28,.4);
	\draw[->,thick,darkblue] (0.28,-.3) to (-0.28,.4);
      \node at (-0.14,0.23) {$\color{darkblue}\scriptstyle\bullet$};
\end{tikzpicture}
}=
\mathord{
\begin{tikzpicture}[baseline = 0]
	\draw[->,thick,darkblue] (0.28,0.4) to[out=90, in=-90] (-.28,1);
	\draw[-,line width=4pt,white] (0.28,1) to[out = -90, in = 90] (-0.28,0.4);
	\draw[-,thick,darkblue] (0.28,1) to[out = -90, in = 90] (-0.28,0.4);
	\draw[-,thick,darkblue] (-0.28,-0.2) to[out=90,in=-90] (0.28,.4);
	\draw[-,line width=4pt,white] (0.28,-0.2) to[out=90,in=-90] (-0.28,.4);
	\draw[-,thick,darkblue] (0.28,-0.2) to[out=90,in=-90] (-0.28,.4);
	\draw[<-,thick,darkblue] (-0.28,-.8) to[out=90,in=-90] (0.28,-0.2);
	\draw[-,line width=4pt,white] (0.28,-.8) to[out=90,in=-90] (-0.28,-0.2);
	\draw[-,thick,darkblue] (0.28,-.8) to[out=90,in=-90] (-0.28,-0.2);
      \node at (-0.28,-0.2) {$\color{darkblue}\scriptstyle\bullet$};
\end{tikzpicture}
}
=
\mathord{
\begin{tikzpicture}[baseline = 0]
	\draw[->,thick,darkblue] (0.28,0.4) to[out=90, in=-90] (-.28,1);
	\draw[-,line width=4pt,white] (0.28,1) to[out = -90, in = 90] (-0.28,0.4);
	\draw[-,thick,darkblue] (0.28,1) to[out = -90, in = 90] (-0.28,0.4);
	\draw[-,thick,darkblue] (0.28,-0.2) to[out=90,in=-90] (-0.28,.4);
	\draw[-,line width=4pt,white] (-0.28,-0.2) to[out=90,in=-90] (0.28,.4);
	\draw[-,thick,darkblue] (-0.28,-0.2) to[out=90,in=-90] (0.28,.4);
	\draw[<-,thick,darkblue] (-0.28,-.8) to[out=90,in=-90] (0.28,-0.2);
	\draw[-,line width=4pt,white] (0.28,-.8) to[out=90,in=-90] (-0.28,-0.2);
	\draw[-,thick,darkblue] (0.28,-.8) to[out=90,in=-90] (-0.28,-0.2);
      \node at (0.28,0.4) {$\color{darkblue}\scriptstyle\bullet$};
\end{tikzpicture}
}
=
\mathord{
\begin{tikzpicture}[baseline = -.5mm]
	\draw[->,thick,darkblue] (0.28,-.3) to (-0.28,.4);
	\draw[line width=4pt,white,-] (-0.28,-.3) to (0.28,.4);
	\draw[thick,darkblue,<-] (-0.28,-.3) to (0.28,.4);
      \node at (0.15,-0.14) {$\color{darkblue}\scriptstyle\bullet$};
\end{tikzpicture}
}\:.
\end{align*}
Then we use these identities plus (\ref{ready}) to check the first equality from (\ref{d4}):
\begin{align*}
\mathord{
\begin{tikzpicture}[baseline = 1mm]
      \node at (0.335,0.15) {$\color{darkblue}\scriptstyle\bullet$};
	\draw[-,thick,darkblue] (0.4,-0.1) to[out=90, in=0] (0.1,0.3);
	\draw[->,thick,darkblue] (0.1,0.3) to[out = 180, in = 90] (-0.2,-0.1);
\end{tikzpicture}
}&=
t\mathord{
\begin{tikzpicture}[baseline = -3mm]
	\draw[-,thick,darkblue] (0.28,-.6) to[out=120,in=-90] (-0.28,0);
	\draw[-,thick,darkblue] (0.28,0) to[out=90, in=0] (0,0.2);
	\draw[-,thick,darkblue] (0,0.2) to[out = 180, in = 90] (-0.28,0);
	\draw[-,line width=4pt,white] (-0.28,-.6) to[out=60,in=-90] (0.28,0);
	\draw[<-,thick,darkblue] (-0.28,-.6) to[out=60,in=-90] (0.28,0);
     \node at (0.2,-0.5) {$\color{darkblue}\scriptstyle\bullet$};
\end{tikzpicture}
}=
t\mathord{
\begin{tikzpicture}[baseline = -3mm]
	\draw[-,thick,darkblue] (0.28,0) to[out=90, in=0] (0,0.2);
	\draw[-,thick,darkblue] (0,0.2) to[out = 180, in = 90] (-0.28,0);
	\draw[<-,thick,darkblue] (-0.28,-.6) to[out=60,in=-90] (0.28,0);
	\draw[-,line width=4pt,white] (0.28,-.6) to[out=120,in=-90] (-0.28,0);
	\draw[-,thick,darkblue] (0.28,-.6) to[out=120,in=-90] (-0.28,0);
     \node at (-0.28,0) {$\color{darkblue}\scriptstyle\bullet$};
\end{tikzpicture}
}=
t\mathord{
\begin{tikzpicture}[baseline = -3mm]
	\draw[-,thick,darkblue] (0.28,0) to[out=90, in=0] (0,0.2);
	\draw[-,thick,darkblue] (0,0.2) to[out = 180, in = 90] (-0.28,0);
	\draw[<-,thick,darkblue] (-0.28,-.6) to[out=60,in=-90] (0.28,0);
	\draw[-,line width=4pt,white] (0.28,-.6) to[out=120,in=-90] (-0.28,0);
	\draw[-,thick,darkblue] (0.28,-.6) to[out=120,in=-90] (-0.28,0);
     \node at (0.28,0) {$\color{darkblue}\scriptstyle\bullet$};
\end{tikzpicture}
}=
t\mathord{
\begin{tikzpicture}[baseline = -3mm]
	\draw[-,thick,darkblue] (0.28,0) to[out=90, in=0] (0,0.2);
	\draw[-,thick,darkblue] (0,0.2) to[out = 180, in = 90] (-0.28,0);
	\draw[-,thick,darkblue] (0.28,-.6) to[out=120,in=-90] (-0.28,0);
	\draw[<-,line width=4pt,white] (-0.28,-.6) to[out=60,in=-90] (0.28,0);
	\draw[<-,thick,darkblue] (-0.28,-.6) to[out=60,in=-90] (0.28,0);
     \node at (-0.15,-.45) {$\color{darkblue}\scriptstyle\bullet$};
\end{tikzpicture}
}=
\mathord{
\begin{tikzpicture}[baseline = 1mm]
	\draw[-,thick,darkblue] (0.4,-0.1) to[out=90, in=0] (0.1,0.3);
	\draw[->,thick,darkblue] (0.1,0.3) to[out = 180, in = 90] (-0.2,-0.1);
      \node at (-0.14,0.15) {$\color{darkblue}\scriptstyle\bullet$};
\end{tikzpicture}
}
\:.
\end{align*}
The remaining equality from (\ref{d4}), and the equality of $
\mathord{
\begin{tikzpicture}[baseline = -1mm]
	\draw[<-,thick,darkblue] (0.08,-.25) to (0.08,.25);
      \node at (0.08,0.05) {$\color{darkblue}\scriptstyle\bullet$};
\end{tikzpicture}
}$
with the
second expression from the main identity, now follow
easily using ($\text{R}0$) once more.
\end{proof}

There is an obvious monoidal functor 
\begin{equation*}
\alpha:\OS(z,t) \rightarrow
\AOS(z,t)
\end{equation*}
taking objects and morphisms to the same things in $\AOS(z,t)$.
The following lemma shows that $\alpha$ is
faithful (and also that the functor $\beta$ from the lemma is full).

\begin{lemma}\label{notmon}
There is a unique $\k$-linear (but not monoidal!) functor $$\beta:\AOS(z,t) \rightarrow
\OS(z,t)$$
such that $\beta \circ \alpha =
\operatorname{Id}_{\OS(z,t)}$ and $\beta \left(1_\a \otimes \mathord{
\begin{tikzpicture}[baseline = -1mm]
      \node at (0,0) {$\color{darkblue}\scriptstyle\bullet$};
	\draw[<-,thick,darkblue] (0,0.25) to (0,-0.25);
\end{tikzpicture}
}\right) = 1_\a
\otimes \up$ for all $\a \in\words$.
The effect of $\beta$ on dots on other strands
is given by
\begin{align}
\mathord{
\begin{tikzpicture}[baseline =-1.5mm]
	\draw[-,thick,double,darkblue] (-.3,-.5) to (-.3,.5);
	\draw[-,thick,double,darkblue] (.3,-.5) to (.3,.5);
	\draw[->,thick,darkblue] (0,-.5) to (0,.5);
      \node at (0,0) {$\color{darkblue}\scriptstyle\bullet$};
      \node at (-.3,-.68) {$\a$};
      \node at (.3,-.65) {$\b$};
\end{tikzpicture}
}
&\mapsto
\mathord{
\begin{tikzpicture}[baseline =-1.5mm]
	\draw[-,thick,double,darkblue] (-.3,-.5) to (-.3,.5);
	\draw[->,thick,darkblue] (.6,0) to[out=90,in=-90] (0,.5);
	\draw[-,line width=4pt,white] (.3,-.5) to (.3,.5);
	\draw[-,thick,double,darkblue] (.3,-.5) to (.3,.5);
	\draw[-,line width=4pt,white] (0,-.5) to[out=90,in=-90] (.6,0);
	\draw[-,thick,darkblue] (0,-.5) to[out=90,in=-90] (.6,0);
      \node at (-.3,-.68) {$\a$};
      \node at (.3,-.65) {$\b$};
\end{tikzpicture}
}
\:,&
\mathord{
\begin{tikzpicture}[baseline =-1.5mm]
	\draw[-,thick,double,darkblue] (-.3,-.5) to (-.3,.5);
	\draw[-,thick,double,darkblue] (.3,-.5) to (.3,.5);
	\draw[<-,thick,darkblue] (0,-.5) to (0,.5);
      \node at (0,0) {$\color{darkblue}\scriptstyle\bullet$};
      \node at (-.3,-.68) {$\a$};
      \node at (.3,-.65) {$\b$};
\end{tikzpicture}
}
&\mapsto
t^{-2}\mathord{
\begin{tikzpicture}[baseline =-1.5mm]
	\draw[-,thick,double,darkblue] (-.3,-.5) to (-.3,.5);
	\draw[<-,thick,darkblue] (0,-.5) to[out=90,in=-90] (.6,0);
	\draw[-,line width=4pt,white] (.3,-.5) to (.3,.5);
	\draw[-,thick,double,darkblue] (.3,-.5) to (.3,.5);
	\draw[-,line width=4pt,white] (.6,0) to[out=90,in=-90] (0,.5);
	\draw[-,thick,darkblue] (.6,0) to[out=90,in=-90] (0,.5);
      \node at (-.3,-.68) {$\a$};
      \node at (.3,-.65) {$\b$};
\end{tikzpicture}
}\:,\label{arun}
\end{align}
for any $\a,\b \in \words$.
\end{lemma}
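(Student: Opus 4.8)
The plan is to build $\beta$ by a ``reroute substitution'' on diagrams and then show it descends from diagrams to morphisms. First note that a morphism of $\AOS(z,t)$ is a $\k$-linear combination of dotted ribbon diagrams, and that reading such a diagram from bottom to top and cutting at the (generic, distinct) heights of its dots exhibits it as a composite of dot-free $\OS(z,t)$-diagrams interspersed with single-dot slices $1_\a\otimes\bullet^{\pm1}\otimes 1_\b$, together with the rotated morphism $1_\a\otimes(\text{dot on }\down)^{\pm1}\otimes 1_\b$ for dots lying on downward segments (by the previous lemma the latter is again of the first type after a local isotopy, but it is harmless to regard it as a further generator). Hence a $\k$-linear functor that is the identity on $\OS(z,t)$ is determined by its values on these single-dot slices, and the prescribed value $\beta(1_\a\otimes\bullet)=1_{\a\up}$ forces, via the dot-slide relations \eqref{d0}--\eqref{d1}, the value on $1_\a\otimes\bullet^{\pm1}\otimes 1_\b$ to be exactly the reroute morphism in \eqref{arun}: the marked strand bulges out to the far right, passing \emph{over} every strand on the way out and \emph{under} every strand on the way back, with a factor $t^{\mp 2}$ in the downward case and the over/under convention reversed for $\bullet^{-1}$. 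This gives uniqueness and tells us what $\beta$ must be.

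For existence I would \emph{define} $\beta$ on an arbitrary dotted diagram by performing this reroute substitution at every dot --- detach the strand at the dot and splice in, within a thin horizontal slab at the dot's height, a tendril running rightward past all strands present and then back leftward, over/under as above --- and then read off the resulting dot-free diagram as a morphism of $\OS(z,t)$, extending $\k$-linearly. The substance of the proof is that this is well defined on morphisms, which needs three checks. (a) Invariance under isotopy of dotted diagrams and under sliding a dot within its own edge: the tendril can always be isotoped freely in $\OS(z,t)$ past any local feature --- past a crossing by $(\mathrm R\mathrm{III})$, past a cap or cup by planar isotopy, $(\mathrm R0)$ and $(\mathrm R\mathrm{II})$ --- so the substituted diagram is unchanged. (b) Invariance under the defining relations $(\mathrm R0),(\mathrm{FR}\mathrm I),(\mathrm R\mathrm{II}),(\mathrm R\mathrm{III}),(\mathrm S),(\mathrm T),(\mathrm D)$ applied in dot-free regions: automatic, since there $\beta$ agrees with the quotient map $\AOS(z,t)\to\OS(z,t)$ and the tendrils of nearby dots can be isotoped clear. (c) Invariance under relation $(\mathrm A)$ and the derived identities of the previous lemma: after substitution each becomes an explicit small identity in $\OS(z,t)$; for instance, writing $S$ for the positive upward crossing, $(\mathrm A)$ turns into $S^{-1}\circ\big(z\,S+1_{\up\up}\big)=S$ in $\End_{\OS(z,t)}(\up\up)$, which holds because Theorem~\ref{first}(1) gives $S^2=zS+1_{\up\up}$, equivalently $S^{-1}=S-z\,1_{\up\up}$. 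The identities \eqref{d5}--\eqref{d1} reduce in the same way to $(\mathrm R0),(\mathrm R\mathrm{II}),(\mathrm R\mathrm{III})$ and Theorem~\ref{first}(1), while the rotation identity relating the dot on $\up$ to the dot on $\down$ uses in addition $(\mathrm T)$ to produce the scalar $t^{-2}$. Finally $\beta$ respects composition because vertical stacking does not alter which strands lie to the right of a dot at its own height, so each tendril remains inside its slab. (As a sanity check, $\beta(1_\up\otimes\bullet)=1_{\up\up}$ but $\beta(\bullet\otimes 1_\up)=S^2\neq 1_{\up\up}$, so $\beta$ is indeed not monoidal.)

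With $\beta$ constructed, the remaining assertions are immediate: $\beta\circ\alpha=\operatorname{Id}_{\OS(z,t)}$ since dot-free diagrams are untouched, in particular $\beta(1_\a\otimes\bullet)=1_\a\otimes\up$ because a dot on the rightmost strand has an empty tendril; the formulas \eqref{arun} hold by the very definition of the reroute; and uniqueness was explained above. (This also gives the parenthetical facts that $\alpha$ is faithful and $\beta$ is full.) I expect the only genuinely laborious step to be check (c): the individual reductions to Theorem~\ref{first}(1) and planar isotopy are trivial, but one must keep careful track of the orientation of each strand and of which crossings the reroute produces as ``over'' versus ``under'', and it is here --- in the asymmetry between the $\up$- and $\down$-cases --- that the factor $t^{-2}$ must be pinned down correctly.
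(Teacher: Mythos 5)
Your proposal takes essentially the same approach as the paper: define $\beta$ by the ``reroute'' substitution (a dot on a strand becomes a tendril passing over on the way out and under on the way back, with a scalar $t^{-2}$ when the strand is downward), and then verify consistency. The paper's organization is cleaner at one spot that your plan elides: it invokes the mechanical conversion from a monoidal presentation to a $\k$-linear one (\cite[$\S$2.6]{BCNR}), so that it never has to argue for isotopy invariance of the substituted diagram directly -- the ``commuting relations'' of that presentation already encode it. Your step (a), by contrast, has to establish this from scratch, and the brief appeal to $(\mathrm{R}0)$, $(\mathrm{RII})$, $(\mathrm{RIII})$ hides the genuinely substantive point that as the dot is isotoped the set of strands to its right changes, so the tendril gains or loses entire \emph{pairs} of crossings (one over, one under), and the verification that these new pairs cancel is exactly the work. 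Two further comments. First, in (c) you reduce $(\mathrm{A})$ to the two-strand identity $S^{-1}S^2=S$, but what must actually be checked is $1_\a\otimes(\mathrm{A})\otimes 1_\b$ for arbitrary $\a,\b$; the reduction to the two-strand case requires carrying the $\b$-strands along the loop, which is again step (a), so as written (a) and (c) are not cleanly separated. Second, the $t^{-2}$ should not be described as something ($\mathrm{T}$) ``produces'' during the consistency check; it must be built into the definition of the reroute on a downward dot, exactly as in \eqref{arun}, and then $(\mathrm{T})$ together with \eqref{d5}--\eqref{d4} is used to \emph{verify} compatibility. The paper pins this down by the explicit last computation in its proof, which is worth reproducing rather than asserting.
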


\begin{proof}
We already have a presentation for $\AOS(z,t)$ as a $\k$-linear
monoidal category,
 with
generators and relations
coming from Theorem~\ref{first} plus the additional generator
$O := \mathord{
\begin{tikzpicture}[baseline = -1mm]
      \node at (0,0) {$\color{darkblue}\scriptstyle\bullet$};
	\draw[<-,thick,darkblue] (0,0.25) to (0,-0.25);
\end{tikzpicture}
}$ and its two-sided inverse $O^{-1}$ subject to $(\text{A})$.
Since we are trying to construct a $\k$-linear functor that is not
monoidal, we need to convert this into a presentation for
$\AOS(z,t)$ as
a $\k$-linear category, as explained in \cite[$\S$2.6]{BCNR}.
The generators are all morphisms of the form $1_\a
\otimes S \otimes 1_\b, 1_\a \otimes T \otimes 1_\b, 1_\a \otimes C
\otimes 1_\b$, $1_\a \otimes D \otimes 1_\b$ and $1_\a \otimes O \otimes
1_\b$
for all $\a, \b \in \words$.
The relations are derived from the monoidal relations by tensoring
with $1_\a$ and $1_\b$ in a similar way, plus we also need {\em
  commuting relations} replacing the interchange law.

Using this new presentation, we can establish the existence of
$\beta$: 
it is the identity on objects, and must send the generating morphisms
 $1_\a
\otimes S \otimes 1_\b, 1_\a \otimes T \otimes 1_\b, 1_\a \otimes C
\otimes 1_\b$ and $1_\a \otimes D \otimes 1_\b$ to the same morphisms in
$\OS(z,t)$ since we want $\beta\circ\alpha = \operatorname{Id_{\OS(z,t)}}$.
It sends $1_\a \otimes O \otimes 1_\b$ and its two-sided inverse
$1_\a \otimes O^{-1} \otimes 1_\b$ to
$$
\mathord{

}$.
\end{proof}

The category $\AOS(z,t)$ is studied further in \cite{B2}: it
is the $k=0$ case of the $q$-Heisenberg category $\H_k(z,t)$
introduced there.

\section{Shortest word theory}\label{sswt}

For the next few sections, we assume that $\k$ is a field of
characteristic $p \geq 0$ and $z = q-q^{-1}$ for $q \in
\k^\times\setminus\{\pm 1\}$.
Since we are going to be discussing linear representations rather than tensor
ones, it is natural to replace the category $\OS(z,t)$ with the associative
algebra
$$
OS = \bigoplus_{\a,\b \in \words} 1_\a OS 1_\b
\qquad\text{where}\qquad
1_\a OS 1_\b := \Hom_{\OS(z,t)}(\b,\a),
$$
multiplication being induced by composition in $\OS(z,t)$.
This algebra is {\em locally unital} rather than unital,
with a distinguished system of local units given by the mutually
orthogonal idempotents 
$\{1_\a\:|\:\a \in \words\}$.
The functor 
category $\Mod \OS(z,t)$ of $\OS(z,t)$-modules as defined in the introduction
may be identified with the usual algebraic category $\Mod OS$ of all {\em right
  $OS$-modules} $M$ which are locally unital in the sense that
$M = \bigoplus_{\a \in \words} M 1_\a$. 
The additive Karoubi envelope $\dot\OS(z,t)$ is
equivalent to the full subcategory $\pMod OS$ of $\Mod OS$ consisting of {\em finitely generated
  projective modules},
i.e., modules isomorphic to finite direct sums of right ideals
$e OS$ for idempotents $e \in OS$.
This means that we may identify $K_0(\dot\OS(z,t))$ with the split
Grothendieck group $K_0(\pMod OS)$;
the resulting ring structure on $K_0(\pMod OS)$ is determined by
$$
[e OS] [f OS] = [(e\otimes f) OS]
$$
for idempotents $e,f \in OS$.

Each $1_\a OS 1_\b$ is finite-dimensional by Theorem~\ref{bt}, hence,
$OS$ is {\em locally finite-dimensional}, and $\Mod OS$ is a
{\em locally Schurian category} in the sense of \cite[$\S$2]{BD}.
We will freely use the general language and results about such categories explained
there, most of which boil down to the observation that 
$\Mod OS$ is a Grothendieck category.
In particular, we let $\lfdMod OS$ be the subcategory of $\Mod OS$ consisting of all
{\em locally finite-dimensional modules}, i.e., the $OS$-modules $M$
such that 
$\dim_\k M 1_\a < \infty$ for all $\a \in \words$.
These are exactly the modules that have finite composition
multiplicities.
Any finitely generated
$OS$-module $M$ is locally finite-dimensional, so that $\pMod OS$ is a
subcategory of $\lfdMod OS$.

In this section, we are going to classify the irreducible $OS$-modules.
The key to our approach is that the algebra $OS$ has a {\em
  triangular decomposition}.
Any ribbon has three sorts of component: 
\begin{itemize}
\item cups
whose boundary is on $y=1$;
\item
caps whose boundary is on $y=0$;
\item
propagating strands whose boundary intersects both $y=0$ and $y=1$.
\end{itemize}
The cups and caps carry an overall orientation that is either leftward or rightward, while the
propagating strands are either upward strands or downward strands.
Introduce the following locally unital subalgebras of $OS$:
\begin{itemize}
\item[$OS^\circ_{r,s}$:] 
The $\k$-span of all ribbons that have 
$r$ propagating upward strands and $s$ propagating downward strands,
no components that are cups or caps,
and in 
which propagating upward strands only cross underneath propagating
downward strands.
\item[$OS^\circ$:] $\bigoplus_{r,s \geq 0} OS^\circ_{r,s}$.
\item[$OS^+$:] The $\k$-span of all ribbons that have no components that are caps and in which no
  two propagating strands cross.
\item[$OS^\sharp$:] The $\k$-span of all ribbons with no components
  that are caps 
and in 
which propagating upward strands only cross underneath propagating
downward strands.
\item[$OS^-$:] The $\k$-span of all ribbons that have no components that are cups and in which no
  two propagating strands cross.
\item[$OS^\flat$:] The $\k$-span of all ribbons with no components
  that are cups 
and in 
which propagating upward strands only cross underneath propagating
downward strands.
\end{itemize}
It is obvious that these are all locally unital subalgebras of $OS$. Also, $OS$ is
graded as $OS = \bigoplus_{d \in \ZZ} OS[d]$ with $OS[d]$ spanned by
all ribbons such that the total number of cups minus the total number
of caps equals $d$. This induces a grading on each of the subalgebras
above. Moreover, $OS^\sharp[0] = OS^\flat[0] = OS^\circ$.
We have already introduced the right $OS$-module categories $\Mod
OS, \lfdMod OS$ and $\pMod OS$. We adopt analogous notation for all of
these other locally unital algebras.
Also let $$\fdMod OS^\circ = \coprod_{r,s\geq 0} \fdMod OS_{r,s}^\circ
$$ 
be the category of (globally) finite-dimensional right
$OS^\circ$-modules.

In the following lemma, we take tensor products of locally unital modules over the
locally unital algebra $\K
:= \bigoplus_{\a \in \words} \k 1_\a < OS$. In terms of the usual tensor
product $\otimes$ over the ground field $\k$, we have that
$V \otimes_\K W = \bigoplus_{\a \in \words} V 1_\a \otimes 1_\a W$.

\begin{lemma}\label{td}
Multiplication define a  vector space isomorphism
\begin{align*}
OS^+ \otimes_\K OS^\circ \otimes_\K OS^- &\stackrel{\sim}{\rightarrow}
OS.
\end{align*}
Similarly, there are isomorphisms
$OS^+ \otimes_\K OS^\circ\stackrel{\sim}{\rightarrow} OS^\sharp$ and 
$OS^\circ \otimes_\K OS^- \stackrel{\sim}{\rightarrow} OS^\flat$.
\end{lemma}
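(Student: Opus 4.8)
The plan is to prove this by the standard diagrammatic straightening argument: the three subalgebras are spanned by reduced ribbons of a prescribed shape, their product (under composition) is a spanning set of all of $OS$ by a normal-form argument, and then a dimension count against the basis from Theorem~\ref{bt} forces linear independence. First I would record that, by Theorem~\ref{bt}, $1_\a OS 1_\b$ has basis given by reduced lifts of $(\a,\b)$-matchings, and likewise each of $1_\a OS^+ 1_\b$, $1_\a OS^\circ 1_\b$, $1_\a OS^- 1_\b$ has an analogous basis consisting of the reduced ribbons of the appropriate shape (these are honestly linearly independent since they are a subset of a basis of $OS$, once we know they span the corresponding subalgebra, which is clear from the definitions; here one uses that a reduced ribbon with no two propagating strands crossing and no caps is determined by its matching together with the cup data, etc.). So the map $OS^+\otimes_\K OS^\circ\otimes_\K OS^-\to OS$ is a map between spaces for which we have explicit bases, and it suffices to (i) check it is surjective and (ii) match dimensions.

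For surjectivity I would argue that an arbitrary reduced $(\a,\b)$-ribbon $f$ can be rewritten, using only rectilinear isotopy and the Reidemeister moves (R0), (RI), (RII), (RIII), (S), (T), (D), (A) available in $\OS(z,t)$, as a composite $f^+\circ f^\circ\circ f^-$ where $f^-$ is a ribbon in $OS^-$ (all caps pushed to the bottom, no two propagating strands crossing), $f^\circ$ is in $OS^\circ$ (the ``middle'' part: a permutation-type diagram where the surviving upward strands pass underneath the surviving downward strands), and $f^+$ is in $OS^+$ (all cups pushed to the top). Concretely: in a generic reduced ribbon, slide all cap components as low as possible and all cup components as high as possible; what remains in the middle is a diagram on $r$ up-strands and $s$ down-strands with no cups or caps; since the ribbon is reduced no strand self-crosses and no two strands cross twice, and using the braid-type relations together with the crossing-swap identities from (\ref{hoohoo}) one may arrange that every upward propagating strand crosses under every downward propagating strand, i.e.\ the middle belongs to $OS^\circ$, with any ``excess'' crossings among like-oriented strands or between a cap/cup and a through-strand absorbed into $f^-$ or $f^+$. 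This shows the image is all of $OS$; it is the step I expect to be the main obstacle, since making the normal form genuinely canonical (rather than just surjective) requires care about which crossings can be moved past cups/caps without introducing uncontrolled error terms — but for surjectivity alone one only needs \emph{some} such decomposition, and the scalars picked up from (RI), (T), (D), (A) are units, so they do not affect the span.

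Next I would do the dimension count. Fix $\a,\b\in\words$. A reduced $(\a,\b)$-ribbon corresponds to an $(\a,\b)$-matching; writing $r$ for the number of through-strands of each orientation and $d$ for the number of cup-cap pairs, a short combinatorial computation (summing over the number of ways to choose which boundary points are matched to cups/caps versus through-strands, and over the permutation realized by the middle) shows that $\dim 1_\a OS 1_\b$ equals $\sum_{\text{factorizations}} \dim(1_\a OS^+ 1_\c)\cdot\dim(1_\c OS^\circ 1_{\c'})\cdot\dim(1_{\c'} OS^- 1_\b)$, the sum over intermediate words $\c,\c'$, which is exactly $\dim\big((OS^+\otimes_\K OS^\circ\otimes_\K OS^-)$ between $\a$ and $\b\big)$. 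Since the multiplication map is surjective between finite-dimensional spaces of equal dimension, it is an isomorphism. Finally, the two ``one-sided'' statements $OS^+\otimes_\K OS^\circ\xrightarrow{\sim}OS^\sharp$ and $OS^\circ\otimes_\K OS^-\xrightarrow{\sim}OS^\flat$ follow by the same argument restricted to ribbons with no cap components (respectively no cup components): in $OS^\sharp$ there are no caps, so the decomposition $f=f^+\circ f^\circ$ has no $f^-$ factor, and surjectivity plus the matching dimension count (now the combinatorics only involves cups and a middle permutation) gives the isomorphism; the $OS^\flat$ case is the mirror image, obtainable formally by applying the symmetry $\tau$ of (\ref{tau}) which interchanges $OS^\sharp$ and $OS^\flat$. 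One should also note $OS^\sharp[0]=OS^\flat[0]=OS^\circ$, as already observed in the text, so these decompositions are compatible with the gradings, with $OS^+$ living in non-negative degrees and $OS^-$ in non-positive degrees.
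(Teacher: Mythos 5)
Your proof is correct in outline, but it does strictly more work than the paper's, and the extra work — the surjectivity/straightening argument — is precisely the step you yourself flag as the ``main obstacle.'' The paper avoids proving surjectivity separately. It simply observes that the multiplication map carries the natural basis of the $\K$-tensor product (pure tensors of reduced lifts chosen from $OS^+$, $OS^\circ$, $OS^-$ with matching intermediate words) to a collection of generic reduced $(\a,\b)$-ribbons with all cups at the top, all propagating crossings in the middle, and all caps at the bottom — exactly one such ribbon per $(\a,\b)$-matching. Since Theorem~\ref{bt} says \emph{any} set consisting of one reduced lift per matching is a basis of $1_\a OS 1_\b$, the map sends a basis to a basis and is therefore an isomorphism. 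No normal-form rewriting, no skein-relation bookkeeping, no induction on the number of crossings, and no dimension count: the flexibility built into Theorem~\ref{bt} does all the work. Your dimension count is morally the same combinatorial observation (the bijection between matchings and cup/middle-permutation/cap triples), but you then need a separate argument for surjectivity, and pushing a reduced ribbon into normal form via (S) generates error terms with fewer crossings that have to be controlled inductively; this can be made to work, but it is exactly the fiddly part the paper's formulation eliminates.

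Two smaller points. First, (A) is a relation of the affine category $\AOS(z,t)$, not of $\OS(z,t)$, so it is not available (and not needed) here. Second, your suggestion to deduce the $OS^\flat$ case from the $OS^\sharp$ case by applying $\tau$ requires a bit of care: $\tau$ is an anti-isomorphism to $\OS(z,t)^{\op}$ and also reverses orientations, so while it does swap caps with cups, you should check that it respects the ``upward-under-downward'' convention defining $OS^\circ$. The direct argument, running the cap-free case in reverse with cups replaced by caps, is equally short and avoids the issue.
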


\begin{proof}
We apply Theorem~\ref{bt} to pick a basis for $OS^+$
consisting of cap-free generic reduced lifts of matchings.
Similarly, we pick a basis for $OS^-$.
Finally, we pick a basis for $OS^\circ$ consisting of cup- and cap-free generic ribbons, all
of whose rightward crossings are positive and leftward crossings are
negative.
To prove the lemma, it remains to observe that
the non-zero images of the pure tensors in these basis elements under
the multiplication
$OS^+ \otimes_\K OS^\circ \otimes_\K OS^-\rightarrow OS$ give a basis for $OS$
itself: it is just another of the bases from Theorem~\ref{bt}
consisting
of generic reduced lifts with all cups at the top, all crossings
of propagating strands in the middle, and all caps at
the bottom of the picture.
\end{proof}

Recall the Iwahori-Hecke algebras $H_r$ 
from (\ref{hecke}) and the homomorphism $\i_r$ from (\ref{iotap1}).
Theorem~\ref{bt} shows that this is an isomorphism.
More generally, let
\begin{equation}\label{belt}
H_{r,s} := H_r \otimes H_s
\end{equation}
for any $r,s \geq 0$. Then there is an injective (but no longer 
surjective) homomorphism
\begin{align}\label{iotap2}
\i_{r,s}
:
H_{r,s}
&\rightarrow 1_{\down^s\up^r} OS 1_{\down^s \up^r}
\end{align}
sending $S_i \otimes 1$ to the positive crossing 
$\begin{tikzpicture}[baseline = -.5mm]
	\draw[->,thick,darkblue] (0.2,-.2) to (-0.2,.3);
	\draw[line width=4pt,white,-] (-0.2,-.2) to (0.2,.3);
	\draw[thick,darkblue,->] (-0.2,-.2) to (0.2,.3);
\end{tikzpicture}
$ of the $i$th and $(i+1)$th strands
and $1 \otimes S_j$ to the positive crossing
$\begin{tikzpicture}[baseline = -.5mm]
	\draw[<-,thick,darkblue] (0.2,-.2) to (-0.2,.3);
	\draw[line width=4pt,white,-] (-0.2,-.2) to (0.2,.3);
	\draw[thick,darkblue,<-] (-0.2,-.2) to (0.2,.3);
\end{tikzpicture}
$ 
of the $(r+j)$th and $(r+j+1)$th
strands, again numbering strands from right to left.

\begin{lemma}\label{cartanlem}
The map $\i_{r,s}$ is an algebra isomorphism
$H_{r,s} \stackrel{\sim}{\rightarrow} 1_{\down^s \up^r} OS_{r,s}^\circ
1_{\down^s \up^r}.$
Moreover, $OS_{r,s}^\circ$ is isomorphic to the matrix algebra
$\Mat_{\binom{r+s}{r}}(H_{r,s})$.
Hence,
the functor $$
\Upsilon_{r,s} := 
? \otimes_{H_{r,s}} 1_{\down^s \up^r}
OS^\circ_{r,s}:\Mod{H_{r,s}} \rightarrow \Mod{OS^\circ_{r,s}}.
$$
is an equivalence of categories,
with quasi-inverse defined by right multiplication by the idempotent
$1_{\down^s \up^r}$.
\end{lemma}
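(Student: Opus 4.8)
The plan is to prove Lemma~\ref{cartanlem} in three stages, building on Theorem~\ref{bt} and Lemma~\ref{td}. First I would establish that $\i_{r,s}$ is an isomorphism onto $1_{\down^s\up^r} OS^\circ_{r,s} 1_{\down^s\up^r}$. That it is injective is already noted after~(\ref{iotap2}), and it lands in $OS^\circ_{r,s}$ since the positive crossings used have no cups or caps and the upward/downward strands are kept separate; surjectivity comes from Theorem~\ref{bt}: a basis for $1_{\down^s\up^r} OS^\circ_{r,s} 1_{\down^s\up^r}$ is given by reduced lifts of $(\down^s\up^r,\down^s\up^r)$-matchings that are cup- and cap-free, have all rightward crossings positive and leftward crossings negative, and keep all $r$ upward strands to the right of all $s$ downward strands at top and bottom. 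Such a diagram is exactly a product of a permutation diagram among the $\up$-strands (a positive braid lift of a permutation in $\Sym_r$) stacked with one among the $\down$-strands (in $\Sym_s$), because an upward strand can only pass \emph{under} a downward one and neither sort can turn around; this identifies the basis with $\{S_w\otimes S_{w'}\mid w\in\Sym_r, w'\in\Sym_s\}$, i.e.\ the image of the standard basis of $H_{r,s}$. Hence $\i_{r,s}$ is bijective.

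Second I would identify $OS^\circ_{r,s}$ with the matrix algebra $\Mat_{\binom{r+s}{r}}(H_{r,s})$. The relevant objects are the words $\gamma\in\words$ with exactly $r$ letters $\up$ and $s$ letters $\down$; there are $\binom{r+s}{r}$ of them, and $1_\gamma$ for such $\gamma$ are mutually orthogonal idempotents whose sum is the identity of $OS^\circ_{r,s}$ (no diagram in $OS^\circ_{r,s}$ touches any other word). For each such $\gamma$ pick the obvious cup/cap-free generic diagram $u_\gamma:\down^s\up^r\to\gamma$ that ``sorts'' the strands into the order prescribed by $\gamma$, always routing upward strands under downward ones and using only positive rightward / negative leftward crossings, together with its analogous ``unsort'' $u_\gamma':\gamma\to\down^s\up^r$; these satisfy $u_\gamma'\circ u_\gamma = 1_{\down^s\up^r}$ and $u_\gamma\circ u_\gamma' = 1_\gamma$ (a short planar-isotopy check, since no strand crosses itself and each pair crosses at most twice, cancelling by (RII)). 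Then the maps $a\mapsto (u_\delta\circ\,\cdot\,\circ u_\gamma')$ give mutually compatible isomorphisms $1_\gamma OS^\circ_{r,s} 1_\delta\cong 1_{\down^s\up^r}OS^\circ_{r,s}1_{\down^s\up^r}\cong H_{r,s}$, and assembling these over all $\gamma,\delta$ realizes $OS^\circ_{r,s}$ as $\binom{r+s}{r}\times\binom{r+s}{r}$ matrices over $H_{r,s}$, with $1_{\down^s\up^r}$ corresponding to a single diagonal matrix unit. One must check that composition of diagrams corresponds to matrix multiplication, which follows because $u_\gamma'\circ u_\gamma=1_{\down^s\up^r}$ inserts the required ``identity'' in the middle; the dimension count $\dim 1_\gamma OS^\circ_{r,s}1_\delta=(r+s)!=\dim H_{r,s}$ from Theorem~\ref{bt} confirms there is no room for a kernel or cokernel.

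Third, the equivalence statement is then formal Morita theory: $1_{\down^s\up^r}$ is a full idempotent in $OS^\circ_{r,s}\cong\Mat_{\binom{r+s}{r}}(H_{r,s})$ (it is a diagonal matrix unit in a matrix algebra, hence full), and $1_{\down^s\up^r}OS^\circ_{r,s}1_{\down^s\up^r}\cong H_{r,s}$ by the first stage. Therefore $?\otimes_{H_{r,s}}1_{\down^s\up^r}OS^\circ_{r,s}$ and $?\cdot 1_{\down^s\up^r}$ are mutually inverse equivalences between $\Mod H_{r,s}$ and $\Mod OS^\circ_{r,s}$; the functor $\Upsilon_{r,s}$ in the statement is precisely the former, so we are done. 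I expect the main obstacle to be the second stage: carefully nailing down the ``sorting'' diagrams $u_\gamma$ and verifying $u_\gamma'\circ u_\gamma=1_{\down^s\up^r}$, $u_\gamma\circ u_\gamma'=1_\gamma$ and the compatibility of these identifications with composition — this is where one actually uses that upward strands cross \emph{underneath} downward strands (so the choices are canonical and the relevant crossings are invertible via (RII)), as opposed to a mere dimension count. Everything else is either a direct appeal to Theorem~\ref{bt} or standard Morita theory for a full idempotent in a matrix algebra.
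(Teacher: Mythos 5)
Your proof is correct and takes essentially the same route as the paper: both deduce the first isomorphism directly from Theorem~\ref{bt}, then exhibit explicit matrix units (your $u_\gamma,u_\gamma'$ are the paper's $e_{\gamma,\down^s\up^r}$ and $e_{\down^s\up^r,\gamma}$, and your (RII)-cancellation check is exactly the content of the paper's relation $e_{\a,\b}e_{\b',\c}=\delta_{\b,\b'}e_{\a,\c}$) to identify $OS^\circ_{r,s}$ with $\Mat_{\binom{r+s}{r}}(H_{r,s})$, before invoking standard Morita theory for a full diagonal idempotent.
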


\begin{proof}
The first statement follows from Theorem~\ref{bt}.
To deduce the second statement, let $\words_{r,s}$ denote the
$\binom{r+s}{r}$ different words which have $r$ letters $\up$ and $s$
letters $\down$. Note that $OS^\circ_{r,s} = \bigoplus_{\a,\b \in
  \words_{r,s}} 1_\a OS^\circ_{r,s} 1_\b$.
For each $\a,\b \in \words_{r,s}$, let $e_{\a,\b} \in 1_\a OS_{r,s}^\circ 1_\b$
be the unique (up to planar isotopy) reduced $(\b,\a)$-ribbon which only
involves positive rightward crossings and negative leftward crossings, and has no
cups, caps or upward/downward crossings. For example, $e_{\a,\a} = 1_\a$ for
each $\a$.
We have that $e_{\a,\b} e_{\b',\c} = \delta_{\b,\b'} e_{\a,\c}$.
Hence, the map
$$
H_{r,s} \rightarrow
1_\a OS^\circ_{r,s} 1_\b, \qquad
f \mapsto e_{\a,{\down^s \up^r}} \i_{r,s}(f) e_{{\down^s \up^r},\b}
$$
is a bijection, and
$OS^\circ_{r,s} = \bigoplus_{\a, \b \in \words_{r,s}} H_{r,s} e_{\a,\b}$ is the matrix algebra as claimed.
\end{proof}

Next we are going to mimic the usual arguments of highest weight theory for
semisimple Lie algebras, with the role of ``Borel subalgebra'' being
played by $OS^\sharp$, and the role of ``Cartan subalgebra'' being
played by $OS^\circ$.
To parametrize the isomorphism classes of
irreducible representations of $OS^\circ$, we need some
facts about the representation
theory of the Iwahori-Hecke algebra $H_r$; e.g. see \cite{DJ}. 

The algebra $H_r$ is semisimple
if $q$ is not a root of unity, with irreducible representations 
being the Specht modules $\SS_\lambda$
parametrized by partitions $\lambda \vdash r$.
To construct $\SS_\lambda$ explicitly, recall the elements $x_\lambda$
and $y_\lambda$
from (\ref{symmetrizers}).
The right ideals $x_\lambda H_r$ and $y_\lambda H_r$ are the {\em permutation module}
$M_\lambda$ and the {\em signed permutation module} $N_\lambda$,
respectively.
By \cite[Theorem 3.3]{DJ}, the space $\Hom_{H_r}(M_\lambda,
N_{\lambda^\trans})$ is one-dimensional. Then the {\em Specht module}
$\SS_\lambda$ is the image of any non-zero homomorphism in this space.

The definition just given also makes sense when $q$ is a root of unity (remembering
$q^2 \neq 1$); the resulting Specht module $\SS_\lambda$ 
is related to the module $\SS^\lambda$ constructed in \cite{DJ} by $\SS_\lambda \cong (\SS^{\lambda^\trans})^\#$.
In general, we define $e$ to be the smallest positive integer such
that $q^{2e}=1$,
setting $e := 0$ if $q$ is not a root of unity.
A partition $\lambda$ is {\em $e$-restricted} if either $e=0$, or $e >
0$ and $\lambda_i - \lambda_{i+1} < e$ for each $i=1,2,\dots$.
For $e$-restricted $\lambda\vdash r$, the Specht module $\SS_\lambda$ has
irreducible head $\D_\lambda$, and these modules for all $e$-restricted
$\lambda\vdash r$
give a complete set of pairwise inequivalent irreducible right
$H_r$-modules.

Let $\Y_\lambda$ be a projective cover of $\D_\lambda$; since $H_r$
is a symmetric algebra, this is
also an injective hull.
If $e=0$ we have simply that
$\D_\lambda=\SS_\lambda=\Y_\lambda$, and they are all equal to the right ideal
$e_\lambda H_r$ where $e_\lambda$ is the Young symmetrizer from
(\ref{youngsymmetrizer}).
In general, it is known
that $\Y_\lambda$ has a finite
filtration\footnote{This is proved by applying the ``Schur
functor'' to the analogous result for the
$q$-Schur algebra.} with sections $\SS_\mu$, each appearing
$[\SS_\mu:\D_\lambda]$ times.
Consequently,
\begin{equation*}
[\Y_\lambda] = \sum_{\mu \vdash r} [\SS_\mu:\D_\lambda] [\SS_\mu]
\end{equation*}
in the Grothendieck group $K_0(\fdMod H_r)$ of the Abelian category
$\fdMod H_r$.
We refer to this decomposition as {\em Brauer reciprocity};
it may also be proved by lifting idempotents.

Now let $\Par_{r,s} := \{\LA = (\lambda^\up,\lambda^\down)\:|\:\lambda\vdash r, \mu \vdash
s\}$ be the set of {\em bipartitions} of $(r,s)$,
and let $\RPar_{r,s} \subseteq \Par_{r,s}$ be the $e$-restricted ones.
In particular, we denote the empty bipartition
$(\varnothing,\varnothing)$ by $\NOTHING$.
From the previous paragraph, it follows that $\RPar_{r,s}$ parametrizes
the irreducible $H_{r,s}$-modules.
Applying Lemma~\ref{cartanlem}, we get from them irreducible $OS^\circ$-modules
\begin{equation}
\D(\LA):=
(\D_{\lambda^\up} \boxtimes \D_{\la^\down}) \otimes_{H_{r,s}} 1_{\down^s \up^r} OS^\circ_{r,s}.
\end{equation}
Define 
$\SS(\LA)$ 
for $\LA\in\Par_{r,s}$ and
$\Y(\LA)$ for $\LA\in\RPar_{r,s}$
in similar ways,
starting from $\SS_{\lambda^\up} \boxtimes \SS_{\la^\down}$ or $\Y_{\lambda^\up}\boxtimes
\Y_{\la^\down}$, respectively.
For $\LA\in\RPar_{r,s}$, $\Y(\LA)$ 
is a projective cover and an injective hull of $\D(\LA)$,
and it has a filtration with sections $\SS(\MU)$ for $\MU
\in \Par_{r,s}$, each appearing
$[\SS(\MU):\D(\LA)] = [\SS_{\mu^\up}:\D_{\la^\up}] [\SS_{\mu^\down}: \D_{\la^\down}]$
in the filtration.
Finally, we put these modules all together: setting
$\Par := \bigsqcup_{r,s \geq 0} \Par_{r,s}$ and
$\RPar := \bigsqcup_{r,s \geq 0} \RPar_{r,s}$,
the modules
$\{\D(\LA)\:|\:\LA \in \RPar\}$
are a complete set of pairwise inequivalent irreducible
$OS^\circ$-modules.

The projection of $OS^\sharp$ onto its degree zero component
$OS^\sharp[0]$ is a surjective homomorphism $OS^\sharp
\twoheadrightarrow OS^\circ$. Using this, we can view any $OS^\circ$-module
instead as an $OS^\sharp$-module. We denote the resulting
inflation
functor by $\infl^{\sharp}$; similarly, there is an inflation
functor
$\infl^{\flat}$ taking $OS^\circ$-modules to $OS^\flat$-modules.
Define
\begin{align}\label{eclipse1}
\bar\Delta(\LA) &:= \infl^{\sharp} \D(\LA) \otimes_{OS^\sharp} OS,
\\
\tilde\Delta(\LA) &:= \infl^\sharp \SS(\LA) \otimes_{OS^\sharp} OS,
\\
\Delta(\LA) &:= \infl^\sharp \Y(\LA) \otimes_{OS^\sharp} OS,\label{eclipse3}
\end{align}
for $\la \in \RPar, \Par$ and $\RPar$, respectively.
We call $\bar\Delta(\LA)$ the {\em proper standard module}
and $\Delta(\LA)$ the {\em standard module}
associated to $\LA \in \RPar$.
These are locally finite-dimensional but not ``globally'' finite-dimensional $OS$-modules.
Note also if $e=0$ that
$\bar\Delta(\LA)=\tilde\Delta(\LA)=\Delta(\LA)$
for each $\LA \in \Par$.

Any $OS$-module $M$ decomposes as $\bigoplus_{\a \in \words} M 1_\a$.
We refer to the direct sum of the subspaces $M 1_\a$ taken over all $\a \in \words$ of
minimal length such that $M 1_\a \neq 0$
as the {\em shortest word space} of $M$.
It is automatically an $OS^\sharp$-submodule of $M$ on which
$\bigoplus_{d > 0} OS^\sharp[d]$ acts as zero. We say that $M$ is a
{\em shortest word module} of type $\LA \in \RPar$ if $M$ is
generated as an $OS$-module by its shortest word space, and this space
is isomorphic to $\D(\LA)$ as an $OS^\circ$-module.
Then, since $\D(\LA)$ is irreducible, $M$ is actually
generated by any non-zero vector in its shortest word
space. 

\begin{theorem}\label{class}
For $\LA\in\RPar$, the proper standard module $\bar\Delta(\LA)$ has a unique
maximal submodule $\rad\,\bar\Delta(\LA)$.
Setting
$\L(\LA) := \bar\Delta(\LA) / \rad\,
\bar\Delta(\LA)$,
we obtain a complete set of pairwise inequivalent irreducible
$OS$-modules
$\{\L(\LA)\:|\:\LA\in\RPar\}$.
Moreover, each $\L(\LA)$ is absolutely irreducible.
\end{theorem}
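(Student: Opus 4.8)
The plan is to run the standard highest-weight-category machinery, using the triangular decomposition from Lemma~\ref{td} together with the ``shortest word'' filtration just introduced. First I would observe that every nonzero $OS$-module $M$ has a nonzero shortest word space $M^\circ := \bigoplus M 1_\a$ (sum over $\a$ of minimal length with $M1_\a \neq 0$), that this is an $OS^\sharp$-module killed by $\bigoplus_{d>0} OS^\sharp[d]$, hence an $OS^\circ$-module, and that by Frobenius reciprocity (adjunction between inflation-then-induction $\infl^\sharp(-)\otimes_{OS^\sharp} OS$ and the ``take shortest word space'' functor, or more precisely restriction followed by the projection to the shortest word space) we have $\Hom_{OS}(\bar\Delta(\LA), M) \neq 0$ whenever $M^\circ$ contains a copy of $\D(\LA)$ with $\LA\in\RPar$. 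This gives the key fact that any irreducible $OS$-module $L$ is a quotient of exactly one $\bar\Delta(\LA)$: indeed $L^\circ$ is a nonzero $OS^\circ$-module on which the positive part acts trivially, and since $L$ is irreducible it is generated by $L^\circ$; any irreducible $OS^\circ$-submodule $\D(\LA)$ of $L^\circ$ ($\LA\in\RPar$, using the classification of irreducible $OS^\circ$-modules established above) yields a nonzero map $\bar\Delta(\LA)\to L$, necessarily surjective.

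Next I would show $\bar\Delta(\LA)$ has a unique maximal submodule. By the triangular decomposition, $\bar\Delta(\LA) = \infl^\sharp\D(\LA)\otimes_{OS^\sharp} OS$ has shortest word space canonically isomorphic to $\D(\LA)$ sitting in the lowest degree, and the grading $OS = \bigoplus_d OS[d]$ induces a grading on $\bar\Delta(\LA)$ with $\bar\Delta(\LA)[0] \cong \D(\LA)$ (identifying $OS^\sharp[0]=OS^\circ$). Any proper submodule $N\subsetneq \bar\Delta(\LA)$ must satisfy $N[0]=0$: otherwise $N\supseteq \D(\LA)$ (as $\D(\LA)$ is irreducible), and then $N \supseteq \D(\LA)\otimes_{OS^\sharp} OS = \bar\Delta(\LA)$. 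Hence the sum of all proper submodules is still proper (it has zero degree-zero component), giving a unique maximal submodule $\rad\,\bar\Delta(\LA)$ and an irreducible head $\L(\LA) := \bar\Delta(\LA)/\rad\,\bar\Delta(\LA)$. The discussion of the previous paragraph then shows $\LA\mapsto\L(\LA)$ is a bijection from $\RPar$ onto the set of isomorphism classes of irreducible $OS$-modules: surjectivity was shown above, and injectivity follows because $\L(\LA)^\circ \cong \D(\LA)$ (the degree-zero part survives to the quotient since $\rad\,\bar\Delta(\LA)$ has zero degree-zero component), and the $\D(\LA)$ are pairwise non-isomorphic.

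Finally, for absolute irreducibility I would argue that $\End_{OS}(\L(\LA)) = \k$. By Schur's lemma (applicable since $\L(\LA)$ is irreducible and locally finite-dimensional, so its endomorphism ring is a division algebra over $\k$) it suffices to see this division algebra is $\k$. An endomorphism of $\L(\LA)$ restricts to an endomorphism of the shortest word space $\L(\LA)^\circ \cong \D(\LA)$, and since $\L(\LA)$ is generated by $\L(\LA)^\circ$ this restriction is injective on $\End_{OS}(\L(\LA))$; so $\End_{OS}(\L(\LA)) \hookrightarrow \End_{OS^\circ}(\D(\LA))$. Thus it reduces to showing $\D(\LA)$ is absolutely irreducible as an $OS^\circ$-module, which by Lemma~\ref{cartanlem} reduces to the absolute irreducibility of $\D_{\la^\up}\boxtimes\D_{\la^\down}$ over $H_r\otimes H_s$, i.e. to the well-known fact (from \cite{DJ}) that the modules $\D_\lambda$ are absolutely irreducible $H_r$-modules --- equivalently that $H_r\otimes_\k\bar\k$ has the same number of irreducibles with the same dimensions over any field extension. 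The main obstacle I anticipate is bookkeeping: carefully justifying that the ``shortest word space'' functor is exact (or at least left exact) and is genuinely adjoint to the standardization functor in the locally unital setting, and checking that the degree grading on $OS$ restricts compatibly to $OS^\sharp$ with $OS^\sharp[0]=OS^\circ$ so that the degree-zero argument for the unique maximal submodule goes through; both are consequences of Lemma~\ref{td} but need to be spelled out.
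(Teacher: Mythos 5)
Your proposal is correct and follows essentially the same route as the paper: triangular decomposition identifying $\bar\Delta(\LA)$ with $\D(\LA)\otimes_\K OS^-$ so that the degree-zero piece generates, the sum of proper submodules therefore being proper, Frobenius reciprocity to show every irreducible is a head of some $\bar\Delta(\LA)$ with $\LA$ read off from the shortest word space, and reduction of $\End_{OS}(\L(\LA))$ to $\End_{OS^\circ}(\D(\LA))$ for absolute irreducibility via the known result for Hecke algebras. Two minor cosmetic points: the grading on $\bar\Delta(\LA) \cong \D(\LA)\otimes_\K OS^-$ is concentrated in degrees $\leq 0$ (caps lower degree), so ``degree zero'' is the \emph{top} piece, and the adjunction you invoke is simply $\Hom_{OS}(\infl^\sharp(-)\otimes_{OS^\sharp}OS, M)\cong \Hom_{OS^\sharp}(\infl^\sharp(-), \res M)$ rather than an adjunction with the ``shortest word space'' functor per se — the shortest word space only enters because it is where one finds the required $OS^\sharp$-submodule killed by the positive part. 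Neither affects the validity of the argument.
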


\begin{proof}
By Lemma~\ref{td}, we have that
$\bar\Delta(\LA) = \D(\LA)
\otimes_{OS^\sharp}  OS 
=
\D(\LA) \otimes_\K OS^-$
as a right $\K$-module. Hence, it is a non-zero shortest word module
of type $\LA$.
Since any non-zero vector in $\D(\LA) \otimes OS^-[0]$ generates all of $\bar\Delta(\LA)$,
any proper submodule of $\bar\Delta(\LA)$ must be a subspace of 
$\bigoplus_{d < 0} \D(\LA) \otimes_\K OS^-[d]$.
This implies that the sum of all proper submodules
of $\bar\Delta(\LA)$ is itself proper, hence, it is the unique maximal submodule
$\rad\, \bar\Delta(\LA)$ of $\bar\Delta(\LA)$. 
Thus, the quotient modules $\L(\LA) := \bar\Delta(\LA) /
\rad\,\Delta(\LA)$
are irreducible.

Now let $L$ be any irreducible $OS$-module.
Pick an irreducible $OS^\circ$-submodule $L^\circ \cong \D(\LA)$ of its
shortest word space. Then by
Frobenius reciprocity we get an $OS$-module homomorphism
$\bar\Delta(\LA) \rightarrow L$
lifting an isomorphism $\D(\LA)\stackrel{\sim}{\rightarrow} L^\circ$. 
This map is surjective since $L$ is irreducible, hence, we get that $L \cong \L(\LA)$.

Finally, if $\L(\LA)\cong \L(\MU)$,
then their shortest word spaces must be isomorphic as $OS^\circ$-modules,
so $\LA=\MU$. Also, since $\End_{OS}(\L(\LA)) \cong \End_{OS^\circ}(\D(\LA))$, the absolute
irreducibility follows from the analogous assertion for Hecke algebras,
which is well known.
\end{proof}

\begin{example}\label{swex}
To illustrate ``shortest word theory,''
we use it to prove the existence of a non-zero homomorphism
$\bar\Delta(\MU)\rightarrow \bar\Delta(\LA)$
when $t = q^n$ for $n \in \NN$,
 $\LA := ((1^n), \varnothing)$ and
$\MU = ((1^{n+1}), (1))$.
The irreducible $OS^\circ_{n,0}$-module $\D(\LA)$
comes from the ``sign representation'' of $H_n$. So it 
is one-dimensional and is spanned by a vector
on which $1_{\up^n}$ acts as the identity and any
positive upward crossing acts as $-q^{-1}$.
Let $v$ be the generator of
$\bar\Delta(\LA) = \D(\LA)
\otimes_{OS^\sharp} OS$
defined by this vector tensored $1_{\up^n} \in OS$.
Also for $i,j=0,\dots,n$ define
$$
a_i := 
\mathord{
\begin{tikzpicture}[baseline =0mm]
	\draw[->,ultra thick,darkblue] (.7,-.3) to (.7,.5);
	\draw[->,ultra thick,darkblue] (.1,-.3) to (.1,.5);
        \draw[-,line width=4pt, white] (-.2,-.3) to [out=90,in=180] (.1,0);
        \draw[-,line width=4pt, white] (.1,0) to [out=0,in=90] (.4,-.3);
        \draw[<-,thick, darkblue] (-.2,-.3) to [out=90,in=180] (.1,0);
        \draw[-,thick,darkblue] (.1,0) to [out=0,in=90] (.4,-.3);
      \node at (.75,-.45) {$\scriptstyle n-i$};
      \node at (.1,-.45) {$\scriptstyle i$};
\end{tikzpicture}
},
\qquad\qquad
b_j := 
\mathord{
\begin{tikzpicture}[baseline =0mm]
	\draw[->,ultra thick,darkblue] (.7,-.3) to (.7,.5);
	\draw[->,ultra thick,darkblue] (.1,-.3) to (.1,.5);
        \draw[-,line width=4pt, white] (-.2,.5) to [out=-90,in=180] (.1,.1);
        \draw[-,line width=4pt, white] (.1,.1) to [out=0,in=-90] (.4,.5);
        \draw[-,thick, darkblue] (-.2,.5) to [out=-90,in=180] (.1,.1);
        \draw[->,thick,darkblue] (.1,.1) to [out=0,in=-90] (.4,.5);
      \node at (.75,-.45) {$\scriptstyle n-j$};
      \node at (.1,-.45) {$\scriptstyle j$};
\end{tikzpicture}
}\:.
$$
A calculation with relations shows that 
$$
v a_ib_j = \left\{\begin{array}{ll}
[n]_q v&\text{if $i=j$,}\\
q^n (-q)^{i-j+1}v&\text{if $i < j$,}\\
q^{-n} (-q)^{i-j-1}v&\text{if $i > j$.}
\end{array}\right.
$$
Now consider the vector 
$$
w := \sum_{i=0}^n (-q)^i v a_i \in \bar\Delta(\LA).
$$
This is non-zero by Lemma~\ref{td}.
We claim:
\begin{itemize}
\item[(1)]
$w b_j = 0$ for each $j=0,1,\dots,n$;
\item[(2)]
$w \i_{n+1,1}(S_i) = -q^{-1} w$ for $i=1,\dots,n$.
\end{itemize}
To see (1), we have that
$$
w b_j = \left[(-q)^j [n]_q + \sum_{i=0}^{j-1} q^n (-q)^{i-j+1} (-q)^i
+ \sum_{i=j+1}^n q^{-n} (-q)^{i-j-1} (-q)^i\right] v,
$$
which is zero since $[n]_q = \sum_{i=0}^{j-1} q^{-n+2i-2j+1} + \sum_{i=j+1}^n
q^{-n+2i-2j-1}$.
We leave the check of (2) to the reader.
It means that $w$ spans a one-dimensional $H_{n+1,1}$-submodule
of $\bar\Delta(\LA)$ isomorphic to its ``sign representation,''
so the $OS^\circ$-submodule generated by $w$ is a copy of $\D(\MU)$.
In view of (1), it is actually an $OS^\sharp$-submodule isomorphic to
$\infl^\sharp \D(\MU)$.
Finally,
by Frobenius reciprocity, we get
a non-zero $OS$-module homomorphism
$\bar\Delta(\MU)\rightarrow \bar\Delta(\LA)$.
\end{example}

The various flavors of standard module introduced in
(\ref{eclipse1})--(\ref{eclipse3})
are obtained by applying the {\em standardization functor}
\begin{equation}\label{sta}
\Delta := (\infl^\sharp -)\otimes_{OS^\sharp} OS:\Mod{OS^\circ} \rightarrow
\Mod{OS}
\end{equation}
to the $OS^\circ$-modules $\D(\LA), \SS(\LA)$ and $\Y(\LA)$.
By Lemma~\ref{td}, the composition of this functor followed by the forgetful
functor to vector spaces is isomorphic to $- \otimes_\K OS^-$. Hence, $\Delta$
is exact.
There is also the {\em costandardization functor}
\begin{equation}\label{costa}
\nabla:= \bigoplus_{\a \in \words} 
\Hom_{OS^\flat}(1_\a OS,
\infl^\flat -):
\Mod{OS^\circ}\rightarrow \Mod{OS},
\end{equation}
where the action of $a \in 1_\a OS 1_\b$ on $f \in
\Hom_{OS^\flat}(1_{\a'} OS, \infl^\flat M)$ is 
zero unless $\a=\a'$, in which case, 
it is the element of 
$\Hom_{OS^\flat}(1_\b OS, \infl^\flat M)$
defined from 
$(fa)(b) :=f(ab)$.
This functor is exact since 
$$
1_\a OS \cong 1_\a OS^+ \otimes_\K OS^\flat
\cong \bigoplus_{\b \in \words} \dim_\k(1_\a OS^+ 1_\b) 1_\b OS^\flat
$$ as a right
$OS^\flat$-module, which is finitely generated and projective.
We refer to the modules
\begin{equation}
\bar\nabla(\LA) := 
\nabla \D(\LA),\qquad
\nabla(\LA) := \nabla \Y(\LA)
\end{equation}
as the {\em proper costandard} and {\em costandard modules}, respectively.

There is a well-known duality functor $\circledast$ on finite-dimensional modules over the Hecke
algebra with $\D_\lambda^\circledast \cong \D_\lambda$, hence, $\Y_\lambda^\circledast \cong \Y_\lambda$. 
The corresponding duality $\circledast$ on $\fdMod OS^\circ$
takes a right module to its linear dual with the natural left action
twisted into a right action using the
antiautomorphism arising from the restriction of the isomorphism $\tau$ from (\ref{tau}).
In an entirely analogous way, $\tau$ gives rise to a duality, also
denoted $\circledast$, on the category
$\lfdMod OS$; this sends a module to the direct sum of the linear
duals of its word spaces.
Since $\D(\LA)^\circledast \cong \D(\LA)$ and $\Y(\LA)^\circledast \cong \Y(\LA)$, 
the following lemma implies that
\begin{align}\label{duality2}
\L(\LA)^\circledast &\cong \L(\LA),&
\Delta(\LA)^\circledast &\cong \nabla(\LA),&
\bar\Delta(\LA)^\circledast \cong \bar\nabla(\LA),
\end{align}
for any $\LA \in \RPar$. In particular, this means that $\L(\LA)$ can
also
be realized as the
irreducible socle of $\bar\nabla(\LA)$.

\begin{lemma}
The functors $\Delta$ and $\nabla$ send finite-dimensional
$OS^\circ$-modules to locally finite-dimensional $OS$-modules. Moreover,
the functors $\circledast \circ \Delta$ and $\nabla \circ \circledast$
are isomorphic on finite-dimensional $OS^\circ$-modules.
\end{lemma}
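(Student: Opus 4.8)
The plan is to treat the two assertions separately: the first is immediate from the triangular decomposition, while the second is a tensor--hom adjunction wrapped inside the duality $\circledast$.

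\textbf{Local finite-dimensionality.} By Lemma~\ref{td}, $\Delta(M)\cong M\otimes_\K OS^-$ as right $\K$-modules, so $\Delta(M)1_\a=\bigoplus_{\b\in\words}M1_\b\otimes_\k 1_\b OS^-1_\a$; since $M$ is finite-dimensional only finitely many $\b$ contribute, and each $1_\b OS^-1_\a$ is finite-dimensional by Theorem~\ref{bt}, so $\Delta(M)1_\a$ is finite-dimensional. For $\nabla$, the decomposition $1_\a OS\cong\bigoplus_{\b}\dim_\k(1_\a OS^+1_\b)\,1_\b OS^\flat$ of right $OS^\flat$-modules recorded just below \eqref{costa} gives $\nabla(M)1_\a=\Hom_{OS^\flat}(1_\a OS,\infl^\flat M)\cong\bigoplus_{\b}\dim_\k(1_\a OS^+1_\b)\,M1_\b$, again finite-dimensional. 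Hence both functors land in $\lfdMod OS$, and in particular $\Delta(M)^\circledast$ and $\nabla(M^\circledast)$ make sense.

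\textbf{The isomorphism.} First I would record how the anti-automorphism $\tau$ of \eqref{tau} meets the triangular decomposition: its restriction to $OS^\circ$ is by construction the anti-automorphism defining $\circledast$ there; it exchanges cups and caps, whence $\tau(OS^\pm)=OS^\mp$ (no mixed propagating crossings intervene); and combining this with Lemma~\ref{td} one gets $\tau(OS^\sharp)=OS^\flat$ with $\tau(OS^\sharp[d])=OS^\flat[-d]$. Dualizing word spaces and twisting by $\tau$ then identifies $(\infl^\sharp M)^\circledast$ with $\infl^\flat(M^\circledast)$, since the kernel $\bigoplus_{d>0}OS^\sharp[d]$ of $OS^\sharp\twoheadrightarrow OS^\circ$ goes to the kernel $\bigoplus_{d<0}OS^\flat[d]$ of $OS^\flat\twoheadrightarrow OS^\circ$. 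Now for $X\in\lfdMod OS$: Frobenius reciprocity (the one used in the proof of Theorem~\ref{class}) together with $\circledast\circ\circledast\cong\id$ gives $\Hom_{OS}(X,\Delta(M)^\circledast)\cong\Hom_{OS}(\Delta(M),X^\circledast)\cong\Hom_{OS^\sharp}(\infl^\sharp M,\res_{OS^\sharp}(X^\circledast))$, while the ``co-Frobenius reciprocity'' $\Hom_{OS}(X,\nabla(N))\cong\Hom_{OS^\flat}(\res_{OS^\flat}X,\infl^\flat N)$ (via $\Phi\mapsto(x\mapsto\Phi(x)(1_\a))$ for $x\in 1_\a OS$, which is exactly what the exactness discussion after \eqref{costa} feeds into) applied with $N=M^\circledast$ gives $\Hom_{OS}(X,\nabla(M^\circledast))\cong\Hom_{OS^\flat}(\res_{OS^\flat}X,\infl^\flat M^\circledast)$. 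The two right-hand sides coincide: $\tau$ identifies $\res_{OS^\sharp}(X^\circledast)$ with the $\circledast$-dual of the $OS^\flat$-module $\res_{OS^\flat}X$, and then applying the involution $\circledast$ and $(\infl^\sharp M)^\circledast\cong\infl^\flat M^\circledast$ converts $\Hom_{OS^\sharp}(\infl^\sharp M,(\res_{OS^\flat}X)^\circledast)$ into $\Hom_{OS^\flat}(\res_{OS^\flat}X,\infl^\flat M^\circledast)$. All identifications are natural in $X$ and $M$, so Yoneda yields $\Delta(M)^\circledast\cong\nabla(M^\circledast)$, i.e.\ $\circledast\circ\Delta\cong\nabla\circ\circledast$ on $\fdMod OS^\circ$.

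\textbf{The hard part.} The one step needing genuine care is the bookkeeping in the middle paragraph: checking that the contravariant duality $\circledast$---which is packaged entirely through $\tau$---really does swap the two ``Borel halves'' $OS^\sharp$ and $OS^\flat$, intertwines $\infl^\sharp$ with $\infl^\flat$ after dualizing, and that the assorted $\tau$-twists on module structures cancel when threaded through the two reciprocities. (The apparent worry that $\tau$ need not preserve the ``upward-under-downward'' condition on mixed propagating crossings is a red herring: $\tau(OS^\circ)=OS^\circ$ is built into the definition of $\circledast$ on $OS^\circ$-modules, $\tau(OS^\pm)=OS^\mp$ only involves ribbons with no mixed propagating crossings, and then $\tau(OS^\sharp)=OS^\flat$ drops out of Lemma~\ref{td}.) A concrete substitute for the Yoneda step, if one prefers an explicit map, is the pairing $\langle g,\,m\otimes x\rangle:=g(\tau(x))(m)$ on $\nabla(M^\circledast)1_\a\times\Delta(M)1_\a$, where $\Delta(M)1_\a=\infl^\sharp M\otimes_{OS^\sharp}OS1_\a$ and $g\in\Hom_{OS^\flat}(1_\a OS,\infl^\flat M^\circledast)$, so $g(\tau(x))\in M^\circledast 1_\b=(M1_\b)^*$; one checks it is balanced over $OS^\sharp$ (using $\tau^2=\id$, that cups go to caps which act by $0$ on $\infl^\flat M^\circledast$, and that $\tau|_{OS^\circ}$ defines $\circledast$), $OS$-equivariant for the $\circledast$-twisted right action, and perfect by the dimension count $\dim\Delta(M)1_\a=\sum_\b\dim M1_\b\cdot\dim 1_\b OS^-1_\a=\sum_\b\dim M1_\b\cdot\dim 1_\a OS^+1_\b=\dim\nabla(M^\circledast)1_\a$, the middle equality being the bijection $\tau\colon 1_\b OS^-1_\a\xrightarrow{\sim}1_\a OS^+1_\b$.
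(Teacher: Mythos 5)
Your ``concrete substitute'' is exactly the paper's proof: the pairing $\langle g,\,m\otimes x\rangle=g(\tau(x))(m)$ is precisely the paper's two-sided inverse $\psi\mapsto\widetilde{\psi}$ (with $\widetilde{\psi}(v\otimes f)=\psi(\tau(f))(v)$), and the paper likewise treats the local finite-dimensionality of $\Delta(M)$ and $\nabla(M)$ as an easy consequence of the triangular decomposition. So the core of your argument is the same as the paper's.

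Two observations worth making.

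First, your primary route via Frobenius reciprocity plus Yoneda is a legitimate alternative to the paper's explicit construction, and the $\tau$-bookkeeping you carry out there ($\tau$ swaps $OS^\sharp$ with $OS^\flat$, $(\infl^\sharp M)^\circledast\cong\infl^\flat(M^\circledast)$, $\res_{OS^\sharp}(X^\circledast)\cong(\res_{OS^\flat}X)^\circledast$) is correct. It is more abstract and buys naturality for free, but it requires you to be explicit that the test objects $X$ must range over $\lfdMod OS$ (not all of $\Mod OS$), since the passage $\Hom_{OS}(X,\Delta(M)^\circledast)\cong\Hom_{OS}(\Delta(M),X^\circledast)$ and the ``apply $\circledast$ again'' step both rely on $\circledast\circ\circledast\cong\id$, which only holds on locally finite-dimensional modules. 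That restriction is harmless for Yoneda since $\Delta(M)^\circledast$ and $\nabla(M^\circledast)$ are themselves in $\lfdMod OS$, but it should be said.

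Second, in the explicit-pairing paragraph, ``perfect by the dimension count'' is not quite a proof. Equality of $\dim\nabla(M^\circledast)1_\a$ and $\dim\Delta(M)1_\a$ reduces perfectness to nondegeneracy on one side, but that nondegeneracy still has to be checked, and it is not automatic: a putative witness $m\otimes\tau(y)$ could a priori vanish in the quotient $\infl^\sharp M\otimes_{OS^\sharp}OS$. The paper avoids this by simply writing down both maps, $\theta\mapsto\widehat{\theta}$ with $\widehat{\theta}(f)(v)=\theta(v\otimes\tau(f))$ and $\psi\mapsto\widetilde{\psi}$ as above, and observing (using $\tau^2=\id$) that they are mutually inverse. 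You should do the same: replace the dimension count by the explicit inverse $\theta\mapsto\widehat{\theta}$, which closes the gap with essentially no extra work.
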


\begin{proof}
The first statement is easy to see from the definitions; for $\nabla$,
one needs to
know that $1_\a OS$ is a finitely generated right $OS^\flat$-module by Lemma~\ref{td}.
Then, for a finite-dimensional $OS^\circ$-module $M$, we define an $OS$-module homomorphism
$$
(\infl^\sharp M \otimes_{\OS^\sharp} OS)^\circledast\rightarrow
\bigoplus_{\a \in \words} \Hom_{OS^\flat}(1_\a OS, \infl^\flat (M^\circledast)),
\qquad
\theta \mapsto \widehat{\theta}
$$
where $\widehat{\theta}(f)(v) = \theta(v \otimes \tau(f))$
for $v \in M, f \in 1_\a OS$.
There is a two-sided inverse
$$
\bigoplus_{\a \in \words} \Hom_{OS^\flat}(1_\a OS, \infl^\flat
(M^\circledast))
\rightarrow
(\infl^\sharp M \otimes_{\OS^\sharp} OS)^\circledast,
\qquad
\psi \mapsto \widetilde{\psi}
$$
where $\widetilde{\psi}(v \otimes f) = \psi(\tau(f))(v)$.
\end{proof}

We say that an $OS$-module $M$ has a {\em finite $\Delta$-flag} if it has a
finite filtration whose sections are isomorphic to standard modules
$\Delta(\LA)$ for various $\LA \in \RPar$.
Let $\deltaMod OS$ be the full subcategory of
$\Mod OS$ consisting of all modules with a finite $\Delta$-flag.
We view it as an exact category with admissible sequences being the
ones that are exact in $\Mod OS$.
The next three lemmas are all well known in this sort of situation.

\begin{lemma}\label{rflat}
The restriction of $\Delta(\LA)$ to $OS^\flat$ is isomorphic to
$\Y(\LA) \otimes_{OS^\circ} OS^\flat$. These modules give all of the
indecomposable projective $OS^\flat$-modules (up to isomorphism).
\end{lemma}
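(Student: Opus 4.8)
The plan is to deduce the first statement from the triangular decomposition of Lemma~\ref{td} by a formal bimodule manipulation, and then to identify the resulting modules with the indecomposable projectives by a grading argument. First I would upgrade Lemma~\ref{td} to a statement about bimodules: multiplication gives an isomorphism of $(OS^\sharp,OS^\flat)$-bimodules $OS^\sharp\otimes_{OS^\circ}OS^\flat\xrightarrow{\sim}OS$, since precomposing it with the isomorphisms $OS^+\otimes_\K OS^\circ\xrightarrow{\sim}OS^\sharp$ and $OS^\circ\otimes_\K OS^-\xrightarrow{\sim}OS^\flat$ from Lemma~\ref{td} identifies it, via $OS^\circ\otimes_{OS^\circ}OS^\circ\cong OS^\circ$, with the multiplication isomorphism $OS^+\otimes_\K OS^\circ\otimes_\K OS^-\xrightarrow{\sim}OS$. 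Since $OS^\sharp[0]=OS^\circ$, the composite $OS^\circ\hookrightarrow OS^\sharp\twoheadrightarrow OS^\circ$ is the identity, so restricting $\infl^\sharp M$ along $OS^\circ\hookrightarrow OS^\sharp$ returns $M$. Hence by associativity of tensor products
\[
\Delta(\LA)=\infl^\sharp\Y(\LA)\otimes_{OS^\sharp}OS\cong\infl^\sharp\Y(\LA)\otimes_{OS^\sharp}\bigl(OS^\sharp\otimes_{OS^\circ}OS^\flat\bigr)\cong\Y(\LA)\otimes_{OS^\circ}OS^\flat
\]
as right $OS^\flat$-modules, which is the first assertion.

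For the second assertion, I would first record projectivity. By Lemma~\ref{cartanlem}, $OS^\circ_{r,s}$ is Morita equivalent to $H_{r,s}=H_r\otimes H_s$ and $\Y(\LA)$ corresponds to $\Y_{\la^\up}\boxtimes\Y_{\la^\down}$, an indecomposable projective (indeed injective) $H_{r,s}$-module; so $\Y(\LA)$ is an indecomposable projective right $OS^\circ$-module, and it is finitely generated since it is finite-dimensional. As $1_\a OS^\circ\otimes_{OS^\circ}OS^\flat=1_\a OS^\flat$, the additive functor $-\otimes_{OS^\circ}OS^\flat$ carries projective right $OS^\circ$-modules to projective right $OS^\flat$-modules, so $P_\LA:=\Y(\LA)\otimes_{OS^\circ}OS^\flat$ is a finitely generated projective $OS^\flat$-module.

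The crux is to show that the $P_\LA$ for $\LA\in\RPar$ are indecomposable, pairwise non-isomorphic, and exhaustive. Here I would use the grading $OS^\flat=\bigoplus_{d\le 0}OS^\flat[d]$ with $OS^\flat[0]=OS^\circ$; the piece $OS^\flat[<0]:=\bigoplus_{d<0}OS^\flat[d]$ is a two-sided ideal with $OS^\flat/OS^\flat[<0]\cong OS^\circ$. The key elementary observation is the cap bound: a cup-free ribbon whose bottom boundary is a word $\b$ has at most $|\b|/2$ caps, so $1_\bullet OS^\flat[d]1_\b=0$ whenever $d<-|\b|/2$; consequently $(OS^\flat[<0])^n1_\b=0$ for $n>|\b|/2$, i.e. $OS^\flat[<0]$ is locally nilpotent. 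It follows that $OS^\flat[<0]$ annihilates every simple $OS^\flat$-module (if $S\cdot OS^\flat[<0]=S$ then $S1_\b=S\cdot(OS^\flat[<0])^n1_\b=0$ for $n\gg0$), so every simple $OS^\flat$-module is inflated from an irreducible $OS^\circ$-module, hence from some $\D(\LA)$ with $\LA\in\RPar$. Therefore any simple quotient of $P_\LA$ factors through $P_\LA/(P_\LA\cdot OS^\flat[<0])=\Y(\LA)$ inflated along $OS^\flat\twoheadrightarrow OS^\circ$, whose top is the single simple $\D(\LA)$ inflated; thus $P_\LA$ has simple top. A finitely generated projective with simple top is indecomposable, modules with distinct tops are non-isomorphic, and, invoking the theory of locally Schurian categories from \cite{BD}, every indecomposable projective $OS^\flat$-module is the projective cover of a simple module, while $P_\LA$ is exactly the projective cover of $\D(\LA)$ inflated; as $\LA$ runs over $\RPar$ these exhaust the simples and hence the indecomposable projectives.

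I expect the main obstacle to be the bookkeeping in this last paragraph — making the inclusion $OS^\flat[<0]\subseteq J(OS^\flat)$ and the projective-cover statements precise in the locally unital (rather than unital) setting — but the essential input, the cap-counting bound, is entirely elementary, and everything else is standard once that is in hand.
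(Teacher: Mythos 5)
Your argument is correct, and while the first assertion is handled exactly as the paper does (it is the immediate bimodule consequence of Lemma~\ref{td}, which you simply write out in more detail), the second assertion takes a genuinely different route. The paper proves indecomposability of $P_\LA := \Y(\LA)\otimes_{OS^\circ}OS^\flat$ in one line via the adjunction isomorphism
$\End_{OS^\flat}(P_\LA) \cong \Hom_{OS^\circ}(\Y(\LA), P_\LA|_{OS^\circ}) \cong \End_{OS^\circ}(\Y(\LA))$
(the last step because $\Y(\LA)$ sits in the $(r,s)$-block and the degree-$d$ pieces of $P_\LA|_{OS^\circ}$ for $d<0$ live in the $(r-d,s-d)$-blocks), and then observes that every indecomposable projective $OS^\flat$-module is a summand of some $P_\LA$ because the $\Y(\LA)$ exhaust the indecomposable projectives over $OS^\circ$. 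You instead establish the stronger structural fact that $OS^\flat[<0]$ is a locally nilpotent ideal (via the elementary cap bound $1_\bullet OS^\flat[d]1_\b=0$ for $d<-|\b|/2$), deduce that $OS^\flat$ and $OS^\circ$ have the same simples, and show each $P_\LA$ has simple top $\D(\LA)$, hence is the projective cover of $\D(\LA)$. Both are valid; your route is longer but yields the classification of irreducible $OS^\flat$-modules and identifies the radical quotient as a by-product, while the paper's endomorphism-ring argument is more economical and avoids any discussion of simples over $OS^\flat$.
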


\begin{proof}
The first statement is clear from Lemma~\ref{td}.
For the second, observe that
the $OS^\flat$-modules $\Y(\LA) \otimes_{OS^\circ} OS^\flat$ are induced
from the projective $OS^\circ$-modules, hence, they are projective and every
indecomposable projective $OS^\flat$-module is isomorphic to a summand of one of
them.
It remains to show that $\Y(\LA)\otimes_{OS^\circ} OS^\flat$ is
indecomposable. This follows because
$\End_{OS^\flat}(\Y(\LA) \otimes_{OS^\circ} OS^\flat)
\cong \End_{OS^\circ}(\Y(\LA))$, which is local as $\Y(\LA)$ is
indecomposable.
\end{proof}

In view of the following lemma, the Grothendieck group $K_0(\deltaMod
OS)$ of the exact category $\deltaMod OS$
is the free $\ZZ$-module on basis
$\{[\Delta(\LA)]\:|\:\LA \in \RPar\}$.

\begin{lemma}\label{fly}
For $\LA,\MU \in \RPar$ and $d \geq 0$, we have that
$$
\dim \operatorname{Ext}^d_{OS}(\Delta(\LA),\bar\nabla(\MU)) = 
\left\{
\begin{array}{ll}
1&\text{if $d=0$ and $\LA=\MU$,}\\
0&\text{otherwise.}
\end{array}
\right.
$$
Hence, for any $M \in \Mod OS$ with a finite $\Delta$-flag, the multiplicity
$(M:\Delta(\LA))$ of $\Delta(\LA)$ as a section of such a flag
is well defined independent of the particular choice of flag, and it satisfies
$(M:\Delta(\LA)) = \dim \Hom_{OS}(M, \bar\nabla(\LA))$.
\end{lemma}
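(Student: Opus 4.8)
The plan is to compute these Ext groups by two successive reductions: first from $OS$ down to $OS^\flat$ via a Shapiro-type isomorphism, and then from $OS^\flat$ down to $OS^\circ$ via tensor--hom adjunction, at which point the answer is forced by the fact that $\Y(\LA)$ is a projective cover of the absolutely irreducible $OS^\circ$-module $\D(\LA)$.

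First I would record that $\bar\nabla(\MU)$ is coinduced from $OS^\flat$. Writing $\operatorname{Coind}(M') := \bigoplus_{\a}\Hom_{OS^\flat}(1_\a OS, M')$ for a right $OS^\flat$-module $M'$, the definition of $\nabla$ in (\ref{costa}) gives $\bar\nabla(\MU) = \operatorname{Coind}(\infl^\flat\D(\MU))$, and $\operatorname{Coind}$ is right adjoint to the exact restriction functor $\Mod OS \to \Mod OS^\flat$. By Lemma~\ref{td} each $1_\a OS \cong 1_\a OS^+\otimes_\K OS^\flat$ is a finitely generated projective right $OS^\flat$-module, so $\operatorname{Coind}$ is exact; being the right adjoint of an exact functor, it also sends injectives to injectives. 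Passing an injective resolution of $M'$ through $\operatorname{Coind}$ and invoking the adjunction then yields $\operatorname{Ext}^d_{OS}(N,\operatorname{Coind}(M')) \cong \operatorname{Ext}^d_{OS^\flat}(N|_{OS^\flat}, M')$ for every right $OS$-module $N$ and every $d\geq 0$. Applying this with $N = \Delta(\LA)$ and $M' = \infl^\flat\D(\MU)$, and using Lemma~\ref{rflat} to identify $\Delta(\LA)|_{OS^\flat} \cong \Y(\LA)\otimes_{OS^\circ}OS^\flat$ (which is projective over $OS^\flat$), kills all higher Ext and leaves, for $d=0$,
$$\Hom_{OS}(\Delta(\LA),\bar\nabla(\MU)) \cong \Hom_{OS^\flat}(\Y(\LA)\otimes_{OS^\circ}OS^\flat,\ \infl^\flat\D(\MU)) \cong \Hom_{OS^\circ}(\Y(\LA),\ \D(\MU)),$$
where the last step is tensor--hom adjunction together with the observation that the composite $OS^\circ \hookrightarrow OS^\flat \twoheadrightarrow OS^\circ$ is the identity, so $(\infl^\flat\D(\MU))|_{OS^\circ} = \D(\MU)$. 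Since any homomorphism from $\Y(\LA)$ to the simple module $\D(\MU)$ factors through the head $\D(\LA)$ of $\Y(\LA)$, this equals $\Hom_{OS^\circ}(\D(\LA),\D(\MU))$, which is one-dimensional for $\LA=\MU$ and zero otherwise by absolute irreducibility of $\D(\LA)$ (which reduces to the analogous well-known statement for $\D_\lambda$ over the Hecke algebras, exactly as in the proof of Theorem~\ref{class}). This proves the displayed formula.

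For the final assertion I would induct on the length of a $\Delta$-flag $0 = M_0 \subseteq M_1 \subseteq \cdots \subseteq M_k = M$ with $M_i/M_{i-1} \cong \Delta(\LA_i)$: applying the contravariant functor $\Hom_{OS}(-,\bar\nabla(\LA))$ to each short exact sequence $0 \to M_{i-1} \to M_i \to \Delta(\LA_i) \to 0$ and using the vanishing $\operatorname{Ext}^1_{OS}(\Delta(\LA_i),\bar\nabla(\LA)) = 0$ from the first part shows $\dim\Hom_{OS}(M_i,\bar\nabla(\LA)) = \dim\Hom_{OS}(M_{i-1},\bar\nabla(\LA)) + \delta_{\LA_i,\LA}$, whence $\dim\Hom_{OS}(M,\bar\nabla(\LA))$ equals the number of indices $i$ with $\LA_i=\LA$; as the left-hand side is visibly independent of the chosen flag, so is this count, and the two agree. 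The only place I expect to need genuine care is the first reduction: verifying the exactness hypotheses behind the Shapiro isomorphism in the locally unital (and not globally finite-dimensional) setting, which the explicit projective decomposition of $1_\a OS$ over $OS^\flat$ from Lemma~\ref{td} handles, and checking that $\Delta(\LA)|_{OS^\flat}$ really is $OS^\flat$-projective; the $OS^\circ$-level $\Hom$ computation and the closing induction are purely formal.
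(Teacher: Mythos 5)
Your proof is correct and follows exactly the paper's route: a Shapiro isomorphism to pass from $OS$ to $OS^\flat$ (using that $\bar\nabla(\MU)$ is coinduced and that coinduction is exact and preserves injectives), then Lemma~\ref{rflat} to identify the restriction of $\Delta(\LA)$ with the $OS^\flat$-projective $\Y(\LA)\otimes_{OS^\circ}OS^\flat$, and finally tensor--hom adjunction reducing to $\Hom_{OS^\circ}(\Y(\LA),\D(\MU))$, which is forced by $\Y(\LA)$ being a projective cover. You supply more explicit justification than the paper for why the three Ext groups are naturally isomorphic, but the argument is the same.
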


\begin{proof}
For the first statement, we have natural isomorphisms
\begin{align*}
\operatorname{Ext}^d_{OS}(\Delta(\LA),\bar\nabla(\MU))
&\cong
\operatorname{Ext}^d_{OS}\Bigg(\Delta(\LA),
\bigoplus_{\a \in \words}
\Hom_{OS^\flat}\left(1_\a OS, \infl^\flat \D(\MU)\right)\Bigg)\\
&\cong
\operatorname{Ext}^d_{OS^\flat}(\Y(\LA) \otimes_{OS^\circ} OS^\flat,
\infl^\flat \D(\MU))\\
&\cong
\operatorname{Ext}^d_{OS^\circ}(\Y(\LA),
\D(\MU)).
\end{align*}
This is zero unless $\LA=\MU$ and $d=0$ as $\Y(\LA)$ is the projective
cover of $\D(\LA)$.
It follows that $\dim \Hom_{OS}(M,\bar\nabla(\LA))$ counts the
multiplicity of $\Delta(\LA)$ in a $\Delta$-flag of $M$, giving the
second statement.
\end{proof}

The next lemma shows that $\deltaMod OS$ is Karoubian.

\begin{lemma}\label{crit}
An $OS$-module $M$
has a finite $\Delta$-flag if and only if it is finitely generated and projective as an
  $OS^\flat$-module.
Hence, any direct summand of a module with a finite $\Delta$-flag also
has a finite $\Delta$-flag.
\end{lemma}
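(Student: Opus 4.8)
\textbf{Proof proposal for Lemma~\ref{crit}.}
The plan is to prove both implications, the forward one by an induction along a $\Delta$-flag and the backward one by a standard ``lift a filtration'' argument using the structure of indecomposable projective $OS^\flat$-modules identified in Lemma~\ref{rflat}. First, for the forward direction: if $M$ has a finite $\Delta$-flag, I would induct on its length. A module $\Delta(\LA)$ restricts to a finitely generated projective $OS^\flat$-module by Lemma~\ref{rflat} (namely $\Y(\LA)\otimes_{OS^\circ} OS^\flat$). In the inductive step, given a short exact sequence $0\to M'\to M\to \Delta(\LA)\to 0$ with $M'$ having a shorter $\Delta$-flag, both $M'$ and $\Delta(\LA)$ restrict to finitely generated projective $OS^\flat$-modules; since $\Delta(\LA)|_{OS^\flat}$ is projective the sequence splits over $OS^\flat$, so $M|_{OS^\flat}\cong M'|_{OS^\flat}\oplus \Delta(\LA)|_{OS^\flat}$ is finitely generated projective. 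This gives the ``only if'' part.

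For the converse, suppose $M$ is finitely generated and projective as an $OS^\flat$-module. The idea is to build a $\Delta$-flag from the top down. Consider the shortest word space of $M$, i.e.\ the sum of $M 1_\a$ over all $\a$ of minimal length with $M1_\a\ne 0$; this is an $OS^\circ$-module (via the projection $OS^\sharp\twoheadrightarrow OS^\circ$, restricted to the appropriate idempotents), and it is finite-dimensional since $M$ is locally finite-dimensional. Because $OS^\circ$ is a product of matrix algebras over the semisimple-or-not Hecke algebras $H_{r,s}$ (Lemma~\ref{cartanlem}), I would pick a projective cover of this $OS^\circ$-module as a direct sum of $\Y(\LA)$'s; say the shortest word space has projective cover $\bigoplus_i \Y(\LA_i)$ (with appropriate grading/word data so all $\LA_i$ lie in the relevant $\RPar_{r,s}$). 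Using Frobenius reciprocity (the standardization functor $\Delta$ is left adjoint to the ``take shortest word space / restrict to $OS^\circ$'' functor in the appropriate sense), this produces an $OS$-module map $\varphi:\bigoplus_i\Delta(\LA_i)\to M$ whose image contains the shortest word space. The key claim is that $\varphi$ is surjective and its kernel again has a finite $\Delta$-flag, after which one induces on $\dim_\k M1_\a$ summed over the finitely many relevant $\a$ (this is finite because $M|_{OS^\flat}$ is finitely generated, so only finitely many word spaces are nonzero, all finite-dimensional).

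The main obstacle is controlling the kernel $K=\ker\varphi$: I must show both that $\varphi$ is surjective and that $K$ itself satisfies the same hypothesis (finitely generated projective over $OS^\flat$) with strictly smaller total word-space dimension, so the induction terminates. Surjectivity follows because $M$ is generated over $OS$ by its shortest word space together with ``lower'' pieces, and by projectivity over $OS^\flat$ one can peel off the $\Delta$-summands using the isomorphism $1_\a OS\cong 1_\a OS^+\otimes_\K OS^\flat$ from Lemma~\ref{td}; more precisely, comparing $M|_{OS^\flat}$ and $(\bigoplus_i\Delta(\LA_i))|_{OS^\flat}$ (both finitely generated projective $OS^\flat$-modules) and arranging $\varphi$ to be an isomorphism on the relevant projective cover over $OS^\flat$ shows $\varphi$ is a split surjection of $OS^\flat$-modules, hence $K|_{OS^\flat}$ is a direct summand of $(\bigoplus_i\Delta(\LA_i))|_{OS^\flat}$ and therefore finitely generated projective over $OS^\flat$; and $K1_\a=0$ for all $\a$ of the minimal length, so the total word-space dimension strictly drops. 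Applying the inductive hypothesis to $K$ gives $K$ a finite $\Delta$-flag, and splicing it with the $\Delta$-flag of $\bigoplus_i\Delta(\LA_i)$ (whose quotient by $K$ is $M$) yields a finite $\Delta$-flag of $M$. The final sentence of the lemma is then immediate: a direct summand $N$ of a module $M$ with a finite $\Delta$-flag is, by the established equivalence, a summand of a finitely generated projective $OS^\flat$-module, hence itself finitely generated projective over $OS^\flat$, hence has a finite $\Delta$-flag; thus $\deltaMod OS$ is Karoubian.
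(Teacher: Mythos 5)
Your forward direction is fine (it spells out the one-line observation in the paper that Lemma~\ref{rflat} covers it), but your converse has a genuine gap in the surjectivity claim. The shortest word space of $M$ is $\bigoplus_{j:\text{min}}\Y(\LA_j)$ where the sum runs over only the indecomposable summands $\Y(\LA_j)\otimes_{OS^\circ}OS^\flat$ of $M|_{OS^\flat}$ with $r_j+s_j$ minimal. Frobenius reciprocity therefore produces a map $\varphi:\bigoplus_{j:\text{min}}\Delta(\LA_j)\to M$ whose image is the $OS$-submodule generated by the shortest word space, and this is \emph{not} all of $M$ in general. For a concrete counterexample, take $M=\Delta(\LA)\oplus\Delta(\MU)$ with $\LA\in\RPar_{1,0}$ and $\MU\in\RPar_{2,0}$: the shortest word space is $\Y(\LA)$, so $\varphi:\Delta(\LA)\to M$ has image $\Delta(\LA)\oplus 0\ne M$. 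You cannot enlarge the domain to include $\Delta(\LA_j)$ for non-minimal $\LA_j$ either, because those $\Y(\LA_j)$'s are not in the shortest word space, so the inflation-followed-by-Frobenius construction of $\varphi$ does not apply to them.

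There is also a second problem with the logical structure, even granting surjectivity: from a short exact sequence $0\to K\to\bigoplus_i\Delta(\LA_i)\to M\to 0$ with $K$ possessing a finite $\Delta$-flag, one cannot in general conclude that the \emph{quotient} $M$ has a finite $\Delta$-flag. The splicing you invoke goes the wrong way — a $\Delta$-flag of $K$ and one of $M$ would combine to one of $\bigoplus_i\Delta(\LA_i)$, not vice versa. The paper avoids both difficulties by going bottom-up: choose a single $\LA$ with $r+s$ minimal among the summands $\Y(\LA)\otimes_{OS^\circ}OS^\flat$ of $M|_{OS^\flat}$, observe that the inclusion $\Y(\LA)\hookrightarrow M$ lands in the shortest word space (hence is an $\infl^\sharp$-module map), apply Frobenius reciprocity to get $\Delta(\LA)\to M$ whose image is exactly the chosen summand $M'$ (because that image, as a set, is the $OS^\flat$-submodule generated by $\Y(\LA)$), so $M'$ is an $OS$-submodule of $M$ isomorphic to $\Delta(\LA)$, and then pass to the quotient $M/M'$, which is still f.g.\ projective over $OS^\flat$ with one fewer indecomposable summand. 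Here the short exact sequence $0\to\Delta(\LA)\to M\to M/M'\to 0$ combined with the induction hypothesis on $M/M'$ \emph{does} yield a $\Delta$-flag of $M$, since the standard module sits at the bottom. I suggest you recast your converse along these lines, inducting on the number $n$ of indecomposable summands of $M|_{OS^\flat}$.
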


\begin{proof}
If $M$ has a finite $\Delta$-flag, then it is finitely generated and
projective over $OS^\flat$ thanks to Lemma~\ref{rflat}.
Conversely, suppose that $M$ is finitely generated and projective over
$OS^\flat$, so that the restriction of $M$ to $OS^\flat$
is isomorphic to a direct sum of some number $n$ of $OS^\flat$-modules of the form
$\Y(\LA)\otimes_{OS^\circ} OS^\flat$.
We show that $M$ has a finite $\Delta$-flag by induction on $n$.
The case $n=0$ is trivial.
If $n> 0$, we choose $r,s\geq 0$ with $r+s$ minimal
such that the restriction of $M$ to $OS^\flat$ has a summand $M'
\cong \Y(\LA)\otimes_{OS^\circ} OS^\flat$
for some $\LA \in \RPar_{r,s}$.
The $OS^\circ$-module homomorphism $\Y(\LA) \cong \Y(\LA) \otimes_{OS^\circ}
OS^\flat[0]
\hookrightarrow M'$ is actually an $OS^\sharp$-module
homomorphism $\infl^\sharp \Y(\LA) \hookrightarrow M$ since its image is
in the shortest word space of $M$. Hence, we get induced an $OS$-module
homomorphism $\Delta(\LA) \rightarrow M$ with image $M'$.
This shows that $M'$ is
actually an $OS$-submodule of $M$ and $M' \cong \Delta(\LA)$.
The quotient $M / M'$ is finitely generated and projective over
$OS^\flat$ with one fewer indecomposable summand.
It remains to apply the induction hypothesis to deduce that $M / M'$
has a finite $\Delta$-flag, hence, so does $M$.
\end{proof}

Now we look at projectives.
Let $\P(\LA)$ be a projective cover of $\L(\LA)$.
The classes $\{[\P(\LA)]\:|\:\LA \in \RPar\}$ give a basis for
$K_0(\pMod OS)$.
The $OS$-module 
\begin{equation}\label{qd}
\Q(\LA) := \Y(\LA) \otimes_{OS^\circ} OS
\end{equation}
described by the following theorem should be viewed as a first approximation
to $\P(\LA)$.

\begin{theorem}\label{koike1}
For $\LA \in \RPar_{r,s}$,
the module $\Q(\LA)$
has a canonical
filtration with sections indexed by $d=0,1,\dots,\min(r,s)$ appearing
in order from top to bottom, such that the $d$th section is isomorphic to
\begin{equation}\label{hmm}
\bigoplus_{\MU \in \RPar_{r-d,s-d}} 
\Delta(\MU)^{\oplus M^\LA_\MU(e,p)}
\end{equation}
where
$M^\LA_\MU(e,p) :=
\sum_{\NU \in \RPar_{d,d}}
\big[\D_{\mu^\up} \circ \D_{\nu^\up}:\D_{\lambda^\up}\big]
\big[\D_{\mu^\down} \circ \D_{\nu^\down}:\D_{\lambda^\down}\big]
[\Y_{\nu^\up}:\D_{\nu^\down}]$ (which depends on $e$ and the
characteristic $p$ of the field $\k$).
\end{theorem}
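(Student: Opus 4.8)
\emph{Strategy and the filtration.} The plan is to build the filtration from the two-sided ideals of $OS$ measuring the number of propagating strands, and then to identify each layer by factoring the relevant subquotient bimodule and reducing everything to the modular representation theory of Hecke algebras. For $p\geq 0$ let $OS_{\leq p}$ be the $\k$-span of all ribbon diagrams with at most $p$ propagating strands. Vertical composition can never increase the number of propagating strands, so $OS_{\leq p}$ is a two-sided ideal of $OS$; in particular it is a left $OS^\circ$-submodule. Since $\Y(\LA)$ is a \emph{projective} (hence flat) $OS^\circ$-module — it is induced from the projective $H_{r,s}$-module $\Y_{\lambda^\up}\boxtimes\Y_{\lambda^\down}$ — the functor $\Y(\LA)\otimes_{OS^\circ}-$ is exact, so
$$G^d:=\Y(\LA)\otimes_{OS^\circ}OS_{\leq r+s-2d}$$
is a decreasing chain of $OS$-submodules of $\Q(\LA)=G^0$. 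A diagram whose bottom object lies in the block with $r$ up- and $s$ down-strands has at least $|r-s|$ propagating strands, whence $G^{\min(r,s)+1}=0$. This is already a finite, manifestly canonical filtration with exactly $\min(r,s)+1$ layers, indexed by $d=0,1,\dots,\min(r,s)$ from top to bottom.

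\emph{Identifying the layers.} The $d$th layer is $G^d/G^{d+1}\cong\Y(\LA)\otimes_{OS^\circ}\overline{OS}_d$, where $\overline{OS}_d:=OS_{\leq r+s-2d}/OS_{\leq r+s-2d-2}$, whose relevant part is spanned by diagrams with exactly $r+s-2d$ propagating strands. Using the basis of reduced lifts of matchings from Theorem~\ref{bt}, I would show that modulo lower terms every such diagram (with bottom in the $(r,s)$-block) is obtained from $d$ caps at the very bottom — pairing $d$ of the bottom up-endpoints with $d$ of the bottom down-endpoints — stacked below a cap-free ``core'' on the remaining $r-d$ up- and $s-d$ down-strands. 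Promoting this to an isomorphism of $(OS^\circ_{r,s},OS)$-bimodules exhibits $\overline{OS}_d$ as the cap-pattern space $W_d:=1_\varnothing\, OS\, 1_{\down^d\up^d}$ (a right $H_{d,d}$-module via crossings of the capped strands), tensored over $H_{d,d}$ with the bimodule recording how the $d$ capped strands are reattached to the core to recover the $(r,s)$-block, and then post-composed with the standardization functor on the $(r-d,s-d)$-block. Tensoring on the left with $\Y(\LA)$ therefore collapses the layer to $\Delta$ applied to the $OS^\circ_{r-d,s-d}$-module obtained by restricting $\Y_{\lambda^\up}\boxtimes\Y_{\lambda^\down}$ from $H_{r,s}$ to $H_{r-d,s-d}\otimes H_{d,d}$ and then feeding the $H_{d,d}$-part into $W_d$. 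As restriction of a projective module over a parabolic subalgebra of a Hecke algebra is again projective, this $OS^\circ_{r-d,s-d}$-module is a genuine finite direct sum of indecomposable projectives $\Y(\MU)$; since $\Delta(\Y(\MU))=\Delta(\MU)$ by~\eqref{eclipse3}, the layer is honestly $\bigoplus_{\MU}\Delta(\MU)^{\oplus m_{d,\MU}}$, not merely a module with a $\Delta$-flag.

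\emph{Computing the multiplicity.} To identify $m_{d,\MU}$ with $M^\LA_\MU(e,p)$ I would combine two inputs. First, $W_d$ is free of rank one over $H_d$ on each side: Theorem~\ref{bt} gives $\dim W_d=d!$ and the nested-cap diagram generates it under either action, so $W_d\cong H_d$ as an $(H_d,H_d)$-bimodule up to an algebra automorphism of the down-copy, which is absorbed into the labelling of the down-strand modules and in any case immaterial after the reindexing below. Hence $(\Y_{\nu^\up}\boxtimes\Y_{\nu^\down})\otimes_{H_{d,d}}W_d\cong\Y_{\nu^\up}\otimes_{H_d}\Y_{\nu^\down}$, of dimension $[\Y_{\nu^\up}:\D_{\nu^\down}]$ by the usual pairing of projectives over the symmetric algebra $H_d$. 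Second, Frobenius reciprocity together with the self-injectivity of $H_r$ gives that the multiplicity of the projective summand $\Y_{\mu^\up}\boxtimes\Y_{\nu^\up}$ in $\operatorname{Res}^{H_r}_{H_{r-d}\otimes H_d}\Y_{\lambda^\up}$ equals $\dim\Hom_{H_{r-d}\otimes H_d}(\operatorname{Res}\Y_{\lambda^\up},\D_{\mu^\up}\boxtimes\D_{\nu^\up})=\dim\Hom_{H_r}(\Y_{\lambda^\up},\D_{\mu^\up}\circ\D_{\nu^\up})=[\D_{\mu^\up}\circ\D_{\nu^\up}:\D_{\lambda^\up}]$, and likewise on the down-side. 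Multiplying the three factors and summing over $\NU\in\RPar_{d,d}$ then yields exactly the stated formula. As a consistency check, when $e=0$ one has $\D=\SS=\Y$ and $[\Y_{\nu^\up}:\D_{\nu^\down}]=\delta_{\nu^\up,\nu^\down}$, so $M^\LA_\MU$ reduces to $\sum_{\nu\vdash d}LR^{\lambda^\up}_{\mu^\up,\nu}LR^{\lambda^\down}_{\mu^\down,\nu}$, which is Koike's formula underlying~\eqref{nlamu} after the usual conjugation and twist bookkeeping.

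\emph{Main difficulty.} The hard part is the bimodule factorization of $\overline{OS}_d$ in the second step: using Theorem~\ref{bt} one must check carefully that sliding all caps to the bottom of a diagram is compatible with both the left $OS^\circ_{r,s}$- and the right $OS$-actions, that the only residual freedom is the $H_{d,d}$-action on $W_d$, and that the core contributes a genuine $OS^\circ_{r-d,s-d}$-module structure while the cap part contributes only a multiplicity space. Once this is in place, the direct-sum (rather than merely filtered) structure of each layer is automatic, and Lemma~\ref{fly} is needed only as an optional cross-check on the numbers $m_{d,\MU}$ via $\dim\Hom_{OS}(G^d/G^{d+1},\bar\nabla(\MU))$. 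Everything else reduces to exactness (projectivity of $\Y(\LA)$ over $OS^\circ$), the basis theorem, and standard facts about Hecke algebras.
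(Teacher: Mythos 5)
Your filtration is the same one the paper uses, just described differently: filtering $\Q(\LA)=\Y(\LA)\otimes_{OS^\circ}OS^\sharp\otimes_{OS^\sharp}OS$ by the grading on $OS^\sharp$ (number of cups) and then applying the exact functor $-\otimes_{OS^\sharp}OS$ gives exactly the filtration by $\Y(\LA)\otimes_{OS^\circ}OS_{\leq r+s-2d}$, since a diagram with top in the $(r,s)$-block has at most $r+s-2d$ propagating strands iff it has at least $d$ cups. Your flatness observation and the reduction to Hecke-algebra multiplicities via Frobenius reciprocity and $\dim\Hom_{H_d}(\Y_{\nu^\down},\Y_{\nu^\up})=[\Y_{\nu^\up}:\D_{\nu^\down}]$ are likewise in line with the paper.

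The gap — which you candidly flag as ``the main difficulty'' — is precisely where the paper spends all its effort, and you have not supplied it. You assert that the graded piece $\overline{OS}_d$ factors as a ``cap pattern space'' $W_d$ tensored over $H_{d,d}$ with a reattachment bimodule, that $W_d\cong H_d$ as an $(H_d,H_d)$-bimodule ``up to an automorphism of the down-copy,'' and that this factorization is compatible with both the left $OS^\circ_{r,s}$- and right $OS$-actions. None of this is proven, and it is not routine: the right action of $H_s$ (acting by crossings among down-strands at the top) does not stabilize any naive ``cup-pattern times core'' decomposition of a basis diagram, because crossing a cupped strand with a propagating strand produces a linear combination of matchings of different shapes. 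Pinning this down is exactly the content of the paper's explicit bimodule isomorphism $\theta:\tau\text{-}\Hom_{H_d}(H_s,H_r)\to 1_{\down^s\up^r}OS^\sharp 1_{\down^{s-d}\up^{r-d}}$, whose verification requires the Deodhar-type case analysis for minimal coset representatives of $\Sym_s/\Sym_d$ and a delicate matching with the $H_s$-action from each side. The cleaner route the paper takes is to work inside $OS^\sharp$ rather than $OS$, so that only cups (no caps) are in play and the graded pieces are honest $(OS^\circ,OS^\circ)$-bimodules, and then to apply $-\otimes_{OS^\sharp}OS$ at the very end; your proposal tries to decompose diagrams in $OS$ directly, which is the messier version of the same idea (and, incidentally, has the cups and caps transposed — the $d$ extra strands pair up at the top, as cups in the $OS^\sharp$-factor, not as caps at the bottom). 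Until the bimodule isomorphism is established, your claim that each layer is a genuine direct sum of $\Delta(\MU)$'s (rather than just something with a $\Delta$-flag) and the resulting multiplicity count are both unjustified.
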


\begin{proof}
Take $0 \leq d \leq \min(r,s)$.
We always view $H_{d}$ as a subalgebra of $H_r$ or $H_s$ via the
natural embeddings.
We
also need the ``unnatural'' embeddings $H_{r-d} \hookrightarrow H_r$
and $H_{s-d}\hookrightarrow H_s$ 
which send $S_i \mapsto S_{d+i}$; the images of these embeddings
centralize $H_d$.
Let $\Hom_{H_d}(H_s,H_r)$ be the space of all right $H_d$-module
homomorphisms.
Using the unnatural embeddings for $H_{r-d}$ and $H_{s-d}$,
this is an $(H_r \otimes H_{s-d}, H_{r-d}\otimes H_s)$-bimodule.
Let $\tau\text{-\!}\Hom_{H_d}(H_s,H_r)$ be the 
the same space but with the left action of $H_{s-d}$ and right action of
$H_{s}$ twisted into right and left actions, respectively, using the
anti-automorphism $\tau$ which sends $S_w \mapsto
S_{w^{-1}}$.
This means that
$\tau\text{-\!}\Hom_{H_d}(H_s,H_r)$ is an $(H_{r,s}, H_{r-d,s-d})$-bimodule.
The space $1_{\down^s \up^r} OS^\sharp 1_{\down^{s-d} \up^{r-d}}$
is also naturally an $(H_{r,s}, H_{r-d,s-d})$-bimodule. 
We claim that these two bimodules are isomorphic.

To prove the claim, 
$H_s$ is a free right
$H_d$-module with basis $\{S_y\:|\:y \in \mathfrak{D}\}$ where $\mathfrak{D}$ is the set of
minimal length $\Sym_s / \Sym_d$-coset representatives.
So the $H_d$-module homomorphisms $\left\{f_{x,y}:H_s\rightarrow H_r\:|\:x \in \Sym_r, y \in \mathfrak{D}\right\}$
defined from
$f_{x,y}(S_z) := \delta_{y,z} S_x$
for $x\in\Sym_r, y,z \in \mathfrak{D}$
give a linear basis for $\tau\text{-\!}\Hom_{H_d}(H_s,H_r)$.
Let
$$
c := 
\mathord{
\begin{tikzpicture}[baseline = -6mm]
\node at (.2,-1.1) {$\scriptstyle r-d$};
\node at (-.25,-.05) {$\scriptstyle d$};
\node at (-.7,-1.1) {$\scriptstyle s-d$};
\draw[->,ultra thick,darkblue] (-.65,-.2) to (-.65,-1);
\draw[<-,ultra thick,darkblue] (.25,-.2) to (.25,-1);
\draw[-,line width=5pt,white] (.15,-.6) to [out=0,in=-90] (.55,-.2);
\draw[->,ultra thick,darkblue] (.15,-.6) to [out=0,in=-90] (.55,-.2);
\draw[-,ultra thick,darkblue] (-.25,-.2) to [out=-90,in=180] (.15,-.6);
\end{tikzpicture}
}
\in 1_{\down^s \up^r} OS^\sharp 
1_{\down^{s-d} \up^{r-d}},
$$
where the thick arrows labelled by a number represent that number of parallel thin ones.
We will show that
the linear map
$$
\theta:\tau\text{-\!}\Hom_{H_d}(H_s,H_r) \rightarrow
1_{\down^s \up^r} OS^\sharp 1_{\down^{s-d} \up^{r-d}},
\qquad
f_{x,y} \mapsto \i_{r,s}(S_x \otimes S_y) c
$$
is an $(H_{r,s}, H_{r-d,s-d})$-bimodule isomorphism.
To see this, it is clear from Theorem~\ref{bt} that $\theta$ is a vector
space isomorphism. We must check that it is a bimodule homomorphism.
This is straightforward for the left action of $H_r$ and the right
action of $H_{r-d}$. In the next two paragraphs, we check it for the
left action of $H_s$ and the right action of $H_{s-d}$, respectively.

To show $\theta$ is a left $H_s$-module
homomorphism, take $x \in \Sym_r, y \in \mathfrak{D}$ and $1 \leq i < s$.
By \cite[Lemma 1.1]{DJ}, exactly one of the following holds: (a) $s_i
y \in \mathfrak{D}$; (b) $s_{y^{-1}(i)} \in \Sym_d$.
In case (a), $S_i S_y = S_{s_i y}$ if $\ell(s_iy) > \ell(y)$
or $S_{s_i y} + (q-q^{-1}) S_y$ if $\ell(s_i y) < \ell(y)$.
In case (b), $S_i S_y = S_y S_{y^{-1}(i)}$
and $$
\i_{r,s}(S_x \otimes S_i S_y) c = 
\i_{r,s}(S_x \otimes S_y S_{y^{-1}(i)}) c
= \i_{r,s}(S_x S_{y^{-1}(i)} \otimes S_y) c.
$$
We deduce for $z \in \mathfrak{D}$ that
\begin{align*}
(\theta^{-1}(S_i &\theta(f_{x,y})))(S_z) = 
(\theta^{-1}( \i_{r,s}(S_x \otimes S_i S_y) c))(S_z)\\
&=
\left\{
\begin{array}{ll}
f_{x,s_i y}(S_z)&\text{if $s_i y \in \mathfrak{D}, \ell(s_i y) > \ell(y)$,}\\
(f_{x,s_i y} + (q-q^{-1}) f_{x,y})(S_z)&\text{if $s_i y \in \mathfrak{D}, \ell(s_i y) < \ell(y)$,}\\
f_{x s_{y^{-1}(i)},y}(S_z)&\text{if $s_i y \notin \mathfrak{D}, \ell(x s_{y^{-1}(i)}) > \ell(x)$,}\\
(f_{x s_{y^{-1}(i)},y}+(q-q^{-1}) f_{x ,y})(S_z)&\text{if $s_i y \notin \mathfrak{D}, \ell(x s_{y^{-1}(i)}) < \ell(x)$}
\end{array}
\right.\\
&=
\left\{
\begin{array}{ll}
\delta_{s_i y, z} S_x\hspace{36.8mm}&\text{if $s_i y \in \mathfrak{D}, \ell(s_i y) > \ell(y)$,}\\
\delta_{s_i y, z} S_x + (q-q^{-1}) \delta_{y,z} S_x&\text{if $s_i y \in \mathfrak{D}, \ell(s_i y) < \ell(y)$,}\\
\delta_{y,z} S_x S_{y^{-1}(i)}&\text{if $s_i y \notin \mathfrak{D}$}.
\end{array}
\right.
\end{align*}
We need to show this is equal to
$$
(S_i f_{x,y})(S_z) = f_{x,y}(S_i S_z) = 
\left\{
\begin{array}{ll}
\delta_{y,s_i  z} S_x&\text{if $s_i z \in \mathfrak{D}, \ell(s_i z) > \ell(z)$},\\
\delta_{y,s_i  z} S_x + (q-q^{-1}) \delta_{y,z} S_x&\text{if $s_i z \in \mathfrak{D}, \ell(s_i z) < \ell(z)$},\\
\delta_{y,z} S_x S_{y^{-1}(i)}&\text{if $s_i z \notin \mathfrak{D}$}.\\
\end{array}\right.
$$
This follows easily by considering several cases: (a) $y = z$; (b) $y = s_i
z$; (c) $s_i y = z$; (d) none of the above.

To show that $\theta$ is a right $H_{s-d}$-module homomorphism,
take $x \in \Sym_r, y, z \in \mathfrak{D}$ and $1 \leq i < s-d$.
We must show that
$(\theta^{-1}(\theta(f_{x,y}) S_i))(S_z)
= (f_{x,y} S_{d+i}) (S_z)$, i.e.,
$$
(\theta^{-1}(\i_{r,s}(S_x \otimes S_y S_{d+i})c)(S_z)
=
f_{x,y}(S_z S_{d+i}).
$$
This time, 
$y s_{d+i} \in \mathfrak{D}$ always.
So the left hand side of the identity to be proved equals
$$
\left\{\begin{array}{ll}
\delta_{y s_{d+i}, z} S_x
&\text{if $\ell(y s_{d+i}) > \ell(y)$,}\\
\delta_{y s_{d+i}, z}S_x + \delta_{y,z} (q-q^{-1})S_x&\text{if $\ell(y s_{d+i}) < \ell(y)$.}
\end{array}\right.
$$
Similarly, the right hand side is
$$
\left\{\begin{array}{ll}
\delta_{y,z s_{d+i}} S_x
&\text{if $\ell(z s_{d+i}) > \ell(z)$,}\\
\delta_{y, z s_{d+i}}S_x + \delta_{y,z} (q-q^{-1})S_x\:&\text{if $\ell(z s_{d+i}) < \ell(z)$.}
\end{array}\right.
$$
The two sides are equal by considering four cases like before.

We have now proved the claim made in the opening paragraph.
To prove the theorem, we must construct the filtration of $\Q(\LA)$.
By transitivity of induction,  
$$
\Q(\LA) = (\Y(\LA) \otimes_{OS^\circ} OS^\sharp) \otimes_{OS^\sharp} OS.
$$
Since $OS^\sharp$ is positively graded, the grading gives us a filtration of 
$\Y(\LA)
\otimes_{OS^\circ} OS^\sharp$ as an $OS^\sharp$-module
with sections $\Y(\LA) \otimes_{OS^\circ} OS^\sharp[d]$ for $d=0,1,\dots$
appearing in order from top to bottom. The $d$th section is clearly zero unless $d
\leq \min(r,s)$.
Since the functor $? \otimes_{OS^\sharp} OS$ is exact, we are thus reduced
to checking
for $0 \leq d \leq \min(r,s)$ that
$$
\Y(\LA) \otimes_{OS^\circ} OS^\sharp[d] \cong
\bigoplus_{\MU \in \RPar_{r-d,s-d}} 
\Y(\MU)^{\oplus M^\LA_\MU(e,p)}
$$
as a right $OS^\circ$-module.
Using Lemma~\ref{cartanlem}, we show equivalently that
$$
(\Y_{\la^\up} \boxtimes \Y_{\la^\down}) \otimes_{H_{r,s}}
1_{\down^s\up^r} OS^\sharp 1_{\down^{s-d}\up^{r-d}}
\cong \bigoplus_{\mu \in \RPar_{r-d,s-d}} (\Y_{\mu^\up} \otimes
\Y_{\mu^\down})^{\bigoplus M^\LA_\MU(e,p)}
$$
as a right $H_{r-d,s-d}$-module.
By the opening claim, the module on the left hand side is isomorphic
to
$$
(\Y_{\la^\up} \boxtimes \Y_{\la^\down}) \otimes_{H_{r,s}}
\tau\text{-\!}\Hom_{H_d}(H_s,H_r)
\cong 
\tau\text{-\!}\Hom_{H_d}(\Y_{\la^\down},\Y_{\la^\up}),
$$
where we have used the self-duality of $\Y_{\la^\down}$.
Then we note by Frobenius reciprocity
that
\begin{align*}
\res^{H_r}_{H_{r-d,d}} \Y_{\la^\up} &\cong
\bigoplus_{\mu^\up \vdash (r-d), \nu^\up \vdash d} 
(\Y_{\mu^\up} \boxtimes \Y_{\nu^\up})^{\oplus
[\D_{\mu^\up}\circ \D_{\nu^\up}:\D_{\la^\up}]},\\
\res^{H_s}_{H_{s-d,d}} \Y_{\la^\down} &\cong
\bigoplus_{\mu^\down \vdash (s-d), \nu^\down \vdash d} 
(\Y_{\mu^\down} \boxtimes  \Y_{\nu^\down})^{\oplus [\D_{\mu^\down}\circ \D_{\nu^\down}:\D_{\la^\down}]}.
\end{align*}
Making these substitutions in
$\tau\text{-\!}\Hom_{H_d}(\Y_{\la^\down},\Y_{\la^\up})$, using also
$\dim \Hom_{H_d}(\Y_{\nu^\down}, \Y_{\nu^\up}) = \left[\Y_{\nu^\up}:\D_{\nu^\down}\right]$
and self-duality of $\Y_{\mu^\down}$, gives the conclusion.
\end{proof}

\begin{corollary}\label{kio}
For $\LA \in \RPar_{r,s}$, the module $\Q(\LA)$ is isomorphic
$\P(\LA)$ plus a finite direct sum of projectives
$\P(\MU)$ for bipartitions $\MU \in \bigsqcup_{d = 0}^{\min(r,s)-1}
\RPar_{r-d,s-d}$.
Hence, the classes $\{[\Q(\LA)]\:|\:\LA \in \RPar\}$ give another basis for
$K_0(\pMod OS)$.
\end{corollary}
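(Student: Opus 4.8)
The plan is to read off the indecomposable summands of the projective module $\Q(\LA)$ from a single $\Hom$-space computation, using Theorem~\ref{koike1} only to sharpen the index range of the ``lower'' terms.

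First I would record that $\Q(\LA)=\Y(\LA)\otimes_{OS^\circ}OS$ is a finitely generated projective $OS$-module: the functor $-\otimes_{OS^\circ}OS$ is left adjoint to restriction $\Mod OS\to\Mod OS^\circ$, restriction is exact, and hence induction carries the projective $OS^\circ$-module $\Y(\LA)$ to a projective $OS$-module (finite generation is automatic since $\Y(\LA)$ is supported on the finitely many words with $r$ letters $\up$ and $s$ letters $\down$). So $\Q(\LA)\cong\bigoplus_{\MU\in\RPar}\P(\MU)^{\oplus b_{\LA,\MU}}$ for nonnegative integers $b_{\LA,\MU}$. Since each $\P(\MU)$ is a projective cover of the absolutely irreducible $\L(\MU)$, and $\Y(\LA)$ is a projective cover of the absolutely irreducible $\D(\LA)$, tensor--hom adjunction gives
\[
b_{\LA,\MU}=\dim_\k\Hom_{OS}(\Q(\LA),\L(\MU))=\dim_\k\Hom_{OS^\circ}(\Y(\LA),\res_{OS^\circ}\L(\MU))=[\res_{OS^\circ}\L(\MU):\D(\LA)],
\]
where the last multiplicity is computed in the block of $OS^\circ$ indexed by $(r,s)$, on which everything is finite-dimensional.

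Next I would bound which $\MU$ contribute. By Lemma~\ref{td} the proper standard $\bar\Delta(\MU)\cong\D(\MU)\otimes_\K OS^-$ as right $\K$-modules; because $OS^-$ is spanned by cup-free ribbons, $\bar\Delta(\MU)$ (for $\MU\in\RPar_{r',s'}$) is supported exactly on the words with $r'+k$ letters $\up$ and $s'+k$ letters $\down$, $k\ge 0$, its shortest word space being $\D(\MU)$ as an $OS^\circ$-module. By Theorem~\ref{class} the radical of $\bar\Delta(\MU)$ lives in strictly longer words, so its quotient $\L(\MU)$ has the same shortest word space $\D(\MU)$, and every composition factor $\D(\KAPPA)$ of $\res_{OS^\circ}\L(\MU)$ has $\KAPPA\in\RPar_{r'+k,s'+k}$ for some $k\ge 0$. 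Feeding this into the formula for $b_{\LA,\MU}$ with $\LA\in\RPar_{r,s}$ forces $\MU\in\RPar_{r-d,s-d}$ for some $0\le d\le\min(r,s)$; and in the case $d=0$ the factor $\D(\LA)$ can only occur in the shortest word space $\D(\MU)$ of $\L(\MU)$, so $b_{\LA,\MU}=\delta_{\LA,\MU}$ there. This already shows that $\P(\LA)$ occurs with multiplicity one, that no other $\P(\MU)$ with $\MU\in\RPar_{r,s}$ occurs, and that the remaining summands are $\P(\MU)$ with $\MU\in\bigsqcup_{d=1}^{\min(r,s)}\RPar_{r-d,s-d}$. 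To get the stated bound $d\le\min(r,s)-1$ I would compare with the explicit $\Delta$-flag of $\Q(\LA)$ from Theorem~\ref{koike1}: a hypothetical summand $\P(\MU)$ with $\MU$ in the bottom block $\RPar_{r-\min(r,s),s-\min(r,s)}$ is, by the $\min=0$ base case of this very statement, just the module $\Delta(\MU)$, and one checks via the flag (or via $\dim\Hom_{OS}(\Q(\LA),\bar\nabla(\MU))$ and Lemma~\ref{fly}) that no such term can split off. Finally, ordering $\RPar$ by $|\MU|$, the transition matrix from $\{[\Q(\LA)]\}$ to the basis $\{[\P(\MU)]\}$ of $K_0(\pMod OS)$ is unitriangular, so $\{[\Q(\LA)]\:|\:\LA\in\RPar\}$ is a $\ZZ$-basis.

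The hard part will be the support/degree analysis of $\res_{OS^\circ}\L(\MU)$ — precisely, confirming that $\rad\bar\Delta(\MU)$ is concentrated in strictly longer words and then isolating the exact top index $\min(r,s)-1$ rather than $\min(r,s)$; once projectivity of $\Q(\LA)$ and the adjunction identity for $b_{\LA,\MU}$ are in hand, the rest is bookkeeping and the standard unitriangularity argument.
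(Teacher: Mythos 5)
Your argument is a genuine variant of the paper's. The paper deduces that $\P(\LA)$ is a summand of $\Q(\LA)$ from the fact that the top section of the $\Delta$-flag in Theorem~\ref{koike1} is $\Delta(\LA)$ (which already has $\L(\LA)$ in its head), and then controls the remaining summands by combining the full flag with Lemma~\ref{crit} (summands of modules with $\Delta$-flags have $\Delta$-flags) and the well-definedness of $\Delta$-flag multiplicities from Lemma~\ref{fly}. You instead compute the multiplicities $b_{\LA,\MU}$ directly by tensor-hom adjunction, $b_{\LA,\MU}=[\res_{OS^\circ}\L(\MU):\D(\LA)]$, and then bound the range of $\MU$ by shortest-word theory, invoking Theorem~\ref{koike1} only for finiteness. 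Both routes work, and your version is arguably cleaner in that it isolates $b_{\LA,\LA}=1$ and the vanishing on $\RPar_{r,s}\setminus\{\LA\}$ without any appeal to the detailed structure of the flag beyond its existence.

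However, your final sharpening step (arguing that no $\P(\MU)$ with $\MU$ in the ``bottom block'' $\RPar_{r-\min(r,s),s-\min(r,s)}$ can split off, so as to match the paper's stated range $\bigsqcup_{d=0}^{\min(r,s)-1}\RPar_{r-d,s-d}$) would fail, and you should not try to carry it out: that index set in the statement of Corollary~\ref{kio} contains an off-by-one slip, and your instinct to flag this as the ``hard part'' was sound. Take $\LA=((1),(1))$, so $r=s=1$, and work in the semisimple regime ($q$ not a root of unity, $t\notin\{\pm q^n\}$). Theorem~\ref{koike1} gives $\Q(\LA)$ a two-step $\Delta$-flag with sections $\Delta(((1),(1)))$ at $d=0$ and $\Delta(\NOTHING)$ at $d=1$ (one checks $M^{((1),(1))}_{\NOTHING}=1$). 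In the semisimple case every $\Delta(\MU)$ is irreducible and projective, so $\Q(((1),(1)))\cong\P(((1),(1)))\oplus\P(\NOTHING)$, and $\NOTHING\in\RPar_{0,0}$ sits at $d=1=\min(r,s)$, outside the range $\bigsqcup_{d=0}^{\min(r,s)-1}\RPar_{r-d,s-d}=\RPar_{1,1}$. The correct index set for the extra summands is $\bigsqcup_{d=1}^{\min(r,s)}\RPar_{r-d,s-d}$, which is exactly the bound you obtained in step~3; note this set also correctly excludes the $\MU\in\RPar_{r,s}\setminus\{\LA\}$ that the printed range spuriously allows. So your proof is complete once you stop after step~3 — that bound is what makes the transition matrix $(b_{\LA,\MU})$ unitriangular with respect to the filtration of $\RPar$ by $|\MU|$, and the basis claim follows as you wrote.
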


\begin{proof}
Note that $\Q(\LA)$ is projective since the
functor
$? \otimes_{OS^\circ} OS$ sends projectives to projectives.
Also the top section of the $\Delta$-flag of $\Q(\LA)$ constructed in Theorem~\ref{koike1}
is $\Delta(\LA)$, so $\Q(\LA)$ has $\P(\LA)$ as an indecomposable summand.
The other sections only involve 
$\Delta(\MU)$ for $\MU \in \bigsqcup_{d = 0}^{\min(r,s)-1} \Par_{r-d,s-d}$,
so all other summands are of the form $\P(\MU)$ for such $\MU$.
\end{proof}

\begin{corollary}\label{BGG}
The projective cover $\P(\LA)$ of $\L(\LA)$ has a finite $\Delta$-flag
such that
$$
(\P(\LA):\Delta(\MU)) =[\bar\Delta(\MU):\L(\LA)].
$$
 \end{corollary}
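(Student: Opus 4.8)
The plan is to deduce this from Theorem~\ref{koike1} by the routine BGG-reciprocity argument familiar from the theory of (standardly stratified) highest weight categories. First I would check that $\P(\LA)$ itself has a finite $\Delta$-flag: by Corollary~\ref{kio} it is a direct summand of $\Q(\LA) = \Y(\LA)\otimes_{OS^\circ} OS$, and $\Q(\LA)$ carries the finite $\Delta$-flag constructed in Theorem~\ref{koike1}, so Lemma~\ref{crit} (summands of modules with finite $\Delta$-flags again have finite $\Delta$-flags) shows $\P(\LA)\in\deltaMod OS$. In particular, by Lemma~\ref{fly} the multiplicity $(\P(\LA):\Delta(\MU))$ is well defined independently of the flag, so the only content of the corollary is the numerical identity.

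Next I would rewrite both sides as dimensions of $\Hom$-spaces. On one side, Lemma~\ref{fly} gives directly $(\P(\LA):\Delta(\MU)) = \dim\Hom_{OS}(\P(\LA),\bar\nabla(\MU))$. On the other side, since $\P(\LA)$ is the projective cover of the irreducible module $\L(\LA)$ in the locally Schurian category $\Mod OS$, the functor $\Hom_{OS}(\P(\LA),-)$ is exact and takes an irreducible module $S$ to a space of dimension $\delta_{S,\L(\LA)}$; hence $\dim\Hom_{OS}(\P(\LA),N) = [N:\L(\LA)]$ for $N\in\lfdMod OS$ (see \cite[$\S$2]{BD}), and applying this with $N=\bar\nabla(\MU)$ yields $(\P(\LA):\Delta(\MU)) = [\bar\nabla(\MU):\L(\LA)]$ --- a quantity which is automatically finite because we already know the left-hand side is. Finally, the self-duality (\ref{duality2}) identifies $\bar\nabla(\MU)\cong\bar\Delta(\MU)^\circledast$ and $\L(\LA)^\circledast\cong\L(\LA)$, and since $\circledast$ is a contravariant self-equivalence of $\lfdMod OS$ it preserves composition multiplicities, so $[\bar\nabla(\MU):\L(\LA)] = [\bar\Delta(\MU):\L(\LA)]$. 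Chaining these three identities gives $(\P(\LA):\Delta(\MU)) = [\bar\Delta(\MU):\L(\LA)]$.

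The one genuinely delicate point is the identity $\dim\Hom_{OS}(\P(\LA),N) = [N:\L(\LA)]$ in the case at hand, where $N = \bar\nabla(\MU)$ has infinite length and is supported in infinitely many word-lengths. For finite-length modules this is the textbook computation (take a composition series and use exactness of $\Hom_{OS}(\P(\LA),-)$); in general I would invoke the corresponding statement for locally Schurian categories from \cite[$\S$2]{BD}, using crucially that $(\P(\LA):\Delta(\MU)) = \dim\Hom_{OS}(\P(\LA),\bar\nabla(\MU))$ is already known to be finite, so that only finitely many composition factors of $\bar\nabla(\MU)$ are isomorphic to $\L(\LA)$. I expect this foundational bookkeeping about infinite-length costandard modules, rather than any new computation, to be the step requiring the most care.
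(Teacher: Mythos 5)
Your proof is correct and follows essentially the same route as the paper: $\P(\LA)$ has a finite $\Delta$-flag because it is a summand of $\Q(\LA)$, and then $(\P(\LA):\Delta(\MU)) = \dim\Hom_{OS}(\P(\LA),\bar\nabla(\MU)) = [\bar\nabla(\MU):\L(\LA)] = [\bar\Delta(\MU):\L(\LA)]$ via Lemma~\ref{fly}, projectivity, and (\ref{duality2}). The paper states the middle equality without comment; your explicit remark that the well-definedness of the multiplicity for the infinite-length module $\bar\nabla(\MU)$ is safeguarded by the a priori finiteness coming from the $\Delta$-flag is a reasonable bit of care, not a deviation.
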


\begin{proof}
By Corollary~\ref{kio} and Theorem~\ref{koike1}, $\P(\LA)$ is a summand of $\Q(\LA)$, and $\Q(\LA)$ has
a finite $\Delta$-flag.
Hence, $\P(\LA)$ has one too due to Lemma~\ref{crit}.
To deduce the BGG reciprocity formula, 
we use Lemma~\ref{fly}:
$
(\P(\LA):\Delta(\MU))=
\dim \Hom_{OS}(\P(\LA), \bar\nabla(\MU)) = [\bar\nabla(\MU):\L(\LA)].
$
This equals $[\bar\Delta(\MU):\L(\LA)]$ by (\ref{duality2}).
\end{proof}

\begin{corollary}\label{exactsubcat}
By Corollary~\ref{BGG}, there is an embedding $\pMod OS \rightarrow\deltaMod OS$.
This induces an
isomorphism
$K_0(\pMod OS) \stackrel{\sim}{\rightarrow} K_0(\deltaMod OS)$.
\end{corollary}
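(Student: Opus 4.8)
The plan is to show that the exact embedding $\iota\colon\pMod OS\rightarrow\deltaMod OS$ furnished by Corollary~\ref{BGG} carries one $\ZZ$-basis onto another. Recall that $K_0(\pMod OS)$ is free with basis $\{[\P(\LA)]\:|\:\LA\in\RPar\}$ and, as noted just before Lemma~\ref{fly}, that $K_0(\deltaMod OS)$ is free with basis $\{[\Delta(\LA)]\:|\:\LA\in\RPar\}$. Since $\P(\LA)$ has a \emph{finite} $\Delta$-flag (Corollary~\ref{BGG}), we have in $K_0(\deltaMod OS)$ the finite expansion
\[
\iota_*[\P(\LA)]=\sum_{\MU\in\RPar}[\bar\Delta(\MU):\L(\LA)]\,[\Delta(\MU)],
\]
so it suffices to prove that the column-finite integer matrix $D=\big([\bar\Delta(\MU):\L(\LA)]\big)_{\MU,\LA\in\RPar}$ is invertible over $\ZZ$.

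The crux is a triangularity that falls straight out of shortest word theory. Put $|\LA|:=r+s$ for $\LA\in\RPar_{r,s}$. For $\MU\in\RPar_{r',s'}$ one has $\bar\Delta(\MU)\cong\D(\MU)\otimes_\K OS^-$ as a right $\K$-module by Lemma~\ref{td}; since every cap-free ribbon in $OS^-$ runs from a word $\a$ to a word of length $\geq|\a|$, this shows $\bar\Delta(\MU)1_\a=0$ whenever $|\a|<|\MU|$, and its shortest word space is $\D(\MU)$. By Theorem~\ref{class} the head of $\bar\Delta(\MU)$ is $\L(\MU)$ and every other composition factor lies in $\rad\,\bar\Delta(\MU)$, hence is supported in word-lengths strictly larger than $|\MU|$. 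On the other hand $\L(\LA)$ is a shortest word module of type $\LA$, so $\L(\LA)1_\a\neq 0$ for some $\a$ of length exactly $|\LA|$; applying the exact word-space functor $M\mapsto M1_\a$ then shows that $[\bar\Delta(\MU):\L(\LA)]\neq 0$ forces $|\LA|\geq|\MU|$, with equality only when $\MU=\LA$, in which case $[\bar\Delta(\LA):\L(\LA)]=1$. Consequently $\iota_*[\P(\LA)]$ equals $[\Delta(\LA)]$ plus a $\ZZ$-linear combination of classes $[\Delta(\MU)]$ with $|\MU|<|\LA|$.

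It remains to promote this triangularity to invertibility of $D$, which I would do by induction on $|\LA|$; this is well-founded because, for each $n$, only finitely many $\MU\in\RPar$ satisfy $|\MU|\leq n$ (there are finitely many pairs $(r,s)$ with $r+s\leq n$ and $\RPar_{r,s}$ is finite). The displayed relation solves for $[\Delta(\LA)]$ as $[\P(\LA)]$ minus a $\ZZ$-linear combination of classes $[\Delta(\MU)]$ with $|\MU|<|\LA|$, each of which is by the inductive hypothesis already a $\ZZ$-linear combination of the $[\P(\NU)]$; so $\{[\Delta(\LA)]\}$ and $\{\iota_*[\P(\LA)]\}$ are related by a $\ZZ$-unitriangular change of basis with respect to $|\cdot|$. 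Hence $\{\iota_*[\P(\LA)]\:|\:\LA\in\RPar\}$ is a $\ZZ$-basis of $K_0(\deltaMod OS)$ and $\iota_*$ is an isomorphism. I do not anticipate a genuine obstacle here: the only step needing thought is the triangularity claim of the second paragraph, and even that is essentially immediate from the shortest word description of $\bar\Delta(\MU)$ already established around Theorem~\ref{class}; what remains is routine bookkeeping over the infinite index set $\RPar$, kept under control by the finiteness of the $\Delta$-flags and of the sets $\{\MU:|\MU|\leq n\}$.
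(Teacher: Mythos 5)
Your proof is correct, and the triangularity argument is sound, but you take a genuinely different route from the paper. The paper works with the auxiliary projectives $\Q(\LA) = \Y(\LA)\otimes_{OS^\circ}OS$, which give a basis of $K_0(\pMod OS)$ by Corollary~\ref{kio}, and inverts the explicit unitriangular (with respect to $r+s$) transition matrix between $\{[\Q(\LA)]\}$ and $\{[\Delta(\MU)]\}$ that comes directly out of the filtration in Theorem~\ref{koike1}: the $d=0$ section of $\Q(\LA)$ is $\Delta(\LA)$, and all other sections have $\MU\in\RPar_{r-d,s-d}$ with $d>0$. You instead work with the indecomposable projectives $\P(\LA)$ and the BGG matrix $D=\bigl([\bar\Delta(\MU):\L(\LA)]\bigr)$ from Corollary~\ref{BGG}, establishing its unitriangularity via shortest word theory applied to $\bar\Delta(\MU)$. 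Both arguments are column-finite unitriangular changes of basis over the same well-founded ordering by $|\LA|=r+s$, and both ultimately rest on Theorem~\ref{koike1} (yours indirectly, via Corollary~\ref{BGG}); what your version buys is that it doesn't need the explicit multiplicities $M^\LA_\MU(e,p)$, while the paper's avoids any analysis of composition factors of $\bar\Delta$. One small slip: $OS^-$ is the span of \emph{cup}-free reduced ribbons (no components with both endpoints on the top edge), not cap-free; the direction of your length inequality $\bar\Delta(\MU)1_\a=0$ for $|\a|<|\MU|$ is nonetheless correct, since a ribbon in $1_\a OS^- 1_\b$ has no cups so every point of $\a$ lies on a propagating strand and $|\a|\le|\b|$.
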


\begin{proof}
The transition matrix arising from (\ref{hmm}) can
be inverted to express each $[\Delta(\LA)]$ as a finite linear
combination of $[\Q(\MU)]$'s.
\end{proof}

\begin{corollary}\label{BGGplus}
For $\LA \in \RPar_{r,s}$, $\P(\LA)$ has a finite filtration with sections $\tilde\Delta(\MU)$ for
$\MU \in \bigsqcup_{d=0}^{\min(r,s)}\Par_{r-d,s-d}$, 
each appearing $[\tilde\Delta(\MU):\L(\LA)]$
times.
\end{corollary}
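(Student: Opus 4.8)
The plan is to build the filtration in two stages and then reconcile the multiplicities using only the exactness of the standardization functor $\Delta = (\infl^\sharp-)\otimes_{OS^\sharp}OS$ together with the Specht filtration of $\Y(\LA)$.

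First I would note that every standard module carries a canonical finite $\tilde\Delta$-flag. Recall that the $OS^\circ$-module $\Y(\LA)$ has a finite filtration with sections $\SS(\MU)$, $\MU\in\Par_{r,s}$, the section $\SS(\MU)$ occurring $[\SS(\MU):\D(\LA)]$ times (this is Brauer reciprocity for the Hecke algebras, transported through Lemma~\ref{cartanlem}). Applying the standardization functor, which is exact by Lemma~\ref{td} and sends $\Y(\LA)\mapsto\Delta(\LA)$ and $\SS(\MU)\mapsto\tilde\Delta(\MU)$, yields a finite filtration of $\Delta(\LA)$ with sections $\tilde\Delta(\MU)$, $\MU\in\Par_{r,s}$, each appearing $[\SS(\MU):\D(\LA)]$ times.

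Next I would invoke Corollary~\ref{BGG}: $\P(\LA)$ has a finite $\Delta$-flag in which $\Delta(\NU)$ appears $[\bar\Delta(\NU):\L(\LA)]$ times, the relevant $\NU$ lying in $\bigsqcup_{d=0}^{\min(r,s)}\RPar_{r-d,s-d}$. Refining each section $\Delta(\NU)$ of this flag by the $\tilde\Delta$-flag of the previous paragraph produces a finite filtration of $\P(\LA)$ whose sections are the modules $\tilde\Delta(\MU)$ with $\MU$ ranging over $\bigsqcup_{d=0}^{\min(r,s)}\Par_{r-d,s-d}$; here one uses that $[\SS(\MU):\D(\NU)]\neq 0$ forces $\NU$ and $\MU$ into the same $\Par_{r-d,s-d}$, since $\SS(\MU)$ is built from $H_{r-d,s-d}$-modules. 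The total number of times a fixed $\tilde\Delta(\MU)$ with $\MU\in\Par_{r-d,s-d}$ occurs in this filtration is therefore
$$\sum_{\NU\in\RPar_{r-d,s-d}}[\bar\Delta(\NU):\L(\LA)]\,[\SS(\MU):\D(\NU)].$$

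Finally I would identify this sum with $[\tilde\Delta(\MU):\L(\LA)]$. Exactness of the standardization functor applied to a composition series of the finite-length $OS^\circ$-module $\SS(\MU)$ shows that $\tilde\Delta(\MU)$ has a finite filtration with sections $\bar\Delta(\NU)$, $\NU\in\RPar_{r-d,s-d}$, each occurring $[\SS(\MU):\D(\NU)]$ times; hence $[\tilde\Delta(\MU):\L(\LA)]=\sum_{\NU}[\SS(\MU):\D(\NU)]\,[\bar\Delta(\NU):\L(\LA)]$, which is exactly the displayed expression. I do not anticipate a genuine obstacle: the argument is entirely formal once Corollary~\ref{BGG} and the Specht filtration of $\Y(\LA)$ are in hand. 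The only points requiring a line of care are the bookkeeping guaranteeing that all filtrations in sight are finite (using that $\SS(\MU)$ has finite length, $\Y(\LA)$ has a finite Specht flag, and $\P(\LA)$ has a finite $\Delta$-flag) and the legitimacy of refining a filtration by filtrations of its sections, which is routine in the Grothendieck category $\Mod OS$. Independence of the $\tilde\Delta$-flag multiplicities from the chosen flag is neither asserted nor needed for the statement.
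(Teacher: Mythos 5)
Your argument is correct and matches the paper's own proof step for step: apply the exact standardization functor to the Specht flag of $\Y(\LA)$ to get a $\tilde\Delta$-flag of each $\Delta(\NU)$, refine the $\Delta$-flag of $\P(\LA)$ from Corollary~\ref{BGG}, and then recognize the resulting multiplicity as $[\tilde\Delta(\MU):\L(\LA)]$ by applying $\Delta$ to a composition series of $\SS(\MU)$. The extra bookkeeping you spell out (finiteness, and the observation that $[\SS(\MU):\D(\NU)]\neq 0$ localizes $\NU,\MU$ to the same $\Par_{r-d,s-d}$) is implicit in the paper but not a departure from it.
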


\begin{proof}
Recall for $\LA \in \RPar_{r,s}$ 
that $\Y(\LA)$ has a finite filtration with sections $\SS(\MU)$, each
appearing $[\SS(\MU):\D(\LA)]$ times.
Applying the exact standardization functor, we deduce that
$\Delta(\LA)$ has a finite filtration with sections
$\tilde\Delta(\MU)$, each
appearing $[\SS(\MU):\D(\LA)]$ times.
Combined with Corollary~\ref{BGG}, it follows that $\P(\LA)$ has a finite
filtration with sections $\tilde\Delta(\MU)$, each appearing 
with multiplicity
$$
\sum_{\NU \in \RPar} [\bar\Delta(\NU):\L(\LA)] [\SS(\MU):\D(\NU)].
$$
Also, applying $\Delta$ to a composition series of $\SS(\MU)$, we see
that 
$\tilde\Delta(\MU)$ has a filtration with sections
$\bar\Delta(\NU)$, each appearing $[\SS(\MU):\D(\NU)]$ times.
Hence, the multiplicity just displayed is equal to
$[\tilde\Delta(\MU):\L(\LA)]$.
\end{proof}

\begin{proof}[Proof of Theorem~\ref{delection}]
The monoidal functor $\OS{^\circ}(z,t) \rightarrow \OS(z,t)$
corresponds to the induction functor
$?\otimes_{OS^\circ} OS:\pMod OS^\circ \rightarrow \pMod OS$, since the latter
sends $e OS^\circ$ to $e OS$ for any idempotent $e$.
So by the definition (\ref{qd}) it sends $\Y(\LA)$ to $\Q(\LA)$.
Theorem~\ref{delection} follows because 
the classes
$\left\{[\Q(\LA)]\:|\:\LA \in \RPar\right\}$
form a basis for $K_0(\pMod OS)$ according to Corollary~\ref{kio}.
\end{proof}

For the next lemma, we return to the situation of
Theorem~\ref{webs}.
We want to relate the labelling of irreducible $OS$-modules obtained
thus far with the usual labelling of irreducible
$U_q(\mathfrak{gl}_{n})$-modules
via their highest weights.
Take $\LA \in \Par_{r,s}$.
Since $e=0$, Theorem~\ref{koike1} tells us simply that $\Q(\LA)$ has a
filtration with sections 
\begin{equation}\label{koike3}
\bigoplus_{\MU \in \Par_{r-d,s-d}}
\Delta(\MU)^{\oplus M^\LA_\MU}
\qquad\text{where}\qquad
M^\LA_\MU:= M^\LA_\MU(0,0) =
\sum_{\nu \vdash d}
LR^{\lambda^\up}_{\mu^\up, \nu}
LR^{\lambda^\down}_{\mu^\down, \nu}
\end{equation}
for $d=0,\dots,\min(r,s)$,
and it decomposes as $\P(\LA)$ plus a direct sum of projectives
$\P(\MU)$ for various bipartitions $\MU$ obtained from $\LA$ by
removing the same number $d > 0$
of boxes from both $\la^\up$ and $\la^\down$.
Recalling 
the Young symmetrizer (\ref{youngsymmetrizer}),
we have that $\Y(\LA) = \SS(\LA) = \D(\LA) = \i_{r,s}(e_{\la^\up} \otimes
e_{\la^\down}) OS^\circ$. Hence,
\begin{equation}\label{youngsymmetrizer2}
\Q(\LA) = \i_{r,s}(e_{\la^\up} \otimes
e_{\la^\down}) OS.
\end{equation}
Let $e_\LA$ 
be the projection of $\Q(\LA)$ onto its unique summand that
is isomorphic to $\P(\LA)$. Thus,
$e_\LA$ is a primitive idempotent in the quantized walled Brauer algebra
$B_{r,s}$. The following recovers results of \cite{KM1, KM2}.

\begin{lemma}\label{koike2}
Let  notation be as in Theorem~\ref{webs} and assume $n \geq 0$.
Take $\LA \in \Par_{r,s}$ such that $h(\LA)$, its total number of non-zero
parts, is $\leq n$.
Consider the idempotent $\Psi(e_\LA)\in
\End_{U_q(\mathfrak{gl}_{n})}\left((V^-)^{\otimes s}
  \otimes (V^+)^{\otimes r}\right)$.
Its image is the irreducible $U_q(\mathfrak{gl}_{n})$-module $\V(\LA)$ labelled by
the dominant weight
\begin{equation}\label{displayed}
(\la_1^\up-\la_{n}^\down) \eps_1+(\la_2^\up-\la_{n-1}^\down)\eps_2+\cdots + (\la_{n}^\up-\la_1^\down)
\eps_{n},
\end{equation}
using standard conventions for the root system of $\mathfrak{gl}_{n}$.
\end{lemma}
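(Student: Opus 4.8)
The statement is a concrete computation identifying the $U_q(\mathfrak{gl}_n)$-module cut out by the idempotent $\Psi(e_\LA)$ as the irreducible of highest weight (\ref{displayed}). The approach is to reduce to known classical facts via the functor $\Psi$ from Lemma~\ref{heartbeat} and the structure of $\Q(\LA)$ from Theorem~\ref{koike1}. First I would record that, since $e=0$, we have $\Y(\LA)=\SS(\LA)=\D(\LA)=\i_{r,s}(e_{\la^\up}\otimes e_{\la^\down})OS^\circ$ and hence $\Q(\LA)=\i_{r,s}(e_{\la^\up}\otimes e_{\la^\down})OS$ as in (\ref{youngsymmetrizer2}). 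Applying the monoidal functor $\Psi$, the module $\Psi(\Q(\LA))$ is the image of the idempotent $\Psi(\i_{r,s}(e_{\la^\up}\otimes e_{\la^\down}))$ acting on $(V^-)^{\otimes s}\otimes(V^+)^{\otimes r}$. By quantized Schur--Weyl reciprocity (Jimbo \cite{J}), the composite $\j_r:H_r\to\End_{U_q(\mathfrak{gl}_n)}((V^+)^{\otimes r})$ has kernel generated by $e_{(1^{n+1})}$, so for a partition $\lambda$ with $h(\lambda)\le n$ the Young symmetrizer $e_\lambda$ survives and $\Psi(\i_r(e_\lambda))$ projects onto the irreducible $U_q(\mathfrak{gl}_n)$-module of highest weight $\lambda_1\eps_1+\cdots+\lambda_n\eps_n$; the same statement applied to $V^-$ (the dual natural module) gives that $e_{\la^\down}$ projects $(V^-)^{\otimes s}$ onto the irreducible of highest weight $-\la^\down_s\eps_1-\cdots-\la^\down_1\eps_n$ (the ``reversed/negated'' shape, coming from $V^-\cong (V^+)^*$). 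This handles the ``extremal'' summand.

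\textbf{Key steps in order.} (1) Use (\ref{youngsymmetrizer2}) and monoidality of $\Psi$ to write $\Psi(\Q(\LA))=\operatorname{im}\Psi(\i_{r,s}(e_{\la^\up}\otimes e_{\la^\down}))$, which by $q$-Schur--Weyl is $\V(\la^\up)\otimes \V(\la^\down)^{*}$ where $\V(\la^\up)$ is the irreducible of highest weight $\sum\la^\up_i\eps_i$ and $\V(\la^\down)^*$ has highest weight $\sum(-\la^\down_{n+1-i})\eps_i$, provided $h(\LA)\le n$ so both symmetrizers act faithfully. (2) Decompose $\V(\la^\up)\otimes\V(\la^\down)^*$ into irreducibles over $U_q(\mathfrak{gl}_n)$; since $q$ is not a root of unity this decomposition mirrors the classical one, and the highest component is precisely the irreducible of highest weight $\sum(\la^\up_i-\la^\down_{n+1-i})\eps_i$, i.e.\ (\ref{displayed}), appearing with multiplicity one, while every other constituent has a strictly smaller highest weight. (3) On the other hand, by Theorem~\ref{koike1}/Corollary~\ref{kio} (specialized to $e=0$ as in (\ref{koike3})), $\Q(\LA)\cong \P(\LA)\oplus\bigoplus \P(\MU)$ where each $\MU$ is obtained from $\LA$ by deleting $d>0$ boxes from both $\la^\up$ and $\la^\down$; applying $\Psi$ and matching highest weights, the summands $\Psi(\P(\MU))$ contribute exactly the lower constituents of the tensor-product decomposition (the classical multiplicity $M^\LA_\MU=\sum_{\nu\vdash d}LR^{\la^\up}_{\mu^\up,\nu}LR^{\la^\down}_{\mu^\down,\nu}$ is the same Littlewood--Richardson number governing the $\V(\la^\up)\otimes\V(\la^\down)^*$ decomposition). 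Hence by uniqueness of highest-weight constituents, $\Psi(e_\LA)$ — the projection onto the $\P(\LA)$-summand — must project onto the top constituent, which is $\V(\LA)$ of highest weight (\ref{displayed}).

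\textbf{Main obstacle.} The delicate point is Step (3): one must verify that the highest weight ``bookkeeping'' of the $\Q(\LA)$-decomposition under $\Psi$ matches the tensor-product decomposition of $\V(\la^\up)\otimes\V(\la^\down)^*$ compatibly, so that the $\P(\LA)$-summand genuinely corresponds to the unique maximal constituent rather than to some other piece. Concretely, one needs that $\Psi$ does not kill $\P(\LA)$ (which follows from $h(\LA)\le n$ together with Theorem~\ref{webs}, since the generator of the kernel tensor ideal is $\i_{n+1}(e_{(1^{n+1})})$, and the Young symmetrizer $e_{\la^\up}$ with $h(\la^\up)\le n$ cannot lie in that ideal — and dually for $\la^\down$), and that among all the $\Psi(\P(\MU))$ with $\MU$ obtained by removing $d$ boxes, only $\MU=\LA$ ($d=0$) yields the constituent of weight (\ref{displayed}). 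Both of these are transparent once one notes that removing a box from $\la^\up$ decreases some $\la^\up_i$ and removing a box from $\la^\down$ decreases some $\la^\down_j$, so (\ref{displayed}) for $\MU$ is always a strictly lower weight than for $\LA$; this forces the identification. I would also remark that this recovers the quantized-walled-Brauer-algebra results of Kosuda--Murakami \cite{KM1,KM2}, as indicated.
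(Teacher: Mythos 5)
Your proposal is correct and follows essentially the same route as the paper: identify $\Psi(\i_{r,s}(e_{\la^\up}\otimes e_{\la^\down}))$ as the projection onto $\V(\la^\down)^*\otimes\V(\la^\up)$, decompose that tensor product via characters/Littlewood--Richardson, and match against the decomposition of $\Q(\LA)$ into indecomposable projectives. The "main obstacle" you correctly isolate in Step (3) --- knowing that the summands $\Psi(\P(\MU))$ for $\MU\ne\LA$ really account for the lower constituents rather than for $\V(\LA)$ itself --- is exactly what the paper handles by running an induction on $r+s$ (so that $\Psi(e_\MU)$ is already known to project onto $\V(\MU)$ for all $\MU$ obtained from $\LA$ by removing $d>0$ boxes), which makes your informal "matching highest weights" step rigorous.
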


\begin{proof}
We proceed by induction on $r+s$, the case $r+s=0$ being trivial.
Since $e_{\lambda^\up}$ is the Young symmetrizer, the image of
$\Psi(e_{\lambda^\up})\in \End_{U_q(\mathfrak{gl}_{n})}\left((V^+)^{\otimes
    r}\right)$
is the irreducible polynomial representation 
$\V(\la^\up)$ of $U_q(\mathfrak{gl}_{n})$
of highest weight $\lambda_1^\up\eps_1+\cdots+\lambda_{n}^\up\eps_{n}$.
Similarly, the image of $\Psi(e_{\lambda^\down})$ is the dual
irreducible polynomial representation 
$\V(\la^\down)^*$ of highest weight 
$-\la_{n}^\down \eps_1+\cdots+\la_1^\down\eps_{n}$.
Hence, the image of 
$\Psi(\i_{r,s}(e_{\la^\up} \otimes
e_{\la^\down}))$ 
is 
$\V(\la^\down)^* \otimes \V(\la^\up)$.
Using characters, it is easy to see that this tensor product has a unique irreducible
constituent $\V(\LA)$ of
highest weight (\ref{displayed}), plus a sum of irreducible
modules $\V(\MU)$ for bipartitions
$\MU \in \bigsqcup_{d > 0}\Par_{r-d,s-d}$ with $h(\MU) \leq n$.
Now using induction, we deduce that $\Psi(e_\LA)$ must be the
projection onto $\V(\LA)$.
\end{proof}

\begin{remark}
A helpful picture of the weight (\ref{displayed}) is displayed in
\cite[Figure 2]{Koike}. It is also worth noting that multiplicities
$M^\LA_\MU$
appearing in (\ref{koike3}) are the same as the 
$U_q(\mathfrak{gl}_{n})$-composition multiplicities $[\V(\la^\down)^* \otimes \V(\la^\up):\V(\MU)]$
computed in \cite[Corollary 2.3.1]{Koike}.
Given this, the same induction as used to prove Lemma~\ref{koike2}
can be used to show that $\Delta(\LA) = \P(\LA)$ when in the situation
of the lemma.
We will prove this in a different way in Corollary~\ref{morocco} below.
\end{remark}

\begin{remark}
To get the appropriate analog of Lemma~\ref{koike2} when $n \leq 0$, 
one just needs to twist by the isomorphism $\#$. 
Recalling at the level of the Hecke algebra that this is ``tensoring
with sign,''
one can show that $\#$ 
maps
the primitive idempotent 
$e_\LA$ to a conjugate of
$e_{\LA^\trans}$, where $\LA^t := \left((\la^\up)^\trans,
  (\la^\down)^\trans\right)$.
So, for negative $n$,
the $U_q(\mathfrak{gl}_n)$-module
$V(\LA)$ arises as the image of $\Psi(e_{\LA^\trans})$ (instead of $\Psi(e_\LA)$).
\end{remark}

The final result in the section justifies the description of $K_0(\dot\OS(z,t))$
made after Theorem~\ref{delection} in the introduction; the discussion
there also depends on Theorem~\ref{selection} which will be proved in the
next section, and the highest weight/standardly stratified structure
which will be explained in section~\ref{tpc}.

\begin{lemma}\label{commute}
For $\LA \in \RPar_{r,s}$ and $\NU \in \Par_{r-d,s-d}$, we have that
$$
\sum_{\MU \in \Par_{r,s}} [\SS(\MU):\D(\LA)] M^\MU_\NU
=
\sum_{\MU \in \RPar_{r-d,s-d}} M^\LA_\MU(e,p) [\SS(\NU):\D(\MU)].
$$
\end{lemma}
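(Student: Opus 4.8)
The approach I would take is to recognise both sides of the identity as two different bookkeepings of the same data attached to the projective module $\Q(\LA)=\Y(\LA)\otimes_{OS^\circ}OS$. By Theorem~\ref{koike1} this module has a $\Delta$-flag in which $\Delta(\MU)$ occurs $M^\LA_\MU(e,p)$ times; applying the exact standardization functor to the Specht filtration of each $\Y(\MU)$ (Brauer reciprocity) refines this to a filtration with sections the ``Specht standard modules'' $\tilde\Delta(\NU)$ ($\NU\in\Par$), and the resulting $\tilde\Delta(\NU)$-multiplicity is precisely the right-hand side of the lemma. Dually, starting from a Specht filtration of $\Y(\LA)$ over $OS^\circ$ and inducing up, then running the argument of Theorem~\ref{koike1} with Specht modules in place of Young modules — the bimodule isomorphism $1_{\down^s\up^r}OS^\sharp 1_{\down^{s-d}\up^{r-d}}\cong\tau\text{-}\Hom_{H_d}(H_s,H_r)$ is unchanged, and one now uses that $\res^{H_r}_{H_{r-d,d}}\SS_\mu$ is Specht-filtered with Littlewood--Richardson multiplicities — produces a second $\tilde\Delta$-filtration whose $\tilde\Delta(\NU)$-multiplicity is the left-hand side. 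Because Specht-filtration multiplicities over a Hecke algebra need not be well defined when $e\in\{2,3\}$, I would not equate these two numbers directly; instead the plan is to distil the statement into a purely combinatorial identity between decomposition numbers and Littlewood--Richardson coefficients and prove that using only dimensions of $\Hom$-spaces.

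Concretely, the lemma should follow by elementary manipulation from two standard facts. The first is the Brauer reciprocity relation for $H_d$, $[\Y_\kappa:\D_{\kappa'}]=\sum_{\sigma\vdash d}[\SS_\sigma:\D_\kappa][\SS_\sigma:\D_{\kappa'}]$, obtained from $[\Y_\kappa]=\sum_\sigma[\SS_\sigma:\D_\kappa][\SS_\sigma]$ by taking the composition multiplicity of $\D_{\kappa'}$. The second is the identity
\[
\sum_{\pi\vdash r}[\SS_\pi:\D_\lambda]\,LR^{\pi}_{\rho,\sigma}
=\sum_{\mu,\kappa}\bigl[\D_\mu\circ\D_\kappa:\D_\lambda\bigr]\,[\SS_\rho:\D_\mu]\,[\SS_\sigma:\D_\kappa]
\tag{$\star$}
\]
for $\lambda\vdash r$, $\rho\vdash r-d$, $\sigma\vdash d$, the sum on the right being over $e$-restricted $\mu\vdash r-d$ and $\kappa\vdash d$, with $\D_\mu\circ\D_\kappa:=\operatorname{Ind}^{H_r}_{H_{r-d,d}}(\D_\mu\boxtimes\D_\kappa)$. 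Granting these, I would substitute the definition of $M^\LA_\MU(e,p)$ from Theorem~\ref{koike1} into the right-hand side of the lemma, use Brauer reciprocity to replace $[\Y_{\kappa^\up}:\D_{\kappa^\down}]$ by a sum over a single $\sigma\vdash d$ so that the $\up$- and $\down$-contributions become symmetric, apply $(\star)$ to each contribution for fixed $\sigma$, and then sum over $\sigma$ again, recognising $\sum_{\sigma\vdash d}LR^{\pi^\up}_{\nu^\up,\sigma}LR^{\pi^\down}_{\nu^\down,\sigma}=M^{(\pi^\up,\pi^\down)}_\NU$ (cf.\ (\ref{koike3})) and $[\SS_{\pi^\up}:\D_{\lambda^\up}][\SS_{\pi^\down}:\D_{\lambda^\down}]=[\SS(\MU):\D(\LA)]$ with $\MU=(\pi^\up,\pi^\down)$; the outcome is exactly $\sum_{\MU\in\Par_{r,s}}[\SS(\MU):\D(\LA)]\,M^\MU_\NU$, the left-hand side.

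The real work is in $(\star)$, which I would prove by computing $\dim\Hom_{H_{r-d,d}}\bigl(\res^{H_r}_{H_{r-d,d}}\Y_\lambda,\ \SS_\rho\boxtimes\SS_\sigma\bigr)$ in two ways. First, by Frobenius reciprocity (valid in both variances since $H_{r-d,d}\subseteq H_r$ is a Frobenius extension), this equals $\dim\Hom_{H_r}\bigl(\Y_\lambda,\ \operatorname{Ind}^{H_r}_{H_{r-d,d}}(\SS_\rho\boxtimes\SS_\sigma)\bigr)$; as $\Y_\lambda$ is the projective cover of the absolutely irreducible $\D_\lambda$, the functor $\Hom_{H_r}(\Y_\lambda,-)$ is exact and returns the composition multiplicity of $\D_\lambda$, so this is $[\operatorname{Ind}^{H_r}_{H_{r-d,d}}(\SS_\rho\boxtimes\SS_\sigma):\D_\lambda]=\sum_{\pi\vdash r}LR^{\pi}_{\rho,\sigma}[\SS_\pi:\D_\lambda]$ by the Littlewood--Richardson rule in the Grothendieck group. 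Second, $\res^{H_r}_{H_{r-d,d}}\Y_\lambda$ is again a direct sum of Young modules $\Y_\mu\boxtimes\Y_\kappa$ (it is the restriction of a summand of a permutation module), in which, as recorded in the proof of Theorem~\ref{koike1} (Frobenius reciprocity plus self-duality of Young modules), $\Y_\mu\boxtimes\Y_\kappa$ occurs $[\D_\mu\circ\D_\kappa:\D_\lambda]$ times; applying $\Hom_{H_{r-d,d}}(-,\SS_\rho\boxtimes\SS_\sigma)$ and using $\dim\Hom(\Y_\mu\boxtimes\Y_\kappa,\SS_\rho\boxtimes\SS_\sigma)=[\SS_\rho:\D_\mu][\SS_\sigma:\D_\kappa]$ (again $\Y_\mu\boxtimes\Y_\kappa$ is a projective cover) yields the right-hand side of $(\star)$. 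I expect the main obstacle to be not conceptual but bookkeeping in the reduction: one must track carefully how the single partition $\sigma\vdash d$ comes to mediate both Littlewood--Richardson coefficients and both sides of the Cartan pairing $[\Y_{\kappa^\up}:\D_{\kappa^\down}]$ — this is the Grothendieck-group shadow of the delicate $\tau$-twisted $\Hom$ computation in the proof of Theorem~\ref{koike1}, and the conventions for restriction versus $\tau$-twist must be applied consistently throughout.
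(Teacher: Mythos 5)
Your proposal is correct, and the combinatorial argument you land on is essentially the same as the paper's: the paper proves the lemma by a direct chain of Grothendieck-group identities that expand $[\Y_{\kappa^\up}:\D_{\kappa^\down}]$ via Brauer reciprocity and then collapse the sums over $\mu^\up,\kappa^\up$ (and their $\down$-analogues) exactly as your identity $(\star)$ does, before re-expanding via Littlewood--Richardson to recover the left-hand side. Your presentation differs only in organization --- isolating $(\star)$ as a standalone statement and offering a Hom-counting proof of it, whereas the paper does these manipulations inline as straightforward Grothendieck-group bilinearity --- and in the opening representation-theoretic heuristic (counting $\tilde\Delta$-multiplicities in $\Q(\LA)$ two ways), which you rightly flag as unreliable in small quantum characteristic and discard in favour of the combinatorial route.
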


\begin{proof}
We have that
$$
\sum_{\MU \in \RPar_{r-d,s-d}} M^\LA_\MU(e,p) [\SS(\NU):\D(\MU)]\hspace{65mm}
$$ \vspace{-7mm}
\begin{align*}
\qquad\qquad&=
\sum_{\substack{\MU \in \RPar_{r-d,s-d}\\\KAPPA \in \RPar_{d,d}, \gamma
  \vdash d}}
\begin{array}{l}
\\
\big[\D_{\mu^\up} \circ \D_{\kappa^\up}:\D_{\la^\up}\big]
\big[\SS_{\nu^\up}:\D_{\mu^\up}\big]
\big[\SS_\gamma:\D_{\kappa^\up}\big]\times\\
\qquad\qquad\qquad\big[\D_{\mu^\down} \circ \D_{\kappa^\down}:\D_{\la^\down}\big]
\big[\SS_{\nu^\down}:\D_{\mu^\down}\big]
\big[\SS_\gamma:\D_{\kappa^\down}\big]
\end{array}
\\
&=
\sum_{\substack{\KAPPA \in \RPar_{d,d}, \gamma
  \vdash d}}\big[\SS_{\nu^\up} \circ \D_{\kappa^\up}:\D_{\la^\up}\big]
\big[\SS_\gamma:\D_{\kappa^\up}\big]
\big[\SS_{\nu^\down} \circ \D_{\kappa^\down}:\D_{\la^\down}\big]
\big[\SS_\gamma:\D_{\kappa^\down}\big]\\
&=
\qquad\sum_{\gamma
  \vdash d}\big[\SS_{\nu^\up} \circ \SS_\gamma:\D_{\la^\up}\big]
\big[\SS_{\nu^\down} \circ \SS_\gamma:\D_{\la^\down}\big]\\
&=
\:\,\sum_{\substack{\MU \in \Par_{r,s}, \gamma
  \vdash d}}\big[\SS_{\nu^\up} \circ \SS_\gamma:\SS_{\mu^\up}\big]
\big[\SS_{\mu^\up}:\D_{\lambda^\up}]
\big[\SS_{\nu^\down} \circ \SS_{\gamma}:\SS_{\mu^\down}\big]
\big[\SS_{\mu^\down}:\D_{\lambda^\down}]\\
&=
\quad\sum_{\MU \in \Par_{r,s}}
[\SS(\MU):\D(\LA)]M^\MU_\NU.
\end{align*}
\end{proof}

\begin{theorem}\label{thegg}
For any choices of $q$ and $t$, 
the ring $K_0(\pMod OS)$ may be identified with a
subring of $\SYM\otimes_\ZZ\SYM$ so that (\ref{k0b}) 
and (\ref{k0c}) hold.
\end{theorem}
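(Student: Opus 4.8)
The plan is to assemble the desired subring embedding from pieces already established, and then to translate between the three bases $\{[\Q(\LA)]\}$, $\{[\Delta(\LA)]\}$ and $\{[\P(\LA)]\}$ of $K_0(\pMod OS)=K_0(\dot\OS(z,t))$. First I would define
$$
\iota\;:\;K_0(\pMod OS)\;\xleftarrow{\ \sim\ }\;K_0(\pMod OS^\circ)\;\xrightarrow{\ \sim\ }\;\bigoplus_{r,s\geq 0}K_0(\pMod H_{r,s})\;\hookrightarrow\;\bigoplus_{r,s\geq 0}K_0(\fdMod H_{r,s})\;\xrightarrow{\ \sim\ }\;\SYM\otimes_\ZZ\SYM,
$$
where the first arrow is the ring isomorphism of Theorem~\ref{delection} (which, by its proof, sends $[\Y(\LA)]\mapsto[\Q(\LA)]$), the second is the Morita equivalence of Lemma~\ref{cartanlem}, the third is Brauer reciprocity $[\Y_\lambda]\mapsto\sum_\mu[\SS_\mu:\D_\lambda][\SS_\mu]$ applied in both tensor factors, and the last is the standard isomorphism $[\SS_\lambda]\leftrightarrow\s_\lambda$. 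The first task is to see that $\iota$ is a ring homomorphism with image a subring of $\SYM\otimes_\ZZ\SYM$ that is proper exactly when $e>0$. The only point needing care is that the multiplication on $\bigoplus_r K_0(\pMod H_r)$ induced by the monoidal structure on $\dot\OS^\circ(z,t)$ is the induction product: this holds because $\up\otimes\down\cong\down\otimes\up$ in $\OS^\circ(z,t)$ via the invertible crossing $T$, so $\up$'s and $\down$'s never mix. Granting this, the Brauer reciprocity map $\bigoplus_r K_0(\pMod H_r)\to\bigoplus_r K_0(\fdMod H_r)\cong\SYM$ is a unital ring homomorphism (both products being Schur multiplication), injective by unitriangularity, with image a subring $R\subseteq\SYM$ since induction preserves projectivity; and $R=\SYM$ iff every $H_r$ is semisimple iff $e=0$. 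Hence $\operatorname{im}\iota=R\otimes_\ZZ R$.

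The heart of the argument is the identity
$$
\iota([\Delta(\LA)])=\sum_{\KAPPA\in\Par}[\SS(\KAPPA):\D(\LA)]\,\chi_\KAPPA\qquad(\LA\in\RPar),
$$
which is (\ref{k0b}). By construction of $\iota$ and Brauer reciprocity at the level of $H_{r,s}$, $\iota([\Q(\LA)])=\sum_{\MU\in\Par}[\SS(\MU):\D(\LA)]\,\s_{\mu^\up}\otimes\s_{\mu^\down}$ for $\LA\in\RPar_{r,s}$. On the other hand, Theorem~\ref{koike1} together with Corollary~\ref{exactsubcat} gives, in $K_0(\pMod OS)$,
$$
[\Q(\LA)]=\sum_{d\geq 0}\ \sum_{\MU\in\RPar_{r-d,s-d}}M^\LA_\MU(e,p)\,[\Delta(\MU)],
$$
a transition that is unitriangular ($M^\LA_\MU(e,p)=\delta_{\LA,\MU}$ when $d=0$) and so invertible over $\ZZ$. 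Thus, writing $D(\LA)$ for the claimed right-hand side above, it suffices to check $\sum_{d,\MU}M^\LA_\MU(e,p)\,D(\MU)=\iota([\Q(\LA)])$. Expanding $D(\MU)$ and interchanging sums, the coefficient of $\chi_\KAPPA$ on the left is $\sum_{d,\MU}M^\LA_\MU(e,p)[\SS(\KAPPA):\D(\MU)]$, which by Lemma~\ref{commute} equals $\sum_{\MU'\in\Par}[\SS(\MU'):\D(\LA)]\,M^{\MU'}_\KAPPA$; hence the left-hand side becomes $\sum_{\MU'}[\SS(\MU'):\D(\LA)]\bigl(\sum_\KAPPA M^{\MU'}_\KAPPA\chi_\KAPPA\bigr)$. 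So it only remains to know $\sum_\KAPPA M^\MU_\KAPPA\chi_\KAPPA=\s_{\mu^\up}\otimes\s_{\mu^\down}$, equivalently that the LR matrices $(M^\MU_\KAPPA)$ and $(N^\KAPPA_\NU)$ of (\ref{koike3}) and (\ref{nlamu}) are mutually inverse; this is classical, amounting to the fact that the Cauchy kernel $\sum_\gamma\s_\gamma\otimes\s_\gamma$ and the dual Cauchy kernel $\sum_\gamma(-1)^{|\gamma|}\s_\gamma\otimes\s_{\gamma^\trans}$ are inverse to each other (see \cite{Koike}). Inverting the $M(e,p)$-matrix then yields $\iota([\Delta(\LA)])=D(\LA)$, and in particular shows $\chi_\KAPPA\in R\otimes_\ZZ R$ and that $\{D(\LA):\LA\in\RPar\}$ is a $\ZZ$-basis of $R\otimes_\ZZ R$.

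Finally, (\ref{k0c}) follows immediately: by Corollary~\ref{BGG}, $\P(\MU)$ has a finite $\Delta$-flag with $[\P(\MU)]=\sum_{\LA\in\RPar}[\bar\Delta(\LA):\L(\MU)]\,[\Delta(\LA)]$, so applying $\iota$ and the previous paragraph gives $\iota([\P(\MU)])=\sum_{\LA,\KAPPA}[\bar\Delta(\LA):\L(\MU)][\SS(\KAPPA):\D(\LA)]\,\chi_\KAPPA$; finiteness of the $\Delta$-flag together with the grading bookkeeping of Corollary~\ref{kio} (for $\MU\in\RPar_{r,s}$ every $\Delta(\LA)$ occurring has $\LA\in\RPar_{r-d,s-d}$ for some $d\geq 0$, and then $[\SS(\KAPPA):\D(\LA)]=0$ unless $\KAPPA\in\Par_{r-d,s-d}$) puts this in exactly the form of (\ref{k0c}). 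I expect the main obstacle to be bookkeeping rather than conceptual: keeping the two distinct LR-combinatorial matrices $M$ and $N$ straight (note $N$ uses $\gamma^\trans$, $M$ does not) through the change-of-basis chain, and invoking Lemma~\ref{commute} with index ranges matching; the one genuinely external input, $MN=I$, is standard symmetric-function theory.
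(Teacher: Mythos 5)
Your argument is correct and follows essentially the same route as the paper's own proof: embed $K_0(\pMod OS)$ into $\SYM\otimes_\ZZ\SYM$ via Theorem~\ref{delection} and Brauer reciprocity so that $[\Q(\LA)]\mapsto\sum_\MU[\SS(\MU):\D(\LA)]\,\chi_{\mu^\up}\otimes\chi_{\mu^\down}$, use Koike's inversion $(M)(N)=I$ to change basis to the $\chi_\KAPPA$, apply Lemma~\ref{commute}, compare with the $\Delta$-flag of $\Q(\LA)$ from Theorem~\ref{koike1} (via Corollary~\ref{exactsubcat}), and finish with Corollary~\ref{BGG}. The only differences are cosmetic — you spell out the ring-homomorphism and unitriangularity checks slightly more explicitly — so no further comment is needed.
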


\begin{proof}
When $e=0$, 
Lemma~\ref{cartanlem} and the well-known representation theory of Hecke
algebras
imply that the rings
$K_0(\pMod OS^\circ)$ and $\SYM\otimes_\ZZ \SYM$ 
may be identified so that 
$[\SS(\LA)] \leftrightarrow \chi_{\la^\up}\otimes \chi_{\la^\down}.$
For general $e$, using also Brauer reciprocity for the Hecke algebra, we may identify
 $K_0(\pMod OS^\circ)$ with a subring of
$\SYM\otimes_\ZZ\SYM$
so that $$
[\Y(\LA)] \leftrightarrow \sum_{\MU \in \Par_{r,s}} [\SS(\MU):\D(\LA)]
\chi_{\mu^\up}\otimes \chi_{\mu^\down}
$$
for $\LA \in \RPar_{r,s}$ and $r,s\geq 0$.
In view of Theorem~\ref{delection} (and its proof), we deduce that
$K_0(\pMod OS)$ is identified with a subring of $\SYM\otimes_\ZZ\SYM$ so that
$$
[\Q(\LA)] \leftrightarrow  
\sum_{\MU \in \Par_{r,s}} [\SS(\MU):\D(\LA)]
\chi_{\mu^\up}\otimes \chi_{\mu^\down}
$$
for $\LA \in \RPar_{r,s}$ and $r,s\geq 0$.
Now recall the definition (\ref{nlamu}) and (\ref{koike3}).
Setting $N^\LA_\MU = M^\LA_\MU := 0$ whenever $\LA \in \Par_{r,s}$ and 
$\MU \notin \bigsqcup_{d=0}^{\min(r,s)} \Par_{r-d,s-d}$,
the matrix 
$(N^\LA_\MU)_{\LA,\MU \in \Par}$ is inverse to the matrix
$(M^\LA_\MU)_{\LA,\MU \in \Par}$ by 
\cite[Theorem 2.3]{Koike}.
So
$$
[\Q(\LA)] \leftrightarrow  
\sum_{\substack{\MU \in \Par_{r,s}\\0 \leq d \leq \min(r,s)\\\NU \in \Par_{r-d,s-d}}} [\SS(\MU):\D(\LA)] M^\MU_\NU \chi_\NU
$$
for $\LA \in \RPar_{r,s}$ and $r,s \geq 0$.
By Lemma~\ref{commute}, this gives
$$
[\Q(\LA)] \leftrightarrow
\sum_{\substack{0 \leq d \leq \min(r,s)\\\LA \in \RPar_{r-d,s-d}\\\NU
    \in \Par_{r-d,s-d}}} 
M^\LA_\MU(e,p) [\SS(\NU):\D(\MU)] \chi_\NU.
$$
Now use Corollary~\ref{exactsubcat} to identify
$K_0(\pMod OS) = K_0(\deltaMod OS)$.
Comparing with (\ref{hmm}), we deduce that
$$
[\Delta(\LA)] \leftrightarrow
\sum_{\NU \in \Par_{r,s}}
[\SS(\NU):\D(\LA)] \chi_\NU
$$
for $\LA \in \RPar_{r,s}$.
This establishes (\ref{k0b}).
To get (\ref{k0c}) too, use
Corollary~\ref{BGG}.
\end{proof}

\section{Branching rules and characters}\label{schars}

We continue with the setup of the previous section.
In this section, we introduce a biadjoint pair of endofunctors
$E$ and $F$ of $\Mod OS$, which lift the endofunctors $\up \otimes ?$ and $\down
\otimes ?$ of $\OS(z,t)$.
We will use the Jucys-Murphy elements from section \ref{aos}
to decompose these endofunctors
into direct sums of refined functors $E_i$ and
$F_i$, which we study by comparing them 
to some well-known induction and restriction functors on $\Mod OS^\circ$.

To start with, let us recall some standard facts about induction and
restriction for the Iwahori-Hecke algebra $H_r$.
Let 
\begin{equation}
\ind_{r-1}^{r} 
:\Mod H_{r-1} \rightarrow \Mod H_{r},
\qquad
\res_{r-1}^{r} :\Mod H_{r} \rightarrow \Mod H_{r-1}
\end{equation} be the usual 
induction and restriction functors with respect to the natural embedding $H_{r-1}
\hookrightarrow H_{r}, S_i \mapsto S_i$.
So $\ind_{r-1}^{r}$ is defined by tensoring over $H_{r-1}$ with $H_{r}$
viewed as an $(H_{r-1}, H_{r})$-bimodule and 
$\res_{r-1}^{r}$ is defined by tensoring over $H_{r}$ with $H_{r}$
viewed as an $(H_{r}, H_{r-1})$-bimodule;
equivalently, $\res_{r-1}^{r}$ is the functor $\Hom_{H_{r}}(H_{r}, ?)$.
Adjointness of tensor and hom implies that
induction is left adjoint to the restriction functor
$\res_{r-1}^{r}$. It is also right adjoint; cf. \cite[Theorem 2.6]{DJ}.
The {\em Jucys-Murphy element} 
\begin{equation}\label{JMstupid}
\JM_r := S_{r-1} \cdots S_2 S_1 S_1 S_2 \cdots
S_{r-1} \in H_{r}
\end{equation} 
centralizes $H_{r-1}$, so left multiplication
by it defines an
endomorphism of the $(H_{r-1},
H_{r})$-bimodule $H_{r}$. For any $i \in \k$, let $i\text{-\!}\ind_{r-1}^{r}$
be the {\em $i$-induction functor} defined by tensoring with the
summand of this bimodule that arises as the generalized $i$-eigenspace
of this endomorphism. Let $i\text{-\!}\res_{r-1}^{r}$ be the biadjoint
{\em $i$-restriction functor}; explicitly, $i\text{-\!}\res_{r-1}^{r} M$
may be realized as the generalized $i$-eigenspace of $\JM_r$ on $\res^{r}_{r-1} M$. 

The following ``classical'' branching rules\footnote{Probably the best way to prove them
is by applying the ``Schur functor'' to an analogous result for quantum
$GL_n$.} describe the effect of
these functors on the Specht module $\SS_\lambda$. 
In formulating the result, we identify partition $\lambda$ with
its 
{\em Young diagram}, that is, the set
$\{(i,j)\:|\:i \geq 1, 1 \leq j \leq \lambda_r\}$, and
define the {\em content} of the {\em node} $\mathsf{A} = (i,j) \in \NN\times\NN$
from $\cont(\mathsf{A}) := q^{2(j-i)} \in \k$.
For example, here is the Young diagram $\lambda = (5,3,2)$ with
its nodes labeled by their contents:
$$
\diagram{$ 1$&$ q^2$&$
  q^4$&$ q^6$&$ q^8$\cr $
  q^{-2}$&$ 1$&$q^2$\cr
  $ q^{-4}$ & $ q^{-2}$ \cr}.
$$
Let $I_1$ be the set of all
possible contents of nodes of partitions. More generally, for any $c
\in \k^\times$, we let
\begin{equation}\label{Ic}
I_c := \{c q^{2n}\:|\:n \in \ZZ\}  \subseteq \k^\times.
\end{equation}

\begin{lemma}\label{classical}
The following hold for each $i \in \k^\times$:
\begin{enumerate}
\item For $\lambda \vdash (r-1)$, the $H_{r}$-module $i\text{-\!}\ind_{r-1}^{r} \SS_\lambda$ has a
multiplicity-free filtration
with sections $\SS_\mu$ for $\mu \vdash r$
obtained by adding a node
of content $i$ to the Young diagram of $\la$.
\item For $\lambda \vdash r$,
the $H_{r}$-module $i\text{-\!}\res^{r}_{r-1} \SS_\lambda$ has 
multiplicity-free filtration
with sections 
$\cong \SS_{\mu}$ for $\mu \vdash (r-1)$
obtained by removing a node of content $i$ from the Young diagram of $\la$.
\end{enumerate}
In both cases, the filtration should be ordered according to the
usual dominance ordering on the partitions labelling the sections,
most dominant at the bottom.
Hence:
\begin{equation}\label{decomp}
\ind_{r-1}^r = \bigoplus_{i \in I_1} 
i\text{-\!}\ind_{r-1}^r,
\qquad
\res_{r-1}^r = \bigoplus_{i \in I_1} i\text{-\!}\res_{r-1}^r.
\end{equation}
\end{lemma}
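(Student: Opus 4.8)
The plan is to prove the classical branching rule (Lemma~\ref{classical}) directly from the structure of the Jucys-Murphy element $\JM_r$ and the combinatorics of Young's orthogonal/seminormal form for the semisimple Hecke algebra, reducing everything to the case $q$ generic and then specializing. First I would treat the genuinely semisimple situation, i.e.\ $q$ not a root of unity, where $H_r$ is split semisimple and the Specht modules $\SS_\lambda$ are a complete set of irreducibles. Here the sharp statement is already classical: the branching graph for the chain $H_0\subset H_1\subset\cdots$ is the Young lattice, so $\res^r_{r-1}\SS_\lambda\cong\bigoplus_{\mu}\SS_\mu$ (multiplicity-free, sum over $\mu$ obtained from $\lambda$ by removing one node), and by Frobenius reciprocity $\ind^r_{r-1}\SS_\mu\cong\bigoplus_\lambda\SS_\lambda$ over $\lambda$ obtained by adding a node. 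The content refinement comes from the fact that $\JM_r$ acts on the $\mu$-isotypic summand of $\res^r_{r-1}\SS_\lambda$ as the scalar $\cont(\mathsf A)$ where $\mathsf A=\lambda/\mu$ is the removed node; this is a standard computation with the seminormal basis indexed by standard tableaux, where $\JM_r$ is simultaneously diagonalized with eigenvalue $\cont(\mathfrak t(r))$ on the basis vector $v_{\mathfrak t}$ (here $\mathfrak t(r)$ is the node of $\mathfrak t$ containing $r$). So in the semisimple case $i\text{-}\res^r_{r-1}\SS_\lambda$ is literally the direct sum of the $\SS_\mu$ with $\cont(\lambda/\mu)=i$, and dually for $i$-induction, with the filtrations in (1) and (2) being honest direct sum decompositions.

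Next I would handle general $q$ (including roots of unity and positive characteristic) by a specialization argument. Work over the generic ground ring $R:=\ZZ[q,q^{-1}]$ localized so that quantum integers are invertible, or more simply pass to $\widehat R := \mathbb Q(v)$ with a specialization $v\mapsto q$; the Specht module $\SS_\lambda$ is defined over $R$ as the image of the canonical map $M_\lambda\to N_{\lambda^\trans}$ (using the Dipper--James description recalled before the lemma), and it has an $R$-form $\SS_{\lambda,R}$ which is free as an $R$-module with the Specht/dual-Specht or Murphy-type standard basis. The induction and restriction bimodules $H_{r}$, and the Jucys-Murphy element $\JM_r$, are all defined over $R$, so $i\text{-}\ind$ and $i\text{-}\res$ commute with base change. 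In the generic fibre the result is what was proved in the previous paragraph; the filtration of $i\text{-}\res^r_{r-1}\SS_{\lambda,\widehat R}$ by Specht modules, ordered by dominance with the most dominant at the bottom, is then defined over $R$ because the relevant submodules are $R$-pure (their quotients are free $R$-modules, being Specht modules over $R$), and hence it specializes to a filtration of $i\text{-}\res^r_{r-1}\SS_{\lambda,\k}$ with the asserted sections. The dominance ordering enters exactly here: one needs the submodule generated by the ``more dominant'' standard basis vectors to be a genuine $H_{r-1}$-submodule over $R$, which is the content of the classical Murphy/Dipper--James standard basis theorem for restriction.

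Finally, equation (\ref{decomp}) is immediate once (1) and (2) are established: every node of every partition has content in $I_1$, so the generalized eigenvalues of $\JM_r$ occurring on $\res^r_{r-1}\SS_\lambda$ (and, by exactness and the fact that every $H_r$-module has a Specht filtration after suitable reduction, on any finite-dimensional $H_r$-module) all lie in $I_1$; therefore $\res^r_{r-1}=\bigoplus_{i\in I_1} i\text{-}\res^r_{r-1}$ as a decomposition of the restriction bimodule into generalized $\JM_r$-eigenspaces, and dually for induction using the biadjointness already noted. I expect the main obstacle to be the careful bookkeeping in the specialization step, specifically verifying that the dominance-ordered Specht filtration of $i\text{-}\ind$ and $i\text{-}\res$ of a Specht module is defined over the generic ring with free quotients, so that it survives base change; this is where one must invoke the integral standard basis theory for $H_r$ rather than just semisimple representation theory. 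Everything else — the eigenvalue computation for $\JM_r$ on seminormal vectors, Frobenius reciprocity, and the eigenspace decomposition — is routine.
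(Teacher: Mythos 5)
Your approach is genuinely different from the paper's.  The paper does not actually prove Lemma~\ref{classical}; it treats it as a known ``classical'' branching result and merely remarks in a footnote that the best proof is to deduce it from the analogous filtration statement for quantum $GL_n$ (or the $q$-Schur algebra) by applying the $q$-Schur functor.  In that route one uses that $\operatorname{Ind}$ and $\operatorname{Res}$ of Weyl modules for the $q$-Schur algebra have $\Delta$-filtrations by the highest-weight-category machinery, that the block decomposition of the $q$-Schur algebra refines by generalized eigenvalues of the ``analogue of $\JM_r$'' coming from the action of the center, and that the Schur functor is exact and sends Weyl modules to Specht modules.  Your proposal instead argues directly at the Hecke-algebra level: semisimple case via seminormal form, then a base-change argument from $\ZZ[v,v^{-1}]$.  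This is a legitimate alternative and has the virtue of being more self-contained, whereas the Schur-functor approach buys the dominance ordering and the integrality of the filtration for free from the standard-module theory.

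Two places where your write-up needs tightening.  First, your setup of the specialization ring is internally inconsistent: you cannot simultaneously localize $\ZZ[q,q^{-1}]$ to invert all quantum integers \emph{and} specialize to a root of unity, nor can you ``specialize $v\mapsto q$'' starting from $\widehat R=\QQ(v)$.  What you actually want is to work over $\ZZ[v,v^{-1}]$ itself (with the Murphy basis giving the integral Specht filtration of $\operatorname{res}^r_{r-1}\SS_{\lambda,\ZZ[v,v^{-1}]}$ with free quotients), observe that $\JM_r$ preserves the filtration with eigenvalue $\cont(\lambda/\mu_j)$ on each section, and then base change to $\k$.  Over $\k$ the generalized $i$-eigenspace is recovered as $i\text{-}\res^r_{r-1}\SS_{\lambda,\k}=\bigoplus_j\bigl(M_{j,\k}^{(i)}/M_{j-1,\k}^{(i)}\bigr)$, using that each $M_{j,\k}$ is $\JM_r$-stable and that passing to the generalized $i$-eigenspace is exact; this is what makes the filtration sections of $i\text{-}\res$ exactly the $\SS_\mu$ with $\cont(\lambda/\mu)=i$ in $\k$, with several distinct generic contents possibly collapsing to the same $i$.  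Second, you assert the dominance ordering (most dominant at the bottom) by appeal to Murphy/Dipper--James without checking that their standard-basis filtration is ordered the way this lemma requires; given the various conventions in the literature (and this paper's $\SS_\lambda\cong(\SS^{\lambda^\trans})^\#$ convention), that step deserves an explicit verification rather than a citation.
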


The results just explained extend immediately to
$H_{r,s} = H_r \otimes H_s$.
For these algebras, there are two commuting $i$-induction functors
$i\text{-\!}\ind_{r-1,s}^{r,s}$ and 
$i\text{-\!}\ind_{r,s-1}^{r,s}$, defined
by tensoring with the bimodules that arise by taking the generalized
$i$-eigenspaces of 
the endomorphisms of 
$H_{r,s}$ defined by left multiplication by $\JM_r \otimes
1$ or $1 \otimes \JM_s$, respectively.
The biadjoint $i$-restriction functors are denoted $i\text{-\!}\res^{r,s}_{r-1,s}$
 and $i\text{-\!}\res^{r,s}_{r,s-1}$.
Lemma~\ref{classical} extends in an obvious way to 
describe
the effect of these functors on the modules
$\SS_{\la^\up} \boxtimes \SS_{\la^\down}$.

The next step is to use the Morita equivalences from
Lemma~\ref{cartanlem} to transport the branching rules for $H_{r,s}$ just
described to the
algebra $OS^\circ$.
Let 
\begin{align}
\i^\circ_\down:&OS^\circ \rightarrow OS^\circ, \qquad f \mapsto \down\otimes f,\\
\i^\circ_\up:&OS^\circ\rightarrow OS^\circ, \qquad 
f \mapsto \up \otimes f
\end{align} 
be the algebra homomorphisms associated to
the functors $\down \otimes-:\OS^\circ(z,t) \rightarrow \OS^\circ(z,t)$ and $\up\otimes-:\OS^\circ(z,t)
\rightarrow \OS^\circ(z,t)$.
These are not {\em locally unital} algebra homomorphisms:
they send the idempotent $1_\a$ to $1_{\up \a}$ and to $1_{\down \a}$, respectively.
Then let
\begin{align}\label{dib1}
{_\up} OS^\circ &:= \bigoplus_{\a,\b \in \words} 1_{\up \a} OS^\circ 1_\b,
&
OS^\circ_\up &:= \bigoplus_{\a,\b \in \words} 1_\a OS^\circ 1_{\up \b},\\
{_\down} OS^\circ &:= \bigoplus_{\a,\b \in \words} 1_{\down \a} OS^\circ 1_\b,&
OS^\circ_\down &:= \bigoplus_{\a,\b \in \words} 1_\a OS^\circ 1_{\down \b},\label{dib2}
\end{align}
which we view as $(OS^\circ, OS^\circ)$-bimodules
with left and right actions of $a,b \in OS^\circ$ on $f$ defined by $a\cdot
f\cdot b :=
\i^\circ_\up(a) f b,
a f \i^\circ_\up(b), \i^\circ_\down(a) f b$ and $a f \i^\circ_\down(b)$,
respectively.
Tensoring with these bimodules give us four endofunctors of $\Mod{OS^\circ}$:
\begin{align}\label{fan1}
E^\up := ? \otimes_{OS^\circ} {_\up} OS^\circ&:\Mod{OS^\circ} \rightarrow
\Mod{OS^\circ},\\
F^\up := ?\otimes_{OS^\circ} OS^\circ_\up&:\Mod{OS^\circ}\rightarrow \Mod{OS^\circ},\\
F^\down := ? \otimes_{OS^\circ} {_\down} OS^\circ&:\Mod{OS^\circ} \rightarrow
\Mod{OS^\circ},\\
E^\down := ?\otimes_{OS^\circ} OS^\circ_\down&:\Mod{OS^\circ}\rightarrow
\Mod{OS^\circ}.\label{fan4}
\end{align}
The functors $E^\up$ and $F^\down$ 
send $OS^\circ_{r,s}$-modules to $OS^\circ_{r+1,s}$- and
$OS^\circ_{r,s+1}$-modules, respectively; they will be called {\em
  induction functors}.
The functors $F^\up$ and $E^\down$
send $OS^\circ_{r,s}$-modules to $OS^\circ_{r-1,s}$- and
$OS^\circ_{r,s-1}$-modules, respectively; they will be called {\em
  restriction functors}.
This terminology is justified by the following lemma.

\begin{lemma}\label{fan}
The following diagrams commute up to natural isomorphisms:
\begin{align*}
&\begin{CD}
\Mod H_{r+1,s} &@>\Upsilon_{r+1,s}>>&\Mod OS_{r+1,s}^\circ\\
@AA\ind_{r,s}^{r+1,s}A \searrow^{\!\!\!\alpha}&@AAE^\up A\\
\Mod H_{r,s}&@>>\Upsilon_{r,s}>&\Mod OS_{r,s}^\circ,
\end{CD}
&\qquad
&\begin{CD}
\Mod H_{r+1,s} &@>\Upsilon_{r+1,s}>>&\Mod OS_{r+1,s}^\circ\\
@VV\res_{r,s}^{r+1,s}V ^{\beta\!\!\!\!\!\!}\nearrow&@VVF^\up V\\
\Mod H_{r,s}&@>>\Upsilon_{r,s}>&\Mod OS_{r,s}^\circ,
\end{CD}\\
&\begin{CD}
\Mod H_{r,s+1} &@>\Upsilon_{r,s+1}>>&\Mod OS_{r,s+1}^\circ\\
@AA\ind_{r,s}^{r,s+1}A \searrow^{\!\!\!\gamma}&@AAF^\down A\\
\Mod H_{r,s}&@>>\Upsilon_{r,s}>&\Mod OS_{r,s}^\circ,
\end{CD}
&\qquad
&\begin{CD}
\Mod H_{r,s+1} &@>\Upsilon_{r,s+1}>>&\Mod OS_{r,s+1}^\circ\\
@VV\res_{r,s}^{r,s+1}V ^{\delta\!\!\!\!\!}\nearrow&@VVE^\down V\\
\Mod H_{r,s}&@>>\Upsilon_{r,s}>&\Mod OS_{r,s}^\circ.
\end{CD}
\end{align*}
Hence, the functors $E^\up$ and $F^\up$ are biadjoint, as are the
functors
$E^\down$ and $F^\down$.
\end{lemma}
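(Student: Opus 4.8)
The plan is to identify each of $E^\up,F^\up,F^\down,E^\down$, up to natural isomorphism, with an induction or restriction functor for Hecke algebras conjugated by the Morita equivalences of Lemma~\ref{cartanlem}, and then to read off biadjointness from the corresponding standard fact for Hecke algebras. The key observation, already implicit in the proof of Lemma~\ref{cartanlem}, is that the morphisms $e_{\a,\b}$ there satisfy $e_{\a,\b}e_{\b,\a}=1_\a$ and $e_{\b,\a}e_{\a,\b}=1_\b$, so that any two words in $\words_{r,s}$ (those with $r$ letters $\up$ and $s$ letters $\down$) are isomorphic as objects of $\OS^\circ(z,t)$, with $\i_{r,s}$ identifying $\End_{\OS^\circ(z,t)}(\down^s\up^r)$ with $H_{r,s}$. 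Thus the full subcategory of $\OS^\circ(z,t)$ on $\words_{r,s}$ is equivalent to its one-object skeleton, and $\Upsilon_{r,s}$ is the resulting Morita equivalence $\Mod H_{r,s}\simeq\Mod OS^\circ_{r,s}$.

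First I would treat the induction functors $E^\up$ and $F^\down$. The endofunctor $\up\otimes-$ of $\OS^\circ(z,t)$ sends $\down^s\up^r$ to $\up\down^s\up^r$, which is isomorphic to $\down^s\up^{r+1}$ via the sorting morphism $c:=e_{\down^s\up^{r+1},\,\up\down^s\up^r}$ that drags the new leftmost upward strand to the right underneath the $s$ downward strands. On endomorphism algebras, $h\mapsto c\,(1_\up\otimes\i_{r,s}(h))\,c^{-1}$ is a ring homomorphism $H_{r,s}\to H_{r+1,s}$, and I claim it is precisely the natural embedding $H_r\otimes H_s\hookrightarrow H_{r+1}\otimes H_s=H_{r+1,s}$. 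Granting this, since a functor between one-object categories acting on endomorphism rings by a ring homomorphism $\psi$ induces on right-module categories the extension-of-scalars functor along $\psi$, it follows that $E^\up=\;?\otimes_{OS^\circ}{_\up}OS^\circ$ corresponds, under the Morita equivalences, to $?\otimes_{H_{r,s}}H_{r+1,s}=\ind_{r,s}^{r+1,s}$; this is the commutativity of the first diagram. The third diagram, for $F^\down$, is obtained in the same way with $\down$ in place of $\up$.

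For the restriction functors $F^\up$ and $E^\down$, I would use that the $(OS^\circ,OS^\circ)$-bimodule $OS^\circ_\up=\bigoplus_{\a,\b}1_\a OS^\circ 1_{\up\b}$ defining $F^\up$ has honest left action and right action twisted by $\i^\circ_\up$; the same skeletal analysis identifies it with the $(H_{r+1,s},H_{r,s})$-bimodule $H_{r+1,s}$ with right action along the natural embedding, so that $F^\up$ corresponds to $\res_{r,s}^{r+1,s}$, giving the second diagram (and $E^\down$, the fourth, is the analogue with $\down$). Biadjointness is then formal: the first two diagrams give $E^\up\cong\Upsilon_{r+1,s}\circ\ind_{r,s}^{r+1,s}\circ\Upsilon_{r,s}^{-1}$ and $F^\up\cong\Upsilon_{r,s}\circ\res_{r,s}^{r+1,s}\circ\Upsilon_{r+1,s}^{-1}$, while $\ind_{r,s}^{r+1,s}=\ind_r^{r+1}\otimes\id$ and $\res_{r,s}^{r+1,s}=\res_r^{r+1}\otimes\id$ form a biadjoint pair ($\res\dashv\ind$ by the tensor--hom adjunction, $\ind\dashv\res$ by \cite[Theorem 2.6]{DJ}); conjugating the adjunction isomorphisms by the equivalences $\Upsilon_{r,s},\Upsilon_{r+1,s}$ shows that $E^\up$ and $F^\up$ are biadjoint. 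The same argument with the third and fourth diagrams gives that $E^\down$ and $F^\down$ are biadjoint.

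The only step that is not bookkeeping, and the one I expect to be the main obstacle, is the claim that $c\,(1_\up\otimes\i_{r,s}(h))\,c^{-1}$ is the image of $h$ under the natural embedding $H_{r,s}\hookrightarrow H_{r+1,s}$. Unwinding it, one must check that dragging the new upward strand to the right past the $s$ downward strands and back leaves undisturbed all crossings among the original $r+s$ strands --- a short diagrammatic computation with the morphisms $e_{\a,\b}$ using planar isotopy and the Reidemeister relations ($\mathrm{R}$II) and ($\mathrm{R}$III), which must be carried out carefully so that the resorting introduces no extra terms. Once this is in place, the rest of the lemma follows from the categorical manipulations sketched above.
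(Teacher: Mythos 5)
Your proof takes essentially the same approach as the paper: the heart of both arguments is the sorting morphism $c = e_{\down^s\up^{r+1},\,\up\down^s\up^r}$ that drags the new upward strand past the $s$ downward strands, with the check that this conjugation (in the paper, the corresponding bimodule isomorphism) intertwines $\i^\circ_\up\circ\i_{r,s}$ with the natural embedding $H_{r,s}\hookrightarrow H_{r+1,s}$ via Reidemeister moves, after which biadjointness is transported formally across the Morita equivalences $\Upsilon_{r,s}$. The only organizational differences are that the paper derives the restriction squares $\beta,\delta$ indirectly — it first proves $E^\up\dashv F^\up$ directly via tensor-hom and then invokes unicity of right adjoints — whereas you identify all four bimodules explicitly; and the paper notes the $\down$-case needs no sorting at all since $\down\otimes\down^s\up^r = \down^{s+1}\up^r$ is already the skeletal word, so $\gamma$ is the identity. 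One small slip worth fixing: you attributed the two halves of the Hecke-algebra biadjointness the wrong way around — $\ind^{r+1}_{r}\dashv\res^{r+1}_{r}$ is the direction that follows from the tensor--hom adjunction, while $\res^{r+1}_{r}\dashv\ind^{r+1}_{r}$ is the one needing the symmetric Frobenius structure of $H_{r+1}$ over $H_r$, which is what \cite[Theorem 2.6]{DJ} supplies.
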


\begin{proof}
First we construct the 
isomorphism $\alpha:\Upsilon_{r+1,s} \circ \ind_{r,s}^{r+1,s}
\stackrel{\sim}{\rightarrow} E^\up \circ \Upsilon_{r,s}$.
The northwest functor is defined by tensoring over $H_{r,s}$
with the $(H_{r,s},
OS^\circ)$-bimodule
$$
H_{r+1,s} \otimes_{H_{r+1,s}} 1_{\down^s \up^{r+1}} OS^\circ
\cong 1_{ \down^s\up^{r+1}} OS^\circ.
$$
The southeast functor is defined by tensoring with
$$
1_{\down^s \up^r} OS^\circ \otimes_{OS^\circ}
{_\up} OS^\circ
\cong
1_{\up \down^s \up^r} OS^\circ.
$$ 
The following gives an isomorphism between
these two bimodules:
\begin{align*}
1_{\down^s \up^{r+1}} OS^\circ
&\stackrel{\sim}{\rightarrow}
1_{\up\down^s\up^r} OS^\circ,
\\
\mathord{
\begin{tikzpicture}[baseline=2.5mm]
\draw[darkblue,thick,yshift=-5pt,xshift=-5pt] (0,.3) rectangle ++(10pt,10pt);
\node at (0,.3) {$\scriptstyle f$};
\draw[-,thick,double,darkblue] (0,.12) to (0,-.3);
\draw[<-,ultra thick,darkblue] (-.12,.47) to (-.32,1.09);
\draw[->,thick,darkblue] (0,.47) to (0,1.09);
\draw[->, ultra thick,darkblue] (.12,.47) to (.36,1.09);
\node at (0,-.42) {$\a$};
\node at (-.31,1.2) {$\scriptstyle s$};
\node at (.35,1.2) {$\scriptstyle r$};
\end{tikzpicture}
}
&\mapsto 
\mathord{
\begin{tikzpicture}[baseline=2.5mm]
\draw[->,thick,darkblue] (0,.47) to [out=60,in=-50] (-.32,1.09);
\draw[-,line width=3pt,white] (-.12,.47) to [out=100,in=-140] (0.05,1.09);
\draw[<-,ultra thick,darkblue] (-.12,.47) to [out=100,in=-140] (0.05,1.09);
\draw[->,ultra thick,darkblue] (.12,.47) to (.36,1.09);
\node at (0,-.42) {$\a$};
\node at (0.05,1.2) {$\scriptstyle s$};
\node at (.36,1.2) {$\scriptstyle r$};
\draw[darkblue,thick,yshift=-5pt,xshift=-5pt] (0,.3) rectangle ++(10pt,10pt);
\node at (0,.3) {$\scriptstyle f$};
\draw[-, thick,double,darkblue] (0,.12) to (0,-.3);
\end{tikzpicture}
}
\end{align*}
for any $f \in 1_{\down^s\up^{r+1}} OS^\circ 1_\a$.
This establishes the existence of $\alpha$.

To deduce the existence of $\beta$,
we claim that $F^\up$ is right adjoint to $E^\up$.
To see this, $F^\up M = M \otimes_{OS^\circ} OS^\circ_\up
= \bigoplus_{\a \in \words} M 1_{\up \a}
\cong 
\bigoplus_{\a \in \words} \Hom_{OS^\circ}(1_{\up \a} OS^\circ, M)
$. So, by adjointness of tensor and hom, $F^\up$ is right adjoint
to
$\bigoplus_{\a \in \words} ? \otimes_{OS^\circ} 1_{\up \a} OS^\circ = ?
\otimes_{OS^\circ} {_\up} OS^\circ = E^\up$.
Since $\res^{r+1,s}_{r,s}$ is right adjoint to $\ind^{r+1,s}_{r,s}$,
and the horizontal functors in our diagrams are equivalences of
categories, we can now deduce the existence of the desired isomorphism $\beta$ using the previous
paragraph and unicity of right adjoints.

The construction of $\gamma$ is very similar to that of $\alpha$. In
fact, it is even easier since both of the $(H_{r,s}, OS^\circ)$-bimodules
being considered turn out to be the same bimodule
$1_{\down^{s+1}\up^r} OS^\circ$, so we can take $\gamma$ to be induced by
the identity map.
Then we get $\delta$ from $\gamma$ as in the previous paragraph.
\end{proof}

Now we need versions of Jucys-Murphy elements for $OS^\circ$, which
extend the Jucys-Murphy elements of $H_{r,s}$.
For $\varnothing \neq \b \in \words$, define $X^\circ(\b) \in 1_\b OS^\circ 1_\b$ by setting
$X^\circ(\up) := 1_\up$,
$X^\circ(\down) := t^{-2} 1_\down$,
and then recursively defining
\begin{align}\label{chile2}
X^\circ(\up \up \b) &:= 
\mathord{
\begin{tikzpicture}[baseline=2.5mm]
\draw[->,thick,darkblue] (0,.47) to [out=90,in=-90] (-.42,.9);
\draw[-,line width=4pt,white] (-.42,.27) to [out=90,in=-120] (-0.05,.9); 
\draw[->,thick,darkblue] (-.42,.27) to [out=90,in=-120] (-0.05,.9);
\draw[-,thick,darkblue] (-.42,.27) to [out=-90,in=120] (-0.05,-.3);
\draw[-,line width=4pt,white] (0,.12) to [out=-90,in=90] (-.42,-.3);
\draw[-,thick,darkblue] (0,.12) to [out=-90,in=90] (-.42,-.3);
\node at (0.16,.3) {$\scriptstyle X^\circ(\up \b)$};
\draw[-, thick,double,darkblue] (.25,.12) to (.25,-.3);
\draw[-, thick,double,darkblue] (.25,.48) to (.25,.9);
\draw[darkblue,thick,yshift=-5pt,xshift=-5pt] (-.1,.3) rectangle ++(26pt,10pt);
\node at (0.25,-.43) {$\b$};
\node at (0.25,1.07) {$\b$};
\end{tikzpicture}}\:,
&
X^\circ(\up \down \b) &:= 
\mathord{
\begin{tikzpicture}[baseline=2.5mm]
\draw[->,thick,darkblue] (0,.47) to [out=90,in=-90] (-.42,.9);
\draw[-,line width=4pt,white] (-.42,.27) to [out=90,in=-120] (-0.05,.9); 
\draw[-,thick,darkblue] (-.42,.27) to [out=90,in=-120] (-0.05,.9);
\draw[darkblue,thick,yshift=-5pt,xshift=-5pt] (-.1,.3) rectangle ++(26pt,10pt);
\draw[-,thick,darkblue] (0,.12) to [out=-90,in=90] (-.42,-.3);
\draw[-,line width=4pt,white] (-.42,.27) to [out=-90,in=120] (-0.05,-.3);
\draw[->,thick,darkblue] (-.42,.27) to [out=-90,in=120] (-0.05,-.3);
\node at (0.16,.3) {$\scriptstyle X^\circ(\up \b)$};
\draw[-, thick,double,darkblue] (.25,.12) to (.25,-.3);
\draw[-, thick,double,darkblue] (.25,.48) to (.25,.9);
\node at (0.25,-.43) {$\b$};
\node at (0.25,1.07) {$\b$};
\end{tikzpicture}}\:,\\
X^\circ(\down \down \b) &:= 
\mathord{
\begin{tikzpicture}[baseline=2.5mm]
\draw[-,thick,darkblue] (-.42,.27) to [out=90,in=-120] (-0.05,.9);
\draw[-,line width=4pt,white] (0,.47) to [out=90,in=-90] (-.42,.9);
\draw[-,thick,darkblue] (0,.47) to [out=90,in=-90] (-.42,.9);
\draw[darkblue,thick,yshift=-5pt,xshift=-5pt] (-.1,.3) rectangle ++(26pt,10pt);
\draw[->,thick,darkblue] (0,.12) to [out=-90,in=90] (-.42,-.3);
\draw[-,line width=4pt,white] (-.42,.27) to [out=-90,in=120] (-0.05,-.3);
\draw[->,thick,darkblue] (-.42,.27) to [out=-90,in=120] (-0.05,-.3);
\node at (0.16,.3) {$\scriptstyle X^\circ(\down \b)$};
\draw[-, thick,double,darkblue] (.25,.12) to (.25,-.3);
\draw[-, thick,double,darkblue] (.25,.48) to (.25,.9);
\node at (0.25,-.43) {$\b$};
\node at (0.25,1.07) {$\b$};
\end{tikzpicture}}\:,
&
X^\circ(\down \up \b) &:= 
\mathord{
\begin{tikzpicture}[baseline=2.5mm]
\draw[->,thick,darkblue] (-.42,.27) to [out=90,in=-120] (-0.05,.9);
\draw[-,line width=4pt,white] (0,.47) to [out=90,in=-90] (-.42,.9);
\draw[-,thick,darkblue] (0,.47) to [out=90,in=-90] (-.42,.9);
\draw[darkblue,thick,yshift=-5pt,xshift=-5pt] (-.1,.3) rectangle ++(26pt,10pt);
\draw[-,thick,darkblue] (-.42,.27) to [out=-90,in=120] (-0.05,-.3);
\draw[-,line width=4pt,white] (0,.12) to [out=-90,in=90] (-.42,-.3);
\draw[->,thick,darkblue] (0,.12) to [out=-90,in=90] (-.42,-.3);
\node at (0.16,.3) {$\scriptstyle X^\circ(\down \b)$};
\draw[-, thick,double,darkblue] (.25,.12) to (.25,-.3);
\draw[-, thick,double,darkblue] (.25,.48) to (.25,.9);
\node at (0.25,-.43) {$\b$};
\node at (0.25,1.07) {$\b$};
\end{tikzpicture}}\:,\label{chile5}
\end{align}
for any $\a \in \words$; cf. (\ref{JMa})--(\ref{JMb}).

Let ${_{\up}}X^\circ:{_\up} OS^\circ \rightarrow {_\up} OS^\circ$
and $X^\circ_{\up}:OS^\circ_\up \rightarrow OS^\circ_\up$
be the linear endomorphisms defined on $1_{\up \a} OS^\circ$ 
or
$OS^\circ 1_{\up \a}$ 
by
left or right multiplication by $X^\circ(\up \a)$, respectively.
Similarly, replacing $\up$ by $\down$ everywhere, we define
linear endomorphisms 
${_{\down}}X^\circ$ and $X^\circ_{\down}$ of ${_\down} OS^\circ$ and $OS^\circ_\down$.

\begin{lemma}\label{chile0}
All of the linear endomorphisms ${_{\up}}X^\circ, X^\circ_{\up}, {_{\down}}X^\circ$ and $X^\circ_{\down}$
are $(OS^\circ, OS^\circ)$-bimodule endomorphisms.
\end{lemma}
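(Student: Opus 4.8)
The plan is to peel the statement into a purely formal reduction and a single diagrammatic fact. On the formal side, for $f\in 1_{\up\a}OS^\circ 1_\b$ and $b\in 1_\b OS^\circ 1_{\b'}$ one has ${_\up}X^\circ(f\cdot b)=X^\circ(\up\a)\circ f\circ b=({_\up}X^\circ f)\cdot b$ just by associativity of composition, so ${_\up}X^\circ$ and ${_\down}X^\circ$ commute with the right $OS^\circ$-action automatically, and dually $X^\circ_\up$ and $X^\circ_\down$ commute with the left $OS^\circ$-action automatically. Unwinding the bimodule structures through $\i^\circ_\up$ and $\i^\circ_\down$, the remaining content of all four assertions is exactly that
\begin{align*}
X^\circ(\up\a')\circ(1_\up\otimes a)&=(1_\up\otimes a)\circ X^\circ(\up\a),\\
X^\circ(\down\a')\circ(1_\down\otimes a)&=(1_\down\otimes a)\circ X^\circ(\down\a)
\end{align*}
for every morphism $a\colon\a\to\a'$ of $\OS^\circ(z,t)$.

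To prove these, the conceptual route I would take is to recognise $X^\circ(\up\b)$ and $X^\circ(\down\b)$ as the result of pushing a single dot, placed on the outermost (leftmost) strand, down through a cap-free, up-under-down diagram: the recursion (\ref{chile2})--(\ref{chile5}) is precisely the restriction of (\ref{JMa})--(\ref{JMb}) (equivalently of (\ref{arun}), (\ref{JM})) to that world. Concretely I would introduce an affine Cartan category $\AOS^\circ(z,t)$, obtained from $\OS^\circ(z,t)$ by adjoining invertible dots $O\colon\up\to\up$ and $\bar O\colon\down\to\down$ subject to $(\mathrm{A})$ and the dot-slide relations (\ref{d0})--(\ref{d3}), construct as in Lemma~\ref{notmon} a faithful monoidal inclusion $\alpha^\circ$ and a $\k$-linear retraction $\beta^\circ$ with $\beta^\circ\circ\alpha^\circ=\operatorname{Id}$, and check that $X^\circ(\up\b)=\beta^\circ(O\otimes 1_\b)$, $X^\circ(\down\b)=\beta^\circ(\bar O\otimes 1_\b)$.

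Granting that identification, the two displayed identities drop out. Since $\alpha^\circ$ is monoidal, $1_\up\otimes a=\beta^\circ(1_\up\otimes\alpha^\circ(a))$, so by functoriality of $\beta^\circ$ and the interchange law (using $O\circ 1_\up=O=1_\up\circ O$),
\begin{align*}
X^\circ(\up\a')\circ(1_\up\otimes a)
&=\beta^\circ\bigl((O\otimes 1_{\a'})\circ(1_\up\otimes\alpha^\circ(a))\bigr)
=\beta^\circ\bigl(O\otimes\alpha^\circ(a)\bigr)\\
&=\beta^\circ\bigl((1_\up\otimes\alpha^\circ(a))\circ(O\otimes 1_\a)\bigr)
=(1_\up\otimes a)\circ X^\circ(\up\a),
\end{align*}
and the $\down$-case is identical. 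A more hands-on alternative, avoiding $\AOS^\circ(z,t)$, is to prove the two identities directly by induction on the length of $\a$: it suffices to take $a$ to be a generating $OS^\circ$-crossing tensored on either side with identity morphisms, and the recursion (\ref{chile2})--(\ref{chile5}) then lets one slide $X^\circ(\up-)$ past $1_\up\otimes a$ one crossing at a time, each slide being an instance of $(\mathrm{A})$ or of one of (\ref{d0})--(\ref{d3}).

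The step I expect to cost the most care is establishing $X^\circ(\b)=\beta^\circ(O\otimes 1_\b)$ (equivalently, running the alternative induction): this is where one must track the over/under, i.e. up-under-down, data through the recursion carefully enough to be sure that nothing ever leaves $OS^\circ$ and that every slide really is one of the listed relations; the base case — $X^\circ$ on a two- or three-letter word commuting with an adjacent generating crossing — is then a short finite computation. For a purely Hecke-algebraic alternative, one could instead transport the problem through the Morita equivalences of Lemma~\ref{cartanlem} and the functor identifications of Lemma~\ref{fan}, where it reduces to the classical fact that the Jucys--Murphy element (\ref{JMstupid}) of $H_r$ centralises $H_{r-1}$, together with the evident behaviour under conjugation by the matrix units $e_{\a,\b}$.
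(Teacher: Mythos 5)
Your ``hands-on alternative'' is essentially the paper's proof: the right $OS^\circ$-action commutes automatically, the left-action check reduces to left multiplication by a crossing of a neighbouring pair of strands (excluding the leftmost), and this reduces by induction to four explicit identities on three-strand diagrams, which the paper then verifies directly. That part is a match, and you correctly flag the over/under bookkeeping as the place where care is needed.

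Both of your preferred routes need repair, though. The conceptual route via a hypothetical $\AOS^\circ(z,t)$ is circular: checking that such a monoidal category is consistent (i.e.\ that its dot-slide relations stay inside the up-under-down world) and that $X^\circ(\b)=\beta^\circ(O\otimes 1_\b)$ is exactly Lemma~\ref{chile0} dressed differently --- in contrast to $\AOS(z,t)$, where $X(\b)$ is \emph{defined} as $\beta$ applied to a dot diagram, so naturality is free. In the hands-on alternative you cite $(\mathrm{A})$ and (\ref{d0})--(\ref{d3}) as the slides; but those are relations of $\AOS(z,t)$, they involve literal dots, and they change crossing chiralities (e.g.\ (\ref{d0}) trades a positive upward crossing for a negative one). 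In $OS^\circ$ there are no dots, and the up-under-down convention fixes the chirality of every up/down crossing; the four base-case identities in the paper are verified by unfolding the recursion (\ref{chile2})--(\ref{chile5}) and applying the braid, Reidemeister~II and quadratic relations of $OS$ directly (the $\up\up\up$ case is $S_2S_1^2S_2\cdot S_1=S_1\cdot S_2S_1^2S_2$, a consequence of the braid relation alone). Recall also that $X^\circ(\b)$ and $X(\b)$ differ by error terms coming from the quadratic relation --- see the proof of Lemma~\ref{newz} --- so the $X^\circ$'s cannot literally be treated as images of dots. Your Hecke-algebraic alternative is legitimate in principle, but the identification of ${_\up}X^\circ$ with left multiplication by $\JM_{r+1}\otimes 1$ under the Morita equivalences of Lemma~\ref{cartanlem} is exactly the explicit picture check in the proof of Lemma~\ref{fur}, which the paper presents \emph{after} Lemma~\ref{chile0}; you would need to extract and prove that intertwining first.
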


\begin{proof}
We just explain the argument for ${_{\up}}X^\circ$. It is obvious that this
defines a right $OS^\circ$-module homomorphism. To see that it also
commutes with the left action of $OS^\circ$, it suffices to check that it
commutes with left multiplication by
any element of $OS^\circ$ defined by
a crossing of a neighboring pairs of strands (excluding the leftmost strand).
This quickly reduces by induction to checking the following four
identities:
\begin{align*}
X^\circ(\up\up\up) \circ \:
\mathord{
\begin{tikzpicture}[baseline = -.5mm]
	\draw[->,thick,darkblue] (-0.6,-.3) to (-0.6,.4);
	\draw[->,thick,darkblue] (0.28,-.3) to (-0.28,.4);
 	\draw[line width=4pt,white,-] (-0.28,-.3) to (0.28,.4);
	\draw[thick,darkblue,->] (-0.28,-.3) to (0.28,.4);
\end{tikzpicture}
}
&= 
\mathord{
\begin{tikzpicture}[baseline = -.5mm]
	\draw[->,thick,darkblue] (-0.6,-.3) to (-0.6,.4);
	\draw[->,thick,darkblue] (0.28,-.3) to (-0.28,.4);
 	\draw[line width=4pt,white,-] (-0.28,-.3) to (0.28,.4);
	\draw[thick,darkblue,->] (-0.28,-.3) to (0.28,.4);
\end{tikzpicture}
}
\circ X^\circ(\up\up\up),
&
X^\circ(\up\down\down) \circ \:
\mathord{
\begin{tikzpicture}[baseline = -.5mm]
	\draw[->,thick,darkblue] (-0.6,-.3) to (-0.6,.4);
	\draw[<-,thick,darkblue] (0.28,-.3) to (-0.28,.4);
 	\draw[line width=4pt,white,-] (-0.28,-.3) to (0.28,.4);
	\draw[thick,darkblue,<-] (-0.28,-.3) to (0.28,.4);
\end{tikzpicture}
}
&= 
\mathord{
\begin{tikzpicture}[baseline = -.5mm]
	\draw[->,thick,darkblue] (-0.6,-.3) to (-0.6,.4);
	\draw[<-,thick,darkblue] (0.28,-.3) to (-0.28,.4);
 	\draw[line width=4pt,white,-] (-0.28,-.3) to (0.28,.4);
	\draw[thick,darkblue,<-] (-0.28,-.3) to (0.28,.4);
\end{tikzpicture}
}
\circ X^\circ(\up\down\down),\\
X^\circ(\up\down\up) \circ \:
\mathord{
\begin{tikzpicture}[baseline = -.5mm]
	\draw[->,thick,darkblue] (-0.6,-.3) to (-0.6,.4);
	\draw[thick,darkblue,->] (-0.28,-.3) to (0.28,.4);
	\draw[-,line width=4pt,white] (0.28,-.3) to (-0.28,.4);
	\draw[<-,thick,darkblue] (0.28,-.3) to (-0.28,.4);
\end{tikzpicture}
}
&= 
\mathord{
\begin{tikzpicture}[baseline = -.5mm]
	\draw[->,thick,darkblue] (-0.6,-.3) to (-0.6,.4);
	\draw[thick,darkblue,->] (-0.28,-.3) to (0.28,.4);
	\draw[-,line width=4pt,white] (0.28,-.3) to (-0.28,.4);
	\draw[<-,thick,darkblue] (0.28,-.3) to (-0.28,.4);
\end{tikzpicture}
}
\circ X^\circ(\up\up\down),
&
X^\circ(\up\up\down) \circ \:
\mathord{
\begin{tikzpicture}[baseline = -.5mm]
	\draw[->,thick,darkblue] (-0.6,-.3) to (-0.6,.4);
	\draw[->,thick,darkblue] (0.28,-.3) to (-0.28,.4);
 	\draw[line width=4pt,white,-] (-0.28,-.3) to (0.28,.4);
	\draw[thick,darkblue,<-] (-0.28,-.3) to (0.28,.4);
\end{tikzpicture}
}
&= 
\mathord{
\begin{tikzpicture}[baseline = -.5mm]
	\draw[->,thick,darkblue] (-0.6,-.3) to (-0.6,.4);
	\draw[->,thick,darkblue] (0.28,-.3) to (-0.28,.4);
 	\draw[line width=4pt,white,-] (-0.28,-.3) to (0.28,.4);
	\draw[thick,darkblue,<-] (-0.28,-.3) to (0.28,.4);
\end{tikzpicture}
}
\circ X^\circ(\up\down\up).
\end{align*}
These are all straightforward on drawing the diagrams for these $X^\circ$'s explicity.
\end{proof}

For $i \in \k^\times$,
let  $E_i^\up$ be the subfunctor of $E$ that is defined by tensoring
with the bimodule that is the generalized $i$-eigenspace of
${_{\up}}X^\circ:{_\up} OS^\circ \rightarrow {_\up} OS^\circ$.
Define $F_i^\up, F_i^\down$ and $E_i^\down$ similarly
using the endomorphisms $X^\circ_{\up}, {_{\down}}X^\circ$ and $X^\circ_{\down}$.

\begin{lemma}\label{fur}
The following diagrams commute up to natural isomorphisms:
\begin{align*}
&\begin{CD}
\Mod H_{r+1,s} &@>\Upsilon_{r+1,s}>>&\Mod OS_{r+1,s}^\circ\\
@AAi\text{-\!}\ind_{r,s}^{r+1,s}A &&@AAE_i^{\up} A\\
\Mod H_{r,s}&@>>\Upsilon_{r,s}>&\Mod OS_{r,s}^\circ,
\end{CD}
&\qquad
&\begin{CD}
\Mod H_{r+1,s} &@>\Upsilon_{r+1,s}>>&\Mod OS_{r+1,s}^\circ\\
@VV i\text{-\!}\res_{r,s}^{r+1,s}V& &@VVF_i^{\up} V\\
\Mod H_{r,s}&@>>\Upsilon_{r,s}>&\Mod OS_{r,s}^\circ,
\end{CD}\\
&\begin{CD}
\Mod H_{r,s+1} &@>\Upsilon_{r,s+1}>>&\Mod OS_{r,s+1}^\circ\\
@AA t^{-2}i^{-1}\text{-\!}\ind_{r,s}^{r,s+1}A &&@AAF_i^\down A\\
\Mod H_{r,s}&@>>\Upsilon_{r,s}>&\Mod OS_{r,s}^\circ,
\end{CD}
&\qquad
&\begin{CD}
\Mod H_{r,s+1} &@>\Upsilon_{r,s+1}>>&\Mod OS_{r,s+1}^\circ\\
@VVt^{-2}i^{-1}\text{-\!}\res_{r,s}^{r,s+1}V &&@VVE_i^{\down} V\\
\Mod H_{r,s}&@>>\Upsilon_{r,s}>&\Mod OS_{r,s}^\circ.
\end{CD}
\end{align*}
Hence, the functors $E_i^{\up}$ and $F_i^{\up}$ are biadjoint, as are the
functors
$E_i^{\down}$ and $F_i^{\down}$.
Moreover:
\begin{equation}\label{decomp1}
E^\up = \bigoplus_{i \in I_1} E_i^\up,
\quad
F^\up = \bigoplus_{i \in I_1} F_i^\up,\quad
F^\down = \bigoplus_{i \in I_{t^{-2}}} F_i^\down,
\quad
E^\down = \bigoplus_{i \in I_{t^{-2}}} E_i^\down.
\end{equation}
\end{lemma}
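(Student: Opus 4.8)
The plan is to bootstrap everything from the four coarse commuting squares of Lemma~\ref{fan}. Each square already identifies a ``Hecke'' bimodule with an ``$OS^\circ$'' bimodule; what remains is to observe that under this identification the Jucys--Murphy endomorphism used to define $i$-induction corresponds to the relevant $X^\circ$-endomorphism used to define $E_i^\up$, $F_i^\up$, $F_i^\down$, $E_i^\down$. Once this intertwining is in hand, passing to generalized eigenspaces refines each isomorphism of Lemma~\ref{fan} into the corresponding square of the lemma; biadjointness and the decomposition (\ref{decomp1}) are then formal.

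Consider first the northwest square. By the proof of Lemma~\ref{fan} the isomorphism $\alpha$ is induced by the explicit $(H_{r,s},OS^\circ)$-bimodule isomorphism $1_{\down^s\up^{r+1}}OS^\circ\stackrel{\sim}{\to}1_{\up\down^s\up^r}OS^\circ$ that slides the leftmost upward output strand to the far left, underneath the downward strands. Under $\i_{r+1,s}$ the element $\JM_{r+1}\otimes 1$ becomes the curl of that leftmost upward strand around the remaining $r$ upward strands, so left multiplication by it is precisely the endomorphism of $H_{r+1,s}\otimes_{H_{r+1,s}}1_{\down^s\up^{r+1}}OS^\circ\cong 1_{\down^s\up^{r+1}}OS^\circ$ used to define $i\text{-\!}\ind_{r,s}^{r+1,s}$. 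The key claim is that $\alpha$ carries this endomorphism to ${_\up}X^\circ$ on $1_{\up\down^s\up^r}OS^\circ$, i.e. to left multiplication by $X^\circ(\up\down^s\up^r)$; this is a diagram computation which follows from the recursive definitions (\ref{chile2})--(\ref{chile5}), relation (\ref{arun}), and the relations of $\AOS(z,t)$ recorded in Section~\ref{aos}, peeling the downward strands off one at a time using the appropriate case of the recursion. No scalars enter because $\JM_{r+1}$ and $X^\circ(\up\,\cdot\,)$ are normalized identically via $X^\circ(\up)=1_\up$. Granting this, $\alpha$ restricts to an isomorphism between the generalized $i$-eigenspaces on each side for every $i$, which is the refined northwest square $\Upsilon_{r+1,s}\circ i\text{-\!}\ind_{r,s}^{r+1,s}\cong E_i^\up\circ\Upsilon_{r,s}$. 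Biadjointness of $E_i^\up$ and $F_i^\up$, and hence the northeast square, is then obtained by transporting the biadjointness of $i\text{-\!}\ind$ and $i\text{-\!}\res$ through the equivalences $\Upsilon_{r,s}$, $\Upsilon_{r+1,s}$ exactly as $\beta$ was obtained from $\alpha$ in Lemma~\ref{fan}.

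For the two southern squares one repeats the argument with the downward strands, using that $\gamma$ in Lemma~\ref{fan} is induced by the identity map $1_{\down^{s+1}\up^r}OS^\circ\to 1_{\down^{s+1}\up^r}OS^\circ$, so that left multiplication by $\i_{r,s+1}(1\otimes\JM_{s+1})$ is matched with ${_\down}X^\circ$, i.e. with left multiplication by $X^\circ(\down\down^s\up^r)$. The only difference from the upward case is the normalization $X^\circ(\down)=t^{-2}1_\down$ together with the fact that $\down$ is dual to $\up$: straightening the downward curl using relation (T) and (\ref{arun}) shows that the generalized $i$-eigenspace of $\JM_{s+1}$ corresponds to the generalized $t^{-2}i^{-1}$-eigenspace of ${_\down}X^\circ$, the inversion coming from duality and the factor $t^{-2}$ from the seed of the recursion. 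Hence $\gamma$, refined by eigenspaces, gives $\Upsilon_{r,s+1}\circ t^{-2}i^{-1}\text{-\!}\ind_{r,s}^{r,s+1}\cong F_i^\down\circ\Upsilon_{r,s}$, and the last square and biadjointness of $E_i^\down$, $F_i^\down$ follow by transport of adjunctions as before (alternatively the southern statements can be deduced from the northern ones via the duality $\circledast$, keeping track of the powers of $t$). I expect this bookkeeping of $t$-powers in the downward cases to be the only real obstacle; everything else is a routine transport of structure along the equivalences $\Upsilon$.

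Finally, for (\ref{decomp1}): each $1_{\up\a}OS^\circ 1_\b$ is finite-dimensional by Theorem~\ref{bt}, so ${_\up}X^\circ$ acts locally finitely and ${_\up}OS^\circ$ is the direct sum of its generalized eigenspaces. By the intertwining just established, these eigenvalues occur among the generalized eigenvalues of the elements $\JM_m$ acting on the regular modules $H_m$, which are contents of nodes of partitions and hence lie in $I_1$ by Lemma~\ref{classical}. Thus $E^\up=\bigoplus_{i\in I_1}E_i^\up$ and likewise $F^\up=\bigoplus_{i\in I_1}F_i^\up$; for the downward functors the eigenvalues are transformed by $i\mapsto t^{-2}i^{-1}$, which maps $I_1=\{q^{2n}:n\in\ZZ\}$ bijectively onto $I_{t^{-2}}=\{t^{-2}q^{2n}:n\in\ZZ\}$, giving $F^\down=\bigoplus_{i\in I_{t^{-2}}}F_i^\down$ and $E^\down=\bigoplus_{i\in I_{t^{-2}}}E_i^\down$.
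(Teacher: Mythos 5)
Your proposal is correct and takes essentially the same route as the paper: refine the bimodule isomorphisms $\alpha,\gamma$ from Lemma~\ref{fan} by verifying diagrammatically that they intertwine $\i_{r+1,s}(\JM_{r+1}\otimes 1)$ with $X^\circ(\up\down^s\up^r)$ (resp.\ that $t^{-2}\i_{r,s+1}(1\otimes\JM_{s+1})^{-1}=X^\circ(\down^{s+1}\up^r)$), pass to generalized eigenspaces, deduce the restriction squares and biadjointness by unicity of adjoints, and get (\ref{decomp1}) from Lemma~\ref{classical}. The only quibble is your phrase ``left multiplication by $\i_{r,s+1}(1\otimes\JM_{s+1})$ is matched with ${_\down}X^\circ$'': the two endomorphisms are not equal (rather ${_\down}X^\circ = t^{-2}\JM_{s+1}^{-1}$), but your next sentence recovers exactly the correct eigenvalue change $i\mapsto t^{-2}i^{-1}$, so the argument stands.
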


\begin{proof}
Consider the first diagram.
In the proof of Lemma~\ref{fan}, the isomorphism of functors
$\alpha$ was induced by an explicit bimodule isomorphism
$1_{\down^s \up^{r+1}} OS^\circ \rightarrow 1_{\up \down^s \up^r} OS^\circ$.
This isomorphism intertwines the endomorphism of
$1_{\down^s \up^{r+1}} OS^\circ$ 
defined by left multiplication by $\i_{r+1,s}(\JM_{r+1} \otimes 1)$ with
the endomorphism of 
$1_{\up \down^s \up^r} OS^\circ$ defined by left multiplication by $X^\circ(\up
  \down^s \up^r)$; the appropriate picture needed to see this is as
follows:
$$
\mathord{
\begin{tikzpicture}[baseline =0mm]
	\draw[->,thick,darkblue] (0.6,-0.1) to[out=90,in=-90] (-.6,.5);
	\draw[-,line width=4pt,white] (-.3,-.5) to (-.3,.5);
	\draw[-,line width=4pt,white] (.4,-.5) to (.4,.5);
	\draw[<-,ultra thick,darkblue] (-.3,-.5) to (-.3,.5);
	\draw[->,ultra thick,darkblue] (.4,-.5) to (.4,.5);
	\draw[-,line width=4pt,white] (0,-.5) to[out=90,in=-90] (.6,-.1);
	\draw[-,thick,darkblue] (0,-.5) to[out=90,in=-90] (.6,-.1);
      \node at (-.3,.65) {$\scriptstyle s$};
      \node at (.4,.65) {$\scriptstyle r$};
\end{tikzpicture}
}
=
\mathord{
\begin{tikzpicture}[baseline =0mm]
	\draw[->,thick,darkblue] (0.6,0) to[out=90,in=-90] (-.5,.5);
	\draw[-,thick,darkblue] (0.1,-.5) to[out=80,in=-90] (-.6,-.1);
	\draw[-,thick,darkblue] (-.6,-.1) to[out=90,in=160] (.1,0);
	\draw[-,line width=4pt,white] (-.3,-.5) to (-.3,.5);
	\draw[-,line width=4pt,white] (.4,-.5) to (.4,.5);
	\draw[<-,ultra thick,darkblue] (-.3,-.5) to (-.3,.5);
	\draw[->,ultra thick,darkblue] (.4,-.5) to (.4,.5);
	\draw[-,line width=4pt,white] (.1,0) to[out=-20,in=-90] (.6,0);
	\draw[-,thick,darkblue] (.1,0) to[out=-20,in=-90] (.6,0);
      \node at (-.3,.65) {$\scriptstyle s$};
      \node at (.4,.65) {$\scriptstyle r$};
\end{tikzpicture}
}
\:.
$$
Consequently, this bimodule isomorphism restricts to an isomorphism
between the appropriate
summands of these bimodules, showing that the
restriction of $\alpha$ gives the desired natural transformation.

The second diagram follows from the first by unicity of adjoints on
observing that
$F_i^\up$ is right adjoint to $E_i^\up$, which follows from the explicit construction of the adjunction in the
second paragraph of the proof of Lemma~\ref{fan}.

The third diagram 
is established in the same way as the first diagram.
One needs to check that the endomorphisms of
$1_{\down^{s+1} \up^{r}} OS^\circ$ 
defined by left multiplication by $t^{-2} \i_{r,s+1}(1 \otimes
\JM_{s+1})^{-1}$
and by $X^\circ(\down^{s+1} \up^r)$ are equal, which is clear from the
following picture:
$$
t^{-2}\left(\mathord{
\begin{tikzpicture}[baseline =0mm]
	\draw[->,ultra thick,darkblue] (.8,-.5) to (.8,.5);
	\draw[-,thick,darkblue] (.6,0) to[out=90,in=-90] (0,.5);
	\draw[-,line width=4pt,white] (.3,-.5) to (.3,.5);
	\draw[<-,ultra thick,darkblue] (.3,-.5) to (.3,.5);
	\draw[-,line width=4pt,white] (0,-.5) to[out=90,in=-90] (.6,0);
	\draw[<-,thick,darkblue] (0,-.5) to[out=90,in=-90] (.6,0);
      \node at (.8,.65) {$\scriptstyle r$};
      \node at (.3,.65) {$\scriptstyle s$};
\end{tikzpicture}}\right)^{-1}=
 t^{-2} \mathord{
\begin{tikzpicture}[baseline =0mm]
	\draw[->,ultra thick,darkblue] (.7,-.5) to (.7,.5);
	\draw[-,line width=4pt,white] (0,-.5) to[out=90,in=-90] (.9,0);
	\draw[<-,thick,darkblue] (0,-.5) to[out=90,in=-90] (.9,0);
	\draw[-,line width=4pt,white] (.3,-.5) to (.3,.5);
	\draw[<-,ultra thick,darkblue] (.3,-.5) to (.3,.5);
	\draw[-,line width=4pt,white] (.9,0) to[out=90,in=-90] (0,.5);
	\draw[-,thick,darkblue] (.9,0) to[out=90,in=-90] (0,.5);
      \node at (.7,.65) {$\scriptstyle r$};
      \node at (.3,.65) {$\scriptstyle s$};
\end{tikzpicture}
}
\:.
$$

The fourth  diagram follows by adjunction as before.

The final statement of the lemma follows using these diagrams plus facts we have
already discussed about the induction and restriction functors for
$H_{r,s}$.
\end{proof}

We assemble the results so far into the following theorem, which describes
all of the branching rules for the functors
$E_i^\up$, $F_i^\up$,
$F_i^\down$
and $E_i^\down$.

\begin{lemma}\label{newfirst}
The following hold for $i \in \k^\times$
and $\LA \in \Par_{r,s}$.
\begin{enumerate}
\item $E_i^\up \SS(\LA)$ has a multiplicity-free filtration
with sections $\SS(\MU)$
for $\MU \in \Par_{r+1,s}$ obtained by adding a node of
content $i$ to the Young diagram of $\la^\up$.
\item $F_i^\up \SS(\LA)$ has a multiplicity-free filtration
with sections $\SS(\MU)$
for $\MU \in \Par_{r-1,s}$ obtained by removing a node of
content $i$ from the Young diagram of $\la^\up$.
\item $F_i^\down \SS(\LA)$ has a multiplicity-free filtration
with sections $\SS(\MU)$
for $\MU \in \Par_{r,s+1}$ obtained by adding a node of
content $t^{-2} i^{-1}$ to the Young diagram of $\la^\down$.
\item $E_i^\down \SS(\LA)$ has a multiplicity-free filtration
with sections $\SS(\MU)$
for $\MU \in \Par_{r,s-1}$ obtained by removing a node of
content $t^{-2} i^{-1}$ from the Young diagram of $\la^\down$.
\end{enumerate}
In all cases, the filtrations should be ordered according to the
usual dominance ordering on the partitions labelling the sections,
most dominant at the bottom.
\end{lemma}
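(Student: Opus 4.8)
The plan is to deduce all four branching rules directly by transporting the classical branching rules of Lemma~\ref{classical} (in the version for $H_{r,s}=H_r\otimes H_s$) across the Morita equivalences $\Upsilon_{r,s}$, using the four commuting diagrams of Lemma~\ref{fur} and the fact that, by definition, $\SS(\LA)=\Upsilon_{r,s}(\SS_{\lambda^\up}\boxtimes\SS_{\lambda^\down})$ for $\LA=(\lambda^\up,\lambda^\down)\in\Par_{r,s}$.

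First I would treat case (1). Fix $\LA\in\Par_{r,s}$. By the first diagram of Lemma~\ref{fur} there is a natural isomorphism $E_i^\up\circ\Upsilon_{r,s}\cong\Upsilon_{r+1,s}\circ i\text{-}\ind_{r,s}^{r+1,s}$, and $i\text{-}\ind_{r,s}^{r+1,s}$ acts only in the first tensor factor, carrying $\SS_{\lambda^\up}\boxtimes\SS_{\lambda^\down}$ to $\bigl(i\text{-}\ind_{r}^{r+1}\SS_{\lambda^\up}\bigr)\boxtimes\SS_{\lambda^\down}$. By Lemma~\ref{classical}(1) the module $i\text{-}\ind_{r}^{r+1}\SS_{\lambda^\up}$ has a multiplicity-free filtration with sections $\SS_{\mu^\up}$ indexed by the partitions $\mu^\up\vdash(r+1)$ obtained from $\lambda^\up$ by adding a node of content $i$, ordered by dominance with the most dominant at the bottom. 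Tensoring on the right by $\SS_{\lambda^\down}$ (an exact operation) and then applying the exact equivalence $\Upsilon_{r+1,s}$ turns this into a multiplicity-free filtration of $E_i^\up\SS(\LA)$ with sections $\SS(\MU)$ for $\MU=(\mu^\up,\lambda^\down)\in\Par_{r+1,s}$, preserving the ordering since only the first component changes.

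The other three cases are identical in structure, using the remaining diagrams of Lemma~\ref{fur}: for (2) the second diagram gives $F_i^\up\SS(\LA)\cong\Upsilon_{r-1,s}\bigl((i\text{-}\res_{r-1}^{r}\SS_{\lambda^\up})\boxtimes\SS_{\lambda^\down}\bigr)$, to which one applies Lemma~\ref{classical}(2); for (3) the third diagram gives $F_i^\down\SS(\LA)\cong\Upsilon_{r,s+1}\bigl(\SS_{\lambda^\up}\boxtimes(t^{-2}i^{-1}\text{-}\ind_{s}^{s+1}\SS_{\lambda^\down})\bigr)$, to which one applies Lemma~\ref{classical}(1) with content parameter $t^{-2}i^{-1}$; and for (4) the fourth diagram gives $E_i^\down\SS(\LA)\cong\Upsilon_{r,s-1}\bigl(\SS_{\lambda^\up}\boxtimes(t^{-2}i^{-1}\text{-}\res_{s-1}^{s}\SS_{\lambda^\down})\bigr)$, to which one applies Lemma~\ref{classical}(2) with content parameter $t^{-2}i^{-1}$. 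In the degenerate situations $r=0$ in (2) or $s=0$ in (4), the partition $\lambda^\up$ (respectively $\lambda^\down$) has no node of the requisite content to remove and both sides are zero, consistent with an empty filtration.

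Since the whole argument is transport of structure along exact equivalences, there is no genuine obstacle; the only points requiring care are bookkeeping the content shift $i\mapsto t^{-2}i^{-1}$ attached to the downward strand in the third and fourth diagrams of Lemma~\ref{fur}, and observing that the dominance ordering of Lemma~\ref{classical} is carried over verbatim because in each of the four cases exactly one of $\lambda^\up,\lambda^\down$ is altered, so the ordering on the resulting bipartitions is just the dominance ordering on that single component.
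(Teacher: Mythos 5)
Your argument is correct and is exactly the route the paper takes: the paper's proof is literally the sentence ``This follows from Lemmas~\ref{fur} and \ref{classical},'' and you have simply unwound what that composition of diagrams and branching rules means. The one place where a reader might pause — keeping track of the content shift $i \mapsto t^{-2}i^{-1}$ in cases (3) and (4), and checking that the dominance ordering survives transport since only one component of the bipartition changes — you handle explicitly, which is fine.
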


\begin{proof}
This follows from Lemmas~\ref{fur} and \ref{classical}.
\end{proof}

Now we turn our attention at last to $OS$ itself.
Mimicking the definitions made above for $OS^\circ$,
we  write $\i_\down:OS \rightarrow OS$ and
$\i_\up:OS\rightarrow OS$ for the algebra homomorphisms associated to
the functors $\down \otimes-:\OS(z,t) \rightarrow \OS(z,t)$ and $\up\otimes-:\OS(z,t)
\rightarrow \OS(z,t)$. 
Then let
\begin{align}\label{dob1}
{_\up} OS &:= \bigoplus_{\a,\b \in \words} 1_{\up \a} OS 1_\b,
&
OS_\up &:= \bigoplus_{\a,\b \in \words} 1_\a OS 1_{\up \b},\\
{_\down} OS &:= \bigoplus_{\a,\b \in \words} 1_{\down \a} OS 1_\b,&
OS_\down &:= \bigoplus_{\a,\b \in \words} 1_\a OS 1_{\down \b},\label{dob2}
\end{align}
which we view as $(OS, OS)$-bimodules
with left and right actions of $a,b \in OS$ on $f$ defined by $a\cdot
f\cdot b :=
\i_\up(a) f b,
a f \i_\up(b), \i_\down(a) f b$ and $a f \i_\down(b)$, respectively.
A key difference to the situation for $OS^\circ$ emerges right away:

\begin{lemma}\label{easy}
We have that $OS_\up \cong {_\down}OS$ and $OS_\down \cong {_\up}OS$
as $(OS, OS)$-bimodules.
\end{lemma}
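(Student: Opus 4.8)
The plan is to realise each isomorphism by ``bending a strand'' with the generating cup and cap, using that the snake (zigzag) relations in $\OS(z,t)$ are exactly relation~(3) of Theorem~\ref{first}. Write $C:\unit\to\down\up$ and $D:\up\down\to\unit$ for the images in $\OS(z,t)$ of the generators $C,D$ of Theorem~\ref{first}, so that $(D\otimes 1_\up)\circ(1_\up\otimes C)=1_\up$ and $(1_\down\otimes D)\circ(C\otimes 1_\down)=1_\down$. First I would construct the isomorphism $OS_\up\stackrel{\sim}{\rightarrow}{_\down}OS$ in detail; the isomorphism $OS_\down\stackrel{\sim}{\rightarrow}{_\up}OS$ is then entirely analogous, bending instead with the leftward cup $\unit\to\up\down$ and leftward cap $\down\up\to\unit$ (whose zigzag relations hold because $\OS(z,t)$ is strictly pivotal, cf.\ Section~\ref{sgr}).

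For $\a,\b\in\words$ I would define mutually inverse linear maps
$$
\begin{aligned}
\Phi&:1_\a OS 1_{\up\b}\to 1_{\down\a} OS 1_\b,& f&\mapsto (1_\down\otimes f)\circ(C\otimes 1_\b),\\
\Psi&:1_{\down\a} OS 1_\b\to 1_\a OS 1_{\up\b},& g&\mapsto (D\otimes 1_\a)\circ(1_\up\otimes g).
\end{aligned}
$$
A short computation with the interchange law gives $\Psi\Phi(f)=f\circ\bigl((D\otimes 1_\up)\circ(1_\up\otimes C)\bigr)\otimes 1_\b$ and $\Phi\Psi(g)=\bigl((1_\down\otimes D)\circ(C\otimes 1_\down)\bigr)\otimes 1_\a\,\circ\, g$, and both bracketed composites are identities by Theorem~\ref{first}(3); hence $\Phi$ and $\Psi$ are inverse to each other, and summing over $\a,\b$ they assemble into a linear isomorphism $OS_\up\stackrel{\sim}{\rightarrow}{_\down}OS$.

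It then remains to check that $\Phi$ respects the two bimodule structures. Compatibility with the left actions is immediate from functoriality of $\otimes$: for $a\in OS$ one has $\Phi(a\circ f)=(1_\down\otimes a)\circ\Phi(f)=\i_\down(a)\circ\Phi(f)$, which is exactly the left action on ${_\down}OS$. Compatibility with the right actions would follow from the interchange identity $(1_{\down\up}\otimes b)\circ(C\otimes 1_{\b'})=(C\otimes 1_\b)\circ b$ for $b\in 1_\b OS 1_{\b'}$, which gives $\Phi\bigl(f\circ\i_\up(b)\bigr)=\Phi(f)\circ b$, matching the (untwisted) right action on ${_\down}OS$. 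The only point that needs care — the ``hard part'', such as it is — is that the right action on $OS_\up$ is twisted by $\i_\up$ while that on ${_\down}OS$ is untwisted, so the bend must be performed with the cup $C:\unit\to\down\up$ and not the leftward cup $\unit\to\up\down$ (and dually for $D$); the other choice would make $\Phi$ linear on the wrong side. Once the correct cup and cap are fixed, the whole verification reduces to the interchange law and the snake relations.
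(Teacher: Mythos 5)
Your proposal is correct and uses the same idea as the paper: bend a boundary strand with a cup and a cap, with the snake relations showing the two maps are mutually inverse. The paper simply draws the two diagrams defining $OS_\down\cong{_\up}OS$ and leaves everything else (including the bimodule‐homomorphism check you spell out via the interchange law) implicit, while you write the other isomorphism $OS_\up\cong{_\down}OS$ out symbolically; the content is the same.
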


\begin{proof}
The mutually inverse bimodule isomorphisms
$OS_\down \rightarrow {_\up} OS$ and ${_\up} OS \rightarrow OS_\down$
are defined on diagrams by the maps
$$
\mathord{
\begin{tikzpicture}[baseline=2.5mm]
\draw[darkblue,thick,yshift=-5pt,xshift=-5pt] (0,.3) rectangle ++(10pt,10pt);
\node at (0,.3) {$\scriptstyle f$};
\draw[->, thick,darkblue] (-.12,.12) to (-.12,-.3);
\draw[-, thick,double,darkblue] (.12,.12) to (.12,-.3);
\draw[-, thick,double,darkblue] (0,.47) to (0,.89);
\node at (0,1.02) {$\a$};
\node at (.12,-.44) {$\b$};
\end{tikzpicture}
}
\mapsto
\mathord{
\begin{tikzpicture}[baseline=2.5mm]
\draw[darkblue,thick,yshift=-5pt,xshift=-5pt] (0,.3) rectangle ++(10pt,10pt);
\node at (0,.3) {$\scriptstyle f$};
\draw[-, thick,darkblue] (-.22,-.2) to [out=0,in=-90] (-.12,.12);
\draw[<-, thick,darkblue] (-.3,.89) to [out=-90,in=180] (-.22,-.2);
\draw[-, thick,double,darkblue] (.12,.12) to (.12,-.3);
\draw[-, thick,double,darkblue] (0,.47) to (0,.89);
\node at (0,1.02) {$\a$};
\node at (.12,-.44) {$\b$};
\end{tikzpicture}
}
\qquad\text{and}\qquad
\mathord{
\begin{tikzpicture}[baseline=2.5mm]
\draw[darkblue,thick,yshift=-5pt,xshift=-5pt] (0,.3) rectangle ++(10pt,10pt);
\node at (0,.3) {$\scriptstyle f$};
\draw[-, thick,double,darkblue] (0,.12) to (0,-.3);
\draw[->,thick,darkblue] (-.12,.47) to (-.12,.89);
\draw[-, thick,double,darkblue] (.12,.47) to (.12,.89);
\node at (.12,1.02) {$\a$};
\node at (0,-.44) {$\b$};
\end{tikzpicture}
}
\mapsto
\mathord{
\begin{tikzpicture}[baseline=2.5mm]
\draw[darkblue,thick,yshift=-5pt,xshift=-5pt] (0,.3) rectangle ++(10pt,10pt);
\node at (0,.3) {$\scriptstyle f$};
\draw[-,thick,darkblue] (-.12,.47) to [out=90,in=0] (-.22,.8);
\draw[<-,thick,darkblue] (-.3,-.3) to [out=90,in=180] (-.22,.8);
\draw[-, thick,double,darkblue] (0,.12) to (0,-.3);
\draw[-, thick,double,darkblue] (.12,.47) to (.12,.89);
\node at (.12,1.02) {$\a$};
\node at (0,-.44) {$\b$};
\end{tikzpicture}
},
$$
respectively. The isomorphism $OS_\up \cong {_\down} OS$ is
constructed similarly.
\end{proof}

This means that we only need to define {\em two} functors:
\begin{align}\label{edef}
E:= ?\otimes_{OS} {_\up}OS
\,\cong\:
 ?\otimes_{OS} OS_{\down} &:\Mod OS \rightarrow \Mod OS,\\
 F:= ?\otimes_{OS} {_\down}OS\,\cong\: ?\otimes_{OS} OS_{\up} &:\Mod OS \rightarrow \Mod OS.
\end{align}
Note that
\begin{equation}\label{theee}
E (1_\a OS) = 1_\a OS \otimes_{OS} ({_\up} OS) = 1_\a({_\up}
OS) = 1_{\up \a} OS,
\end{equation} and
similarly $F(1_\a OS) = 1_{\down \a} OS$.
By adjointness of tensor and hom, the functor $E$ has a canonical 
right adjoint
\begin{equation}\label{Ra}
\bigoplus_{\a \in \words} \Hom_{OS}(1_{\up \a}OS, ?)
\cong \:? \otimes_{OS} OS_{\up}, 
\end{equation}
with the isomorphism here
sending $f$ 
in the $\a$th summand to 
$f(1_{\up \a}) \otimes 1_{\up \a}$.
In view of Lemma~\ref{easy}, the functor
$? \otimes_{OS} OS_{\up}$ on the right hand side of (\ref{Ra}) is isomorphic to $F$.
Thus, we see that $(E,F)$ is an adjoint pair.
Explicitly, the unit $\operatorname{Id} \rightarrow FE$ 
of this adjunction is 
defined by the bimodule homomorphism
\begin{align}\label{unit}
&OS \rightarrow {_\up}OS\otimes_{OS}{_\down}OS,&
&\mathord{
\begin{tikzpicture}[baseline=2.5mm]
\draw[darkblue,thick,yshift=-5pt,xshift=-5pt] (0,.3) rectangle ++(10pt,10pt);
\node at (0,.3) {$\scriptstyle f$};
\draw[-, thick,double,darkblue] (0,.12) to (0,-.1);
\draw[-, thick,double,darkblue] (0,.47) to (0,.69);
\node at (0,.85) {$\a$};
\node at (0,-.27) {$\b$};
\end{tikzpicture}
}
\mapsto
\mathord{
\begin{tikzpicture}[baseline=2.5mm]
\draw[darkblue,thick,yshift=-5pt,xshift=-5pt] (0,.3) rectangle ++(10pt,10pt);
\node at (0,.3) {$\scriptstyle f$};
\draw[->, thick,darkblue] (-.35,-.1) to (-.35,.69);
\draw[-, thick,double,darkblue] (0,.12) to (0,-.1);
\draw[-, thick,double,darkblue] (0,.47) to (0,.69);
\node at (0,.85) {$\a$};
\node at (0,-.27) {$\b$};
\end{tikzpicture}
}
\otimes
\mathord{
\begin{tikzpicture}[baseline=2.5mm]
\draw[-, thick,darkblue] (-.5,.4) to [out=180,in=-90] (-.7,.69);
\draw[->, thick,darkblue] (-.5,.4) to [out=0,in=-90] (-.3,.69);
\draw[-, thick,double,darkblue] (-0.05,-.1) to (-0.05,.69);
\node at (-0.05,-.27) {$\b$};
\end{tikzpicture}
},
\\\intertext{and the counit
$EF \rightarrow \operatorname{Id}$ is defined by}
&{_\down}OS \otimes_{OS} {_\up} OS
\rightarrow OS,
&&
\mathord{
\begin{tikzpicture}[baseline=2.5mm]
\draw[darkblue,thick,yshift=-5pt,xshift=-5pt] (0,.3) rectangle ++(10pt,10pt);
\node at (0,.3) {$\scriptstyle f$};
\draw[-, thick,double,darkblue] (0,.12) to (0,-.3);
\draw[<-,thick,darkblue] (-.12,.47) to (-.12,.89);
\draw[-, thick,double,darkblue] (.12,.47) to (.12,.89);
\node at (.12,1.05) {$\a$};
\node at (0,-.45) {$\b$};
\end{tikzpicture}
}\otimes\:\,
\mathord{
\begin{tikzpicture}[baseline=2.5mm]
\draw[darkblue,thick,yshift=-5pt,xshift=-5pt] (0,.3) rectangle ++(10pt,10pt);
\node at (0,.3) {$\scriptstyle g$};
\draw[-, thick,double,darkblue] (0,.12) to (0,-.3);
\draw[->,thick,darkblue] (-.12,.47) to (-.12,.89);
\draw[-, thick,double,darkblue] (.12,.47) to (.12,.89);
\node at (.12,1.07) {$\b$};
\node at (0,-.45) {$\c$};
\end{tikzpicture}
}
\mapsto
\mathord{
\begin{tikzpicture}[baseline=2.5mm]
\draw[-, thick,double,darkblue] (.12,.7) to (.12,.89);
\draw[darkblue,thick,yshift=-5pt,xshift=-5pt] (0,.53) rectangle ++(10pt,10pt);
\node at (0,.53) {$\scriptstyle f$};
\draw[darkblue,thick,yshift=-5pt,xshift=-5pt] (0,0) rectangle ++(10pt,10pt);
\node at (0,0) {$\scriptstyle g$};
\draw[-, thick,double,darkblue] (0,-.18) to (0,-.3);
\draw[-,thick,darkblue] (-.12,.17) to [out=90,in=-90] (-.4,.5);
\draw[-,thick,darkblue] (-.4,.5) to[out=90,in=180] (-.25,.9);
\draw[->,thick,darkblue] (-.25,.9) to[out=0,in=90] (-.12,.7);
\draw[-, thick,double,darkblue] (.12,.17) to (.12,.35);
\node at (0,-.45) {$\c$};
\node at (.12,1.05) {$\a$};
\end{tikzpicture}
}.\label{counit}
\end{align}
Reversing the roles of $\up$ and $\down$ in this argument, we get another canonical
adjunction making
$(F,E)$ is an adjoint pair.
So $E$ and $F$ are biadjoint, hence, they are exact, and
preserve locally finite-dimensional, finitely generated, finitely
cogenerated, projective and injective objects;
cf. \cite[Theorem 2.11]{BD}.

Recalling (\ref{JM}),
let ${_{\up}}X:{_\up} OS \rightarrow {_\up} OS$
be the bimodule endomorphism defined on $1_{\up \b} OS$ 
by
left multiplication by $X(\up \b)$, 
and 
let 
$X_{\down}:OS_\down\rightarrow OS_\down$ be defined on $1_\b OS_{\down \b}$ by right multiplication
by $X(\down \b)$.
These are intertwined by the isomorphism from Lemma~\ref{easy}; this
depends on (\ref{d4}).
Similarly, switching $\up$ with $\down$ everywhere, we get
endomorphisms ${_{\down}}X:{_\down}OS\rightarrow {_\down} OS$ and
$X_{\up}:OS_\up \rightarrow OS_\up$, which are again intertwined by the
isomorphism from Lemma~\ref{easy}.

\begin{lemma}\label{newz}
There are short exact sequence of $(OS^\circ, OS)$-bimodules
\begin{align}
0
&\longrightarrow
OS^\circ_\up \otimes_{OS^\sharp} OS 
\stackrel{\alpha}{\longrightarrow}
OS^\circ \otimes_{OS^\sharp} OS_\up
\stackrel{\beta}{\longrightarrow}
{_\down}OS^\circ\otimes_{OS^\sharp} OS
\longrightarrow 0,\label{ses1}\\
0
&\longrightarrow
OS^\circ_\down \otimes_{OS^\sharp} OS 
\stackrel{\alpha}{\longrightarrow}
OS^\circ \otimes_{OS^\sharp} OS_\down
\stackrel{\beta}{\longrightarrow}
{_\up}OS^\circ\otimes_{OS^\sharp} OS
\longrightarrow 0.\label{ses2}
\end{align}
The maps $\alpha$ and $\beta$ in the first sequence 
satisfy $\alpha \circ (X^\circ_{\up} \otimes \id)= (\id
\otimes X_{\up})\circ \alpha$ and $\beta \circ (\id \otimes X_{\up}) = ({_{\down}}X^\circ \otimes \id) \circ \beta$.
The maps in the second sequence have analogous properties.
\end{lemma}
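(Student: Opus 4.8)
The plan is to write $\alpha$ and $\beta$ as explicit local diagram moves and to prove exactness by a basis count built on the triangular decomposition, using a functor interpretation to organize the bookkeeping and to make the $X$-compatibility transparent. First I would reinterpret the three bimodules in (\ref{ses2}) as representing functors $\Mod OS^\circ \to \Mod OS$: using Lemma~\ref{easy} to identify $OS_\down \cong {_\up}OS$, transitivity of $\otimes$, and the fact that $\infl^\sharp$ amounts to tensoring with $OS^\circ$ viewed as an $(OS^\circ, OS^\sharp)$-bimodule through the projection $OS^\sharp \twoheadrightarrow OS^\circ$, one checks that $OS^\circ_\down \otimes_{OS^\sharp} OS$, $OS^\circ \otimes_{OS^\sharp} OS_\down$ and ${_\up}OS^\circ \otimes_{OS^\sharp} OS$ represent, respectively, $\Delta\circ E^\down$, $E\circ\Delta$ and $\Delta\circ E^\up$, so that (\ref{ses2}) is the bimodule incarnation of
\begin{equation*}
0 \longrightarrow \Delta\circ E^\down \longrightarrow E\circ\Delta \longrightarrow \Delta\circ E^\up \longrightarrow 0;
\end{equation*}
likewise (\ref{ses1}) incarnates $0 \to \Delta\circ F^\up \to F\circ\Delta \to \Delta\circ F^\down \to 0$.

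This picture both suggests the maps and clarifies their structure. The map $\beta$ for (\ref{ses2}) is the quotient map that sends a diagram to $0$ whenever the distinguished (``new'') propagating strand introduced by $E$ occurs in a cap, and is the evident identification otherwise (landing in $\Delta\circ E^\up$, where that strand is forced to stay inside the Cartan part); $\alpha$ is the complementary inclusion, bending the new strand into a cup against an existing strand. Concretely these are defined on the generating diagrams of $OS^\circ_\down\otimes_{OS^\sharp}OS$ and $OS^\circ\otimes_{OS^\sharp}OS_\down$ by adding a single cup or cap near the distinguished strand (for (\ref{ses1}), the analogue of $\beta$ also reverses the orientation of that strand). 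That the maps are balanced over $OS^\sharp$ and are $(OS^\circ, OS)$-bimodule homomorphisms is then immediate from the diagram calculus, since the left $OS^\circ$-action and the right $OS$-action take place away from the local modification. For (\ref{ses1}) the maps are defined in the mirror way, with $\up$ and $\down$ interchanged.

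The crux is exactness. Here I would invoke Lemma~\ref{td} and Theorem~\ref{bt}: pick bases of $OS^+$, $OS^\circ$, $OS^-$ consisting of generic reduced ribbons as in the proof of Lemma~\ref{td}, and assemble them into $\K$-bases of all three bimodules in each sequence, indexed so that each basis element of the middle term corresponds either to a basis element of the right term (distinguished strand not in a cap) or to one of the left term (it is in a cap). Modulo lower-order corrections from resolving crossings via $(\mathrm{S})$ and the Reidemeister relations, $\alpha$ and $\beta$ act unitriangularly on these adapted bases, whence $\alpha$ is injective, $\beta$ is surjective, and $\ker\beta=\operatorname{im}\alpha$; since the three arrows are bimodule maps, this yields exactness of the bimodule sequences. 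I expect this bookkeeping to be the main obstacle: the reduced lifts must be chosen so that inserting the cup/cap and re-reducing changes the index only in a unitriangular way, and one has to track carefully how the new strand interacts with the propagating strands of $OS^\circ$.

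Finally, for the intertwining relations $\alpha\circ(X^\circ_\up\otimes\id)=(\id\otimes X_\up)\circ\alpha$, $\beta\circ(\id\otimes X_\up)=({_\down}X^\circ\otimes\id)\circ\beta$ and their analogues for (\ref{ses2}), I would argue by local diagram computation: unwind the recursions (\ref{chile2})--(\ref{chile5}) for the $X^\circ$'s and (\ref{JMa})--(\ref{JMb}) for the $X$'s, then use the dot-sliding relations (\ref{d5})--(\ref{d1}) together with $(\mathrm{A})$ to push the Jucys--Murphy dot past the single cup or cap defining $\alpha$ or $\beta$. These are short; the only subtlety is that when the distinguished strand changes orientation one picks up the factor $t^{\pm2}$ built into $X$ on $\down$-strands, which matches the shift $i\mapsto t^{-2}i^{-1}$ appearing in Lemma~\ref{fur} and serves as a consistency check.
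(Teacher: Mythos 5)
Your overall strategy matches the paper's: interpret the three bimodules as representing $\Delta\circ F^\up$, $F\circ\Delta$, $\Delta\circ F^\down$ (and the $E$-analogues), define $\alpha$ and $\beta$ by explicit local diagram modifications, prove exactness via a basis coming from the triangular decomposition, and check the $X$-compatibility by a local computation. However, there are two places where the argument as written has a gap.

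The more serious one is the exactness step. You assert that ``modulo lower-order corrections\ldots\ $\alpha$ and $\beta$ act unitriangularly on these adapted bases, whence $\alpha$ is injective, $\beta$ is surjective, and $\ker\beta=\operatorname{im}\alpha$.'' Unitriangularity alone gives injectivity of $\alpha$ and surjectivity of $\beta$, but it does \emph{not} give $\ker\beta=\operatorname{im}\alpha$: for that you would additionally need $\beta\circ\alpha=0$ together with a (local) dimension count, neither of which you supply. Moreover, the ``main obstacle'' you anticipate — tracking error terms from resolving crossings via $(\mathrm{S})$ — does not actually arise if the bases are chosen correctly. The key is that, by Lemma~\ref{td}, one can take the $g$-factor of each pure tensor to lie in a basis of $OS^-$ consisting of reduced lifts with \emph{no crossings between propagating strands} and no cups. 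The maps $\alpha$ and $\beta$ then attach a single new strand along the far left edge of $g$, which introduces no crossings at all with the rest of $g$; no Reidemeister or skein relations are needed. With this choice the basis of the middle term splits into two subsets $Q_1\sqcup Q_2$ according to whether the leftmost strand of $g$ is propagating or is a cap, and $\alpha$ and $\beta$ are genuine \emph{bijections} of $P$ onto $Q_1$ and of $Q_2$ onto $R$ respectively — not merely unitriangular — from which exactness is immediate once one also notes that $\beta\circ\alpha=0$ (a rightward cup commutes past $\otimes_{OS^\sharp}$ to give zero). So your argument is simultaneously worrying about correction terms that a better basis avoids, and relying on an inference that those terms would in fact break.

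The intertwining check is closer, but the mechanism you describe is not quite the one that makes it work. Your plan is to push the Jucys--Murphy dot past the cup or cap using the dot-sliding relations of the affine category. The actual point is different: $X(\up\b)$ and $X^\circ(\up\b)$ are \emph{not} equal as elements of $OS$, but they differ by error terms, produced when one converts the negative crossings in $X$ to positive ones via the quadratic relation $(\mathrm{S})$, that all contain a rightward cap at the top of the picture. Such a cap is killed when commuted across $\otimes_{OS^\sharp}$ because the right $OS^\sharp$-action on the $OS^\circ$-factor is by inflation. It is this vanishing-against-the-tensor-product argument, rather than dot-sliding per se, that establishes the displayed intertwining identities. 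The sign/scale comparison $i\leftrightarrow t^{-2}i^{-1}$ you mention is a good sanity check, but it is a consequence of having set up the correct normalizations in (\ref{JM}) and (\ref{chile2})--(\ref{chile5}); it is not part of the proof of this lemma.
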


\begin{proof}
We just go through the details for the first short exact sequence. 
The bimodule homomorphisms $\alpha$ and $\beta$ are defined on pure
tensors as follows:
\begin{align}\label{alpha}
\alpha:
\mathord{
\begin{tikzpicture}[baseline=2.5mm]
\draw[darkblue,thick,yshift=-5pt,xshift=-5pt] (0,.3) rectangle ++(10pt,10pt);
\node at (0,.3) {$\scriptstyle f$};
\draw[->, thick,darkblue] (-.1,-.3) to (-.1,.12);
\draw[-, thick,double,darkblue] (0.1,.12) to (0.1,-.3);
\draw[-, thick,double,darkblue] (0,.47) to (0,.85);
\node at (0.1,-.45) {$\b$};
\node at (0,1.02) {$\a$};
\end{tikzpicture}
}
\otimes
\mathord{
\begin{tikzpicture}[baseline=2.5mm]
\draw[darkblue,thick,yshift=-5pt,xshift=-5pt] (0,.3) rectangle ++(10pt,10pt);
\node at (0,.3) {$\scriptstyle g$};
\draw[-, thick,double,darkblue] (0,.12) to (0,-.3);
\draw[-, thick,double,darkblue] (0,.47) to (0,.85);
\node at (0,1.05) {$\b$};
\node at (0,-.45) {$\c$};
\end{tikzpicture}
}
&\mapsto
\mathord{
\begin{tikzpicture}[baseline=2.5mm]
\draw[darkblue,thick,yshift=-5pt,xshift=-5pt] (0,.3) rectangle ++(10pt,10pt);
\node at (0,.3) {$\scriptstyle f$};
\draw[->, thick,darkblue] (-.1,-.3) to (-.1,.12);
\draw[-, thick,double,darkblue] (0.1,.12) to (0.1,-.3);
\draw[-, thick,double,darkblue] (0,.47) to (0,.85);
\node at (0.1,-.45) {$\b$};
\node at (0,1.02) {$\a$};
\end{tikzpicture}
}
\otimes
\mathord{
\begin{tikzpicture}[baseline=2.5mm]
\draw[darkblue,thick,yshift=-5pt,xshift=-5pt] (0,.3) rectangle ++(10pt,10pt);
\node at (0,.3) {$\scriptstyle g$};
\draw[->, thick,darkblue] (-.35,-.3) to (-.35,.85);
\draw[-, thick,double,darkblue] (0,.12) to (0,-.3);
\draw[-, thick,double,darkblue] (0,.47) to (0,.85);
\node at (0,1.05) {$\b$};
\node at (0,-.45) {$\c$};
\end{tikzpicture}
}\:,&
\beta:
\mathord{
\begin{tikzpicture}[baseline=2.5mm]
\draw[darkblue,thick,yshift=-5pt,xshift=-5pt] (0,.3) rectangle ++(10pt,10pt);
\node at (0,.3) {$\scriptstyle f$};
\draw[-, thick,double,darkblue] (0,.12) to (0,-.3);
\draw[-, thick,double,darkblue] (0,.47) to (0,.85);
\node at (0,-.45) {$\b$};
\node at (0,1.02) {$\a$};
\end{tikzpicture}
}
\otimes
\mathord{
\begin{tikzpicture}[baseline=2.5mm]
\draw[darkblue,thick,yshift=-5pt,xshift=-5pt] (0,.3) rectangle ++(10pt,10pt);
\node at (0,.3) {$\scriptstyle g$};
\draw[<-, thick,darkblue] (-0.1,.12) to (-0.1,-.3);
\draw[-, thick,double,darkblue] (0.1,.12) to (0.1,-.3);
\draw[-, thick,double,darkblue] (0,.47) to (0,.85);
\node at (0,1.05) {$\b$};
\node at (0.1,-.45) {$\c$};
\end{tikzpicture}
}
&\mapsto
\mathord{
\begin{tikzpicture}[baseline=2.5mm]
\draw[darkblue,thick,yshift=-5pt,xshift=-5pt] (0,.3) rectangle ++(10pt,10pt);
\node at (0,.3) {$\scriptstyle f$};
\draw[->, thick,darkblue] (-.35,.85) to (-.35,-.3);
\draw[-, thick,double,darkblue] (0,.12) to (0,-.3);
\draw[-, thick,double,darkblue] (0,.47) to (0,.85);
\node at (0,1.02) {$\a$};
\node at (0,-.45) {$\b$};
\end{tikzpicture}
}
\otimes
\mathord{
\begin{tikzpicture}[baseline=2.5mm]
\draw[darkblue,thick,yshift=-5pt,xshift=-5pt] (0,.3) rectangle ++(10pt,10pt);
\node at (0,.3) {$\scriptstyle g$};
\draw[-, thick,darkblue] (-.4,.85) to (-.4,.1);
\draw[-, thick,darkblue] (-.4,.1) to [out=-90,in=180] (-.25,-.1);
\draw[->, thick,darkblue] (-.25,-.1) to [out=0,in=-90] (-.1,.12);
\draw[-, thick,double,darkblue] (0.1,.12) to (0.1,-.3);
\draw[-, thick,double,darkblue] (0,.47) to (0,.85);
\node at (0,1.05) {$\b$};
\node at (0.1,-.45) {$\c$};
\end{tikzpicture}
}\:.
\end{align}
It is straightforward to see these are well-defined bimodule
homomorphisms.
Also $\beta \circ \alpha = 0$. 
Indeed, if we apply $\beta \circ \alpha$ to a pure tensor as above, we
produce a pure tensor of the form $f \otimes g$ 
such that
the strand of $g$ starting in
the top left corner is a rightward cup. This cup commutes past the tensor to give zero
since we are viewing ${_\down}OS^\circ$ as a right $OS^\sharp$-module by inflation.

To show that the sequence is exact, we pick bases.
Recall that $\words_{r,s}$ denotes words which have exactly $r$ letters $\up$ and $s$
letters $\down$.
For $\a,\b \in \words_{r,s}$, let $A_{\a,\b}$ be a basis for $1_\a OS^\circ 1_\b$ 
consisting of reduced lifts of matchings. 
Similarly, for $\b \in \words_{r,s}$ and $\c \in \words_{r+t,s+t}$ for $t \geq 0$, 
let $B_{\b,\c}$ be a basis for $1_\b OS^- 1_\c$ consisting of reduced
lifts of matchings.
By Lemma~\ref{td}, we see that
\begin{align*}
P &:= \left\{f \otimes g\:\bigg|\:
\begin{array}{l}
f \in A_{\a, \up \b}, g \in B_{\b,\c}\text{ for }
r,s,t\geq 0\text{ and }\\
\a \in \words_{r+1,s}, \b \in \words_{r,s}, \c \in \words_{r+t,s+t},
\end{array}
\right\},\\
Q &:= \left\{f \otimes g\:\bigg|\:
\begin{array}{l}
f \in A_{\a, \b}, g \in B_{\b,\up \c}\text{ for }
r,s,t\geq 0\text{ with }s+t\geq 1\text{ and }\\
\a, \b \in \words_{r,s}, \c \in \words_{r+t,s+t-1},
\end{array}
\right\},\\
R &:= \left\{f \otimes g\:\bigg|\:
\begin{array}{l}
f \in A_{\down \a, \b}, g \in B_{\b,\c}\text{ for }
r,s,t\geq 0\text{ and }\\
\a \in \words_{r,s}, \b \in \words_{r,s+1}, \c \in \words_{r+t,s+t+1},
\end{array}
\right\}
\end{align*}
are bases for $OS^\circ_\up \otimes_{OS^\sharp} OS$,
$OS^\circ \otimes_{OS^\sharp} OS_\up$ and ${_\down}OS^\circ\otimes_{OS^\sharp}
OS$, respectively.
Then we partition the set $Q$ as $Q_1\sqcup Q_2$
so that $Q_1$ consists of all $f \otimes g \in Q$ such that the
reduced lift $g$ has a propagating upward strand on its left edge,
and $Q_2$ consists of all remaining elements of $Q$.
Note for each $f \otimes g \in Q_2$ that 
the strand of $g$ starting in
the bottom left corner is a rightward cap. Then it is clear that the
map $\alpha$ maps $P$ bijectively onto $Q_1$ and $\beta$ maps $Q_2$
bijectively onto $R$. This completes the proof.

Now we check that
$\alpha \circ (X^\circ_{\up}\otimes \id) = (\id \otimes X_{\up}) \circ \alpha$.
Take
$f \otimes g\in
OS^\circ_\up \otimes_{OS^\sharp} OS$.
We must show that
$(f \circ X^\circ(\up \b)) \otimes (\up g) = f \otimes ((\up g) \circ X(\up
  \c))$ for $f \in 1_\a OS^\circ 1_{\up \b}$ and $g \in 1_\b OS 1_{\c}$.
We can move $X^\circ(\up \b)$ over the first tensor product and commute $X(\up \c)$
with $\up g$, to reduce to checking that
$1_{\up \b} \otimes X(\up \b) =1_{\up \b} \otimes X^\circ(\up \b).$
The morphism $X^\circ(\up \b)$ can be transformed into $X(\up \b)$ 
by using the quadratic relation to switch 
some positive crossings to negative crossings. This produces some
error terms which involve caps at the top of the picture, which become
zero when commuted back over the tensor product.
(This argument can be made more formal by using induction on the
length of the word $\b$, using the recursions (\ref{JMa}) and (\ref{chile2}).)

The proof that
$\beta \circ (\id \otimes X_{\up}) =
({_{\down}}X^\circ \otimes \id)\circ \beta$ is similar; one needs to use also
(\ref{d5}) and Lemma~\ref{chile0}.
\end{proof}

Finally, we refine the functors $E$ and $F$.
For $i \in \k^\times$, let $E_i$ be the subfunctor of $E$ that is
defined by tensoring with the bimodule that is the generalized
$i$-eigenspace of ${_{\up}}X:{_\up}OS \rightarrow {_\up} OS$;
equivalently, it may be defined by tensoring
with the generalized $i$-eigenspace of 
$X_{\down}:OS_\down \rightarrow OS_\down$.
Similarly, switching $\up$ and $\down$ everywhere, defines a
subfunctor $F_i$ of $F$.
Let
\begin{equation}\label{I}
I := I_1 \cup I_{t^{-2}} = \left\{q^{2n}, t^{-2} q^{-2n}\:\big|\:n\in \ZZ\right\} \subset \k^\times.
\end{equation}

\begin{lemma}\label{phew}
There are short exact sequences of functors
for each $i \in \k^\times$:
\begin{align}\label{sesI}
0&\longrightarrow
\Delta \circ F_i^\up \longrightarrow F_i \circ \Delta \longrightarrow
\Delta \circ F_i^\down \longrightarrow 0,\\\label{sesII}
0&\longrightarrow
\Delta \circ E_i^\down \longrightarrow E_i \circ \Delta \longrightarrow
\Delta \circ E_i^\up \longrightarrow 0.
\end{align}
Moreover,
the functors $E_i$ and $F_i$ are biadjoint, and we have that
\begin{equation}\label{decomp2}
E = \bigoplus_{i \in I} E_i,
\qquad
F = \bigoplus_{i \in I} F_i.
\end{equation}
\end{lemma}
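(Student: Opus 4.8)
\emph{Plan.} The whole statement should follow formally from the short exact sequences of $(OS^\circ,OS)$-bimodules in Lemma~\ref{newz}, together with the exactness of $\Delta$ and of the functors $E,F,E^\up,F^\up,E^\down,F^\down$. First I would identify the three composite functors in (\ref{sesI}) with tensoring by the three bimodules in (\ref{ses1}). Using $\Delta=(\infl^\sharp-)\otimes_{OS^\sharp}OS$, transitivity of tensor products, and the identification $F\cong\,?\otimes_{OS}OS_\up$ from Lemma~\ref{easy}, one checks
\begin{gather*}
\Delta\circ F^\up\;\cong\;?\otimes_{OS^\circ}\bigl(OS^\circ_\up\otimes_{OS^\sharp}OS\bigr),\qquad
\Delta\circ F^\down\;\cong\;?\otimes_{OS^\circ}\bigl({_\down}OS^\circ\otimes_{OS^\sharp}OS\bigr),\\
F\circ\Delta\;\cong\;?\otimes_{OS^\circ}\bigl(OS^\circ\otimes_{OS^\sharp}OS_\up\bigr),
\end{gather*}
where in the middle term $OS^\circ$ is made into an $(OS^\circ,OS^\sharp)$-bimodule by pulling the regular right action back along the projection $OS^\sharp\twoheadrightarrow OS^\circ$, so that $?\otimes_{OS^\circ}OS^\circ=\infl^\sharp(?)$. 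Applying $?\otimes_{OS^\circ}$ to (\ref{ses1}) yields a sequence of natural transformations $\Delta\circ F^\up\to F\circ\Delta\to\Delta\circ F^\down$; it is short exact since all three functors are exact and the sequence is visibly exact on projective $OS^\circ$-modules (where it is a direct summand of (\ref{ses1}) itself), and a sequence of exact functors that is exact on projectives is exact everywhere (apply a projective resolution). The analogue for $E$ comes the same way from (\ref{ses2}).

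Next I would refine. The maps $\alpha,\beta$ of (\ref{ses1}) intertwine the operators $X^\circ_\up$, $X_\up$ and ${_\down}X^\circ$, as recorded at the end of Lemma~\ref{newz}. Since passing to generalized $i$-eigenspaces is natural and exact, I apply it to the short exact sequence just obtained, taking the $X^\circ_\up$-eigenspace on the left term, the $X_\up$-eigenspace in the middle, and the ${_\down}X^\circ$-eigenspace on the right term; by the definitions of $F^\up_i$, $F_i$, $F^\down_i$ as exactly these eigenspace subfunctors, this produces (\ref{sesI}). The sequence (\ref{sesII}) follows from (\ref{ses2}) and the analogous intertwining relations.

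For the decomposition $E=\bigoplus_{i\in I}E_i$, the splitting $E=\bigoplus_{i\in\k^\times}E_i$ is just the generalized eigenspace decomposition of ${_\up}X$, which is locally finite because each $1_{\up\a}OS1_\b$ is finite-dimensional and ${_\up}X$-invariant; similarly for $F$. To see $E_i=0$ for $i\notin I$: the module $\Q(\LA)=\Y(\LA)\otimes_{OS^\circ}OS$ has a $\Delta$-flag by Theorem~\ref{koike1}, $E$ is exact, and (\ref{sesII}) exhibits $E\circ\Delta(\Y(\MU))$ as an extension of $\Delta$'s of projective $OS^\circ$-modules; so $E\,\Q(\LA)$ has a $\Delta$-flag whose sections, by Lemma~\ref{newfirst}, are $\Delta(\MU)$ obtained by adding a box of content in $I_1$ or removing a box of content in $I_{t^{-2}}$. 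Hence $E_i\,\Q(\LA)=0$ for $i\notin I=I_1\cup I_{t^{-2}}$; as the $\Q(\LA)$ generate $\Mod OS$ and $E_i$ is right exact, $E_i=0$. The case of $F$ is identical using (\ref{sesI}).

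Finally, biadjointness: $E$ and $F$ are already biadjoint, with unit and counit (\ref{unit})--(\ref{counit}), so the right adjoint of the summand $E_i\subseteq E$ is automatically the summand of $F$ cut out by the mate of the idempotent $E\twoheadrightarrow E_i$, namely the projection onto the generalized $i$-eigenspace of the mate of ${_\up}X$. Thus the statement reduces to verifying that the mate of the endomorphism ${_\up}X$ of $E$ under $E\dashv F$ is the endomorphism ${_\down}X$ of $F$; this is a direct diagrammatic check, feeding a dot through the explicit unit/counit and sliding it past the inserted cup and cap using (\ref{d4})--(\ref{d5}), the $t^{\pm2}$ scalars being already absorbed into $X(\up)=1_\up$, $X(\down)=t^{-2}1_\down$. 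Then $E_i\dashv F_i$ follows from $E\dashv F$, and $F_i\dashv E_i$ from $F\dashv E$ by the symmetric computation. I expect the two places needing genuine care to be the bookkeeping in the first step (arranging the $\infl^\sharp$'s and one-sided tensor products so the composites become $?\otimes_{OS^\circ}$ of the bimodules of Lemma~\ref{newz} on the nose) and this dot-sliding computation, where one must track orientations and the $t^{\pm2}$ factors precisely.
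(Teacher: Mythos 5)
Your proof follows the same route as the paper: obtain the unrefined short exact sequences of functors by tensoring (\ref{ses1})--(\ref{ses2}) with $\Delta$ realized as $\,?\otimes_{OS^\circ}(OS^\circ\otimes_{OS^\sharp}OS)$, refine them to $i$-eigenspaces using the intertwining of $X^\circ_\up,X_\up,{_\down}X^\circ$ from Lemma~\ref{newz}, establish the decomposition (\ref{decomp2}) by bootstrapping from $\Delta$-flags, and cut down the $(E,F)$-adjunction to $(E_i,F_i)$. Your final step is the paper's, just phrased as a mate computation: the identity ``the mate of ${_\up}X$ under $E\dashv F$ is ${_\down}X$'' is precisely the paper's key claim that ${_\up}X\otimes\id=\id\otimes{_\down}X$ on ${_\up}OS\otimes_{OS}{_\down}OS$ and its twin, and both treatments leave the diagrammatic verification to the reader.

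There is, however, a genuine (if small) gap in your handling of (\ref{decomp2}). You assert that ``the splitting $E=\bigoplus_{i\in\k^\times}E_i$ is just the generalized eigenspace decomposition of ${_\up}X$,'' justified by local finite-dimensionality, and then aim to kill the pieces with $i\notin I$. But $\k$ is merely a field, not algebraically closed, so finite-dimensionality of each $1_{\up\a}OS1_\b$ only gives a decomposition indexed by the irreducible factors of the characteristic polynomial of ${_\up}X$ on that piece; it does \emph{not} by itself give $E=\bigoplus_{i\in\k^\times}E_i$, since there could be summands corresponding to non-linear irreducible factors. The paper raises exactly this caveat at the start of its proof and avoids asserting the full $\k^\times$-decomposition. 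Your subsequent $\Delta$-flag argument is essentially the right fix, but as written it only shows $E_i\Q(\LA)=0$ for $i\notin I$, which does not on its own exclude non-split summands. You should instead argue that the filtration of $E\Q(\LA)$ with sections $\Delta(\MU)$, on which (by the intertwining and the decomposition (\ref{decomp1}) over $OS^\circ$) the operator ${_\up}X$ acts with minimal polynomial having roots in $I$, forces the minimal polynomial of ${_\up}X$ on all of $E\Q(\LA)$ to split over $\k$ with roots in $I$; this gives $E\Q(\LA)=\bigoplus_{i\in I}E_i\Q(\LA)$ directly, and since each $1_\b OS$ is a summand of such $\Q(\LA)$'s, the bimodule decomposition ${_\up}OS=\bigoplus_{i\in I}({_\up}OS)_i$ follows.
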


\begin{proof}
Note to start with that although ${_\down} OS$ is not
finite-dimensional (or even a direct sum of finite-dimensional
bimodules as was the case for ${_\down} OS^\circ$), it is locally
finite-dimensional in the sense that it is the direct sum of
the finite-dimensional vector spaces $1_{\down \a} OS 1_\b$ for $\a,\b \in
\words$. The endomorphism ${_{\down}}X$ leaves each of these
finite-dimensional vector
spaces invariant. This is enough to see that each generalized $i$-eigenspace of
${_{\down}}X$ is a summand of the bimodule ${_\down} OS$.
However, until we have proved (\ref{decomp2}), there may also be summands
arising from generalized
eigenspaces corresponding to eigenvalues of ${_{\down}}X$ not in $I
\subset \k^\times$, and there could
also be summands arising from non-linear irreducible factors of the
characteristic polynomial.
Similar remarks apply to $F_i$.

To define an adjunction making $(E_i,F_i)$ into an adjoint pair,
we project the adunction for $(E,F)$ onto the summands $E_i$ and $F_i$.
To see that this does the job, 
one needs to use the explicit forms for the unit and counit of the adjunction $(E,F)$
given in (\ref{unit})--(\ref{counit}). The key point is that
${_{\up}}X \otimes \operatorname{id} = \operatorname{id} \otimes
{_{\down}}X$ as an endomorphism of ${_\up} OS \otimes_{OS} {_\down} OS$
and ${_{\down}}X \otimes \operatorname{id} = \operatorname{id} \otimes
{_{\up}}X$ as an endomorphism of ${_\down} OS \otimes_{OS} {_\up} OS$.
A similar argument produces an adjunction $(F_i,E_i)$ the other way
around.
It then follows that $E_i$ and $F_i$ are both exact; one can also see
this since they are summands of the exact functors $E$ and$F$.

The short exact sequences from Lemma~\ref{newz} may be viewed
equivalently as short exact sequences of functors
\begin{align*}
0&\longrightarrow
\Delta \circ F^\up \longrightarrow F \circ \Delta \longrightarrow
\Delta \circ F^\down \longrightarrow 0,\\
0&\longrightarrow
\Delta \circ E^\down \longrightarrow E \circ \Delta \longrightarrow
\Delta \circ E^\up \longrightarrow 0.
\end{align*}
Similarly, using the final assertion of the lemma, we get
(\ref{sesI})--(\ref{sesII}) from
Lemma~\ref{newz} on passing to the appropriate generalized
eigenspaces.

Finally, we must establish (\ref{decomp2}).
The short exact sequences of functors obtained in the previous paragraph plus
(\ref{decomp1})
imply that (\ref{decomp2}) holds on any standard module $\Delta(\LA)$.
By exactness and Corollary~\ref{BGG}, it follows that it also holds on
any indecomposable projective module.
Hence, it is true on any module.
\end{proof}

With these branching rules in hand, we can proceed to the definition
of the formal character of a locally finite-dimensional $OS$-module.

First, we must refine the idempotents $1_\a$ for $\a \in \words$.
Let $\Words$ be the set of words in the
letters $\{\up_i, \down_i\:|\:i \in I\}$.
Thus, an element of $\Words$ has the form
$\a_\bi = (\a_n)_{i_n}\cdots (\a_1)_{i_1}$
for words $\a = \a_n \cdots \a_1 \in \words$
and $\bi = i_n \cdots i_1 \in \langle I \rangle$.
Take a word $\a_\bi \in \Words$ of length $n$.
Let $X_i$ be the Jucys-Murphy element in $1_\a OS 1_\a$ that is defined by a dot
on the $i$th strand from the right, so that
 $X_1,\dots,X_n$ generate a commutative subalgebra of the
finite-dimensional algebra $1_\a OS 1_\a$.
It follows that there is an idempotent $1_{\a_\bi} \in 1_\a OS 1_\a$
which projects any $1_\a OS 1_\a$-module onto the simultaneous
generalized eigenspaces of $X_1,\dots,X_n$
corresponding to eigenvalues $i_1,\dots,i_n$, respectively.
For a given $\a$, all but finitely many $1_{\a_\bi}$ are zero.

Now define the {\em formal character} of a locally finite-dimensional
$OS$-module $M$ by
\begin{equation}
\ch M := \sum_{\a_\bi\in\Words} (\dim M 1_{\a_\bi}) \a_\bi,
\end{equation}
which is an element of the ring of (possibly infinite) $\ZZ$-linear combinations of elements of
the monoid $\Words$.
From the proof of the following lemma plus (\ref{decomp2}), one sees
that $1_\a = \sum_{\bi} 1_{\a_\bi}$.
Note also that $\ch$ is additive on short exact sequences.

\begin{lemma}\label{basically}
$\ch M = \sum_{i \in I} \down_i (\ch E_i M) 
+ \sum_{i \in I} \up_i (\ch F_i M).$
\end{lemma}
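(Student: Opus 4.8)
The plan is to unwind the definitions so that the two sums on the right-hand side recover the word-space decomposition of $M$ according to the leftmost letter, keeping careful track of which of $E_i$, $F_i$ corresponds to which letter. Recall from \eqref{theee} that $E(1_\a OS) = 1_{\up\a} OS$ and $F(1_\a OS) = 1_{\down\a} OS$; dually, for a module $M$ we have $E_i M$ built from ${_\up}OS$ (equivalently $OS_\down$) and $F_i M$ from ${_\down}OS$ (equivalently $OS_\up$). The right adjoint description \eqref{Ra} gives $FM \cong \bigoplus_{\a\in\words}\Hom_{OS}(1_{\up\a}OS, M)$, and evaluation at $1_{\up\a}$ identifies $(FM)1_\a$ with $M1_{\up\a}$; similarly $(EM)1_\a \cong M1_{\down\a}$. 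So the first thing I would do is nail down these two natural isomorphisms $(FM)1_\a \cong M1_{\up\a}$ and $(EM)1_\a \cong M1_{\down\a}$ as vector spaces, functorially in $M$.

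Next I would check that these isomorphisms are compatible with the Jucys--Murphy gradings, so that they refine to $(F_i M)1_{\a_\bi}\cong M1_{\up_i\,\a_\bi}$ and $(E_i M)1_{\a_\bi}\cong M1_{\down_i\,\a_\bi}$. The point is that ${_\up}X$ acting on the left of ${_\up}OS$ corresponds under \eqref{Ra}/Lemma~\ref{easy} to the action of the Jucys--Murphy dot on the new leftmost ($(n{+}1)$st) strand of $1_{\up\a}OS$; and similarly $X_{\up}$ on $OS_\up$ realizes the dot on the leftmost strand for $F_i$. Tracking the internal $X_1,\dots,X_n$ through the isomorphism is immediate since those dots sit on strands untouched by the adjunction maps, while the leftmost dot is precisely what defines the subscript $i$. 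Hence $\dim (F_i M)1_{\a_\bi} = \dim M 1_{\up_i\,\a_\bi}$ and $\dim (E_i M)1_{\a_\bi} = \dim M 1_{\down_i\,\a_\bi}$.

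With this in place the identity is a bookkeeping computation. For each $\a_\bi\in\Words$ of length $n$, the letter spaces $M1_{(\up_i\,\a_\bi)}$ over $i\in I$ together with $M1_{(\down_i\,\a_\bi)}$ over $i\in I$ account for all of $M1_{(\up\a)}$ and $M1_{(\down\a)}$, because $1_{\up\a} = \sum_{i\in I}1_{\up_i\,\a_\bi'}$ summed over the relevant refinements (this uses $1_\a=\sum_\bi 1_{\a_\bi}$, which follows from \eqref{decomp2} applied to the regular-type bimodules, together with the fact that the only eigenvalues occurring for a leftmost dot lie in $I$ by Lemma~\ref{phew}). Every nonempty word $\b_\bj\in\Words$ has a leftmost letter which is either $\up_i$ or $\down_i$ for a unique $i\in I$, and writing $\b_\bj = \up_i\,\a_\bk$ (resp.\ $\down_i\,\a_\bk$) the coefficient $\dim M1_{\b_\bj}$ equals $\dim (F_i M)1_{\a_\bk}$ (resp.\ $\dim (E_i M)1_{\a_\bk}$) by the previous paragraph. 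Summing over all nonempty $\b_\bj$ and reorganizing by leftmost letter gives exactly $\ch M = \sum_{i\in I}\up_i(\ch F_i M) + \sum_{i\in I}\down_i(\ch E_i M)$; note $M1_\varnothing$ contributes nothing since $\ch M$ only involves nonempty words once one observes $OS$ has no objects mapping to $\varnothing$ under $E$ or $F$ in a way that matters here — more precisely, the empty word is the unique word not in the image, and its coefficient is absorbed correctly because $\varnothing$ has no leftmost letter and $\ch$ records it with multiplicity $\dim M1_\varnothing$, which I should simply carry along as the degree-zero term on both sides.

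The main obstacle I anticipate is the second paragraph: verifying that the adjunction isomorphism $(F_iM)1_{\a_\bi}\cong M1_{\up_i\,\a_\bi}$ genuinely matches the Jucys--Murphy subscript, i.e.\ that the dot defining the $i$-eigenspace of ${_\up}X$ (or $X_\up$) gets sent to the dot on the leftmost strand of $1_{\up\a}OS$ under \eqref{Ra} and Lemma~\ref{easy}. This requires unwinding the explicit bimodule maps of Lemma~\ref{easy} and the counit \eqref{counit}, and using relations \eqref{d4} (which, as noted in the text, is exactly what makes ${_{\up}}X$ and $X_\down$ correspond). Everything else is formal: additivity of $\ch$ on short exact sequences, the decomposition \eqref{decomp2}, and the combinatorics of leftmost letters.
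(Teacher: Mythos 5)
Your proof is correct and follows essentially the same route as the paper: identify $M1_{\up\b}\cong (FM)1_\b$ (and $M1_{\down\b}\cong(EM)1_\b$) via \eqref{theee} and the $(E,F)$-adjunction, check that the generalized $i$-eigenspace of the leftmost Jucys--Murphy dot matches the summand $(F_iM)1_\b$ (resp.\ $(E_iM)1_\b$), and then sort words by their leftmost letter. The paper's proof is just a terser version of exactly this argument.
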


\begin{proof}
Take $\a_\bi\in \Words$
and suppose that $\a = \up \b, \bi = i \bj$.
We claim that $\dim M 1_{\a_\bi} = \dim (F_i M) 1_{\b_\bj}$.
The lemma follows from this together with
the analogous statement 
argument with $\up$ replaced with $\down$ and $F_i$ replaced with
$E_i$. 
To prove the claim, 
using (\ref{theee}), we have that
\begin{align*}
M 1_{\a}\cong\Hom_{OS}(1_{\a} OS, M)&\cong\Hom_{OS}(E (1_\b OS), M)\\
&\cong \Hom_{OS}(1_\b OS, F M)
\cong (FM) 1_\b.
\end{align*}
Under this isomorphism, the generalized $i$-eigenspace of 
$\mathord{
\begin{tikzpicture}[baseline = -1mm]
      \node at (0,0) {$\color{darkblue}\scriptstyle\bullet$};
	\draw[<-,thick,darkblue] (0,0.25) to (0,-0.25);
\end{tikzpicture}
} \otimes 1_\b$
corresponds to the summand $(F_i M) 1_\b$.
The result follows.
\end{proof}

\begin{lemma}
The characters 
$\left\{\ch \L(\LA)\:\big|\:\LA \in \RPar\right\}$
of the irreducible $OS$-modules
are linearly independent.
\end{lemma}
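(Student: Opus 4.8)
The plan is to peel off the ``leading part'' of each character with respect to word length, reduce to the subalgebra $OS^\circ$, and there reduce to a standard fact about Iwahori--Hecke algebras. First I would record the shape of $\ch\L(\LA)$. Fix $\LA\in\RPar_{r,s}$ and put $n:=r+s=|\LA|$. Recall from the construction and from the proof of Theorem~\ref{class} that $\bar\Delta(\LA)=\D(\LA)\otimes_\K OS^-$ is supported on words of length $\geq n$, that its length-$n$ part is exactly $\D(\LA)$ (since $OS^-$ is supported in non-positive degrees and the degree-zero part is just the idempotents), and that every proper submodule, in particular $\rad\,\bar\Delta(\LA)$, has zero length-$n$ part; hence the same holds for the quotient $\L(\LA)$. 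Moreover, comparing the recursions (\ref{JMa})--(\ref{JMb}) defining the Jucys--Murphy elements $X(\b)$ of $OS$ with the recursions (\ref{chile2})--(\ref{chile5}) defining the Jucys--Murphy elements $X^\circ(\b)$ of $OS^\circ$ (they are built from the same diagrams, with the same base cases $X(\up)=1_\up$, $X(\down)=t^{-2}1_\down$), one sees that the operators $X_1,\dots,X_n$ act on the length-$n$ part of $\L(\LA)$ exactly as the Jucys--Murphy elements of the $OS^\circ$-module $\D(\LA)$. Consequently the restriction of $\ch\L(\LA)$ to words of length $n$ equals the $OS^\circ$-character $\ch\D(\LA)$, while every other word occurring in $\ch\L(\LA)$ has length $>n$.

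Next I would run the triangularity argument. Suppose $\sum_\LA c_\LA\,\ch\L(\LA)=0$ is a finite relation with all $c_\LA\neq 0$; I want to show the sum is empty. Let $n_0$ be the least value of $|\LA|$ over the $\LA$ appearing, and restrict the relation to the span of words of length $n_0$. By the previous paragraph the terms with $|\LA|>n_0$ disappear, and each term with $|\LA|=n_0$ contributes $c_\LA\,\ch\D(\LA)$, giving $\sum_{|\LA|=n_0}c_\LA\,\ch\D(\LA)=0$. Since $\D(\LA)$ is supported on $\words_{r,s}$ for $\LA\in\RPar_{r,s}$, and $\words_{r,s}$ and $\words_{r',s'}$ are disjoint when $(r,s)\neq(r',s')$, this forces $\sum_{\LA\in\RPar_{r,s}}c_\LA\,\ch\D(\LA)=0$ for each $(r,s)$ with $r+s=n_0$. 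So it suffices to prove: for all $r,s$ the characters $\{\ch\D(\LA)\mid\LA\in\RPar_{r,s}\}$ are linearly independent. Restricting each such character to the single word $\down^s\up^r$ only loses information, so it is enough to prove the restrictions are linearly independent. Under the Morita equivalence $\Upsilon_{r,s}$ of Lemma~\ref{cartanlem}, $\D(\LA)1_{\down^s\up^r}=\D_{\la^\up}\boxtimes\D_{\la^\down}$ as an $H_{r,s}=H_r\otimes H_s$-module, and using the identifications behind Lemma~\ref{fur} the operators $X_1,\dots,X_{r+s}$ act on this space, up to the bijective eigenvalue relabellings $i\mapsto i$ on the rightmost $r$ strands and $i\mapsto t^{-2}i^{-1}$ on the leftmost $s$ strands, as the usual Jucys--Murphy elements of $H_r$ and of $H_s$. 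Hence this restriction is, after applying these bijections of the alphabet, the product (in the two disjoint blocks of $r$ and $s$ positions) of the formal characters of $\D_{\la^\up}$ over $H_r$ and of $\D_{\la^\down}$ over $H_s$. Since a relabelling of the alphabet by a bijection preserves linear independence, and the set of all products of two linearly independent families (indexed by pairs) is linearly independent, the claim reduces to the linear independence of $\{\ch\D_\lambda\mid\lambda\ e\text{-restricted},\ \lambda\vdash r\}$ and the analogous statement for $s$.

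For that last input I would argue as follows. By the classical branching rule of Lemma~\ref{classical}, $\ch\SS_\mu=\sum_T\bi^T$, the sum over standard $\mu$-tableaux $T$, where $\bi^T$ is the sequence of contents of the entries of $T$. The ``row-reading'' tableau $T^\mu$ (filling $1,\dots,\mu_1$ along the first row, then $\mu_1+1,\dots,\mu_1+\mu_2$ along the second, and so on) is the unique standard tableau of any shape whose content sequence is $\bi^{T^\mu}$ (at each step the next content forces the next box), and $\mu$ is recovered from $\bi^{T^\mu}$ by reading off the lengths of its maximal runs of consecutive contents. Hence $\bi^{T^\mu}$ occurs only in $\ch\SS_\mu$ (with coefficient $1$), so the coefficient of $\bi^{T^\mu}$ in $\sum_\mu c_\mu\ch\SS_\mu$ is $c_\mu$; this proves the $\ch\SS_\mu$ are linearly independent. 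Finally, by \cite{DJ} the $e$-restricted square part of the decomposition matrix $([\SS_\mu:\D_\lambda])$ is unitriangular, so each $\ch\D_\lambda$ with $\lambda$ $e$-restricted is an integral combination of the $\ch\SS_\mu$ via a matrix that is invertible over $\ZZ$; linear independence of the $\ch\D_\lambda$ follows, completing the proof.

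The main obstacle will be the bookkeeping concentrated in the first and third paragraphs: precisely verifying that on the shortest word space the $OS$-Jucys--Murphy operators $X_i$ induce exactly the Jucys--Murphy structure of $\D(\LA)$ as an $OS^\circ$-module (so that the leading part of $\ch\L(\LA)$ really is $\ch\D(\LA)$), and then tracking through $\Upsilon_{r,s}$ and the twists of Lemma~\ref{fur} the exact bijective relabelling of the eigenvalue alphabet on the down-strands; once these are pinned down, the rest is purely formal.
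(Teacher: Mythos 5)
Your overall strategy is the same as the paper's: a triangularity argument on word length reduces the problem to the linear independence of the characters of the irreducible $OS^\circ$-modules, then Morita equivalence reduces this to the linear independence of irreducible $H_r$-characters. The paper takes exactly this route (the ``leading terms'' $A_\lambda$, $B_\lambda$, $C_\lambda$ decomposition) and cites \cite[Lemma 11.2.5]{Kbook} for the Hecke algebra input without proof. Your first two paragraphs flesh out what the paper leaves implicit — in particular the verification that on the shortest word space the $OS$-Jucys--Murphy operators $X_i$ agree with the $OS^\circ$-Jucys--Murphy operators $X_i^\circ$ up to error terms involving caps, which act as zero there — and this is all sound.

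However, the final paragraph, where you attempt to prove the Hecke algebra input directly, contains a genuine gap when $q$ is a root of unity (i.e.\ when $e>0$). You assert that the row-reading tableau $T^\mu$ is the unique standard tableau of any shape with its content sequence $\bi^{T^\mu}$, and that the shape $\mu$ can be read off from $\bi^{T^\mu}$ via maximal runs of consecutive contents. Both claims fail for $e > 0$ because the content map $(i,j)\mapsto q^{2(j-i)}$ is then periodic, not injective, so ``the next content forces the next box'' is no longer true. For a concrete counterexample take $e=2$. Then both $(2,2)$ and $(2,1,1)$ are $e$-restricted, and the row-reading tableau of $(2,2)$, namely $\begin{smallmatrix}1&2\\3&4\end{smallmatrix}$, has content sequence $(1,q^2,q^{-2},1)=(1,q^2,q^2,1)$, which is \emph{also} the content sequence of the standard tableau $\begin{smallmatrix}1&2\\3\\4\end{smallmatrix}$ of shape $(2,1,1)$. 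Worse, the second standard tableau $\begin{smallmatrix}1&3\\2&4\end{smallmatrix}$ of shape $(2,2)$ has the same content sequence too, so $\bi^{T^\mu}$ occurs in $\ch\SS_{(2,2)}$ with multiplicity $2$, not $1$ — the coefficient extraction you perform is simply wrong. Your argument is valid in the semisimple case $e=0$ (where contents are injective and a standard tableau together with its shape is uniquely recovered from its residue sequence), but it does not establish the Hecke algebra fact for $e>0$, which is the case of interest for the non-semisimple theory. The correct statement is of course true, but requires a more careful choice of distinguished residue sequence (e.g.\ the good-node/ladder argument as in Kleshchev's proof) or a crystal-theoretic argument; alternatively you should simply cite the reference the paper uses.
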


\begin{proof}
Take $\LA \in \RPar_{r,s}$.
As $\L(\LA)$ is the shortest word module of type $\LA$, its formal
character is a sum $A_\lambda$ of words of the form $\down_{i_{r+s}}
\cdots \down_{i_{r+1}} \up_{i_r} \cdots \up_{i_1}$, 
plus a sum $B_\lambda$ of words that are
obtained from the ones in $A_\lambda$ by properly shuffling the $\down$'s and $\up$'s,
plus a sum $C_\lambda$ of strictly longer words.
By unitriangularity, it suffices to show that the ``leading terms''
$A_\lambda$ are linearly independent for fixed $r,s$ and all $\LA \in
\RPar_{r,s}$.
But $A_\lambda$ is just the product of the formal characters of
$\D_{\la^\down}$ and $\D_{\la^\up}$ in the usual sense of the 
Hecke algebras $H_s$ and $H_r$.
So these words are linearly independent by the well-known linear
independence of irreducible characters for the Hecke
algebra\footnote{This may be proved in
the same way as is explained for the symmetric group in \cite[Lemma 11.2.5]{Kbook}.}.
\end{proof}

Now we define the ($t$-shifted) {\em bipartition graph} to be the $I$-colored directed
graph with vertices $\Par$ and an edge $\LA \stackrel{i}{\rightarrow}
\MU$ if one of the following holds:
\begin{itemize}
\item $\MU$ is obtained from $\LA$ by adding a node of content $i$ to
 $\la^\up$;
\item $\LA$ is obtained from $\MU$ by adding a node of
  content $t^{-2}i^{-1}$
to $\mu^\down$.
\end{itemize}
A small piece of this graph is displayed in Figure 2.
\begin{figure}[hb]
\[
\mathord{
\begin{tikzpicture}[scale=0.7]
	\node (a) at (0, 4) {\small$\NOTHING$};
	\node (b) at (-4, 2) {$\left(\text{\tiny$\yng(1)$} , {\scriptstyle\varnothing}\right)$};
	\node (c) at (4, 2) {$\left({\scriptstyle\varnothing}, \text{\tiny$\yng(1)$}\right)$};
	\node (d) at (-8, 0) {$\left(\text{\tiny$\yng(1,1)$}, {\scriptstyle\varnothing}\right)$};
	\node (e) at (-4, 0) {$\left(\text{\tiny$\yng(2)$}, {\scriptstyle\varnothing}\right)$};
	\node (f) at (0, 0) {$\left({\text{\tiny$\yng(1)$}, \text{\tiny$\yng(1)$}}\right)$};
	\node (g) at (4, 0) {$\left({\scriptstyle\varnothing}, \text{\tiny$\yng(1,1)$}\right)$};
	\node (h) at (8, 0) {$\left({\scriptstyle\varnothing}, \text{\tiny$\yng(2)$}\right)$};
	\node (i) at (-9, -4) {$\left({\text{\tiny$\yng(1,1,1)$}}, {\scriptstyle\varnothing}\right)$};
	\node (j) at (-7, -4) {$\left({\text{\tiny$\yng(2,1)$}}, {\scriptstyle\varnothing}\right)$};
	\node (k) at (-5, -4) {$\left({\text{\tiny$\yng(3)$}}, {\scriptstyle\varnothing}\right)$};
	\node (l) at (-3, -4) {$\left({\text{\tiny$\yng(1,1)$}, \text{\tiny$\yng(1)$}}\right)$};
	\node (m) at (-1, -4) {$\left({\text{\tiny$\yng(2)$}, \text{\tiny$\yng(1)$}}\right)$};
	\node (n) at (1, -4) {$\left({\text{\tiny$\yng(1)$}, \text{\tiny$\yng(1,1)$}}\right)$};
	\node (o) at (3, -4) {$\left({\text{\tiny$\yng(1)$}, \text{\tiny$\yng(2)$}}\right)$};
	\node (p) at (5, -4) {$\left({\scriptstyle\varnothing}, \text{\tiny$\yng(1,1,1)$}\right)$};
	\node (q) at (7, -4) {$\left({\scriptstyle\varnothing}, \text{\tiny$\yng(2,1)$}\right)$};
	\node (r) at (9, -4) {$\left({\scriptstyle\varnothing}, \text{\tiny$\yng(3)$}\right)$};
	\draw[->] (a) to node[above]{$\scriptstyle 1$} (b);
	\draw[<-] (a) to node[above]{$\scriptstyle\:\:\:t^{-2}$} (c);
	
	\draw[->] (b) to node[above]{$\scriptstyle q^{-2}$} (d);
	\draw[->] (b) to node[right]{$\scriptstyle q^2$} (e);
	\draw[<-] (b) to node[above]{$\scriptstyle \:\:\:t^{-2}$} (f);
	
	\draw[->] (c) to node[above]{$\scriptstyle 1$} (f);
	\draw[<-] (c) to node[right]{$\!\scriptstyle t^{-2}q^2$}(g);
	\draw[<-] (c) to node[above]{$\:\:\:\:\:\scriptstyle t^{-2}q^{-2}$} (h);
	
	\draw[->] (d) to node[left]{$\scriptstyle q^{-4}$} (i);
	\draw[->] (d) to node[right]{$\scriptstyle \!q^2$} (j);
	\draw[<-] (d) to node[near start, above]{$\scriptstyle \:\:\:\:t^{-2}$} (l);
	
	\draw[->] (e) to node[near end, below right]{$\scriptstyle \!\!\!\!\!q^{-2}$} (j);
	\draw[->] (e) to node[right]{$\scriptstyle\! q^4$} (k);
	\draw[<-] (e) to node[near start, above right]{$\scriptstyle \!\!\!t^{-2}$} (m);
	
	\draw[->] (f) to node[above left]{$\scriptstyle q^{-2}\!\!\!\!\!\!$} (l);
	\draw[->] (f) to node[below left]{$\scriptstyle q^2$} (m);
	\draw[<-] (f) to node[below left,near end]{$\scriptstyle t^{-2}q^2\!\!\!$} (n);
	\draw[<-] (f) to node[near start, above right]{$\scriptstyle\!\!\!\! t^{-2}q^{-2}$} (o);
	
	\draw[->] (g) to node[above left]{$\scriptstyle 1\!\!\!$} (n);
	\draw[<-] (g) to node[left]{$\scriptstyle t^{-2}q^{4}\!$} (p);
	\draw[<-] (g) to node[near start, above right]{$\scriptstyle\!\!\!\!\! t^{-2}q^{-2}$} (q);
	
	\draw[->] (h) to node[near start, above]{$\scriptstyle 1\!\!\!$} (o);
	\draw[<-] (h) to node[left]{$\scriptstyle t^{-2}q^2\!\!$} (q);
	\draw[<-] (h) to node[right]{$\scriptstyle \!t^{-2}q^{-4}$} (r);
\end{tikzpicture}
}
\]
\caption[Bipartition graph up to word length $3$]{Bipartition graph up to
  word length $3$}
\end{figure}
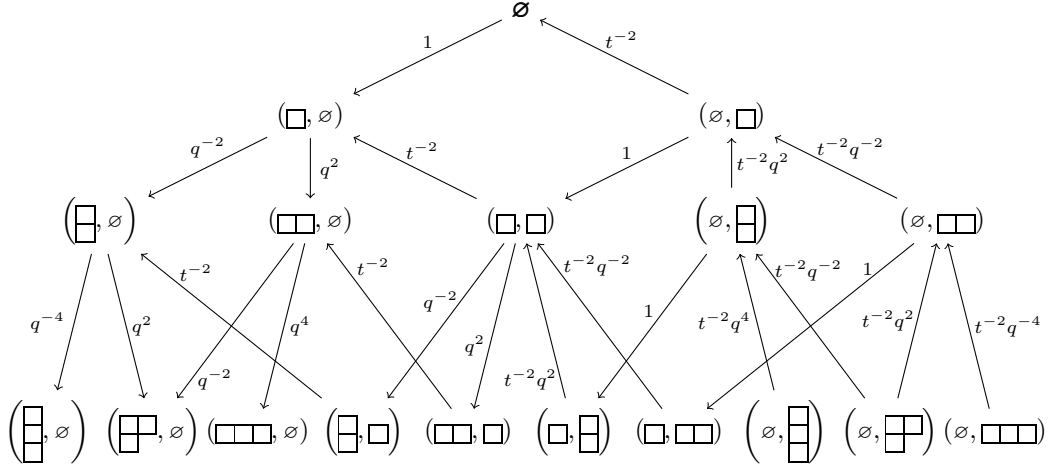

By a {\em path} $\gamma:\LA \rightsquigarrow \MU$ we mean an undirected path in
the bipartition graph
starting at $\LA$ and ending
at $\MU$.
The {\em type} of
such a path $\gamma$ is $\type(\gamma):=\a_\bi \in
\Words$ 
where $\bi = i_n \cdots i_1$ records the colors on the edges of the path
$\LA \stackrel{i_1}{\rule{5mm}{0.5pt}} \cdots \stackrel{i_n}{\rule{5mm}{0.5pt}} \MU$
and $\a = \a_n \cdots \a_1\in\words$ is defined so $\a_m = \up$ or
$\down$ according to whether
the $m$th edge is traversed forwards or backwards according to its direction.
For example, the path
$$
\left(\,\text{\small$\yng(1)$}\,,\,\varnothing\right)
\stackrel{\:t^{-2}\!}{\longleftarrow}  \left(\,\text{\small$\yng(1)$}\,,\,\text{\small$\yng(1)$}\,\right)
\stackrel{1}{\longleftarrow}  \left(\varnothing\,,\,\text{\small$\yng(1)$}\,\right)
\stackrel{t^{-2}}{\longrightarrow} \NOTHING
\stackrel{t^{-2}}{\longleftarrow}
\left(\varnothing\,,\,\text{\small$\yng(1)$}\,\right)$$
is of type $\down_{t^{-2}}\up_{t^{-2}} \down_1 \down_{t^{-2}}$.

\begin{theorem}\label{characters}
For $\LA \in \Par$, we have that
$\displaystyle\ch \tilde \Delta(\LA) =
\sum_{\gamma:\NOTHING\rightsquigarrow \LA} \type(\gamma).$
\end{theorem}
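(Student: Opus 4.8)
The plan is to compute $\ch\tilde\Delta(\LA)$ by induction on $|\LA| := r+s$ for $\LA\in\Par_{r,s}$, using the branching rules from Lemma~\ref{phew} together with the additivity of $\ch$ on short exact sequences (and the elementary fact $1_\a = \sum_\bi 1_{\a_\bi}$). The base case $\LA = \NOTHING$ is clear: $\tilde\Delta(\NOTHING) = \infl^\sharp \SS(\NOTHING)\otimes_{OS^\sharp} OS = 1_\varnothing OS$ modulo nothing, so $\ch\tilde\Delta(\NOTHING) = 1_\varnothing$ (there is a unique path $\NOTHING\rightsquigarrow\NOTHING$, the empty one, of type $\varnothing$). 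Wait---more carefully, $\tilde\Delta(\NOTHING)$ is the standardization of the trivial $OS^\circ_{0,0}$-module, which as a $\K$-module is $\K\otimes_\K OS^- = OS^-$; but its shortest word space is $1_\varnothing$, and by Lemma~\ref{basically} applied repeatedly one recovers $\ch\tilde\Delta(\NOTHING) = \sum_{\gamma:\NOTHING\rightsquigarrow\NOTHING}\type(\gamma)$ anyway, so the base case is fine.

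For the inductive step, I would pick $\LA\in\Par_{r,s}$ with $|\LA| = n \geq 1$ and use Lemma~\ref{basically}:
\[
\ch\tilde\Delta(\LA) = \sum_{i\in I}\down_i\bigl(\ch E_i\tilde\Delta(\LA)\bigr) + \sum_{i\in I}\up_i\bigl(\ch F_i\tilde\Delta(\LA)\bigr).
\]
Now $\tilde\Delta(\LA) = \Delta(\SS(\LA))$, so by the short exact sequences (\ref{sesI}) and (\ref{sesII}) of Lemma~\ref{phew} and exactness/additivity of $\ch$,
\[
\ch E_i\tilde\Delta(\LA) = \ch\tilde\Delta(E_i^\down \SS(\LA)) + \ch\tilde\Delta(E_i^\up\SS(\LA)),\qquad
\ch F_i\tilde\Delta(\LA) = \ch\tilde\Delta(F_i^\up\SS(\LA)) + \ch\tilde\Delta(F_i^\down\SS(\LA)),
\]
where I am writing $\tilde\Delta(N)$ for $\Delta(N)$ when $N$ is an $OS^\circ$-module obtained by the construction from a permutation module, noting that $\Delta$ is exact so it takes filtrations of $OS^\circ$-modules by $\SS(\MU)$'s to filtrations by $\tilde\Delta(\MU)$'s, hence $\ch$ of it is the sum over the sections. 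Then Lemma~\ref{newfirst} identifies precisely which $\SS(\MU)$'s occur: $E_i^\up\SS(\LA)$ has sections $\SS(\MU)$ over $\MU\in\Par_{r+1,s}$ obtained by adding a node of content $i$ to $\lambda^\up$; $F_i^\up\SS(\LA)$ over $\MU\in\Par_{r-1,s}$ removing a node of content $i$ from $\lambda^\up$; $F_i^\down\SS(\LA)$ over $\MU\in\Par_{r,s+1}$ adding a node of content $t^{-2}i^{-1}$ to $\lambda^\down$; and $E_i^\down\SS(\LA)$ over $\MU\in\Par_{r,s-1}$ removing a node of content $t^{-2}i^{-1}$ from $\lambda^\down$. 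Applying the induction hypothesis $\ch\tilde\Delta(\MU) = \sum_{\gamma:\NOTHING\rightsquigarrow\MU}\type(\gamma)$ to all these $\MU$ (each of which has strictly smaller $|\MU|$? --- no: $E_i^\up$, $F_i^\down$ increase the size. So the induction cannot be on total size.)

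The main obstacle, then, is choosing the right induction parameter. The fix: induct on $|\LA|$ but read the recursion in the \emph{other} direction. Since every path $\gamma:\NOTHING\rightsquigarrow\LA$ of length $n\geq 1$ has a last edge, removing that edge gives a path $\gamma':\NOTHING\rightsquigarrow\MU$ of length $n-1$ together with the data of the final edge (an edge $\MU\stackrel{i}{\rightarrow}\LA$ or $\LA\stackrel{i}{\rightarrow}\MU$ in the bipartition graph); the type of $\gamma$ is then $\up_i\type(\gamma')$ or $\down_i\type(\gamma')$ according to the direction. Grouping the paths $\gamma:\NOTHING\rightsquigarrow\LA$ by their last edge gives exactly
\[
\sum_{\gamma:\NOTHING\rightsquigarrow\LA}\type(\gamma) = \sum_{i\in I}\ \sum_{\MU\,:\,\MU\stackrel{i}{\rightarrow}\LA}\up_i\Bigl(\sum_{\gamma':\NOTHING\rightsquigarrow\MU}\type(\gamma')\Bigr) + \sum_{i\in I}\ \sum_{\MU\,:\,\LA\stackrel{i}{\rightarrow}\MU}\down_i\Bigl(\sum_{\gamma':\NOTHING\rightsquigarrow\MU}\type(\gamma')\Bigr).
\]
Here $\MU$ ranges over bipartitions with $|\MU| = n-1$, so the induction hypothesis applies to each inner sum, identifying it with $\ch\tilde\Delta(\MU)$. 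It remains to match the combinatorics of edges of the bipartition graph with the branching rules: an edge $\MU\stackrel{i}{\to}\LA$ means either $\LA$ is obtained from $\MU$ by adding a node of content $i$ to $\mu^\up$ (so $\MU\in\Par_{r-1,s}$, $\MU$ obtained from $\LA$ by removing such a node --- this is the $F_i^\up$ case, matching $\up_i(\ch F_i\cdots)$) or $\MU$ is obtained from $\LA$ by adding a node of content $t^{-2}i^{-1}$ to $\lambda^\down$ (so $\MU\in\Par_{r,s+1}$ --- this is the $F_i^\down$ case, also matching $\up_i$). Dually an edge $\LA\stackrel{i}{\to}\MU$ is either $\MU\in\Par_{r+1,s}$ adding content $i$ to $\lambda^\up$ (the $E_i^\up$ case, matching $\down_i$) or $\MU\in\Par_{r,s-1}$ with $\LA$ obtained from $\MU$ by adding content $t^{-2}i^{-1}$ to $\mu^\down$ (the $E_i^\down$ case, matching $\down_i$). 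Comparing this with the expansion of $\ch\tilde\Delta(\LA)$ via Lemmas~\ref{basically},~\ref{phew},~\ref{newfirst} above, the two sides agree term by term, completing the induction. The only genuinely delicate points to be careful about are (a) confirming that $\ch\tilde\Delta(\MU)$ is indeed additive over the sections of the Specht-filtration of $E_i^\up\SS(\LA)$ etc.\ --- this is immediate since $\Delta$ is exact and $\ch$ is additive on short exact sequences --- and (b) keeping the $t$-shift $t^{-2}i^{-1}$ vs.\ $i$ bookkeeping consistent between the graph definition and Lemma~\ref{newfirst}, which is purely a matter of unwinding (\ref{decomp1}).
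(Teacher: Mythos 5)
Your recursion is exactly the right one and coincides with the paper's --- you correctly unwind Lemma~\ref{basically} together with the short exact sequences of Lemma~\ref{phew} and the branching rules of Lemma~\ref{newfirst}, match the edge-types of the bipartition graph to the four refined functors, and observe that the path sum satisfies the same recursion (grouping by last edge). The difference is in how the induction closes, and there is a genuine gap there.

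The issue is the sentence ``Here $\MU$ ranges over bipartitions with $|\MU| = n-1$, so the induction hypothesis applies.'' That is false. For $\LA\in\Par_{r,s}$, the neighbors $\MU$ appearing in your displayed recursion are of four kinds: $\MU\in\Par_{r-1,s}$ or $\Par_{r,s+1}$ (with $\MU\stackrel{i}{\to}\LA$), and $\MU\in\Par_{r+1,s}$ or $\Par_{r,s-1}$ (with $\LA\stackrel{i}{\to}\MU$). Half of these satisfy $|\MU| = |\LA| + 1 > |\LA|$, so an induction on $|\LA|$ does not close, even after you ``read the recursion in the other direction'' --- the recursion simply doesn't push you towards smaller bipartitions. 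You already noticed this obstruction earlier in your write-up, but the proposed fix doesn't actually remove it; it only replaces ``first edge'' by ``last edge,'' and the neighbors are the same set either way.

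What is actually needed (and what the paper's terse ``induction on path length'' means) is induction on the \emph{length of the word} $\a_\bi\in\Words$, i.e.\ equating coefficients of each word of fixed length on the two sides. The recursion expresses the coefficient of any word of length $n\geq 1$ in $\ch\tilde\Delta(\LA)$ in terms of coefficients of length-$(n-1)$ words in $\ch\tilde\Delta(\MU)$ for the neighbors $\MU$ --- regardless of whether $|\MU|$ went up or down --- and the path sum satisfies the identical recursion. The base case is the coefficient of the empty word: this is $\dim\tilde\Delta(\LA)1_\varnothing$, which equals $\delta_{\LA,\NOTHING}$ since by the triangular decomposition the shortest word space of $\tilde\Delta(\LA)$ sits at words of length $|\LA|$; on the path side there is a unique length-$0$ path $\NOTHING\rightsquigarrow\LA$ exactly when $\LA=\NOTHING$. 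With this change of inductive parameter your argument is correct and essentially identical to the paper's.
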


\begin{proof}
Note the infinite
sum in the theorem makes sense since there are only finitely
many paths of any given length.
From (\ref{sesI})--(\ref{sesII}) and Lemma~\ref{newfirst},
we get some explicit $\tilde\Delta$-filtrations of
 $E_i \tilde\Delta(\LA)$
and $F_i \tilde\Delta(\LA)$ with sections $\tilde\Delta(\MU)$ 
for each $\MU \stackrel{i}{\leftarrow}
\LA$ or
$\MU \stackrel{i}{\rightarrow} \LA$, respectively.
Applying Lemma~\ref{basically}, we deduce that
$$
\ch \tilde\Delta(\LA) = \sum_{i\in I}\Big(\sum_{\MU
  \stackrel{i}{\leftarrow} \LA} \down_i \ch \tilde\Delta(\MU)+
 \sum_{\MU
  \stackrel{i}{\rightarrow} \LA} \up_i \ch \tilde\Delta(\MU)\Big).
$$
Now use induction on path length.
\end{proof}

\begin{corollary}\label{Canpath}
Take $\LA \in \Par$ and $\MU \in \RPar$.
If $\L(\MU)$ is a composition factor of $\tilde\Delta(\LA)$
then there is a path $\gamma:\NOTHING\rightsquigarrow\LA$ 
and a minimal length path $\delta:\NOTHING\rightsquigarrow \MU$
such that $\type(\gamma)=\type(\delta)$.
\end{corollary}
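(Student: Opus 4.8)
The plan is to use Theorem~\ref{characters} as a dictionary between the formal characters of the modules $\tilde\Delta(\cdot)$ and paths in the ($t$-shifted) bipartition graph, and to read off both $\gamma$ and $\delta$ from a single monomial $w\in\Words$. Write $\MU\in\RPar_{r,s}$, so $|\mu^\up|=r$ and $|\mu^\down|=s$. Since $\L(\MU)$ is the shortest word module of type $\MU$ --- its shortest word space has word length $r+s$ and is isomorphic to $\D(\MU)$ as an $OS^\circ$-module --- and since $\D(\MU)\neq 0$, there is a monomial of length $r+s$ occurring (with nonzero coefficient) in $\ch\L(\MU)$; fix such a $w$, and note that no monomial of smaller length occurs in $\ch\L(\MU)$.

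First I would produce $\gamma$. As $\L(\MU)$ is a composition factor of $\tilde\Delta(\LA)$ and $\ch$ is additive on short exact sequences with nonnegative coefficients, $w$ occurs in $\ch\tilde\Delta(\LA)$. By Theorem~\ref{characters}, $\ch\tilde\Delta(\LA)=\sum_{\gamma:\NOTHING\rightsquigarrow\LA}\type(\gamma)$, so there is a path $\gamma:\NOTHING\rightsquigarrow\LA$ with $\type(\gamma)=w$.

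Next I would produce $\delta$ and verify its minimality. The standardization functor is exact and, since $\MU$ is $e$-restricted, $\SS(\MU)$ has $\D(\MU)$ as a quotient; hence $\tilde\Delta(\MU)\twoheadrightarrow\bar\Delta(\MU)\twoheadrightarrow\L(\MU)$, so $w$ occurs in $\ch\tilde\Delta(\MU)=\sum_{\delta:\NOTHING\rightsquigarrow\MU}\type(\delta)$, again by Theorem~\ref{characters}. Thus $w=\type(\delta)$ for a path $\delta:\NOTHING\rightsquigarrow\MU$ with exactly $r+s$ edges. But every edge of the bipartition graph adds or removes a single node, so any path $\NOTHING\rightsquigarrow\MU$ has at least $|\mu^\up|+|\mu^\down|=r+s$ edges; therefore $\delta$ is of minimal length, and $\type(\gamma)=w=\type(\delta)$, as required.

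I do not expect any real obstacle: once Theorem~\ref{characters} is in hand the argument is essentially forced. The two points needing a little care are that the minimal length of a path $\NOTHING\rightsquigarrow\MU$ (equivalently, of a monomial occurring in $\ch\L(\MU)$) is exactly $|\mu^\up|+|\mu^\down|$, which is immediate from the triangular decomposition (Lemma~\ref{td}) and the definition of the graph, and keeping the colour conventions consistent --- the dots defining the refined idempotents $1_{\a_\bi}$, the $t^{-2}$-twisted Jucys--Murphy elements $X^\circ(\cdot)$ of $OS^\circ$, and the $t$-shift built into the edges --- but this is precisely what Theorem~\ref{characters} already records, so nothing new is required.
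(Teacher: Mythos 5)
Your proof is correct and follows essentially the same route as the paper's: pick a shortest-length monomial $w$ in $\ch\L(\MU)$, observe that $w$ occurs in both $\ch\tilde\Delta(\LA)$ (since $[\tilde\Delta(\LA):\L(\MU)]\neq 0$) and $\ch\tilde\Delta(\MU)$ (since $[\tilde\Delta(\MU):\L(\MU)]\neq 0$), then apply Theorem~\ref{characters} twice and use minimality of $|w|$. The only cosmetic difference is that you explicitly justify $[\tilde\Delta(\MU):\L(\MU)]\neq 0$ via the surjections $\tilde\Delta(\MU)\twoheadrightarrow\bar\Delta(\MU)\twoheadrightarrow\L(\MU)$, which the paper leaves implicit.
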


\begin{proof}
Pick any word $\a_\bi \in \words_I$ that appears with non-zero
coefficient in the formal character of the shortest word space of
$\L(\MU)$.
Since 
$[\widetilde\Delta(\LA):\L(\MU)]$ and
$[\widetilde\Delta(\MU):\L(\MU)]$ are both non-zero, 
$\a_\bi$ also has non-zero coefficients in $\ch \widetilde\Delta(\LA)$
and $\ch \widetilde\Delta(\MU)$.
So
Theorem~\ref{characters} implies
that there are paths $\gamma:\NOTHING \rightsquigarrow \LA$ and 
$\delta:\NOTHING \rightsquigarrow \MU$ of the same type
$\a_\bi$. Moreover, $\delta$ is of minimal length amongst all paths
$\NOTHING \rightsquigarrow \MU$.
\end{proof}

\begin{corollary}\label{morocco}
Suppose that $\MU \in \RPar_{r,s}$. 
If {\em either} $t \notin \{\pm q^n\:|\:n \in \ZZ\}$, {\em or}
$e=0$, $t = q^{n}$ for $n \in \NN$ and $h(\MU) \leq n$,
then we have that $\P(\MU) = \Delta(\MU)$.
\end{corollary}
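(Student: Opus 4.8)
The plan is to deduce $\P(\MU) = \Delta(\MU)$ by showing that $\Delta(\MU)$ is projective, which (since $\Delta(\MU)$ has irreducible head $\L(\MU)$ by Theorem~\ref{class}, being a quotient of $\bar\Delta(\MU)$) forces it to be the projective cover of $\L(\MU)$. Equivalently, using BGG reciprocity (Corollary~\ref{BGG}), it suffices to prove that $(\P(\MU):\Delta(\LA)) = [\bar\Delta(\LA):\L(\MU)] = 0$ for all $\LA \neq \MU$, i.e.\ that the only standard subquotient of $\P(\MU)$ is $\Delta(\MU)$ itself. I would establish the stronger combinatorial fact that $[\widetilde\Delta(\LA):\L(\MU)] = 0$ whenever $\LA \neq \MU$ (note $\widetilde\Delta = \bar\Delta = \Delta$ when $e = 0$, and in general $[\bar\Delta(\LA):\L(\MU)]$ is controlled by the $[\widetilde\Delta(\LA):\L(\MU)]$ via the filtrations in Corollary~\ref{BGGplus} and the discussion preceding it).

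The key tool is Corollary~\ref{Canpath}: if $\L(\MU)$ is a composition factor of $\widetilde\Delta(\LA)$, there is a path $\gamma:\NOTHING\rightsquigarrow\LA$ and a minimal-length path $\delta:\NOTHING\rightsquigarrow\MU$ with $\type(\gamma)=\type(\delta)$. So the heart of the argument is purely combinatorial: under either hypothesis on $t$, I must show that a minimal-length path from $\NOTHING$ to $\MU$ forces its endpoint to be uniquely determined by its type, and that a path of that type reaching $\LA$ can only have $\LA = \MU$. In the case $t\notin\{\pm q^n\}$, the content sets $I_1 = \{q^{2n}\}$ and $I_{t^{-2}} = \{t^{-2}q^{-2n}\}$ are \emph{disjoint} in $\k^\times$; since edges in the bipartition graph that add a node to $\la^\up$ carry colors in $I_1$ and those adding a node to $\la^\down$ carry colors in $I_{t^{-2}}$, a minimal-length path $\NOTHING\rightsquigarrow\MU$ can never backtrack (any backward step would have to be undone by a later forward step of the same color, contradicting minimality, because the two color families never interfere). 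Hence a minimal-length path to $\MU\in\RPar_{r,s}$ has exactly $r+s$ edges and is built by adding nodes only; its type $\a_\bi$ records a standard-tableau-like filling of $\mu^\up$ and $\mu^\down$, and any other path of the same type must add the same multiset of nodes with the same content-bookkeeping, hence reach the same bipartition $\MU$. For the second case ($e=0$, $t=q^n$, $h(\MU)\le n$), the content families overlap, so backtracking \emph{is} possible in general; here I would instead invoke the remark following Lemma~\ref{koike2} — which observes that the same induction proving Lemma~\ref{koike2} shows $\Delta(\LA)=\P(\LA)$ in the situation of that lemma — and simply record that argument: $\Q(\MU) = \Psi$-preimage of the projection onto the irreducible $U_q(\mathfrak{gl}_n)$-module $\V(\MU)$, the tilting module $\V(\mu^\down)^*\otimes\V(\mu^\up)$ decomposes with $\V(\MU)$ appearing once and the other constituents $\V(\NU)$ having $h(\NU)\le n$ with strictly fewer boxes, and an induction on $r+s$ (using that $\Q(\NU)=\P(\NU)=\Delta(\NU)$ for those smaller $\NU$ by the inductive hypothesis, combined with the $\Delta$-flag of $\Q(\MU)$ from Theorem~\ref{koike1}) shows $\Q(\MU) = \P(\MU) \oplus \bigoplus_{\NU}\P(\NU)$ with multiplicities matching $M^\MU_\NU$, which by Theorem~\ref{koike1} exactly exhausts the $\Delta$-flag, leaving $\P(\MU) = \Delta(\MU)$.

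The main obstacle is the no-backtracking argument in the first case: I need to argue cleanly that along a \emph{minimal-length} path, the sequence of forward/backward steps, once sorted by which component ($\up$ or $\down$) they affect, cannot contain a cancelling pair. The cleanest way is to project a path $\NOTHING\rightsquigarrow\MU$ onto its two components — the $\up$-steps form a walk in Young's lattice from $\varnothing$ to $\mu^\up$ and the $\down$-steps a walk from $\varnothing$ to $\mu^\down$ — observe these two walks are independent (an edge never changes both components), note that a walk in Young's lattice from $\varnothing$ to a partition of $r$ has length $\ge r$ with equality iff it is monotone increasing, and conclude that minimality of the total length forces both component-walks to be increasing, hence the path adds exactly $\mu^\up\sqcup\mu^\down$ and its type determines $\MU$. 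Once this is in place, Corollary~\ref{Canpath} immediately gives $[\widetilde\Delta(\LA):\L(\MU)]=0$ for $\LA\ne\MU$, and the projectivity of $\Delta(\MU)$ follows from Corollary~\ref{BGG} together with the fact that $\P(\MU)$ has a $\Delta$-flag whose only possible section is $\Delta(\MU)$.
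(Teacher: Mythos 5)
Your broad strategy — reduce via Corollary~\ref{BGG} and Corollary~\ref{Canpath} to a combinatorial statement about paths in the bipartition graph — is the same as the paper's, but there is a genuine gap in how you carry it out, and it matters.

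You propose to show the blanket statement $[\widetilde\Delta(\LA):\L(\MU)]=0$ for all $\LA\ne\MU$. This is simply false in the first case of the corollary, which allows $e>0$ (that is, $q$ a root of unity) as long as $t$ is generic. There the Specht module $\SS(\LA)$ is reducible, and since $\widetilde\Delta(\LA)1_\a$ at the shortest word length is $\SS(\LA)$ as an $OS^\circ$-module, one has $[\widetilde\Delta(\LA):\L(\MU)]\ge[\SS(\LA):\D(\MU)]$, which is positive for various $\MU\ne\LA$ in the same stratum $\Par_{r,s}$. The corresponding step of your path argument (``its type determines $\MU$ \ldots hence reach the same bipartition'') fails for the same reason: when $e>0$ the content of a box lives in $\ZZ/e$, a partition can have more than one addable box of a given content, and so a monotone walk in Young's lattice is \emph{not} determined by the sequence of contents it adds. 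What your no-backtracking argument does prove — and this is exactly what the paper proves — is only that a path of the same type as $\delta$ ends somewhere in $\Par_{r,s}$. To get from $\LA\in\RPar_{r,s}$ to $\LA=\MU$ you then need the shortest-word observation that $\bar\Delta(\LA)1_\a$ at length $r+s$ is the irreducible $OS^\circ$-module $\D(\LA)$, so that $\L(\MU)$ (whose shortest word space is $\D(\MU)$, also at length $r+s$) can appear in $\bar\Delta(\LA)$ only if $\D(\MU)\cong\D(\LA)$. The paper flags this step explicitly in its very first reduction (``Since $\bar\Delta(\LA)$ and $\L(\MU)$ have the same shortest word spaces\ldots''); your write-up omits it, and without it the argument does not close in the root-of-unity case.

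For the second case you abandon the path argument and sketch the $\Q(\LA)$-induction hinted at in the remark following Lemma~\ref{koike2}. That is a legitimate alternative route, but it is genuinely different from the paper's: the paper deliberately proves Corollary~\ref{morocco} a second way, and its actual proof handles case~2 with the \emph{same} path argument — the hypothesis $h(\MU)\le n$ makes the $\up$-contents of $\mu^\up$ and the $\down$-contents of $\mu^\down$ land in disjoint ranges of exponents, so there is a unique path of type $\delta$ starting at $\NOTHING$, and $\gamma=\delta$ forces $\LA=\MU$ directly. Your sketch of the $\Q$-induction also contains some slips that would need to be cleaned up: $\Q(\MU)$ is not a ``$\Psi$-preimage'' of a projection (the relevant object is the idempotent $e_\MU\in\Q(\MU)$), and the statement ``$\Q(\NU)=\P(\NU)=\Delta(\NU)$'' should read $\P(\NU)=\Delta(\NU)$. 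More substantively, the induction as you sketch it is underdetermined: knowing $\Q(\LA)=\P(\LA)\oplus\bigoplus_\NU\P(\NU)^{\oplus c_\NU}$ with the smaller $\P(\NU)=\Delta(\NU)$, and knowing the $\Delta$-flag multiplicities $(\Q(\LA):\Delta(\NU))=M^\LA_\NU$, gives one linear equation in the two unknowns $c_\NU$ and $(\P(\LA):\Delta(\NU))$ for each $\NU$; you still need to pin down the $c_\NU$, say by comparing with the $U_q(\mathfrak{gl}_n)$-decomposition via Lemma~\ref{koike2}, and that extra input needs to be spelled out. Given that the uniform path argument handles both cases once the shortest-word step is in place, that route is the cleaner one to follow.
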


\begin{proof}
In view of Corollary~\ref{BGG}, it suffices to show that
$[\bar\Delta(\LA):\L(\MU)] = \delta_{\LA,\MU}$ for all $ \LA \in \RPar$.
Since $\bar\Delta(\LA)$ and $\L(\MU)$ have the same shortest word
spaces, this follows if we can show for $\LA \in \RPar$ that
$[\bar\Delta(\LA):\L(\MU)] \neq 0
\Rightarrow \LA \in \RPar_{r,s}$.
So suppose that $[\bar\Delta(\LA):\L(\MU)]\neq 0$.
Corollary~\ref{Canpath} implies that 
there is a path $\gamma:\NOTHING\rightsquigarrow\LA$
of the same type as a minimal length path $\delta:\NOTHING
\rightsquigarrow \MU$.
Being of minimal length means that $\delta$ is some permutation of the word
$\up_{i_{r+s}}\cdots\up_{i_{r+1}}\down_{i_{r}}\cdots\down_{i_{1}}$,
where $i_1,\dots,i_r$ are the $\up$-contents of the nodes of $\mu^\up$
and $i_{r+1},\dots,i_{r+s}$ are the $\down$-contents of the nodes of
$\mu^\down$.

If $t \notin \{\pm q^n\:|\:n \in \ZZ\}$, then the set of possible
$\up$-contents of nodes of partitions is disjoint from the set of
possible $\down$-contents.
So any path of type $\delta$ starting at $\NOTHING$ necessarily 
ends at an element of $\Par_{r,s}$. We deduce that $\LA
\in \RPar_{r,s}$ as required.
Instead, suppose that $e=0, t = q^n$ for $n \in \NN$, and $h(\MU)
\leq |n|$. 
Recalling that $h(\MU)$ is the total number of non-zero
parts in both $\mu^\up$ and $\mu^\down$,
these assumptions imply that 
$i_1,\dots,i_{r+s}$ are all distinct, and there is a unique path of type $\delta$
starting at $\NOTHING$. This shows that $\gamma=\delta$, hence, $\LA = \MU$.
\end{proof}

\begin{proof}[Proof of Theorem~\ref{selection}]
Corollary~\ref{morocco} shows that $\P(\LA) = \Delta(\LA)$ for all $\LA
\in \RPar$. 
So the standardization functor $\Delta$ 
sends the indecomposable projectives $\{\Y(\LA)\:|\:\LA\in\RPar\}$ in $\Mod OS^\circ$
to the indecomposable projectives $\{\P(\LA)\:|\:\LA \in \RPar\}$ in $\Mod OS$.
Since this functor is also exact, it follows that it is an equivalence
of categories.
\end{proof}

\begin{proof}[Proof of Theorem~\ref{sscase}]
Suppose that $q$ is not a root of unity and $t \notin \{\pm
q^n\:|\:n \in \ZZ\}$.
The first assumption means that $e=0$, so $OS^\circ$ is semisimple.
Hence, $OS$, or equivalently $\dot\OS(z,t)$, is semisimple thanks to Theorem~\ref{selection}.
The parametrization of indecomposable objects in $\dot\OS(z,t)$
follows from Theorem~\ref{class}: up to isomorphism they correspond to the irreducible
projective modules
$\{\Delta(\LA)\:|\:\LA \in \Par\}$.
Moreover, Theorem~\ref{thegg} shows in this case that
$K_0(\dot\OS(z,t))$ may be identified with $\SYM\otimes_\ZZ\SYM$ so that
$[\Delta(\LA)] \leftrightarrow \chi_\LA$.

It remains to show that $OS$ is not semisimple 
for all other parameter choices.
If $q$ is a root of unity and $t \notin \{\pm q^n\:|\:n \in \ZZ\}$,
this follows from Theorem~\ref{selection}, since Hecke algebras are not
semisimple at roots of unity.
Finally, suppose that $q$ is arbitrary but $t = \pm q^n$ for some $n \in \ZZ$.
Using the isomorphisms (\ref{pi})--(\ref{signaut}), we may as well assume that
$t = q^n$ for $n \in \NN$.
Example~\ref{swex} shows that $\bar\Delta(((n),\varnothing))$ is
reducible, since it has a composition factor isomorphic to
$\L(((n+1),(1)))$.
Since $\bar\Delta(((n),\varnothing))$ is a finitely generated module with
irreducible head, it is therefore not completely reducible,
and $OS$ is not semisimple.
\end{proof}

\section{Categorical action}\label{tpc}

Recall 
that $q \in \k^\times$ is either not a root of unity (in which case $e=0$), or that
$q^2$ is a primitive $e$th root of unity for some $e > 1$.
We are going to show that $\Mod  OS$ has the structure of a tensor
product categorification in the general sense of Losev and Webster
\cite{LW}.
This is most interesting when $t \in \{\pm q^n\:|\:n \in \ZZ\}$ (so
that the $\g$-module $V(-\Lambda_0|\Lambda_{t^{-2}})$ is reducible), but there is no need to impose this assumption.

The set $I$ from (\ref{I})
will now be used to index the simple roots of a symmetric
Kac-Moody algebra $\g$ (over ground field $\CC$),
namely, the Kac-Moody algebra with
Cartan matrix $(c_{i,j})_{i,j \in I}$ defined by (\ref{cartan}).
The Lie algebra $\g$ is generated by its Cartan subalgebra $\h$
and Chevalley generators $\{e_i, f_i\:|\:i \in I\}$ subject to the
Serre relations.
Let 
$$
P := \left\{\La\in\h^*\:\big|\:\langle h_i, \La\rangle \in \ZZ\text{ for
  all }i \in I\right\}.
$$
The simple roots are 
$\{\alpha_i\:|\:i \in I\}$, and
we have that $\langle h_i, \alpha_j \rangle = c_{i,j}$
where $h_i := [e_i,f_i]$.
The fundamental dominant weights are $\{\Lambda_i\:|\:i \in I\}$.
For $i \in I$, let $V(\La_i)$ (resp. $V(-\La_i)$) denote the
integrable highest (resp. lowest) weight module
of highest weight $\La_i$ (resp. lowest weight $-\La_i$).

Let $\g^\up  = \{x^\up\:|\:x \in \g\}$ and $\g^\down = \{x^\down\:|\:x
\in \g\}$ be two copies of $\g$
with Cartan subalgebras $\h^\up$ and $\h^\down$, respectively.
There is a Lie algebra homomorphism
\begin{equation}\label{De}
\Delta:\g \rightarrow \g^\up \oplus \g^\down,
\qquad x \mapsto x^\up + x^\down.
\end{equation}
Identifying $U(\g^\up\oplus\g^\down)$ with $U(\g)\otimes U(\g)$, this
homomorphism amounts to the usual comultiplication on $U(\g)$.
Let $\Fock$ be the $\CC$-vector space with
basis $\{v_\LA\:|\:\LA \in \Par\}$.
The following makes $\Fock$ into a $\g^\up \oplus \g^\down$-module:
\begin{itemize}
\item 
For $i \in I^\up$ we let $e_i^\up v_\LA$ (resp. $f_i^\up v_\LA$)
be the vector $\sum_\MU v_\MU$
summing over all bipartitions $\MU$ obtained from $\LA$ by
adding (resp. removing) a node of $\up$-content $i$ to (resp. from)
$\la^\up$.
\item For $i \in I^\down$ we let $e_i^\down v_\LA$ (resp. $f_i^\down v_\LA$)
be the vector $\sum_\MU v_\MU$
summing over all bipartitions $\MU$ obtained from $\LA$ by
removing (resp. adding) a node of $\down$-content $i$ from (resp. to) $\la^\down$.
\item
The actions of the Cartan subalgebras $\h^\up$ and $\h^\down$ are defined so that $v_\LA$ is 
a weight vector of the following weights for $\h^\up$ or $\h^\down$, respectively:
\begin{align}\label{css1}
\wt^\up(\LA)
&:=-\La_1 + \sum_{A \in \la^\up}
\alpha_{\cont(A)},\\\label{css2}
\wt^\down(\LA)
&:=\La_{t^{-2}} - \sum_{A \in \la^\down}
\alpha_{t^{-2}\cont(A)^{-1}}.
\end{align}
\end{itemize}

Let $V(-\Lambda_1|\Lambda_{t^{-2}})$ be the
$\g^\up\oplus\g^\down$-submodule
of $\Fock$ generated by $v_{\NOTHING}$.
When $e = 0$, we have that $V(-\Lambda_1|\Lambda_{t^{-2}}) = \Fock$,
but it is a proper submodule otherwise. Since $v_{\NOTHING}$ is a lowest weight vector for
$\g^\up$ of weight $-\La_1$ and a highest weight vector for
$\g^\down$ of weight $\La_{t^{-2}}$,
$V(-\Lambda_1|\Lambda_{t^{-2}})$ is isomorphic to the
irreducible $\g^\up\oplus\g^\down$-module $V(-\Lambda_1) \boxtimes 
V(\Lambda_{t^{-2}})$ (with $\g^\up$ acting on the first tensor factor
and $\g^\down$ acting on the second).

For $\LA \in \RPar_{r,s}$ and $r,s \geq 0$, we let
\begin{equation}\label{bla}
b_\LA(e,p) := \sum_{\MU \in \Par_{r,s}} [\SS(\MU):\D(\LA)]
v_\MU \in \Fock,
\end{equation}
so called because it depends on both $e$ and $p$.
The following lemma is a reinterpretation of a well-known result about the
representation theory of Hecke algebras.
It shows in particular that the vectors
$\{b_\LA(e,p)\:|\:\LA\in\RPar\}$ give a
basis for $V(-\Lambda_1|\Lambda_{t^{-2}})$.

\begin{lemma}\label{bunnarong}
There is a vector space isomorphism
\begin{align*}
\CC\otimes_\ZZ K_0(\pMod OS^\circ) &\stackrel{\sim}{\rightarrow}
V(-\Lambda_1|\Lambda_{t^{-2}}),
\qquad [\Y(\LA)] \mapsto b_\LA(e,p).
\end{align*}
This map intertwines the endomorphisms 
of $\CC\otimes_{\ZZ} K_0(\pMod OS^\circ)$
induced by the endofunctors
$E_i^\up, E_i^\down, F_i^\up, F_i^\down$
from (\ref{fan1})--(\ref{fan4})
with the actions of the Chevalley generators $e_i^\up, e_i^\down,
f_i^\up, f_i^\down$ of $\g^\up\oplus\g^\down$ on 
$V(-\Lambda_1|\lambda_{t^{-2}})$.
\end{lemma}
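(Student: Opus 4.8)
The plan is to split the problem into its upward and downward halves, identify each half with the classical Hecke‑algebra categorification of a basic representation, and then reassemble by an external tensor product.

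First I would pass through the Morita equivalences of Lemma~\ref{cartanlem} to identify $\CC\otimes_\ZZ K_0(\pMod OS^\circ)$ with $\bigoplus_{r,s\ge 0}\CC\otimes_\ZZ K_0(\pMod H_{r,s})$, and hence with the external tensor product $\mathcal K^\up\otimes_\CC\mathcal K^\down$, where $\mathcal K^\up:=\bigoplus_r\CC\otimes_\ZZ K_0(\pMod H_r)$ and $\mathcal K^\down:=\bigoplus_s\CC\otimes_\ZZ K_0(\pMod H_s)$. Under this identification $[\Y(\LA)]$ corresponds to $[\Y_{\la^\up}]\otimes[\Y_{\la^\down}]$ and $[\SS(\LA)]$ to $[\SS_{\la^\up}]\otimes[\SS_{\la^\down}]$, and by Lemma~\ref{fur} the operators induced by $E_i^\up,F_i^\up$ act only on the first tensor factor, as the decategorifications of $i$-induction and $i$-restriction for $\bigoplus_rH_r$, while the operators induced by $E_i^\down,F_i^\down$ act only on the second factor, as the decategorifications of $(t^{-2}i^{-1})$-restriction and $(t^{-2}i^{-1})$-induction for $\bigoplus_sH_s$. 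So it suffices to treat each tensor factor separately.

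Next I would invoke the classical categorification theorem for Hecke algebras of Lascoux--Leclerc--Thibon and Ariki \cite{Ariki} (in positive characteristic the analogous statement). Let $\Fock^\up:=\bigoplus_r\CC\otimes_\ZZ K_0(\fdMod H_r)$, with distinguished basis $\{[\SS_\lambda]\}$; by Lemma~\ref{classical} and the definition of the $\g$-action on $\Fock$ in (\ref{css1}), $\Fock^\up$ is precisely the $\g^\up$-module obtained from the part of $\Fock$ supported on bipartitions with empty second component, with $e_i^\up$ acting by $i$-induction and $f_i^\up$ by $i$-restriction (in particular $v_\varnothing=[\SS_\varnothing]$ is a lowest-weight vector of weight $-\La_1$). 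The Cartan (forgetful) map $c^\up:\mathcal K^\up\to\Fock^\up$, which by Brauer reciprocity sends $[\Y_\lambda]\mapsto\sum_\mu[\SS_\mu:\D_\lambda]\,[\SS_\mu]$, commutes with $i$-induction and $i$-restriction on both sides because these are literally the same exact functors on $\Mod H$ merely restricted to the projective subcategory; hence $c^\up$ is a $\g^\up$-module homomorphism once it is known to be injective, and injectivity is the elementary unitriangularity of the decomposition matrix $([\SS_\mu:\D_\lambda])$ with respect to dominance \cite{DJ}, which makes its square $e$-restricted submatrix invertible. The content of the classical theorem is then that the image of $c^\up$ is the integrable irreducible submodule generated by $v_\varnothing$, namely $V(-\La_1)$: one inclusion is immediate because $\operatorname{im}(c^\up)$ is a $\g^\up$-submodule containing $v_\varnothing$, and the reverse is the statement that decategorified projective Hecke modules lie in the vacuum submodule, equivalently that the basic-representation crystal consists of the $e$-restricted partitions. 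Running the identical argument on the downward side, with the content relabelling $i\mapsto t^{-2}i^{-1}$ forced by Lemma~\ref{fur} — so that no Chevalley flip is needed and $v_\varnothing$ is now the highest-weight vector of weight $\La_{t^{-2}}$ — gives a $\g^\down$-module isomorphism $c^\down:\mathcal K^\down\stackrel{\sim}{\longrightarrow}V(\La_{t^{-2}})$. Finally, $c^\up\otimes c^\down$ is a $\g^\up\oplus\g^\down$-module isomorphism from $\mathcal K^\up\otimes_\CC\mathcal K^\down\cong\CC\otimes_\ZZ K_0(\pMod OS^\circ)$ onto $V(-\La_1)\boxtimes V(\La_{t^{-2}})$, which is $V(-\Lambda_1|\Lambda_{t^{-2}})$ by the identification recorded just before the lemma; tracing through Step~1 it carries $[\Y(\LA)]$ to $\big(\sum_\mu[\SS_\mu:\D_{\la^\up}]v_\mu\big)\otimes\big(\sum_\nu[\SS_\nu:\D_{\la^\down}]v_\nu\big)=\sum_\MU[\SS_{\mu^\up}:\D_{\la^\up}][\SS_{\mu^\down}:\D_{\la^\down}]v_\MU=\sum_\MU[\SS(\MU):\D(\LA)]v_\MU=b_\LA(e,p)$, using that composition multiplicities over $H_{r,s}$ factor through the two tensor factors, and the intertwining of $E_i^\up,F_i^\up,E_i^\down,F_i^\down$ with $e_i^\up,f_i^\up,e_i^\down,f_i^\down$ is exactly the combination of $c^\up,c^\down$ being $\g$-module maps with the functor correspondences from Lemma~\ref{fur}.

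The genuinely new work here is bookkeeping rather than theory: the delicate points are to match the three convention shifts correctly — the role of $i$-induction as the raising operator $e_i^\up$ rather than a lowering operator on the upward side (which is why $v_\varnothing$ lands in the lowest-weight module $V(-\La_1)$), the content relabelling $i\mapsto t^{-2}i^{-1}$ on the downward side coming from Lemma~\ref{fur}, and which fundamental weight ($\La_1$ versus $\La_{t^{-2}}$) the empty partition carries — and to verify these are consistent with the weight formulas (\ref{css1})--(\ref{css2}). The underlying categorification input (injectivity of the Cartan map and the identification of its image with the basic representation) is the standard Hecke-algebra result; it may simply be cited, or reassembled on the spot from Brauer reciprocity, the branching rules of Lemma~\ref{newfirst}, and the Misra--Miwa description of the basic crystal, so that no appeal to the deep (canonical-basis) part of Ariki's theorem is required.
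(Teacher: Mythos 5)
Your factorization of $\CC\otimes_\ZZ K_0(\pMod OS^\circ)$ as $\mathcal K^\up\otimes_\CC\mathcal K^\down$ via Lemma~\ref{cartanlem}, with the action of each family of functors living on its own tensor factor by Lemma~\ref{fur}, is valid and a pleasant way to organize the argument; the computation at the end confirming that the resulting map agrees with $[\Y(\LA)]\mapsto b_\LA(e,p)$ is also correct. But the core intertwining step has a real gap, which shows up as a conflation of two different objects on the upward side.

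You define $\Fock^\up:=\bigoplus_r\CC\otimes_\ZZ K_0(\fdMod H_r)$ and assert that $\{[\SS_\lambda]\}$ is a basis of it and that it coincides with the single-partition part of $\Fock$. Neither claim is true when $e>0$: the $r$th graded piece of $K_0(\fdMod H_r)$ has dimension $|\RPar_r|$, whereas the $r$th graded piece of $\Fock^\up$ (as defined by the paper, with basis $v_\lambda$ over \emph{all} $\lambda\vdash r$) has dimension $p(r)$, and the $[\SS_\lambda]$ span but are not linearly independent in $K_0(\fdMod H_r)$. The honest map $v_\lambda\mapsto[\SS_\lambda]$ is a surjection $\Fock^\up\twoheadrightarrow K_0(\fdMod H)$ with nontrivial kernel. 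Your observation that ``the Cartan map commutes because $i$-induction and $i$-restriction are literally the same exact functors restricted to $\pMod$'' is a correct argument that the genuine Cartan map $K_0(\pMod H)\to K_0(\fdMod H)$ intertwines; but the lemma requires instead the map $[\Y_\lambda]\mapsto\sum_\mu[\SS_\mu:\D_\lambda]\,v_\mu$ \emph{into} $\Fock^\up$, which is a lift of the Cartan map through a non-injective quotient map, and intertwining does not transfer automatically across such a lift. Your final computation silently replaces $[\SS_\mu]$ by $v_\mu$ at exactly this step, so the intertwining you have actually established is for the wrong map.

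Two ways to repair this. One is what the paper does: instead of lifting the Cartan map, dualize. The map $f^*:\Fock\to K_0(\fdMod OS^\circ)$, $v_\LA\mapsto[\SS(\LA)]$, intertwines on the nose by the Specht branching rule (Lemma~\ref{newfirst}), with no filtration subtleties because one is working in the Grothendieck group of the abelian category; then $f$ is the adjoint of $f^*$ under the Cartan pairing on one side and the standard form on $\Fock$ on the other, and intertwining passes across because the $E$'s and $F$'s are biadjoint (Lemma~\ref{fur}) and the $e$'s and $f$'s are biadjoint with respect to the orthonormal basis $\{v_\LA\}$. The second repair, closer in spirit to yours, is to argue directly that $f([E_i^\up\Y_\lambda])=e_i^\up f([\Y_\lambda])$ by producing a Specht filtration of $E_i^\up\Y_\lambda$ from that of $\Y_\lambda$ using exactness of $E_i^\up$ and Lemma~\ref{classical}; this works, but you would then need to address the well-definedness of Specht-filtration multiplicities of projectives (e.g.\ by noting they equal the dimensions $\dim\Hom(-,\SS_\mu^{\circledast})$, or by working in the Grothendieck group of the exact category of Specht-filtered modules where $\{[\SS_\lambda]\}$ really is a basis), a point your present write-up does not raise.
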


\begin{proof}
Since the rectangular matrix $([\SS(\MU):\D(\LA)])$
is unitriangular, the elements $b_\LA(e,p)$ for $\LA \in \RPar$ are
linearly independent. So
the linear map $$
f:\CC\otimes_{\ZZ} K_0(\pMod OS^\circ)\rightarrow \Fock, \qquad [\Y(\LA)] \mapsto
b_\LA(e,p)
$$ 
is injective.
In the next paragraph, we show that $f$ intertwines 
$[E_i^\up], [E_i^\down], [F_i^\up], [F_i^\down]$ with $e_i^\up,
e_i^\down, f_i^\up, f_i^\down$, respectively.
Actually, we prove an equivalent dual statement.

Let $K_0(\fdMod OS^\circ)$ be the Grothendieck group of the Abelian
category $\fdMod OS^\circ$, which has basis given by the classes
$\{[\D(\LA)]\:|\:\LA \in \RPar\}$.
We have the non-degenerate Cartan pairing
$$
\langle\cdot,\cdot\rangle:K_0(\pMod OS^\circ) \times K_0(\fdMod OS^\circ) \rightarrow \ZZ
$$
such that $\langle [\Y(\LA)], [\D(\MU)]\rangle = \delta_{\LA,\MU}$ for
$\LA,\MU \in \RPar$.
Lemma~\ref{fur} implies that the linear maps $[E_i^\up]$ and $[E_i^\down]$ are biadjoint to
$[F_i^\up]$ and 
$[F_i^\down]$, respectively.
There is also a non-degenerate symmetric bilinear form $\langle\cdot,\cdot\rangle$ on
$\Fock$ defined so that $\{v_\LA\:|\:\LA \in \Par\}$ is an
orthonormal basis. Again, $e_i^\up$ and $e_i^\down$ are biadjoint to
$f_i^\up$ and $f_i^\down$, respectively, as is clear from the
explicit 
definition of their actions on the basis.
Let $$
f^*:\Fock \rightarrow \CC\otimes_\ZZ K_0(\fdMod OS^\circ)
$$
be the dual map to $f$. It sends $v_\LA \mapsto \sum_{\MU \in
  \RPar} [\SS(\LA):\D(\MU)] [\D(\MU)]$, i.e., to the isomorphism
class $[\SS(\LA)]$ of the Specht module.
Now it is clear from Lemma~\ref{newfirst} that $f^*$ intertwines $f_i^\up, f_i^\down, e_i^\up,
e_i^\down$ with $[F_i^\up], [F_i^\down], [E_i^\up], [E_i^\down]$,
respectively.

The proof so far shows that $\CC\otimes_\ZZ K_0(\pMod OS^\circ)$ has the
structure of an integrable $\g^\up\oplus\g^\down$-module.
It remains to show that the image of $f$ is the submodule
$V(-\Lambda_1|\Lambda_{t^{-2}})$.
This follows because $f$ sends
$[\Y(\NOTHING)]$ to the generator $v_\NOTHING$
of $V(-\Lambda_1|\Lambda_{t^{-2}})$,
and $\CC \otimes_\ZZ K_0(\pMod OS^\circ)$ is actually generated 
as a $\g^\up\oplus\g^\down$-module by this vector.
The latter assertion is a consequence of the analogous statement for the Hecke algebra, 
which is well known; e.g., see \cite[Corollary 4.34]{BD}.
 \end{proof}

\begin{remark}\label{cbrem}
When $p=0$, the basis $\{b_{\LA}(e,p)\:|\:\La \in \RPar\}$
is the monomial basis consisting of pure tensors in Lusztig's
canonical bases for $V(-\Lambda_1)$ and $V(\Lambda_{t^{-2}})$.
This follows from \cite{Ariki}.
When $p > 0$, the decomposition numbers
$[\SS(\LA):\D(\MU)]$ are not known, so it is hard to compute this
basis explicitly.
\end{remark}

Using the homomorphism $\Delta$ from (\ref{De}), we can instead
view $\Fock$ also as a $\g$-module. For this action, $v_\LA$ is of
weight 
\begin{equation}\label{wtdef}
\wt(\LA) := 
\wt^\up(\LA) + \wt^{\down}(\LA)
\end{equation}
with respect to the Cartan subalgebra $\h$.
Also set
\begin{equation}\label{WTdef}
\WT(\LA) := (\wt^\up(\LA), \wt^\down(\LA)) \in P \times P.
\end{equation}
The cyclic $\g^\up\oplus\g^\down$-submodule 
$V(-\Lambda_1|\Lambda_{t^{-2}})$ of $\Fock$
becomes a $\g$-submodule isomorphic to the tensor product $V(-\Lambda_1)\otimes
V(\Lambda_{t^{-2}})$.
A simple induction on weights shows that the vector $v_\NOTHING$ also
generates this module over $\g$. However, 
it is not an 
 irreducible $\g$-module when $t \in \{\pm q^n\:|\:n \in \ZZ\}$.
For the statement of the next lemma, it may be helpful to recall that $K_0(\pMod OS)$ is identified with 
$K_0(\deltaMod OS)$ by Corollary~\ref{exactsubcat}.

\begin{lemma}\label{int}
The functors $E_i$ and $F_i$ send modules with $\Delta$-flags to
modules with $\Delta$-flags, hence, they induce endomorphisms of
$K_0(\deltaMod OS)$. Moreover, 
there is a vector space isomorphism
$$
\CC\otimes_\ZZ K_0(\deltaMod OS)\stackrel{\sim}{\rightarrow}
V(-\Lambda_1|\Lambda_{t^{-2}}),\quad
[\Delta(\LA)] \mapsto b_\LA(e,p)
$$
which intertwines these endomorphisms with the actions of the
Chevalley generators $e_i,f_i$.
\end{lemma}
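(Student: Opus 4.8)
\textbf{Proof proposal for Lemma~\ref{int}.}
The plan is to deduce this statement from Lemma~\ref{bunnarong} by transporting everything through the standardization functor $\Delta$. First I would observe that $E_i$ and $F_i$ do send $\deltaMod OS$ to itself: this is immediate from the short exact sequences (\ref{sesI})--(\ref{sesII}) of Lemma~\ref{phew}, since $E_i \circ \Delta$ and $F_i \circ \Delta$ are, by those sequences, two-step extensions of functors of the form $\Delta \circ (\text{something})$ applied to a projective $OS^\circ$-module, and by Lemma~\ref{fur} the functors $E_i^\up, E_i^\down, F_i^\up, F_i^\down$ preserve projectivity over $OS^\circ$. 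Combined with Lemma~\ref{crit} (so that $\deltaMod OS$ is Karoubian and the class of a $\Delta$-flag is well defined in $K_0(\deltaMod OS)$), this shows $E_i,F_i$ induce $\ZZ$-linear endomorphisms of $K_0(\deltaMod OS)$.

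Next I would set up the vector space isomorphism. By Corollary~\ref{exactsubcat} we identify $K_0(\pMod OS)=K_0(\deltaMod OS)$, with basis $\{[\Delta(\LA)]\:|\:\LA\in\RPar\}$. By Theorem~\ref{delection} (and its proof, which says the induction functor $?\otimes_{OS^\circ} OS$ sends $\Y(\LA)$ to $\Q(\LA)$) the map $[\Y(\LA)]\mapsto[\Q(\LA)]$ is a ring isomorphism $K_0(\pMod OS^\circ)\stackrel{\sim}{\to} K_0(\pMod OS)$; composing with $[\Q(\LA)]\leftrightarrow[\Delta(\LA)]$ (valid since both families are bases and $\Q(\LA)=\Delta(\LA)+(\text{lower }\P(\MU)\text{'s})$, invertibly by Corollary~\ref{kio}) and then with the isomorphism of Lemma~\ref{bunnarong} gives a vector space isomorphism $\CC\otimes_\ZZ K_0(\deltaMod OS)\stackrel{\sim}{\to} V(-\Lambda_1|\Lambda_{t^{-2}})$. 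I must then check it sends $[\Delta(\LA)]\mapsto b_\LA(e,p)$; but this identification is precisely how the composite was arranged, using that $[\Delta(\LA)]$ corresponds to $[\Y(\LA)]$ under the standardization functor applied to Specht-module filtrations — more carefully, Corollary~\ref{BGGplus} and the fact that $\Delta$ is exact let one match $[\Delta(\LA)]$ with $\sum_\MU[\SS(\MU):\D(\LA)][\tilde\Delta(\MU)]$, which under the correspondence is exactly $b_\LA(e,p)$.

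The substantive step is verifying that the induced endomorphisms $[E_i],[F_i]$ of $\CC\otimes_\ZZ K_0(\deltaMod OS)$ intertwine with $e_i,f_i$ on $V(-\Lambda_1|\Lambda_{t^{-2}})$. Here I would use the short exact sequences (\ref{sesI})--(\ref{sesII}): because $\ch$ and hence $[\,\cdot\,]$ in $K_0$ are additive on short exact sequences, in $K_0(\deltaMod OS)$ we get $[F_i\Delta(M)] = [\Delta F_i^\up M] + [\Delta F_i^\down M]$ and $[E_i\Delta(M)] = [\Delta E_i^\down M] + [\Delta E_i^\up M]$ for any finite-dimensional $OS^\circ$-module $M$ with a projective resolution — applied to $M=\Y(\LA)$. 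Translating through the isomorphisms of the previous paragraph, the right-hand sides become $(f_i^\up + f_i^\down)b_\LA(e,p)$ and $(e_i^\up + e_i^\down)b_\LA(e,p)$ by Lemma~\ref{bunnarong}, which are exactly $f_i\,b_\LA(e,p)$ and $e_i\,b_\LA(e,p)$ for the $\g$-action defined via the comultiplication $\Delta$ in (\ref{De}). That is the whole point. The main obstacle I anticipate is purely bookkeeping: making sure the two identifications $[\Delta(\LA)]\leftrightarrow[\Y(\LA)]\leftrightarrow b_\LA(e,p)$ are genuinely compatible — i.e.\ that the ring isomorphism of Theorem~\ref{delection} really does carry the $E_i^\bullet,F_i^\bullet$-module structure on $K_0(\pMod OS^\circ)$ to the $E_i,F_i$-module structure on $K_0(\deltaMod OS)$ via the exact sequences, rather than to some twisted version — but this is forced by the explicitness of the bimodule maps $\alpha,\beta$ in Lemma~\ref{newz} and their compatibility with the Jucys--Murphy endomorphisms stated there, so no genuine difficulty should arise.
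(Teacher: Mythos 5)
Your proof is correct and uses the same essential ingredients as the paper: Lemma~\ref{phew}'s short exact sequences to show $E_i,F_i$ preserve $\Delta$-flags and to compute $[E_i]\circ[\Delta]$ and $[F_i]\circ[\Delta]$, combined with Lemma~\ref{bunnarong} and the comultiplication (\ref{De}). The only difference is a cosmetic detour in constructing the isomorphism: you route through $\Q(\LA)$, Theorem~\ref{delection}, and an ad hoc basis-to-basis bijection $[\Q(\LA)]\leftrightarrow[\Delta(\LA)]$, whereas the paper simply observes that the exact functor $\Delta$ induces a map $[\Delta]:K_0(\pMod OS^\circ)\to K_0(\deltaMod OS)$ sending $[\Y(\LA)]\mapsto[\Delta(\LA)]$, which is already an isomorphism of bases; composing with Lemma~\ref{bunnarong} then immediately produces the required map and makes the intertwining check a one-line consequence of $[E_i]\circ[\Delta]=[\Delta]\circ([E_i^\up]+[E_i^\down])$.
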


\begin{proof}
The given linear isomorphism fits into a commutative diagram
$$
\begin{CD}
\CC\otimes_\ZZ K_0(\pMod OS^\circ)&@>\sim>>&V(-\Lambda_1|\Lambda_{t^{-2}})\\
@V [\Delta] VV&&@|\\
\CC \otimes_\ZZ K_0(\deltaMod OS)&@>\sim>>&V(-\Lambda_1|\Lambda_{t^{-2}})
\end{CD}
$$
where the top map is the isomorphism from Lemma~\ref{bunnarong}.
Lemma~\ref{phew} implies that $E_i$ and $F_i$ preserve $\Delta$-flags.
Moreover, it shows that
$[E_i] \circ [\Delta] = [\Delta] \circ [E_i^\up] + \Delta \circ
[E_i^\down]$. Since the top map intertwines
$[E_i^\up],[E_i^\down]$ with $e_i^\up,e_i^\down$, we deduce from
(\ref{De})
that the bottom map intertwines $[E_i]$ with $e_i$, and similarly for
$[F_i]$ and $f_i$.
\end{proof}

Let $\leq$ be the usual dominance order on $P$:
$\rho \leq \sigma$ if $\sigma - \rho$ is a sum of simple roots.
Then, 
we introduce the {\em inverse dominance order} on $P \times P$
by declaring that
\begin{equation*}
(\rho,\sigma) \leq (\rho',\sigma')
\Leftrightarrow \rho+\sigma = \rho'+\sigma'
\text{ and }\rho \geq \rho'
\Leftrightarrow \rho+\sigma = \rho'+\sigma'
\text{ and }\sigma \leq \sigma'.
\end{equation*}
Recalling (\ref{wtdef})--(\ref{WTdef}),
the next result is the {\em linkage principle}.

\begin{theorem}\label{linkage1}
For $\LA \in \Par$ and $\MU \in \RPar$, we have that
$$
[\tilde\Delta(\LA):\L(\MU)] \neq 0\Rightarrow \WT(\MU) \leq \WT(\LA).
$$
\end{theorem}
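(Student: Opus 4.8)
The plan is to deduce the linkage principle from the character formula in Theorem~\ref{characters} together with the combinatorics of paths in the bipartition graph. First I would recall from Corollary~\ref{Canpath} that if $\L(\MU)$ is a composition factor of $\tilde\Delta(\LA)$, then there is a path $\gamma:\NOTHING\rightsquigarrow\LA$ and a \emph{minimal length} path $\delta:\NOTHING\rightsquigarrow\MU$ with $\type(\gamma)=\type(\delta)$. So the whole problem reduces to a purely combinatorial statement about the bipartition graph: if two paths out of $\NOTHING$ have the same type, then the weights $\WT$ of their endpoints are comparable in the inverse dominance order in the appropriate direction, with the endpoint of the minimal-length path being the $\WT$-smallest.

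The key bookkeeping step is to track how $\WT$ changes along an edge. Traversing an edge $\LA\stackrel{i}{\rightarrow}\MU$ forwards corresponds to a letter $\up_i$ in the type; by definition this either adds a node of $\up$-content $i$ to $\la^\up$ (so $\wt^\up$ drops by $\alpha_i$, $\wt^\down$ is unchanged) or removes a node of $\down$-content $t^{-2}i^{-1}$ from $\mu^\down$ when read backwards — but since we are going forwards from $\LA$ to $\MU$ along this edge, the relevant case is an addition to $\la^\up$; wait, I need to be careful: a forward edge means $\a_m=\up$, and the edge is $\LA\stackrel{i}{\rightarrow}\MU$ precisely when $\MU$ is obtained by adding a node of content $i$ to $\la^\up$ \emph{or} $\LA$ is obtained by adding a node of content $t^{-2}i^{-1}$ to $\mu^\down$. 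In the first subcase $\wt^\up(\MU)=\wt^\up(\LA)-\alpha_i$ and $\wt^\down$ is unchanged; in the second, $\wt^\down(\MU)=\wt^\down(\LA)+\alpha_{t^{-2}i^{-1}}$ and $\wt^\up$ is unchanged. In either subcase the \emph{total} weight $\wt=\wt^\up+\wt^\down$ changes in a way that depends only on the letter $\up_i$, not on which subcase occurred. This is the crucial observation: $\wt(\LA)$ is determined by $\type$ of any path $\NOTHING\rightsquigarrow\LA$, hence if $\type(\gamma)=\type(\delta)$ then $\wt(\MU)=\wt(\LA)$, giving the first half of the definition of $\leq$ on $P\times P$.

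For the second half, I would argue that among all paths of a fixed type $\a_\bi$ starting at $\NOTHING$, the quantity $\wt^\up$ of the endpoint is \emph{maximized} exactly on paths that are of minimal length to their endpoint — equivalently, $\wt^\up(\MU)\geq\wt^\up(\LA)$ whenever $\delta$ is minimal-length. Each letter $\up_i$ either decreases $\wt^\up$ by $\alpha_i$ (adding to $\la^\up$) or leaves $\wt^\up$ fixed (adding to $\mu^\down$), and each letter $\down_i$ either increases $\wt^\up$ by $\alpha_i$ (removing from $\mu^\up$) or leaves it fixed (removing from $\la^\down$). A minimal-length path $\delta$ to $\MU\in\RPar_{r,s}$ uses, by Corollary~\ref{Canpath}'s proof, a permutation of $\up_{i_{r+s}}\cdots\up_{i_{r+1}}\down_{i_r}\cdots\down_{i_1}$ where the $\up$-letters build $\mu^\up$ and the $\down$-letters build $\mu^\down$ — so every $\up$-letter genuinely adds to $\la^\up$ and there are no cancelling $\down$-letters that remove from $\mu^\up$. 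Any other path of the same type must have strictly more letters, and the extra $\down$-letters that remove from $\mu^\up$ followed by $\up$-letters re-adding are the source of any discrepancy; I would make this precise by an induction on path length, showing that $\wt^\up$ of the endpoint can only decrease relative to the minimal case. Then $\wt^\up(\MU)\geq\wt^\up(\LA)$, which combined with $\wt(\MU)=\wt(\LA)$ gives $\wt^\down(\MU)\leq\wt^\down(\LA)$, i.e.\ $\WT(\MU)\leq\WT(\LA)$ in the inverse dominance order.

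The main obstacle I anticipate is making the ``$\wt^\up$ is maximized on minimal-length paths of a given type'' step rigorous and clean: one must show that replacing a path by one of the same type but greater length cannot increase $\wt^\up$ of the endpoint, and that the minimal-length path to $\MU$ really does achieve the extremal $\wt^\up$ without any sign cancellation. I would handle this via a careful induction on the length of the common type $\a_\bi$, peeling off the last letter and using that $E_i$ and $F_i$ shift $\wt^\up$ in a controlled way (via the exact sequences in Lemma~\ref{phew} and the branching rules in Lemma~\ref{newfirst}); alternatively, one can argue directly on the graph by observing that every edge is $\wt^\up$-monotone in the ``right'' direction for its letter, so that a backwards excursion in $\wt^\up$ forces the path to be non-minimal. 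Either way the argument is elementary but needs the subcase analysis of edges spelled out explicitly, which is the one place routine care is required.
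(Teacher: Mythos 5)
Your overall strategy is correct and is essentially the same as the paper's: reduce to a purely combinatorial statement about paths in the bipartition graph via Corollary~\ref{Canpath}, observe that $\wt$ depends only on the type of a path, and then compare $\wt^\up$ (equivalently $\wt^\down$) of the two endpoints. Your observation that the $\wt$-change along an edge depends only on the colour and direction of the edge (not on which of the two subcases of the edge relation occurred) is the right key point, and your secondary suggestion at the end --- ``a careful induction on the length of the common type $\a_\bi$, peeling off the last letter'' --- is literally the paper's proof.

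However, there is a systematic \emph{sign error} in your bookkeeping that would derail a literal writeup. Since $\wt^\up(\LA) = -\La_1 + \sum_{A\in\la^\up}\alpha_{\cont(A)}$, \emph{adding} a node of content $i$ to $\la^\up$ \emph{increases} $\wt^\up$ by $\alpha_i$, not decreases it; correspondingly a backward edge that removes a node of $\up$-content $i$ from $\la^\up$ \emph{decreases} $\wt^\up$ by $\alpha_i$. You have these flipped. With the correct signs the argument reads: along the minimal path $\delta$ every $\up_i$-letter contributes $+\alpha_i$ to $\wt^\up$ and every $\down$-letter contributes $0$ (because a minimal path to $\MU\in\Par_{r,s}$ consists of $r+s$ box-additions and no removals), whereas along any path $\gamma$ of the same type each $\up_i$-letter contributes either $+\alpha_i$ or $0$ and each $\down_i$-letter contributes either $-\alpha_i$ or $0$; hence $\wt^\up(\LA)\leq\wt^\up(\MU)$. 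Combined with $\wt(\LA)=\wt(\MU)$ this gives $\WT(\MU)\leq\WT(\LA)$ in the inverse dominance order. If you had carried out your writeup with your stated signs, the ``minimal-length maximizes $\wt^\up$'' heuristic would have come out as a \emph{minimum}, contradicting the desired inequality, so this is a real bug to catch, not just a typo. Once fixed, the global ``sum the edge contributions'' version is a perfectly valid alternative to the paper's induction, and arguably conceptually transparent; the paper's induction peeling off the last edge trades this global accounting for a two-line inductive step and avoids ever writing down the explicit decomposition of $\wt^\up(\LA)$ as a sum over edges.
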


\begin{proof}
Suppose that $\MU \in \RPar_{r,s}$
is chosen so that $[\tilde\Delta(\LA):\L(\MU)] \neq 0$.
By Corollary~\ref{Canpath}, 
there is a path $\gamma:\NOTHING\rightsquigarrow \LA$ and a minimal
length path
$\delta:\NOTHING\rightsquigarrow\MU$
with $\type(\gamma) = \type(\delta)$.
We show that the existence of such a pair of paths implies that
$\WT(\MU)\leq
\WT(\LA)$ by induction on $r+s$.
The base case $r+s=0$ is trivial as then $\LA = \MU = \NOTHING$.
For the induction step, remove the last edge from each of the paths
$\gamma$ and $\delta$, to obtain shorter paths
$\gamma':\NOTHING\rightsquigarrow \LA'$
and $\delta':\NOTHING\rightsquigarrow\MU'$.
We assume that this last edge is directed in the forward direction,
i.e., 
$\LA' \stackrel{i}{\rightarrow} \LA$
and $\MU' \stackrel{i}{\rightarrow} \MU$;
the argument is entirely similar if it goes backwards.
By induction $\WT(\MU') \leq\WT(\LA')$, i.e., $\wt(\MU') = \wt(\LA')$
and $\wt^\down(\MU') \leq \wt^\down(\LA')$.
The assumption on the last edge means that
$\MU$ is obtained from $\MU'$ by adding a node of
$\up$-content $i$ to $(\mu')^\up$,
and similarly for $\LA$.
We deduce that $\wt(\MU) = \wt(\MU') + \alpha_i = \wt(\LA') +\alpha_i
= \wt(\LA)$ and $\wt^\down(\MU) = \wt^\down(\MU') \leq \wt^\down(\LA')
= \wt^\down(\LA)$.
Hence, $\WT(\MU) \leq \WT(\LA)$.
\end{proof}

\begin{corollary}\label{linkage3}
For $\LA, \MU \in \RPar$ with $\LA \neq \MU$, we have that
$$
[\bar\Delta(\LA):\L(\MU)] \neq 0 \Rightarrow \WT(\MU) < \WT(\LA).
$$
\end{corollary}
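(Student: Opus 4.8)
The plan is to deduce Corollary~\ref{linkage3} from Theorem~\ref{linkage1} (the linkage principle for the ``honest'' standard modules $\tilde\Delta$) together with the filtration of $\tilde\Delta(\LA)$ by proper standard modules $\bar\Delta(\NU)$ established in the proof of Corollary~\ref{BGGplus}. Recall from there that, applying the exact standardization functor $\Delta$ to a composition series of the Specht module $\SS(\LA)$ over $H_{r,s}$, we obtain a filtration of $\tilde\Delta(\LA)$ with sections $\bar\Delta(\NU)$ for $\NU \in \RPar$, each appearing $[\SS(\LA):\D(\NU)]$ times; moreover the ``leading'' section is $\bar\Delta(\LA)$ itself (appearing exactly once, since $[\SS(\LA):\D(\LA)]=1$), and every other $\NU$ occurring satisfies $\NU\neq\LA$.

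First I would fix $\LA,\MU\in\RPar$ with $\LA\neq\MU$ and assume $[\bar\Delta(\LA):\L(\MU)]\neq 0$. Since $\bar\Delta(\LA)$ is one of the sections of the $\bar\Delta$-flag of $\tilde\Delta(\LA)$, additivity of composition multiplicities gives $[\tilde\Delta(\LA):\L(\MU)]\neq 0$, so Theorem~\ref{linkage1} yields $\WT(\MU)\leq\WT(\LA)$. It remains to upgrade $\leq$ to strict $<$, i.e.\ to rule out $\WT(\MU)=\WT(\LA)$. So suppose for contradiction that $\WT(\MU)=\WT(\LA)$. The key observation is that the weight $\WT(\NU)$ determines the pair $(r,s)$ with $\NU\in\Par_{r,s}$ --- indeed $\wt^\up(\NU)=-\Lambda_1+\sum_{A\in\nu^\up}\alpha_{\cont(A)}$ records $|\nu^\up|=r$ via the total coefficient of simple roots (or just via pairing against a suitable element of $\h$), and similarly $\wt^\down(\NU)$ records $s$. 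Hence $\WT(\MU)=\WT(\LA)$ forces $\LA\in\Par_{r,s}$ for the same $(r,s)$ as $\MU$.

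Now I would invoke the linkage principle one more time, but applied to $\bar\Delta(\LA)=\tilde\Delta(\LA)$-section in the special way afforded by minimal-length paths, going back to Corollary~\ref{Canpath}: $[\bar\Delta(\LA):\L(\MU)]\neq 0$ implies there is a path $\gamma:\NOTHING\rightsquigarrow\LA$ and a \emph{minimal length} path $\delta:\NOTHING\rightsquigarrow\MU$ of the same type (here one uses that $\bar\Delta(\LA)$ and $\L(\LA)$, resp.\ $\bar\Delta(\MU)$ and $\L(\MU)$, have the same shortest word spaces, so a shortest word $\a_\bi$ in $\operatorname{ch}\L(\MU)$ occurs in both $\operatorname{ch}\bar\Delta(\LA)$ and $\operatorname{ch}\bar\Delta(\MU)$, exactly as in the proof of Corollary~\ref{Canpath} but with $\tilde\Delta$ replaced by $\bar\Delta$). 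Since $\delta$ has minimal length $r+s$ and $\gamma$ has the same type, $\gamma$ also has length $r+s$; tracking the argument of Theorem~\ref{linkage1}, equality $\wt^\down(\MU)=\wt^\down(\LA)$ together with $\LA,\MU\in\Par_{r,s}$ and the minimality forces, step by step along the two synchronised paths, that at each stage the node added/removed from the $\up$-component (resp.\ $\down$-component) has the same content on both sides, whence $\la^\down=\mu^\down$ and then $\la^\up=\mu^\up$, i.e.\ $\LA=\MU$ --- contradiction.

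The main obstacle, and the place where I would be most careful, is the last step: showing that $\WT(\MU)=\WT(\LA)$ (not merely $\le$) combined with a minimal-length synchronised pair of paths actually pins down $\LA=\MU$. The subtlety is that two different bipartitions in $\Par_{r,s}$ can in principle share the same multiset of $\up$-contents and $\down$-contents once $e>0$ or when $t=\pm q^n$ causes collisions between $\up$-contents and $\down$-contents; but minimality of $\delta$ means $\delta$ is a permutation of $\up_{i_{r+s}}\cdots\up_{i_{r+1}}\down_{i_r}\cdots\down_{i_1}$ with $i_1,\dots,i_r$ the $\up$-contents of $\mu^\up$ and $i_{r+1},\dots,i_{r+s}$ the $\down$-contents of $\mu^\down$, exactly as extracted in the proof of Corollary~\ref{morocco}, and the constraint $\wt^\down(\MU)=\wt^\down(\LA)$ separates the $\down$-part from the $\up$-part so that one may reconstruct $\nu^\down$ from the $\down$-edges of a minimal path and then $\nu^\up$ from the remaining $\up$-edges. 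I would present this reconstruction cleanly by an induction on $r+s$ that runs in parallel on both paths, peeling off the last edge and using that, when $\WT$'s agree, a forward $\up$-edge on one side must be matched by a forward $\up$-edge of the same content on the other (and likewise for $\down$-edges), since otherwise the running $\wt^\down$ values would diverge --- this is the only genuinely computational piece, and it is short.
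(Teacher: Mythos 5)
Your first two steps are sound: deducing $\WT(\MU)\le\WT(\LA)$ from Theorem~\ref{linkage1} via the $\bar\Delta$-flag of $\tilde\Delta(\LA)$, and observing that $\WT$ records the pair $(r,s)$, so that the putative equality $\WT(\MU)=\WT(\LA)$ would force $\LA$ and $\MU$ to lie in the same $\Par_{r,s}$. The gap is in your final step. You try to deduce $\LA=\MU$ from the existence of a minimal-length path $\delta:\NOTHING\rightsquigarrow\MU$ of the same type as some $\gamma:\NOTHING\rightsquigarrow\LA$, reasoning that the synchronised colours ``pin down'' the nodes added. But two paths of the same type in the bipartition graph need not end at the same vertex: already for $e>0$ (and for ordinary partitions, not just bipartitions) one may have two addable nodes of the same colour, so distinct partitions can arise from the same colour sequence and can share the same content multiset. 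Knowing $\wt^\up$ and $\wt^\down$ only fixes these multisets, not the Young diagrams themselves. So the ``short computational piece'' you defer does not actually close.

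The correct way to finish is simpler and is exactly the paper's argument. Once $\LA,\MU\in\Par_{r,s}$ with the same $(r,s)$, use shortest word theory directly: by Lemma~\ref{td}, $\bar\Delta(\LA)\cong\D(\LA)\otimes_{\K}OS^-$ as a right $\K$-module, so the $1_\a$-component is nonzero only for $\a$ of shape $(r+d,s+d)$ with $d\ge 0$, and the part of shape $(r,s)$ is exactly $\D(\LA)$. Hence the only composition factor $\L(\NU)$ of $\bar\Delta(\LA)$ with $\NU\in\RPar_{r,s}$ is $\L(\LA)$ itself, so $\MU=\LA$, contradicting the hypothesis $\LA\ne\MU$. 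In fact the paper runs this in the opposite order: shortest word theory first gives $\MU\in\RPar_{r+d,s+d}$ with $d>0$, which yields $\WT(\MU)\ne\WT(\LA)$ immediately, and then $\le$ from Theorem~\ref{linkage1} upgrades to $<$. Your first two steps plus this shortest-word observation recover the intended proof without any path analysis; the path machinery from Corollary~\ref{Canpath} is unnecessary here and, as used, incorrect.
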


\begin{proof}
By shortest word theory, if $\LA \in \RPar_{r,s}$  we must have
that $\MU \in \RPar_{r+d,s+d}$ for some $d > 0$.
Hence, $\WT(\MU) \neq \WT(\LA)$.
Now we are done since $\WT(\MU) \leq \WT(\LA)$ by Theorem~\ref{linkage1}.
\end{proof}

\begin{corollary}\label{linkage2}
Suppose that $\L(\LA)$ and $\L(\MU)$ belong to the same block of $\Mod
OS$
for some
$\LA,\MU \in \RPar$.
Then we have that $\wt(\LA) = \wt(\MU)$.
\end{corollary}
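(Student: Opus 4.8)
The plan is to reduce the statement to a single implication about composition factors of indecomposable projectives and then feed that implication into the linkage principle of Theorem~\ref{linkage1}. Recall that, in the locally Schurian setting of \cite[$\S$2]{BD} in force here, the block decomposition of $\Mod OS$ corresponds to the equivalence relation on $\RPar$ generated by declaring $\LA$ and $\MU$ equivalent whenever $\L(\MU)$ occurs as a composition factor of the indecomposable projective $\P(\LA)$. Since $\wt:\RPar\to P$ is a genuine function, it therefore suffices to prove the following claim: if $\L(\MU)$ is a composition factor of $\P(\LA)$, then $\wt(\LA)=\wt(\MU)$.

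To establish the claim I would invoke Corollary~\ref{BGGplus}: the projective $\P(\LA)$ has a finite filtration whose sections are the modules $\tilde\Delta(\MU')$, with $\tilde\Delta(\MU')$ occurring exactly $[\tilde\Delta(\MU'):\L(\LA)]$ times. Hence, if $\L(\MU)$ is a composition factor of $\P(\LA)$, it must be a composition factor of one of these sections, say $\tilde\Delta(\MU')$, and such a section can occur at all only if $[\tilde\Delta(\MU'):\L(\LA)]\neq 0$. This produces a single bipartition $\MU'\in\Par$ with both $[\tilde\Delta(\MU'):\L(\LA)]\neq 0$ and $[\tilde\Delta(\MU'):\L(\MU)]\neq 0$. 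Applying Theorem~\ref{linkage1} to each of these gives $\WT(\LA)\leq\WT(\MU')$ and $\WT(\MU)\leq\WT(\MU')$ in the inverse dominance order on $P\times P$. By the definition of that order, $(\rho,\sigma)\leq(\rho',\sigma')$ forces $\rho+\sigma=\rho'+\sigma'$; since $\wt=\wt^\up+\wt^\down$ is precisely the sum of the two components of $\WT$, we get $\wt(\LA)=\wt(\MU')=\wt(\MU)$, proving the claim and hence the corollary.

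The only mildly delicate step is the book-keeping in the first paragraph: that the block relation on $\RPar$ really is generated by ``$\L(\MU)$ is a composition factor of $\P(\LA)$,'' and that, although $\P(\LA)$ is not of finite length, a composition factor of it is automatically a composition factor of one of the (finite-length-behaving) $\tilde\Delta$-sections supplied by Corollary~\ref{BGGplus}. Both points are routine in the locally Schurian framework already set up, so after that the argument is just a two-line combination of Corollary~\ref{BGGplus} and Theorem~\ref{linkage1}. Everything downstream (Corollary~\ref{linkage3} is not even needed, though one could equally route through it) is a direct citation.
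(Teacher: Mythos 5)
Your proof is correct and follows essentially the same route as the paper: both reduce to showing that a composition factor relation between $\P(\LA)$ and $\L(\MU)$ forces $\wt(\LA)=\wt(\MU)$, then use the $\tilde\Delta$-filtration of the projective from Corollary~\ref{BGGplus} to produce a single $\NU\in\Par$ with $[\tilde\Delta(\NU):\L(\LA)][\tilde\Delta(\NU):\L(\MU)]\neq 0$, and finally apply Theorem~\ref{linkage1} together with the observation that comparability in the inverse dominance order forces equality of the $\wt$-components. (Your write-up is if anything slightly more explicit than the paper's about the block-theoretic reduction and about how the inverse dominance order enters.)
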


\begin{proof}
It suffices to show that $
\Hom_{OS}(\P(\LA), \P(\MU)) \neq {\bf 0}\Rightarrow
\wt(\LA)\neq\wt(\MU)$.
To see this, we apply Corollary~\ref{BGGplus}
to see if 
$[\P(\MU):\L(\LA)] \neq 0$ that there exists $\NU \in \Par$ such that
$[\tilde\Delta(\NU):\L(\LA)] [\tilde\Delta(\NU):\L(\MU)] \neq 0$.
By Theorem~\ref{linkage1}, this implies that
$\WT(\LA) \leq \WT(\NU) \geq \WT(\MU)$.
Hence, $\wt(\LA) = \wt(\NU) = \wt(\MU)$.
\end{proof}

The two functions $\wt:\Par \rightarrow P$ and $\WT:\Par \rightarrow P
\times P$ give partitions
\begin{equation}
\Par = \coprod_{\omega \in P} \Par_\omega
=\coprod_{(\rho,\sigma)\in P \times P} \Par_{\rho,\sigma}
\end{equation}
where $\Par_\omega := \wt^{-1}(\omega)$ and $\Par_{\rho,\sigma} :=
\WT^{-1}((\rho,\sigma))$.
Define $\RPar_\omega$ and $\RPar_{\rho,\sigma}$ similarly.
There are corresponding block decompositions
\begin{align}\label{blocka}
\Mod OS &= \:\:\:\:\:\prod_{\omega \in P}\:\:\: \Mod OS_\omega,\\
\Mod OS^\circ &= \!\prod_{(\rho,\sigma) \in P \times P} \!\!\!\Mod
OS^\circ_{\rho,\sigma}\label{blockb}
\end{align}
defined by letting $\Mod OS_\omega$ be the Serre subcategory of $\Mod
OS$ consisting of all modules $M$ such that 
$\Hom_{OS}(\P(\LA),M) \neq 0 \Rightarrow \wt(\LA) = \omega$,
and $\Mod OS^\circ_{\rho,\sigma}$ be the Serre subcategory of $\Mod OS^\circ$
consisting of $M$ 
such that 
$\Hom_{OS^\circ}(\Y(\LA),M) \neq 0 \Rightarrow \WT(\LA) = (\rho,\sigma)$.
For $\Mod OS$, the existence of this decomposition 
depends on Corollary~\ref{linkage2} and the general theory of blocks in
locally Schurian categories discussed in \cite[(L9)--(L10)]{BD}.
For $\Mod OS^\circ$, this decomposition refines the one arising
from the algebra decomposition $OS^\circ = \bigoplus_{r,s \geq 0}
OS^\circ_{r,s}$. In view of Lemma~\ref{cartanlem}, it is a
reformulation of the usual block 
decomposition of the Hecke algebras \cite[Theorem 4.13]{DJ2}.

As well as these block decompositions, we can use the inverse
dominance ordering on $P \times P$
to introduce a {\em stratification} on $\Mod OS$ in the sense of \cite[$\S$2]{LW}.
This is defined by letting 
$\Mod OS_{\leq(\rho,\sigma)}$ be the Serre
subcategory of $\Mod OS$ consisting of all $M$ such that
$\Hom_{OS}(\P(\LA), M) \neq 0 \Rightarrow \WT(\LA) \leq (\rho,\sigma)$.
Define $\Mod OS_{< (\rho,\sigma)}$ similarly.
It is important to note that the set $\coprod_{(\rho',\sigma')\geq
  (\rho,\sigma)} \Par_{\rho',\sigma'}$ is {\em finite}.
Indeed, if $\rho$ is obtained from $-\La_1$ by adding $r$ simple roots
and $\sigma$ is obtained from $\La_{t^{-2}}$ by subtracting $s$ simple
roots,
then it is clear from (\ref{css1})--(\ref{css2}) that $\Par_{\rho,\sigma}
\subseteq \Par_{r,s}$; hence,
$\coprod_{(\rho',\sigma')\geq
  (\rho,\sigma)} \Par_{\rho',\sigma'} \subseteq
\coprod_{d=0}^{\min(r,s)} \Par_{r-d,s-d}$ which is finite.
We say that the stratification is {\em upper-finite} because of this property.

For $(\rho,\sigma) \in P\times P$, let
\begin{equation}\label{pirhosigma}
\pi_{\rho,\sigma}:\Mod OS_{\leq(\rho,\sigma)} \rightarrow \Mod
OS^\circ_{\rho,\sigma}
\end{equation}
be the exact functor defined first by restriction to $OS^\circ$ then
projection onto the block parametrized by $(\rho,\sigma)$.
Composing the inclusion of this block into $\Mod OS^\circ$ with either
$\Delta$ or $\nabla$ defines exact functors
\begin{align}\Delta_{\rho,\sigma}&:\Mod OS^\circ_{\rho,\sigma} \rightarrow \Mod
OS_{\leq(\rho,\sigma)},\label{deltaa}\\
\nabla_{\rho,\sigma}&:\Mod OS^\circ_{\rho,\sigma} \rightarrow \Mod
OS_{\leq(\rho,\sigma)}\label{deltab}.
\end{align}
These are left and right adjoint to $\pi_{\rho,\sigma}$,
respectively.

\begin{lemma}\label{Assgr}
For $(\rho,\sigma) \in P \times P$, the functor $\pi_{\rho,\sigma}$
annihilates all irreducible modules $\L(\LA)$ with $\WT(\LA) < (\rho,\sigma)$.
Hence, it induces an exact functor
$$
\bar\pi_{\rho,\sigma} :\Mod OS_{\leq(\rho,\sigma)} / \Mod OS_{<
  (\rho,\sigma)}
\rightarrow \Mod OS^\circ_{\rho,\sigma}.
$$
In fact, this induced functor is an equivalence of categories.
\end{lemma}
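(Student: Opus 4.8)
The plan is to prove Lemma~\ref{Assgr} in three stages: first show $\pi_{\rho,\sigma}$ kills the ``small'' irreducibles, then identify it with $\bar\pi_{\rho,\sigma}$, then prove the induced functor is an equivalence by exhibiting a quasi-inverse built from the standardization functor.

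First I would check the annihilation claim. For $\LA \in \RPar$ with $\WT(\LA) < (\rho,\sigma)$, I want $\pi_{\rho,\sigma}\L(\LA) = 0$. Since $\pi_{\rho,\sigma}$ is restriction to $OS^\circ$ followed by projection onto the block $\Mod OS^\circ_{\rho,\sigma}$, it suffices to show that no composition factor $\D(\NU)$ of $\res_{OS^\circ}\L(\LA)$ has $\WT(\NU) = (\rho,\sigma)$. By shortest word theory, $\L(\LA)$ has shortest word space $\cong \D(\LA)$ concentrated in the block $\Mod OS^\circ_{\WT(\LA)}$, and all other word spaces lie in $\bigoplus_{d>0}\D(\LA)\otimes_\K OS^-[-d]$; I would argue that any $\D(\NU)$ appearing in the restriction satisfies $\WT(\NU) \leq \WT(\LA) < (\rho,\sigma)$, using that $OS^-[-d]$ only involves caps, which in the Hecke-algebra reformulation via Lemma~\ref{cartanlem} produces branching that can only move $\WT$ downward or keep it fixed (the same content bookkeeping that underlies Theorem~\ref{linkage1}). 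Exactness of $\pi_{\rho,\sigma}$ then gives the factorization through the Serre quotient, yielding $\bar\pi_{\rho,\sigma}$, and exactness of $\bar\pi_{\rho,\sigma}$ is automatic.

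Next, for the equivalence, I would use $\Delta_{\rho,\sigma}$ from (\ref{deltaa}) as the candidate quasi-inverse (after composing with the quotient functor $\Mod OS_{\leq(\rho,\sigma)} \to \Mod OS_{\leq(\rho,\sigma)}/\Mod OS_{<(\rho,\sigma)}$). Since $\Delta_{\rho,\sigma}$ is left adjoint to $\pi_{\rho,\sigma}$, there is a unit $\eta: \operatorname{Id}_{\Mod OS^\circ_{\rho,\sigma}} \to \pi_{\rho,\sigma}\circ\Delta_{\rho,\sigma}$. On a projective $\Y(\LA)$ with $\WT(\LA)=(\rho,\sigma)$, Lemma~\ref{td} shows $\pi_{\rho,\sigma}\Delta(\LA)$ is the $(\rho,\sigma)$-block component of $\res_{OS^\circ}(\D(\LA)\otimes_\K OS^-)$, and the degree-zero piece $\D(\LA)\otimes_\K OS^-[0] = \D(\LA)$ already lies in block $(\rho,\sigma)$ while all lower-degree pieces (by the argument of the previous paragraph, or directly Theorem~\ref{characters}/Corollary~\ref{Canpath}) lie in strictly smaller-$\WT$ blocks; hence $\eta_{\Y(\LA)}$ is an isomorphism. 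So $\eta$ is an isomorphism on projectives, and since both functors are exact and $\Mod OS^\circ_{\rho,\sigma}$ has enough projectives, $\eta$ is an isomorphism of functors: $\pi_{\rho,\sigma}\circ\Delta_{\rho,\sigma}\cong\operatorname{Id}$.

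For the other composite, I would show the counit $\epsilon: \Delta_{\rho,\sigma}\circ\pi_{\rho,\sigma} \to \operatorname{Id}$ becomes an isomorphism after passing to the Serre quotient; equivalently, for every $M \in \Mod OS_{\leq(\rho,\sigma)}$, both the kernel and cokernel of $\epsilon_M$ lie in $\Mod OS_{<(\rho,\sigma)}$. It is enough to check this on the indecomposable projectives $\P(\LA)$ with $\WT(\LA) = (\rho,\sigma)$, which are the ``top'' projectives of the stratum, and there it reduces to the statement that $\Delta(\LA)$ surjects onto $\P(\LA)$ with both kernel and the difference concentrated in strictly smaller blocks --- precisely the content of Corollary~\ref{morocco} when $\Delta(\LA)=\P(\LA)$, and in general of Corollary~\ref{BGG}/Corollary~\ref{linkage3}: the $\Delta$-flag of $\P(\LA)$ has $\Delta(\LA)$ at top and all other sections $\Delta(\MU)$ with $\WT(\MU) < (\rho,\sigma)$, so $\Delta_{\rho,\sigma}\pi_{\rho,\sigma}\P(\LA) = \Delta(\LA) \twoheadrightarrow \P(\LA)$ with kernel a $\Delta$-flagged module living entirely below the stratum. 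I expect the main obstacle to be this last step: pinning down carefully that the stratification/block data genuinely forces $\pi_{\rho,\sigma}$ to be faithful on the quotient category, i.e. that an object of $\Mod OS_{\leq(\rho,\sigma)}$ with zero image under $\pi_{\rho,\sigma}$ actually lies in $\Mod OS_{<(\rho,\sigma)}$ --- this needs the fact that the top composition factors $\L(\LA)$ with $\WT(\LA)=(\rho,\sigma)$ are detected by $\pi_{\rho,\sigma}$ (they map to $\D(\LA) \neq 0$), together with the finiteness of $\coprod_{(\rho',\sigma')\geq(\rho,\sigma)}\Par_{\rho',\sigma'}$ noted before (\ref{pirhosigma}), which guarantees the relevant highest-weight/stratified bookkeeping is well-founded.
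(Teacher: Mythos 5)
You take a different route from the paper (which identifies the indecomposable projectives on both sides and counts them) by attempting to show $\Delta_{\rho,\sigma}$ is a quasi-inverse via the unit and counit of the adjunction. This is a reasonable strategy in principle, but your counit argument contains a genuine error.

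You assert that the $\Delta$-flag of $\P(\LA)$, for $\WT(\LA)=(\rho,\sigma)$, has $\Delta(\LA)$ at the top and all other sections $\Delta(\MU)$ with $\WT(\MU)<(\rho,\sigma)$. The inequality is backwards. Combining Corollary~\ref{BGG} with Corollary~\ref{linkage3}: $(\P(\LA):\Delta(\MU))=[\bar\Delta(\MU):\L(\LA)]$ is non-zero for $\MU\neq\LA$ only when $\WT(\LA)<\WT(\MU)$, i.e.\ $\WT(\MU)>(\rho,\sigma)$ --- exactly as stated in the proof of Theorem~\ref{sstrat}. Consequently $\P(\LA)$ is in general \emph{not} an object of $\Mod OS_{\leq(\rho,\sigma)}$, so you cannot test the counit on it; your arrow $\Delta(\LA)\twoheadrightarrow\P(\LA)$ also runs the wrong way, as $\Delta(\LA)$ is a quotient of $\P(\LA)$ and not vice versa. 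The paper sidesteps this by observing that the projective cover of $\L(\LA)$ \emph{inside} $\Mod OS_{\leq(\rho,\sigma)}$ is the largest quotient of $\P(\LA)$ belonging to that subcategory, and that this largest quotient is $\Delta(\LA)$ precisely because the lower sections of the $\Delta$-flag all have $\WT>(\rho,\sigma)$ and hence get truncated off; one then checks $\bar\pi_{\rho,\sigma}\Delta(\LA)\cong\Y(\LA)$ and concludes from the bijection on indecomposable projectives.

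Two smaller remarks. Your annihilation argument rests on the unproved (and unnecessary) claim that every composition factor $\D(\NU)$ of the restriction of $\L(\LA)$ to $OS^\circ$ has $\WT(\NU)\leq\WT(\LA)$; the paper's argument is much simpler and purely about word length: if $\Par_{\rho,\sigma}\subseteq\Par_{r,s}$ and $\WT(\LA)<(\rho,\sigma)$ then $\LA\in\coprod_{d>0}\RPar_{r+d,s+d}$, so every word space of the shortest word module $\L(\LA)$ has length strictly greater than $r+s$, and its projection to $\Mod OS^\circ_{\rho,\sigma}\subseteq\Mod OS^\circ_{r,s}$ is already zero. Also, in the unit check you wrote $\D(\LA)\otimes_\K OS^-$, which is $\bar\Delta(\LA)$, not $\Delta(\LA)$; by Lemma~\ref{td} the latter is $\Y(\LA)\otimes_\K OS^-$.
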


\begin{proof}
If $\Par_{\rho,\sigma} \subseteq \Par_{r,s}$ then
$\Par_{<(\rho,\sigma)} \subseteq \coprod_{d > 0} \Par_{r+d,s+d}$.
Hence, for $\LA \in \RPar_{\rho,\sigma}$, 
the restriction of $\L(\LA)$ to $OS^\circ$ belongs to $\prod_{d >0}
\Mod OS^\circ_{r+d,s+d}$ and its projection to $\Mod OS^\circ_{\rho,\sigma}
\subseteq \Mod OS^\circ_{r,s}$ is certainly zero.
Since $\pi_{\rho,\sigma}$ is also exact, we get the induced 
functor $\bar\pi_{\rho,\sigma}$ by the universal property of Serre
quotients.

The irreducible objects in the Serre quotient category 
$\Mod OS_{\leq(\rho,\sigma)} / \Mod OS_{<
  (\rho,\sigma)}$ are represented by
$\{\L(\LA)\:|\:\LA \in \RPar_{\rho,\sigma}\}$.
For $\LA \in \RPar_{\rho,\sigma}$,
the projective cover of $\L(\LA)$ in $\Mod OS_{\leq(\rho,\sigma)}$ is
the largest quotient of $\P(\LA)$ which belongs to this subcategory.
In view of Lemma~\ref{BGG} and Corollary~\ref{linkage3}, this largest quotient is $\Delta(\LA)$.
We deduce that
the objects $\{\Delta(\LA)\:|\:\LA \in \RPar_{\rho,\sigma}\}$
give a complete set of pairwise inequivalent indecomposable projective
objects in
$\Mod OS_{\leq(\rho,\sigma)} / \Mod OS_{<
  (\rho,\sigma)}$.

By shortest word theory and considerations like in the first
paragraph of the proof, 
the exact functor $\bar\pi_{\rho,\sigma}$ sends $\Delta(\LA)$ to 
$\Y(\LA)$. So it induces a bijection between isomorphism classes of
indecomposable
projective objects in its source and target categories.
It follows that it is an equivalence.
\end{proof}

All of this puts us in the setup of \cite[Definition 2.1]{LW}, except that our algebra
$OS$ is locally finite-dimensional rather than finite-dimensional, and our
ordering is upper-finite rather than finite. The formal definition of standardly
stratified category from {\em loc. cit.} is 
generalized to include this slightly more general situation in
\cite[$\S$6.2.1]{EL}.

\begin{theorem}\label{sstrat}
The category $\Mod OS$ with its irreducible objects $\{\L(\LA)\:|\:\LA
\in \RPar\}$
and the stratification 
defined by the function $\WT:\RPar \rightarrow P \times P$
and the inverse dominance ordering $\leq$
is an upper-finite standardly stratified category with
associated graded category $\Mod OS^\circ$.
In case $e = 0$, it is an
upper-finite highest weight category.
\end{theorem}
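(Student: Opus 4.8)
The plan is to verify that the data assembled above fits the axioms of an upper-finite standardly stratified category in the sense of \cite[$\S$2]{LW}, in the upper-finite form of \cite[$\S$6.2.1]{EL}. The relevant data are: the stratification poset $(P\times P,\leq)$ with $\leq$ the inverse dominance order; the map $\WT:\RPar\rightarrow P\times P$, which is order-preserving once $\RPar$ is equipped with the partial order in which $\LA<\MU$ means $\WT(\LA)<\WT(\MU)$; the filtration of $\Mod OS$ by the Serre subcategories $\Mod OS_{\leq(\rho,\sigma)}$ with layers $\Mod OS^\circ_{\rho,\sigma}$; and the standard and proper standard modules $\Delta(\LA)$ and $\bar\Delta(\LA)$.

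First I would dispense with the bookkeeping: the subcategories $\Mod OS_{\leq(\rho,\sigma)}$ are Serre subcategories whose union is $\Mod OS$, each layer $\Mod OS^\circ_{\rho,\sigma}$ is the module category of a finite-dimensional algebra (a block of $\Mat_{\binom{r+s}{r}}(H_{r,s})$ by Lemma~\ref{cartanlem}), and the stratification is upper-finite since $\{(\rho',\sigma')\geq(\rho,\sigma)\}$ is finite, as noted before the theorem. Lemma~\ref{Assgr} then supplies the required equivalence $\bar\pi_{\rho,\sigma}:\Mod OS_{\leq(\rho,\sigma)}/\Mod OS_{<(\rho,\sigma)}\stackrel{\sim}{\longrightarrow}\Mod OS^\circ_{\rho,\sigma}$, which exhibits $\Mod OS^\circ$ as the associated graded category; its proof also shows that $\Delta(\LA)$ is the projective cover of $\L(\LA)$ in $\Mod OS_{\leq\WT(\LA)}$, which is the defining property of the standard object. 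For the filtration condition on indecomposable projectives, Corollary~\ref{BGG} gives that $\P(\LA)$ has a finite $\Delta$-flag; its top section must be $\Delta(\LA)$, since the surjection $\P(\LA)\twoheadrightarrow\L(\LA)$ forces the top section $\Delta(\MU)$ to have $\L(\LA)$ in its head and hence $\MU=\LA$, while the remaining sections are $\Delta(\MU)$ for $\MU\neq\LA$, occurring with multiplicity $(\P(\LA):\Delta(\MU))=[\bar\Delta(\MU):\L(\LA)]$, which is nonzero only when $\WT(\MU)>\WT(\LA)$ by Corollary~\ref{linkage3}; thus $\ker(\P(\LA)\to\Delta(\LA))$ carries a $\Delta$-flag with all sections strictly higher in the stratification. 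For the proper standard modules, Theorem~\ref{class} gives that $\bar\Delta(\LA)$ has irreducible head $\L(\LA)$ of multiplicity one, Corollary~\ref{linkage3} gives that every other composition factor $\L(\MU)$ has $\WT(\MU)<\WT(\LA)$, and a shortest-word computation identifies $\bar\pi_{\rho,\sigma}(\bar\Delta(\LA))$ with $\D(\LA)$; together these identify $\bar\Delta(\LA)$ with the proper standard object of the stratification. Finally, the compatibility of $\Delta(\LA)$ with the standardization within its stratum follows from Corollary~\ref{BGGplus} and the exactness of $\Delta$: since $\Y(\LA)$ has an $\SS$-flag, $\Delta(\LA)=\Delta\Y(\LA)$ has a $\tilde\Delta$-flag, and applying $\Delta$ to a composition series of each $\SS(\MU)$ shows each $\tilde\Delta(\MU)$ has a $\bar\Delta$-flag, all with indices in the single stratum $\WT(\LA)$; concatenating gives the required filtration of $\Delta(\LA)$ by proper standards.

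For the highest weight assertion when $e=0$, one uses that $\RPar=\Par$ and $\Y(\LA)=\SS(\LA)=\D(\LA)$, whence $\Delta(\LA)=\tilde\Delta(\LA)=\bar\Delta(\LA)$: the standard and proper standard modules coincide, so together with the above $\Mod OS$ becomes an upper-finite highest weight category for any refinement of the inverse dominance order on $\Par$ to a genuine partial order; the layers $\Mod OS^\circ_{\rho,\sigma}$ are semisimple since $q$ is not a root of unity, so the particular refinement chosen is immaterial.

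The step I expect to be the main obstacle is the precise matching of our constructions to the literal axioms in \cite{LW} and \cite{EL} — in particular establishing the universal (maximality) characterizations of $\Delta(\LA)$ and $\bar\Delta(\LA)$ inside $\Mod OS_{\leq\WT(\LA)}$ and checking the ordering behaviour of the $\Delta$-flag of $\P(\LA)$ — rather than any genuinely new representation-theoretic input: the linkage principle (Corollary~\ref{linkage3}) and the flag structures (Corollaries~\ref{BGG} and~\ref{BGGplus}, Theorem~\ref{koike1}) are already available, and everything else is bookkeeping with Serre quotients and the equivalence of Lemma~\ref{Assgr}.
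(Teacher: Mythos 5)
Your proposal is correct and follows the same route as the paper's proof: upper-finiteness and the stratification from the preceding discussion, Lemma~\ref{Assgr} for the associated graded category, and Lemma~\ref{BGG} with Corollary~\ref{linkage3} for the required $\Delta$-flag on $\P(\LA)$ with $\Delta(\LA)$ on top, plus the $e=0$ observation. You supply more detail (spelling out the identification of standard and proper standard objects and the filtration of $\Delta(\LA)$ by proper standards), but the argument is the same.
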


\begin{proof}
We have already discussed the stratification and shown that it is
upper-finite. Lemma~\ref{Assgr} identifies the
associated graded category with $\Mod OS^\circ$.
Also we know already that
the standardization functor $\Delta_{\rho,\sigma}$ is exact.
It just remains to show that $\P(\LA)$ has a finite filtration with
$\Delta(\LA)$ at the top and other sections of the form $\Delta(\MU)$
for $\MU$ with $\WT(\MU) > \WT(\LA)$. This follows from
Lemma~\ref{BGG}
and Corollary~\ref{linkage3}.
It is highest weight rather than standardly stratified in
case $e=0$ since then each non-zero stratum $\Mod OS^\circ_{\rho,\sigma}$ is semisimple
with just one irreducible object (up to isomorphism).
\end{proof}

We refer the reader to \cite{BD} for the necessary background on
2-representations of 2-Kac-Moody categories used freely in the
proofs of the next two theorems.
Although these notions are essentially due to Rouquier \cite{Rou}, we
are applying them in a locally Schurian
setting not originally considered there.
In particular, the proof of the following theorem depends crucially on 
the (very slight) extension of Rouquier's ``control by $K_0$''  developed in \cite[Theorem 4.27]{BD}.

\begin{proof}[Proof of Theorem~\ref{tpctheorem}]
See \cite[Remark 3.6]{LW} for the notion of tensor product
categorification. In the context of Theorem~\ref{tpctheorem} it means 
the
following data:
\begin{itemize}
\item[(1)] A locally Schurian category $\mathcal C$ with isomorphism
  classes of irreducible objects labelled by $\RPar$, i.e., the
  indexing set for the 
basis of 
$V(-\Lambda_1|\Lambda_{t^{-2}})$ from Remark~\ref{cbrem}.
\item[(2)] A nilpotent categorical action 
making $\mathcal C$ into a 2-representation of the associated
Kac-Moody 2-category $\mathfrak{U}(\g)$.
\end{itemize}
Then we need to verify the following axioms:
\begin{itemize}
\item[(3)] The category $\mathcal C$ is standardly stratified with respect to the
function $\WT:\RPar \rightarrow P \times P$ and the inverse dominance ordering $\leq$ on $P \times P$.
\item[(4)]
For $(\rho,\sigma)\in P \times P$, 
the Serre quotient $\mathcal C_{(\rho,\sigma)} := \mathcal C_{\leq (\rho,\sigma)} / \mathcal C_{< (\rho,\sigma)}$ 
is equivalent
to the category of modules over the $(\rho,\sigma)$-weight
subcategory of the minimal categorification of the irreducible $\g^\up \oplus
\g^\down$-module $V(-\Lambda_1|\Lambda_{t^{-2}})$.
\item[(5)]
There is compatibility between the categorical $\g$-action
on $\mathcal C$ and the categorical $\g^\up\oplus\g^\down$-action on
the associated graded category in the sense that there are short
exact sequences as in (\ref{sesI})--(\ref{sesII}).
\end{itemize}
We must show that $\mathcal C := \Mod OS$ admits this structure.
It is locally Schurian and we have parametrized the irreducibles by
$\RPar$ above, so (1) holds. The main work still needed is to verify
(2) and (4); this is done in the next two paragraphs.
Then axiom (3) is Theorem~\ref{sstrat}, while (5) follows immediately
from Lemma~\ref{newz}.

To verify (2),
we use 
\cite[Theorem 4.27]{BD} to reduce to checking
the conditions
of \cite[Definition 4.25]{BD}. We need the following data:
\begin{itemize}
\item[(6)] A weight
decomposition of the category $\Mod OS$. 
\item[(7)] Biadjoint endofunctors
$E = \bigoplus_{i \in I}E_i$ and $F = \bigoplus_{i \in I} F_i$.
\item[(8)] 
Natural transformations 
$\mathord{
\begin{tikzpicture}[baseline = -2]
	\draw[->,thick,darkred] (0.08,-.15) to (0.08,.3);
      \node at (0.08,0.05) {$\color{darkred}\bullet$};
   \node at (0.08,-.25) {$\scriptstyle{i}$};
\end{tikzpicture}
}:E_i \rightarrow E_i$
and 
$\mathord{
\begin{tikzpicture}[baseline = -2]
	\draw[->,thick,darkred] (0.18,-.15) to (-0.18,.3);
	\draw[->,thick,darkred] (-0.18,-.15) to (0.18,.3);
   \node at (-0.18,-.25) {$\scriptstyle{i}$};
   \node at (0.18,-.25) {$\scriptstyle{j}$};
\end{tikzpicture}
}: E_i \circ E_j \rightarrow E_j \circ E_i$ for each $i,j \in I$
inducing an action of the quiver Hecke algebra $QH_r$
 associated to $\g$ 
on powers of $E$.
\end{itemize}
Then there are two additional axioms to check:
\begin{itemize}
\item[(9)] The endomorphisms $[E_i]$ and $[F_i]$ make
$\CC \otimes_\ZZ K_0(\pMod OS)$ into a well-defined $\g$-module with $\omega$-weight
space $\CC \otimes_\ZZ K_0(\pMod OS_\omega)$.
\item[(10)] For each $i \in I$ and each finitely generated $OS$-module
  $M$, the endomorphism $
\bigg(\mathord{
\begin{tikzpicture}[baseline = -2]
	\draw[->,thick,darkred] (0.08,-.15) to (0.08,.3);
      \node at (0.08,0.05) {$\color{darkred}\bullet$};
   \node at (0.08,-.25) {$\scriptstyle{i}$};
\end{tikzpicture}
}\bigg)_M:E_i M \rightarrow E_i M$ is nilpotent.
\end{itemize}
The weight decomposition (6) comes from (\ref{blocka}).
We have already constructed the functors needed
for (7) in Lemma~\ref{phew}.
For (8), we instead construct natural transformations $\mathord{
\begin{tikzpicture}[baseline = -1]
	\draw[->,thick,darkblue] (0.08,-.15) to (0.08,.3);
      \node at (0.08,0.06) {$\color{darkblue}\bullet$};
\end{tikzpicture}
}:E \rightarrow
E$ and $\mathord{
\begin{tikzpicture}[baseline = -1]
	\draw[->,thick,darkblue] (0.18,-.15) to (-0.18,.3);
	\draw[-,line width=4pt, white] (-0.18,-.15) to (0.18,.3);
	\draw[->,thick,darkblue] (-0.18,-.15) to (0.18,.3);
\end{tikzpicture}
}:E^2 \rightarrow E^2$ inducing an action of the affine Hecke algebra $AH_r$ on powers of $E$.
This is good enough due to the existence of an
isomorphism\footnote{There are various versions of this isomorphism in the
  literature. We will not make a specific choice here since any one of them suffices for our purposes.}
$\widehat{AH}_r \cong \widehat{QH}_r$ between completions constructed
in \cite{BK,Rou,W}.
Recalling the definition (\ref{edef}), we define $\mathord{
\begin{tikzpicture}[baseline = -1]
	\draw[->,thick,darkblue] (0.08,-.15) to (0.08,.3);
      \node at (0.08,0.06) {$\color{darkblue}\bullet$};
\end{tikzpicture}
}$ 
by setting $\Big(\mathord{
\begin{tikzpicture}[baseline = -1]
	\draw[->,thick,darkblue] (0.08,-.15) to (0.08,.3);
      \node at (0.08,0.06) {$\color{darkblue}\bullet$};
\end{tikzpicture}
}\Big)_M := \operatorname{id} \otimes {_{\up}}X:M \otimes_{OS}
{_\up}OS \rightarrow M \otimes_{OS} {_\up} OS$.
To define $\mathord{
\begin{tikzpicture}[baseline = -1]
	\draw[->,thick,darkblue] (0.18,-.15) to (-0.18,.3);
	\draw[-,line width=4pt, white] (-0.18,-.15) to (0.18,.3);
	\draw[->,thick,darkblue] (-0.18,-.15) to (0.18,.3);
\end{tikzpicture}
}$, we may identify ${_\up} OS \otimes_{OS} {_\up} OS$ with
${_{\up\up}}OS$ in the natural notation, then let $\Big(\mathord{
\begin{tikzpicture}[baseline = -1]
	\draw[->,thick,darkblue] (0.18,-.15) to (-0.18,.3);
	\draw[-,line width=4pt, white] (-0.18,-.15) to (0.18,.3);
	\draw[->,thick,darkblue] (-0.18,-.15) to (0.18,.3);
\end{tikzpicture}
}\Big)_M:M \otimes_{OS}
{_{\up\up}}OS
\rightarrow M \otimes_{OS} {_{\up\up}} OS$ be defined on $M 1_\a
\otimes {_{\up\up}} OS$ 
by left
multiplication by $\operatorname{id} \otimes \begin{tikzpicture}[baseline = -.5mm]
	\draw[->,thick,darkblue] (0.2,-.2) to (-0.2,.3);
	\draw[line width=4pt,white,-] (-0.2,-.2) to (0.2,.3);
	\draw[thick,darkblue,->] (-0.2,-.2) to (0.2,.3);
\draw[-,thick,double,darkblue] (.47,-.2) to (.47,.3);
\node at (.47,-.35) {$\a$};
\end{tikzpicture}$.
Axiom (9) follows from Lemma~\ref{int}.
For (10), note
that 
$$
\bigg(\mathord{
\begin{tikzpicture}[baseline = -2]
	\draw[->,thick,darkred] (0.08,-.15) to (0.08,.3);
      \node at (0.08,0.05) {$\color{darkred}\bullet$};
   \node at (0.08,-.25) {$\scriptstyle{i}$};
\end{tikzpicture}
}\bigg)_M = \left(\Big(\mathord{
\begin{tikzpicture}[baseline = -1]
	\draw[->,thick,darkblue] (0.08,-.15) to (0.08,.3);
      \node at (0.08,0.06) {$\color{darkblue}\bullet$};
\end{tikzpicture}
}\Big)_M - i \id\right)\bigg|_{E_i M}$$ 
according to the isomorphism
$\widehat{QH}_r \cong \widehat{AH}_r$. 
It therefore
suffices to show that there is a bound on the Jordan block sizes
of $\operatorname{id} \otimes {_{\up}}X:M \otimes_{OS} {_\up} OS
\rightarrow M \otimes_{OS} {_\up} OS$ for any finitely-generated
$OS$-module $M$.
This follows by the local finite-dimensionality
discussed in the proof of Lemma~\ref{phew}.
 
Finally, we need to verify (4). The categorical action of $\g^\up
\oplus \g^\down$ on $\Mod OS^\circ$ is constructed in a similar way to the
previous paragraph. The required endofuntors come from
(\ref{fan1})--(\ref{fan4}), 
the block decomposition is (\ref{blockb}), and we get ``control by $K_0$''
from Lemma~\ref{bunnarong}.
 In fact, due to Lemmas~\ref{cartanlem} and \ref{fan}, this 
is just a reformulation of 
the familiar categorical action on modules over Hecke algebras
constructed originally in \cite[$\S$7.2]{CR}.
It is a minimal categorification since $OS^\circ_{0,0} = \k$.
\end{proof}

\begin{proof}[Proof of Theorem~\ref{evalthm}]
Theorem~\ref{tpctheorem} implies that $\pMod OS$ is a 2-representation
of $\mathfrak{U}(\g)$.
Thus, 
letting $\mathfrak{C}at_\k$ be the 2-category of $\k$-linear categories,
there is a strict $\k$-linear 2-functor
$\mathfrak{U}(\g) \rightarrow \mathfrak{C}at_\k$
sending $\La \in P$ (i.e., an object of $\mathfrak{U}(\g)$)
to the block
$\pMod OS_\La$,
a 1-morphism $\underline{X}:\La \rightarrow \omega$
to a functor $X:\pMod OS_\La \rightarrow \pMod OS_{\omega}$,
and a 2-morphism $\underline{\eta}:\underline{X} \rightarrow \underline{Y}$
to a natural transformation $\eta:X \rightarrow Y$.
Noting that
$\wt(\NOTHING) = \La_{t^{-2}} - \La_{1}$, the universal property of
$\mathcal R(\La_{t^{-2}}-\La_{1})$
produces a strongly equivariant functor
$$
\Theta:\mathcal R(\La_{t^{-2}}-\La_1)
\rightarrow \pMod OS
$$ 
sending an object $\underline{X} \in \mathcal R(\La_{t^{-2}}-\La_1)$ (i.e., a
$1$-morphism
$\underline{X}:\La_{t^{-2}}-\La_1 \rightarrow \omega$ in
$\mathfrak{U}(\g)$
for some $\omega \in P$)
to the projective $OS$-module 
$X \Delta(\NOTHING)$,
and a morphism $\underline{\eta}:\underline{X} \rightarrow \underline{Y}$ (i.e., a $2$-morphism in
$\mathfrak{U}(\g)$)
to the $OS$-module homomorphism $\eta_{\Delta(\NOTHING)}:X
\Delta(\NOTHING)
\rightarrow  Y \Delta(\NOTHING)$.
Because the unit object of $\OS(z,t)$ corresponds to the
projective module $\Delta(\NOTHING)$ in $\pMod
OS$, this is the essentially the same as the functor appearing in the
theorem we are trying to prove.

In this paragraph, we check that $\Theta$ sends the 2-morphisms (\ref{ideal}) to zero.
For the first one, Lemmas~\ref{newfirst} and \ref{phew} imply that
$E_i \Delta(\NOTHING)$ is zero (so we get done trivially) unless $i =
1$, and also
$E_1 \Delta(\NOTHING) \cong \Delta(((1),\varnothing))$.
The relation follows in the 
non-trivial case $i=1$ because
$\Big(\mathord{
\begin{tikzpicture}[baseline = -1mm]
	\draw[->,darkred,thick] (0.08,-.2) to (0.08,.3);
      \node at (0.08,0) {$\color{darkred}\bullet$};
   \node at (0.1,-.32) {$\scriptstyle{1}$};
\end{tikzpicture}
}\Big)_{\Delta(\NOTHING)}$ is a nilpotent element of $\End_{OS}(\Delta(((1),\varnothing))) \cong \k$.
The second relation follows similarly.
For the final relation, we may assume that $t=\pm 1$, and need to show
that 
$\Big(\mathord{
\begin{tikzpicture}[baseline = 1mm]
  \draw[<-,thick,darkred] (0,0.4) to[out=180,in=90] (-.2,0.2);
  \draw[-,thick,darkred] (0.2,0.2) to[out=90,in=0] (0,.4);
 \draw[-,thick,darkred] (-.2,0.2) to[out=-90,in=180] (0,0);
  \draw[-,thick,darkred] (0,0) to[out=0,in=-90] (0.2,0.2);
 \node at (0,-.12) {$\scriptstyle{1}$};
\end{tikzpicture}
}\Big)\Big|_{\Delta(\NOTHING)}:\Delta(\NOTHING)\rightarrow \Delta(\NOTHING)$ is zero.
This endomorphism
is the
composition of two morphisms 

\vspace{-2mm}

$$
\Delta(\NOTHING) \stackrel{f}{\rightarrow}
E_{1} F_{1} \Delta(\NOTHING)\stackrel{g}{\rightarrow}
\Delta(\NOTHING)
$$
(the cup and the cap).
By Lemmas~\ref{newfirst} and \ref{phew}, the projective module
$E_{1} F_{1} \Delta(\NOTHING)$ has a two step $\Delta$-flag
with
$\Delta(\NOTHING)$ at the bottom and $\Delta(((1),(1)))$ at the top.
By Example~\ref{swex} with $n=0$, we know that
$[\bar\Delta(\NOTHING):\L(((1),(1)))] \neq 0$, so deduce
by BGG reciprocity that
$E_1 F_1 \Delta(\NOTHING) = \P(((1),(1)))$, i.e., it is indecomposable.
So the first morphism $f$ must be a scalar multiple of 
an inclusion of $\Delta(\NOTHING)$
into $E_{1} F_{1} \Delta(\NOTHING)$, and the second morphism must contain $\Delta(\NOTHING)$ in its
kernel.
Hence, $g \circ f = 0$ as required.

It follows that the functor $\Theta$ factors through the quotient to
induce a $\k$-linear functor
$$
\bar\Theta:\dot\VV(-\Lambda_1|\Lambda_{t^{-2}}) \rightarrow
\pMod OS.
$$
To show that this is an equivalence, we will show in the next two
paragraphs that
$\bar\Theta$ induces an isomorphism
\begin{equation}\label{magic}
\bar\Theta:
\Hom_{\dot\VV(-\Lambda_1|\Lambda_{t^{-2}})}(\underline{X},\underline{Y})
\stackrel{\sim}{\rightarrow}
\Hom_{OS}(X \Delta(\NOTHING), Y \Delta(\NOTHING))
\end{equation}
for any $\omega \in P$ and $\underline{X},
\underline{Y}:\La_{t^{-2}}-\La_1 \rightarrow \omega$ 
obtained as compositions\footnote{The infinite sums when $e=0$ make sense as $\underline{E}_i
1_\La$
and $\underline{F}_i 1_\La$ are
zero for all but finitely many $i \in I$.}
of the generating morphisms 
$\underline{E} = \bigoplus_{i \in I}
\underline{E}_i$ and $\underline{F} = \bigoplus_{i \in I}
\underline{F}_i$ in $\dot\VV(-\La_1|\La_{t^{-2}})$.
Let us see how the theorem follows from this.
Recall that $V(-\Lambda_1|\Lambda_{t^{-2}})$ is generated as a $\g$-module
by the vector $v_\NOTHING$. So, using Lemma~\ref{int} plus the natural
positivity of the actions of $[E]$ and $[F]$ on the basis coming from
indecomposable projectives, any $P$ in $\pMod OS$
isomorphic to a
summand of $X \Delta(\NOTHING)$ for some composition $X$ of $E$'s
and $F$'s. Let $e \in \End_{OS}(X \Delta(\NOTHING))$ be the
projection onto this summand.
The inverse image of $e$ under (\ref{magic}) gives
an idempotent in $\End_{\dot{\mathcal
    L}(-\Lambda_1|\Lambda_{t^{-2}})}(\underline{X})$. This defines an object of
$\dot\VV(-\Lambda_1|\Lambda_{t^{-2}})$ whose image under
$\bar\Theta$ is isomorphic to $P$. This shows that $\bar\Theta$ is
dense. It is full and faithful by (\ref{magic}).

So now we must prove (\ref{magic}).
Suppose that $x$ (resp. $x'$) letters of $\underline{X}$
and $y$ (resp. $y'$) letters of $\underline{Y}$ are equal to
$\underline{F}$ (resp. $\underline{E}$).
We may assume further that $r := x'+y=x+y'$, since otherwise both sides of
(\ref{magic}) are zero by weight considerations.
We observe for each $\La \in P$ 
that there is an isomorphism $\rho:\underline{E}\, \underline{F} 1_\La
\cong \underline{F} \,\underline{E} 1_\La$ in 
$\dot\VV(-\La_1|\La_{t^{-2}})$.
To prove this, for all $i,j \in I$, 
the relations in $\mathfrak{U}(\g)$ give canonical isomorphisms
$\underline{E}_i \underline{F}_j 1_\La \oplus 1_\La^{\oplus m_{i,j}} \cong \underline{F}_j
\underline{E}_i 1_\La \oplus 1_\La^{\oplus n_{i,j}}$ for
$m_{i,j},n_{i,j} \in \NN$, one of which is zero. Summing these
isomorphisms over all $i,j \in I$
gives a canonical isomorphism $\underline{E} \, \underline{F} 1_\La \oplus
1_\La^{\oplus m} \cong \underline{F}\, \underline{E}\oplus 1_\La^{\oplus n}$
for some $m,n \in \NN$. In fact, by weight considerations,
we have that $m=n$.
Then we use Krull-Schmidt, which holds because 
$\dot\VV(-\La_1|\La_{t^{-2}})$ is a finite-dimensional
category thanks to \cite[Corollary 4.17]{BD},
to deduce that the existence of the desired (non-canonical) isomorphism
$\rho:\underline{E} \,\underline{F} 1_\La 
\stackrel{\sim}{\rightarrow} \underline{F}\,
\underline{E} 1_\La$.
Then, using these isomorphisms plus isomorphisms coming from the
adjunction 2-morphisms in $\mathfrak{U}(\g)$,
we can construct a vector space isomorphism
$$
\theta:
\Hom_{\dot\VV(-\Lambda_1|\Lambda_{t^{-2}})}(\underline{X},\underline{Y})
\stackrel{\sim}{\rightarrow}
\Hom_{\dot\VV(-\Lambda_1|\Lambda_{t^{-2}})}(\underline{E}^r,\underline{E}^r)
$$
in just the same way as was done in (\ref{exactly}).
Applying $\bar\Theta$, we get also an isomorphism $\phi$ making the
left hand square of the following diagram commute:
\begin{equation}\label{queen}
\begin{CD}
\Hom_{\dot\VV(-\Lambda_1|\Lambda_{t^{-2}})}(\underline{X},\underline{Y}) &@>\sim>\theta>&
\Hom_{\dot\VV(-\Lambda_1|\Lambda_{t^{-2}})}(\underline{E}^r,
\underline{E}^r)
&@<\j_r<<&QH_r\\
@V\bar\Theta VV&&@VV\bar\Theta V&&@VV\psi V\\
\Hom_{OS}(X \Delta(\NOTHING), X \Delta(\NOTHING)) &@>\sim >\phi>&
\Hom_{OS}(E^r \Delta(\NOTHING), E^r \Delta(\NOTHING))&@<\sim<\i_r <&H_r.
\end{CD}
\end{equation}
Using this square, we are reduced 
to showing that the middle vertical map is an isomorphism.

To complete the argument, we already have the isomorphism $\i_r$ in
this diagram; it comes from
(\ref{randwick}).
Let $\j_r$ be the canonical homomorphism coming from the
categorical action (item (8) in the proof of
Theorem~\ref{tpctheorem}), then
define $\psi$ so that the right hand
square commutes.
We claim that $\j_r$ is surjective. To see this, \cite[Proposition
3.11]{KL3} shows that
$\Hom_{\dot\VV(-\Lambda_1|\Lambda_{t^{-2}})}(\underline{E}^r,
\underline{E}^r)$
is generated as a right $\End_{\dot{\mathcal
    L}(-\Lambda_1|\Lambda_{t^{-2}})}(1_{\Lambda_{t^{-2}}-\Lambda_1})$-module
by the image of $\j_r$.
But
$\End_{\dot{\mathcal
    L}(-\Lambda_1|\Lambda_{t^{-2}})}(1_{\Lambda_{t^{-2}}-\Lambda_1})$
is just the field $\k$ since there are enough relations in (\ref{ideal}) to
see that any dotted bubble is a scalar.
Moreover, $\ker \j_r$ contains the ideal $J_r$ of $QH_r$ generated by
$\big\{x^{\delta_{i_1,1}}_1 1_{\bi}\:\big|\:\bi = (i_1,\dots,i_r)\in I^r\big\}$
by the first relation from (\ref{ideal}), so $\psi$ induces
$\bar\psi: QH_r / J_r \rightarrow H_r$.
Since $J_r$ is the cyclotomic ideal of $QH_r$ associated to the dominant
weight $\La_1$, we get that $\bar\psi$ is an isomorphism 
by the main result of \cite{BK}.
It follows that $\bar\Theta$ is an isomorphism too.
\end{proof}

\section{Modifications in the degenerate case}\label{sdeg}

Assume in this section that $\k$ is a field of characteristic $p \geq 0$.
As we have said already in the introduction, when $z=0$, the
category $\OS(z,t)$ should be replaced with the oriented Brauer
category $\OB(\delta)$ studied in \cite{BCNR}.

\begin{proof}[Proof of Theorem~\ref{websup}]
This follows by the same general argument as used to prove Theorem~\ref{webs} (also
Remark~\ref{websrem}).
Instead of the quantized Schur-Weyl duality used before, one uses classical 
Schur-Weyl duality in its ``characteristic free'' form
established in \cite[Theorems 4.1--4.2]{dCP}.
\end{proof}

Now we discuss the degenerate analog of the results in sections 5, 6 and 7.

For section 5, we work with the locally finite-dimensional locally unital algebra
$$
OB = \bigoplus_{\a,\b \in \words} 1_\a OB 1_\b
\qquad\text{where}\qquad
1_\a OB 1_\b = \Hom_{\OB(\delta)}(\b,\a).
$$
It has a triangular decomposition $$
OB \cong OB^+ \otimes_\K OB^0
\otimes_\K OB^-$$ 
like in Lemma~\ref{td}. This actually becomes 
easier since there is no longer any need to be careful about upward strands passing
underneath downward strands when defining $OB^0$.
The subsequent arguments in section 5 then
go through easily on replacing the Hecke algebra $H_r$ with the
group algebra $\k \Sym_r$ of the symmetric group
and $e$ with $p$.

The results in section 6 go through too, but this needs a little more
work since the definitions of the various Jucys-Murphy elements from
(\ref{arun}), (\ref{JMstupid}) and
(\ref{chile2})--(\ref{chile5}) need some modifications, and the
details in the proofs of Lemmas~\ref{fur} and \ref{newz} then
need to be rechecked carefully.
The affine Hecke algebra $AH_r$ becomes the {\em degenerate affine
  Hecke algebra} $dAH_r$ whose polynomial generators $x_1,\dots,x_r$
satisfy the relations
\begin{align}
x_i x_j &= x_j x_i,
&
s_i x_{i+1} &= x_i s_i + 1
\end{align}
in place of (\ref{AHR}).
The unique homomorphism $dAH_r \twoheadrightarrow \k \Sym_r$
sending $s_i \mapsto s_i$ and $x_1 \mapsto 0$ 
sends $x_r$ to the {\em Jucys-Murphy element}
\begin{equation}
\jm_r := \sum_{i=1}^{r-1} (i\:\:r) \in \k \Sym_r.
\end{equation}
These elements are the replacements for (\ref{JMstupid}).
Then the contents of nodes of an ordinary Young diagram (which should
always be interpreted as
elements of the field $\k$) are as in the
following example
$$
\diagram{$ 0$&$ 1$&$2$&$3$&$4$\cr $
  -1$&$ 0$&$1$\cr
  $ -2$ & $ -1$ \cr}.
$$
In place of (\ref{Ic}), we set
\begin{equation}\label{Id}
I_c := \{c + n\:|\:n \in \ZZ\}  \subseteq \k
\end{equation}
for $c \in \k$.
The appropriate analog of Lemma~\ref{classical} uses $I_{0} \subseteq \k$ in
place of $I_1 \subseteq \k^\times$. 
It is a classical result in the (modular) representation theory of the
symmetric group.

The Jucys-Murphy elements of $\OB(\delta)$
are the images of corresponding elements of the
{\em affine oriented Brauer category} $\AOB(\delta)$ introduced in
\cite{BCNR}. This strict $\k$-linear monoidal category is defined by
by adjoining an additional generating
morphism $\mathord{
\begin{tikzpicture}[baseline = -1mm]
      \node at (0,0) {$\color{darkblue}\scriptstyle\bullet$};
	\draw[<-,thick,darkblue] (0,0.25) to (0,-0.25);
\end{tikzpicture}
}$ to $\OB(\delta)$,
subject to the relation ($\text{dA}$) from Figure 1.
The analog of Lemma~\ref{notmon} is explained in \cite[Theorem
3.3]{BCNR}: there is a $\k$-linear functor
$\beta:\AOB(\delta)\rightarrow \OB(\delta)$ sending diagrams with no dots to
the same diagrams in $\OB(\delta)$, and sending $\mathord{
\begin{tikzpicture}[baseline = -1mm]
      \node at (0,0) {$\color{darkblue}\scriptstyle\bullet$};
	\draw[<-,thick,darkblue] (0,0.25) to (0,-0.25);
\end{tikzpicture}
}\mapsto 0$.
The following computes the image of
$\mathord{
\begin{tikzpicture}[baseline = -1mm]
      \node at (0,0.025) {$\color{darkblue}\scriptstyle\bullet$};
	\draw[->,thick,darkblue] (0,0.25) to (0,-0.25);
\end{tikzpicture}
}$ (which is defined so that (\ref{d5})--(\ref{d4}) hold):
$$
\mathord{
\begin{tikzpicture}[baseline = -0.5mm]
	\draw[->,thick,darkblue] (0,0.6) to (0,-0.6);
     \node at (0,0) {$\color{darkblue}\bullet$};
\end{tikzpicture}
}
=
\mathord{
\begin{tikzpicture}[baseline = -0.5mm]
	\draw[-,thick,darkblue] (0,0.6) to (0,0.3);
	\draw[-,thick,darkblue] (0,0.3) to [out=-90,in=180] (.3,-0.2);
	\draw[-,thick,darkblue] (0.3,-0.2) to [out=0,in=-90](.5,0);
	\draw[-,thick,darkblue] (0.5,0) to [out=90,in=0](.3,0.2);
	\draw[-,thick,darkblue] (0.3,.2) to [out=180,in=90](0,-0.3);
	\draw[->,thick,darkblue] (0,-0.3) to (0,-0.6);
      \node at (0,.4) {$\color{darkblue}\bullet$};
\end{tikzpicture}
}
=
\mathord{
\begin{tikzpicture}[baseline = -0.5mm]
	\draw[-,thick,darkblue] (0,0.6) to (0,0.3);
	\draw[-,thick,darkblue] (0,0.3) to [out=-90,in=180] (.3,-0.2);
	\draw[-,thick,darkblue] (0.3,-0.2) to [out=0,in=-90](.5,0);
	\draw[-,thick,darkblue] (0.5,0) to [out=90,in=0](.3,0.2);
	\draw[-,thick,darkblue] (0.3,.2) to [out=180,in=90](0,-0.3);
	\draw[->,thick,darkblue] (0,-0.3) to (0,-0.6);
      \node at (0.48,.02) {$\color{darkblue}\bullet$};
\end{tikzpicture}
}
-
\mathord{
\begin{tikzpicture}[baseline = -0.5mm]
	\draw[->,thick,darkblue] (0,0.6) to (0,-0.6);
\end{tikzpicture}
}
\:\:\:\mathord{
\begin{tikzpicture}[baseline = 1.25mm]
  \draw[-,thick,darkblue] (0.2,0.2) to[out=90,in=0] (0,.4);
  \draw[-,thick,darkblue] (0,0.4) to[out=180,in=90] (-.2,0.2);
\draw[-,thick,darkblue] (-.2,0.2) to[out=-90,in=180] (0,0);
  \draw[->,thick,darkblue] (0,0) to[out=0,in=-90] (0.2,0.2);
\end{tikzpicture}
}\:\mapsto\:
-\delta\:
\mathord{
\begin{tikzpicture}[baseline = -0.5mm]
	\draw[->,thick,darkblue] (0,0.6) to (0,-0.6);
\end{tikzpicture}
}\,.
$$
Then we define $x(\b) \in \Hom_{\OB(\delta)}(\b,\b)$ in the same way as (\ref{JM})
for any $\varnothing \neq \b \in \words$.
There is no longer such a nice diagrammatic interpretation of these
elements like (\ref{arun}), but there
is a recursive definition as in (\ref{JMa})--(\ref{JMb}): we have that
$x(\up) = 0, x(\down) = -\delta 1_\down$, and 
\begin{align}\label{JManew}
x(\up \up \b) &:= 
\mathord{
\begin{tikzpicture}[baseline=2.5mm]
\draw[->,thick,darkblue] (0,.47) to [out=90,in=-90] (-.42,.9);
\draw[->,thick,darkblue] (-.42,.27) to [out=90,in=-120] (-0.05,.9);
\draw[-,thick,darkblue] (-.42,.27) to [out=-90,in=120] (-0.05,-.3);
\draw[-,thick,darkblue] (0,.12) to [out=-90,in=90] (-.42,-.3);
\node at (0.16,.3) {$\scriptstyle x(\up \b)$};
\draw[-, thick,double,darkblue] (.25,.12) to (.25,-.3);
\draw[-, thick,double,darkblue] (.25,.48) to (.25,.9);
\draw[darkblue,thick,yshift=-5pt,xshift=-5pt] (-.08,.3) rectangle ++(24pt,10pt);
\node at (0.25,-.44) {$\b$};
\node at (0.25,1.07) {$\b$};
\end{tikzpicture}}
+
\mathord{
\begin{tikzpicture}[baseline=2.5mm]
\draw[->,thick,darkblue] (0,-.3) to (-.3,.9);
\draw[->,thick,darkblue] (-.3,-.3) to (0,.9);
\draw[-, thick,double,darkblue] (.25,.9) to (.25,-.3);
\node at (0.25,-.44) {$\b$};
\end{tikzpicture}}\,,
&
x(\up \down \b) &:= 
\mathord{
\begin{tikzpicture}[baseline=2.5mm]
\draw[->,thick,darkblue] (0,.47) to [out=90,in=-90] (-.42,.9);
\draw[-,thick,darkblue] (-.42,.27) to [out=90,in=-120] (-0.05,.9);
\draw[->,thick,darkblue] (-.42,.27) to [out=-90,in=120] (-0.05,-.3);
\draw[-,thick,darkblue] (0,.12) to [out=-90,in=90] (-.42,-.3);
\node at (0.16,.3) {$\scriptstyle x(\up \b)$};
\draw[-, thick,double,darkblue] (.25,.12) to (.25,-.3);
\draw[-, thick,double,darkblue] (.25,.48) to (.25,.9);
\draw[darkblue,thick,yshift=-5pt,xshift=-5pt] (-.08,.3) rectangle ++(24pt,10pt);
\node at (0.25,-.44) {$\b$};
\node at (0.25,1.07) {$\b$};
\end{tikzpicture}}-
\mathord{
\begin{tikzpicture}[baseline=2.5mm]
\draw[<-,thick,darkblue] (0,-.3) to [out=90,in=0] (-.15,0.1);
\draw[-,thick,darkblue] (-.15,0.1) to [out=180,in=90] (-.3,-.3);
\draw[<-,thick,darkblue] (-.3,.9) to [out=-90,in=-180] (-.15,.5);
\draw[-,thick,darkblue] (-.15,.5) to [out=0,in=-90] (0,.9);
\draw[-, thick,double,darkblue] (.25,.9) to (.25,-.3);
\node at (0.25,-.44) {$\b$};
\end{tikzpicture}}
\:,\\
x(\down \down \b) &:= 
\mathord{
\begin{tikzpicture}[baseline=2.5mm]
\draw[-,thick,darkblue] (-.42,.27) to [out=90,in=-120] (-0.05,.9);
\draw[-,thick,darkblue] (0,.47) to [out=90,in=-90] (-.42,.9);
\draw[darkblue,thick,yshift=-5pt,xshift=-5pt] (-.08,.3) rectangle ++(24pt,10pt);
\draw[->,thick,darkblue] (0,.12) to [out=-90,in=90] (-.42,-.3);
\draw[->,thick,darkblue] (-.42,.27) to [out=-90,in=120] (-0.05,-.3);
\node at (0.16,.3) {$\scriptstyle x(\down \b)$};
\draw[-, thick,double,darkblue] (.25,.12) to (.25,-.3);
\draw[-, thick,double,darkblue] (.25,.48) to (.25,.9);
\node at (0.25,-.44) {$\b$};
\node at (0.25,1.07) {$\b$};
\end{tikzpicture}}-
\mathord{
\begin{tikzpicture}[baseline=2.5mm]
\draw[<-,thick,darkblue] (0,-.3) to (-.3,.9);
\draw[<-,thick,darkblue] (-.3,-.3) to (0,.9);
\draw[-, thick,double,darkblue] (.25,.9) to (.25,-.3);
\node at (0.25,-.44) {$\b$};
\end{tikzpicture}}\,,
&
x(\down \up \b) &:= 
\mathord{
\begin{tikzpicture}[baseline=2.5mm]
\draw[->,thick,darkblue] (-.42,.27) to [out=90,in=-120] (-0.05,.9);
\draw[-,thick,darkblue] (0,.47) to [out=90,in=-90] (-.42,.9);
\draw[darkblue,thick,yshift=-5pt,xshift=-5pt] (-.08,.3) rectangle ++(24pt,10pt);
\draw[->,thick,darkblue] (0,.12) to [out=-90,in=90] (-.42,-.3);
\draw[-,thick,darkblue] (-.42,.27) to [out=-90,in=120] (-0.05,-.3);
\node at (0.16,.3) {$\scriptstyle x(\down \b)$};
\draw[-, thick,double,darkblue] (.25,.12) to (.25,-.3);
\draw[-, thick,double,darkblue] (.25,.48) to (.25,.9);
\node at (0.25,-.44) {$\b$};
\node at (0.25,1.07) {$\b$};
\end{tikzpicture}}+
\mathord{
\begin{tikzpicture}[baseline=2.5mm]
\draw[-,thick,darkblue] (0,-.3) to [out=90,in=0] (-.15,0.1);
\draw[->,thick,darkblue] (-.15,0.1) to [out=180,in=90] (-.3,-.3);
\draw[-,thick,darkblue] (-.3,.9) to [out=-90,in=-180] (-.15,.5);
\draw[->,thick,darkblue] (-.15,.5) to [out=0,in=-90] (0,.9);
\draw[-, thick,double,darkblue] (.25,.9) to (.25,-.3);
\node at (0.25,-.44) {$\b$};
\end{tikzpicture}}
\:,\label{JMbnew}
\end{align}
for any word $\b$.
Finally, the Jucys-Murphy elements $x^\circ(\b)$ of $\OB^\circ(\delta)$, i.e., the
subcategory consisting of all objects but only morphisms represented
by diagrams with no cups or caps, 
are defined from
$x^\circ(\up) := 0, x^\circ(\down) := -\delta 1_\down$, and
\begin{align}\label{JMcnew}
x^\circ(\up \up \b) &:= 
\mathord{
\begin{tikzpicture}[baseline=2.5mm]
\draw[->,thick,darkblue] (0,.47) to [out=90,in=-90] (-.42,.9);
\draw[->,thick,darkblue] (-.42,.27) to [out=90,in=-120] (-0.05,.9);
\draw[-,thick,darkblue] (-.42,.27) to [out=-90,in=120] (-0.05,-.3);
\draw[-,thick,darkblue] (0,.12) to [out=-90,in=90] (-.42,-.3);
\node at (0.17,.3) {$\scriptstyle x^\circ(\up \b)$};
\draw[-, thick,double,darkblue] (.25,.12) to (.25,-.3);
\draw[-, thick,double,darkblue] (.25,.48) to (.25,.9);
\draw[darkblue,thick,yshift=-5pt,xshift=-5pt] (-.1,.3) rectangle ++(26pt,10pt);
\node at (0.25,-.44) {$\b$};
\node at (0.25,1.07) {$\b$};
\end{tikzpicture}}
+
\mathord{
\begin{tikzpicture}[baseline=2.5mm]
\draw[->,thick,darkblue] (0,-.3) to (-.3,.9);
\draw[->,thick,darkblue] (-.3,-.3) to (0,.9);
\draw[-, thick,double,darkblue] (.25,.9) to (.25,-.3);
\node at (0.25,-.44) {$\b$};
\end{tikzpicture}}\,,
&
x^\circ(\up \down \b) &:= 
\mathord{
\begin{tikzpicture}[baseline=2.5mm]
\draw[->,thick,darkblue] (0,.47) to [out=90,in=-90] (-.42,.9);
\draw[-,thick,darkblue] (-.42,.27) to [out=90,in=-120] (-0.05,.9);
\draw[->,thick,darkblue] (-.42,.27) to [out=-90,in=120] (-0.05,-.3);
\draw[-,thick,darkblue] (0,.12) to [out=-90,in=90] (-.42,-.3);
\node at (0.17,.3) {$\scriptstyle x^\circ(\up \b)$};
\draw[-, thick,double,darkblue] (.25,.12) to (.25,-.3);
\draw[-, thick,double,darkblue] (.25,.48) to (.25,.9);
\draw[darkblue,thick,yshift=-5pt,xshift=-5pt] (-.1,.3) rectangle ++(26pt,10pt);
\node at (0.25,-.44) {$\b$};
\node at (0.25,1.07) {$\b$};
\end{tikzpicture}}\:,\\
x^\circ(\down \down \b) &:= 
\mathord{
\begin{tikzpicture}[baseline=2.5mm]
\draw[-,thick,darkblue] (-.42,.27) to [out=90,in=-120] (-0.05,.9);
\draw[-,thick,darkblue] (0,.47) to [out=90,in=-90] (-.42,.9);
\draw[darkblue,thick,yshift=-5pt,xshift=-5pt] (-.1,.3) rectangle ++(26pt,10pt);
\draw[->,thick,darkblue] (0,.12) to [out=-90,in=90] (-.42,-.3);
\draw[->,thick,darkblue] (-.42,.27) to [out=-90,in=120] (-0.05,-.3);
\node at (0.17,.3) {$\scriptstyle x^\circ(\down \b)$};
\draw[-, thick,double,darkblue] (.25,.12) to (.25,-.3);
\draw[-, thick,double,darkblue] (.25,.48) to (.25,.9);
\node at (0.25,-.44) {$\b$};
\node at (0.25,1.07) {$\b$};
\end{tikzpicture}}-
\mathord{
\begin{tikzpicture}[baseline=2.5mm]
\draw[<-,thick,darkblue] (0,-.3) to (-.3,.9);
\draw[<-,thick,darkblue] (-.3,-.3) to (0,.9);
\draw[-, thick,double,darkblue] (.25,.9) to (.25,-.3);
\node at (0.25,-.44) {$\b$};
\end{tikzpicture}}\,,
&
x^\circ(\down \up \b) &:= 
\mathord{
\begin{tikzpicture}[baseline=2.5mm]
\draw[->,thick,darkblue] (-.42,.27) to [out=90,in=-120] (-0.05,.9);
\draw[-,thick,darkblue] (0,.47) to [out=90,in=-90] (-.42,.9);
\draw[darkblue,thick,yshift=-5pt,xshift=-5pt] (-.1,.3) rectangle ++(26pt,10pt);
\draw[->,thick,darkblue] (0,.12) to [out=-90,in=90] (-.42,-.3);
\draw[-,thick,darkblue] (-.42,.27) to [out=-90,in=120] (-0.05,-.3);
\node at (0.17,.3) {$\scriptstyle x^\circ(\down \b)$};
\draw[-, thick,double,darkblue] (.25,.12) to (.25,-.3);
\draw[-, thick,double,darkblue] (.25,.48) to (.25,.9);
\node at (0.25,-.44) {$\b$};
\node at (0.25,1.07) {$\b$};
\end{tikzpicture}}\:.\label{JMdnew}
\end{align}
We leave it to the reader to verify with these new definitions
that Lemmas~\ref{fur} and \ref{newz} go through; see also \cite{R}. In
the statement of Lemma~\ref{fur}, one should replace 
$t^{-2}i^{-1}$ with $-i-\delta$,
$I_1$ with $I_0$, and $I_{t^{-2}}$ with $I_{-\delta}$.
Also
 the set $I$ from (\ref{I}) becomes
\begin{equation}
I := I_{0} \cup I_{-\delta} = \{n, -n - \delta\:|\:n \in \ZZ\} \subseteq \k.
\end{equation}
Adjusting the subsequent combinatorics in analogous ways, all of
the other results of section 6 follow as before.

Moving on to section 7, 
the Lie algebra $\g$ is the Kac-Moody algebra associated to the Cartan
matrix
$(c_{i,j})_{i,j \in I}$ defined by (\ref{cartan2}).
The module $V(-\Lambda_1|\Lambda_{t^{-2}})$ becomes
$V(-\Lambda_0|\Lambda_{-\delta})$, and
the degenerate analogs of (\ref{css1})--(\ref{css2}) are
\begin{align}\label{css1}
\wt^\up(\LA)
&:=-\La_0 + \sum_{A \in \la^\up}
\alpha_{\cont(A)},\\\label{css2}
\wt^\down(\LA)
&:=\La_{-\delta} - \sum_{A \in \la^\down}
\alpha_{-\!\cont(A)-\delta},.
\end{align}
There are no other significant discrepancies.

\begin{proof}[Proof of Theorem~\ref{aisha2}]
This is the same as the proof of
Theorem~\ref{evalthm} given in the previous section.
\end{proof}

\end{document}